\renewcommand{\subsection}[1]{\vspace{3mm}\refstepcounter{subsection}\noindent{\bf \thesubsection. #1.} }
\newcommand{\np}[1]{\vspace{3mm}\refstepcounter{subsection}\noindent{\bf \thesubsection.} }
\renewcommand{\subsubsection}[1]{\vspace{3mm}\refstepcounter{subsubsection}\noindent{\bf \thesubsubsection. #1.} }
\newcommand{\trdeg}{\operatorname{trdeg}}
\newcommand{\DD}{\mathbb{D}}
\newcommand{\rad}{\operatorname{rad}}
\newcommand{\la}{\langle}
\newcommand{\ra}{\rangle}
\newcommand{\Hom}{\operatorname{Hom}}
\renewcommand{\geq}{\geqslant}
\renewcommand{\leq}{\leqslant}
\newcommand{\Osh}{{\mathcal O}}                        
\renewcommand{\H}{\mathrm{H}}                          
\newcommand{\lcm}{\operatorname{lcm}}
\newcommand{\WDiv}{\operatorname{WDiv}}
\newcommand{\DDiv}{\mathbb{D}\mathrm{iv}}
\newcommand{\Div}{\operatorname{CDiv}}
\newcommand{\Disc}{\operatorname{Disc}}
\newcommand{\idlim}{\varprojlim} 
\newcommand{\K}{\mathrm{K}}                            
\newcommand{\rank}{\operatorname{rank}}                
\newcommand{\cchar}{\operatorname{char}}
\newcommand{\Spec}{\operatorname{Spec}}
\newcommand{\GKdim}{\operatorname{GKdim}}
\newcommand{\Gal}{\operatorname{Gal}}
\newcommand{\G}{\mathrm{G}}
\newcommand{\Br}{\operatorname{Br}}
\newcommand{\kk}{\mathbf{k}}
\newcommand{\Trd}{\operatorname{Trd}}
\newcommand{\N}{\operatorname{N}}
\newcommand{\spec}{\operatorname{Spec}}
\newcommand{\KK}{\mathbf{K}}
\newcommand{\FF}{\mathbf{F}}
\newcommand{\PP}{\mathbb{P}} 
\newcommand{\QQ}{\mathbb{Q}} 
\newcommand{\RR}{\mathbb{R}} 
\newcommand{\ZZ}{\mathbb{Z}} 
\newtheorem{theorem}{Theorem}[section]
\newtheorem{lemma}[theorem]{Lemma}
\newtheorem{corollary}[theorem]{Corollary}
\newtheorem{proposition}[theorem]{Proposition}
\theoremstyle{definition}
\newtheorem{defn}[theorem]{Definition}
\newtheorem{defns}[theorem]{Definitions}
\newtheorem{remark}[theorem]{Remark}
\newtheorem{example}[theorem]{Example}
\newtheorem{question}[theorem]{Question}
\newtheorem{conjecture}[theorem]{Conjecture}
\begin{document}
\title{On the Kodaira dimension of maximal orders}
\author{Nathan Grieve}
{ 

\address{
School of Mathematics and Statistics, 4302 Herzberg Laboratories, Carleton University, 1125 Colonel By Drive, Ottawa, ON, K1S 5B6, Canada
}
\address{D\'{e}partement de math\'{e}matiques, Universit\'{e} du Qu\'{e}bec \'a Montr\'{e}al, Local PK-5151, 201 Avenue du Pr\'{e}sident-Kennedy, Montr\'{e}al, QC, H2X 3Y7, Canada}
\email{nathan.m.grieve@gmail.com}%
}

\author{Colin Ingalls}
\address{
School of Mathematics and Statistics, 4302 Herzberg Laboratories, Carleton University, 1125 Colonel By Drive, Ottawa, ON, K1S 5B6, Canada
}
\email{cingalls@math.carleton.ca}%

\begin{abstract}
  Let $\kk$ be an algebraically closed field of characteristic zero and $\KK$ a finitely generated field over $\kk$.  Let $\Sigma$ be a central simple $\KK$-algebra, $X$ a normal projective model of $\KK$ and $\Lambda$ a sheaf of maximal $\Osh_X$-orders in $\Sigma$.  There is a ramification $\QQ$-divisor $\Delta$ on $X$, which is related to the canonical bimodule $\omega_\Lambda$ by an adjunction formula. It only depends on the class of $\Sigma$ in the Brauer group of $\KK$.  When the numerical abundance conjecture holds true, or when $\Sigma$ is a central simple algebra, we show that the Gelfand-Kirillov dimension (or GK dimension) of the canonical ring of $\Lambda$ is one more than the Iitaka dimension (or D-dimension) of the log pair $(X,\Delta)$.   In the case that $\Sigma$ is a division algebra, we further show that this GK dimension is also one more than the transcendence degree of the division algebra of degree zero fractions of the canonical ring of $\Lambda$.  We prove that these dimensions are birationally invariant when the b-log pair determined by the ramification divisor has b-canonical singularities.   In that case we refer to the  Iitaka (or  D-dimension) of $(X,\Delta)$ as the Kodaira dimension of the order $\Lambda$.  For this, we establish birational invariance of the Kodaira dimension of b-log pairs with b-canonical singularities. We also show that the Kodaira dimension can not decrease for an embedding of  central simple algebras, finite dimensional over their centres, which induces a Galois extension of their centres, and satisfies a divisibility condition on the ramification which we call an effective embedding.  For example, this condition holds if the target  central simple algebra has the property that its period equals its index.
  In proving our main result, we establish existence of equivariant b-terminal resolutions of $\G$-b-log pairs and we also find two variants of the Riemann-Hurwitz formula.  The first variant applies to  effective embeddings of central simple algebras with fixed centres while the second applies to the pullback of a central simple algebra by a Galois extension of its centre. We also give two different local characterizations of effective embeddings.  The first is in terms of complete local invariants, while the second uses Galois cohomology.

\end{abstract}
\thanks{\emph{Mathematics Subject Classification (2010):} Primary 14E15, Secondary 16H10.}

\maketitle
\tableofcontents
\section{Introduction}
Let $\kk$ be an algebraically closed field of characteristic zero and let $\KK$ be a finitely generated field over $\kk$.  By birational invariance of Kodaira dimension for proper models of $\KK$, define the Kodaira dimension of $\KK$ to be $\kappa(\KK):=\kappa(X)$, the Kodaira dimension of some  smooth proper model $X$ of $\KK$.  The Kodaira dimension of such an $X$ can be defined in terms of its canonical ring 
$$R(X,\K_X) := \bigoplus_{\ell \geq 0} \H^0(X,\Osh_X(\ell \K_X)) =  \bigoplus_{\ell \geq 0} \Hom_{\Osh_X}(\Osh_X,\omega_X^{\otimes \ell})$$
in various ways.
For instance, let $\GKdim R(X,\K_X)$ denote the GK dimension of $R(X,\K_X)$.  Then: 
\begin{equation}\label{GK:Kod:defn}
 \kappa(X):= 
\GKdim R(X,\K_X) -1. 
\end{equation}

Our convention is that $\kappa(X)=-1$ when negative rather than $-\infty$.
In this article,  we wish to extend the concept of Kodaira dimension for $\KK$ to a central simple $\KK$-algebra $\Sigma$.  In place of the model $X$, we choose a sheaf of maximal $\Osh_X$-orders $\Lambda$ in $\Sigma$.  In this situation, there is a canonical bimodule $\omega_\Lambda$ and, by using tensor products over $\Lambda$ and reflexive hulls $(-)^{\vee\vee}$, we define the canonical ring of $\Lambda$ to be: 
$$
R(\Lambda,\omega_\Lambda) := \bigoplus_{\ell \geq 0} \Hom_{\Lambda}(\Lambda, (\omega_\Lambda^{\otimes \ell })^{\vee \vee}),
$$
see Definition~\ref{defn:cansheafOfOrder} and Definition~\ref{can:ring:defn:1} for more details.  In the spirit of \eqref{GK:Kod:defn}, we define
\begin{equation*}\label{GK:Kod:defn'}
 \kappa(\Lambda):= 
\GKdim R(\Lambda,\omega_\Lambda) -1. 
\end{equation*}
At this point, we refer to $\kappa(\Lambda)$ as the \emph{Iitaka dimension} of $\Lambda$ and we reserve the designation of \emph{Kodaira dimension} for a birational invariant of $\Lambda$.  
To understand the Iitaka dimension of $\Lambda$, and ultimately the Kodaira dimension of $\Sigma$, we relate the canonical ring $R(\Lambda,\omega_\Lambda)$ to geometric information on its centre.  To that end, we need an observation of M.~Artin.  Then, as in Corollary~\ref{canonical:ring:order:cor4}, there is the ramification divisor $\Delta_\Lambda$, which is a $\QQ$-divisor on $X$ so that
\begin{equation}\label{Adjunction:intro:defn1}
(\omega_\Lambda^{\otimes n})^{\vee\vee} \simeq (\Osh_X(n(\K_X+\Delta_{\Lambda})) \otimes \Lambda )^{\vee\vee}.
\end{equation}  
Here, $n^2 = [\Sigma : \KK]$.  Motivated by \eqref{Adjunction:intro:defn1}, we set $\K_\Lambda := \K_X+\Delta_\Lambda$ and consider the log canonical ring
$$R(X,\K_\Lambda) := \bigoplus_{\ell\geq 0} \H^0(X, \Osh_X( \lfloor \ell(\K_X+  \Delta_{\Lambda}) \rfloor) ).$$
Further, if we assume that $(X,\Delta_{\Lambda})$ is $\QQ$-Gorenstein, then the $n$th Veronese subrings of the canonical ring $R(\Lambda,\omega_\Lambda)$
and the log canonical ring $R(X,\K_\Lambda)$ are, respectively:
$$R(\Lambda,\omega_\Lambda)^{(n)} := \bigoplus_{\ell \geq 0} R(\Lambda,\omega_\Lambda)_{\ell n} = \bigoplus_{\ell \geq 0}\H^0(X,\Lambda(\ell n (\K_X+\Delta_{\Lambda})))
$$
and
$$R(X,\K_\Lambda)^{(n)} := \bigoplus_{\ell\geq 0} \H^0(X, \Osh_X( \ell n(\K_X+  \Delta_{\Lambda}) ) ). 
$$
So $R(X,\K_\Lambda)^{(n)}$ is also a central subalgebra of $R(\Lambda,\omega_\Lambda)^{(n)}$.

We can obtain an equivalence of GK dimensions if we assume the numerical abundance conjecture from birational geometry.  First of all, recall the concept of numerical dimension \cite{Fujita81}.  If $D$ is a $\QQ$-Cartier divisor on $X$, then its \emph{numerical dimension} is defined to be
$$
\kappa_\sigma(D) := \max \{\sigma(D; \Osh_X(A)):  \text{$A$ is an ample divisor on $X$} \}.
$$
Here, for a reflexive sheaf $F$ on $X$, we put: 
$$
\sigma(D;F) := \max \left\{ k \in \ZZ_{\geq 0} : \limsup_{m \to \infty} \frac{h^0(X,F \otimes \Osh_X(\lfloor m D \rfloor)) }{m^k} > 0 \right\}
$$
(By convention, $\sigma(D;F) = - 1$ in case that $h^0(X,F\otimes\Osh_X(\lfloor m D \rfloor)) = 0$ for all sufficiently divisible $m \gg 0$.)

\begin{conjecture}[{Numerical Abundance Conjecture \cite[Conjecture 1.1]{Gongyo:2013}}]\label{perturbed:growth:abundance:conj}
Let $(X,\Delta)$ be a projective Kawamata log terminal (klt) pair, where $\K_X + \Delta$ is $\QQ$-Cartier.  Then the numerical and Iitaka dimensions of the log canonical divisor coincide:
\begin{equation}\label{num:abundance:growth:eqn}
\kappa_{\sigma}(X,\K_X + \Delta) = \kappa(X,\K_X + \Delta).
\end{equation}
\end{conjecture}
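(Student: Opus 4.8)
The plan is to split the claimed equality into the two inequalities $\kappa(X,\K_X+\Delta)\leq \kappa_\sigma(X,\K_X+\Delta)$ and $\kappa_\sigma(X,\K_X+\Delta)\leq \kappa(X,\K_X+\Delta)$, since only the second carries real content. For the first, the key point is that the Iitaka dimension never exceeds the numerical dimension for a pseudoeffective $\QQ$-divisor: a nonzero section of $\Osh_X(\lfloor \ell(\K_X+\Delta)\rfloor)$ exhibits $\ell(\K_X+\Delta)$ as effective, so any growth detected in the Iitaka sense is already visible numerically, and one reads off $\kappa\leq\kappa_\sigma$ from the definition of Nakayama's $\sigma$-dimension. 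This needs no extra hypothesis; when $\K_X+\Delta$ fails to be pseudoeffective, both invariants equal $-1$ with the sign convention of the paper, and the inequality is vacuous.

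For the reverse inequality I would pass to a minimal model. After the reduction to the pseudoeffective case above, the fact that $(X,\Delta)$ is klt lets one invoke the minimal model program of Birkar--Cascini--Hacon--McKernan to produce a birational contraction from $(X,\Delta)$ to a minimal model $(X',\Delta')$ on which $\K_{X'}+\Delta'$ is nef; both $\kappa$ and $\kappa_\sigma$ are preserved under this contraction. The problem thus reduces to showing, on the minimal model, that a nef log canonical divisor has equal numerical and Iitaka dimensions, i.e. to abundance for the nef pair $(X',\Delta')$.

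This last reduction is the genuine obstacle, and it is exactly where the argument cannot be closed unconditionally. To conclude one must prove that the nef divisor $\K_{X'}+\Delta'$ is semiample, after which the Iitaka fibration it defines forces its Iitaka and numerical dimensions to coincide; but semiampleness of $\K_{X'}+\Delta'$ is not known in general, which is precisely why the statement is recorded here as Conjecture~\ref{perturbed:growth:abundance:conj} rather than proved. I would therefore settle it only in the established ranges: when $\dim X\leq 3$, where abundance for klt pairs is a theorem; when $\K_X+\Delta$ is big, where both dimensions equal $\dim X$; and when $\kappa_\sigma(X,\K_X+\Delta)=0$, where Nakayama's analysis of the Zariski decomposition together with the torsion and abundance results of Gongyo yield semiampleness. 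In the body of the present paper these suffice, since the division-algebra hypothesis on $\Sigma$ supplies an independent route to the comparison of dimensions and Conjecture~\ref{perturbed:growth:abundance:conj} is invoked only to import semiampleness in cases where it is already available.
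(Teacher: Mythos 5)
The statement you were asked to prove is not a theorem of the paper: it is Conjecture~\ref{perturbed:growth:abundance:conj}, quoted from Gongyo, and the paper offers no proof of it --- it enters only as a hypothesis in Theorem~\ref{log:abundance:growth} and Theorem~\ref{kodOfcsa}. So there is no proof in the paper to compare yours against, and your own conclusion --- that the argument cannot be closed unconditionally and that abundance/semiampleness for the nef log canonical divisor is exactly the open content --- is the right one. Your two reductions (the elementary inequality $\kappa(X,\K_X+\Delta)\leq\kappa_\sigma(X,\K_X+\Delta)$, which with the paper's definition follows from $h^0(\Osh_X(\lfloor m(\K_X+\Delta)\rfloor))\leq h^0(\Osh_X(\lfloor m(\K_X+\Delta)\rfloor)\otimes\Osh_X(A))$ for $A$ ample and effective, and the passage to a minimal model) are the standard ones and are fine as far as they go.

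Two corrections to your framing. First, the reduction to the nef case is itself conditional: for a pseudoeffective klt pair the existence of a minimal model is only known from Birkar--Cascini--Hacon--McKernan when the boundary (or $\K_X+\Delta$) is big, so in general the obstruction is not only abundance for the nef model but the existence of that model in the first place. Second, your closing sentence misdescribes how the paper uses the conjecture: it does not restrict attention to low dimension or to cases where abundance is already a theorem; it simply assumes Conjecture~\ref{perturbed:growth:abundance:conj} as a standing hypothesis in Theorem~\ref{log:abundance:growth} and Theorem~\ref{kodOfcsa}, and the unconditional route available when $\Sigma$ is a division algebra (Theorem~\ref{Ld:maximal:order:section:ring:thm:intro}) proceeds through transcendence degrees of graded rings of fractions, not through semiampleness.
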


We use Conjecture~\ref{perturbed:growth:abundance:conj} to prove the following theorem in \S \ref{perturbed:growth}. 

\begin{theorem}\label{log:abundance:growth}
Let $\KK$ be a finitely generated field over $\kk$ and let $\Sigma$ be a central simple $\KK$-algebra.  Let $X$ be a normal projective model of $\KK$ and let $\Lambda$ be a maximal $\Osh_X$-order in $\Sigma$.  Let $\Delta_\Lambda$ be the ramification divisor of $\Lambda$ and assume that the log pair $(X,\Delta_\Lambda)$ is klt, $\QQ$-Gorenstein, and satisfies the numerical abundance conjecture.
Then
$$\GKdim R(\Lambda,\omega_\Lambda) = \GKdim R(X,\K_\Lambda).$$
\end{theorem}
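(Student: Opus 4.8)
The plan is to reduce the equality of GK dimensions to an equality of polynomial growth orders of the graded pieces of the two rings, then to sandwich the $\Lambda$-cohomology between two copies of the $\Osh_X$-cohomology, and finally to collapse the sandwich using the numerical abundance hypothesis. First I would pass to $n$th Veronese subrings: the GK dimension of an $\NN$-graded algebra is insensitive to passage to a Veronese subring, since for the polynomially bounded graded algebras at hand it is governed by the growth order $1+\limsup_\ell \log(\dim A_\ell)/\log \ell$ of the graded pieces, which is unchanged by sampling at multiples of $n$. So it suffices to show $\GKdim R(\Lambda,\omega_\Lambda)^{(n)} = \GKdim R(X,\K_\Lambda)^{(n)}$. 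By the adjunction formula \eqref{Adjunction:intro:defn1} the degree-$\ell$ piece of $R(\Lambda,\omega_\Lambda)^{(n)}$ is $g(\ell):=\dim \H^0(X,\Lambda(\ell n \K_\Lambda))$, while that of $R(X,\K_\Lambda)^{(n)}$ is $f(\ell):=\dim\H^0(X,\Osh_X(\ell n \K_\Lambda))$; the $\QQ$-Gorenstein hypothesis on $(X,\Delta_\Lambda)$ ensures $n\K_\Lambda$ is Cartier so these are genuine cohomologies. The theorem thus reduces to $\limsup_\ell \log g(\ell)/\log\ell = \limsup_\ell \log f(\ell)/\log\ell$, with common value $\kappa(X,\K_\Lambda)$.

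Next I would produce the sandwich. Because $\Lambda$ is a sheaf of $\Osh_X$-algebras, its unit gives an inclusion $\Osh_X\hookrightarrow\Lambda$; and because $\Lambda$ is reflexive of rank $n^2=[\Sigma:\KK]$, the standard embedding of a torsion-free sheaf into a sum of twists of the structure sheaf furnishes, for some ample $A$ and integer $m_0$, an inclusion $\Lambda\hookrightarrow\Osh_X(m_0 A)^{\oplus n^2}$. Tensoring with $\Osh_X(\ell n\K_\Lambda)$ and applying reflexive hulls — both maps remain injective, being isomorphisms over the generic point $\Spec\Sigma$ of the torsion-free sheaves involved — yields
\[
\Osh_X(\ell n\K_\Lambda)\hookrightarrow \Lambda(\ell n\K_\Lambda)\hookrightarrow \Osh_X(\ell n\K_\Lambda+m_0 A)^{\oplus n^2}.
\]
Taking global sections gives
\[
f(\ell)\ \leq\ g(\ell)\ \leq\ n^2\, \dim\H^0\bigl(X,\Osh_X(\ell n\K_\Lambda+m_0 A)\bigr),
\]
and the left inequality already shows $\limsup_\ell \log g(\ell)/\log\ell \geq \kappa(X,\K_\Lambda)$.

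Finally I would close the gap on the right. This is where the numerical abundance hypothesis is essential: by Nakayama's description of the numerical dimension, the growth order in $\ell$ of $\dim\H^0(X,\Osh_X(\ell n\K_\Lambda+m_0 A))$ — an ample perturbation of a multiple of $\K_\Lambda$ — equals $\kappa_\sigma(X,\K_\Lambda)$ (the fixed ample twist $m_0A$ serves as the perturbing divisor, and rescaling by $n$ does not affect the order). Since $(X,\Delta_\Lambda)$ is klt, Conjecture~\ref{perturbed:growth:abundance:conj} gives $\kappa_\sigma(X,\K_\Lambda)=\kappa(X,\K_\Lambda)$, so the right-hand side of the sandwich also has growth order $\kappa(X,\K_\Lambda)$. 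The sandwich then forces $\limsup_\ell \log g(\ell)/\log\ell=\kappa(X,\K_\Lambda)$, matching the order of $f$, and undoing the Veronese reduction yields $\GKdim R(\Lambda,\omega_\Lambda)=\GKdim R(X,\K_\Lambda)$.

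The main obstacle is precisely this last step. Without the klt and abundance hypotheses the upper half of the sandwich only bounds the growth of $g$ by $\kappa_\sigma(X,\K_\Lambda)$, which may strictly exceed $\kappa(X,\K_\Lambda)$; the entire argument hinges on the identification $\kappa_\sigma=\kappa$ to match the upper bound to the lower one. The degenerate case in which $\K_\Lambda$ is not pseudoeffective should be dispatched separately, but there every $\H^0$ above vanishes for large $\ell$, both rings reduce to $\kk$ in degree zero, and the equality of GK dimensions (with the convention $\kappa=-1$) is immediate.
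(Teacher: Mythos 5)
Your proposal is correct and follows essentially the same route as the paper: the paper likewise identifies $\GKdim R(\Lambda,\omega_\Lambda)$ with the growth order of $h^0(X,\Lambda(\lfloor m\K_\Lambda\rfloor))$, bounds it below via $\Osh_X\hookrightarrow\Lambda$ and above by embedding the reflexive sheaf $\Lambda$ into a sum of ample twists (its Proposition~\ref{perturbed:growth:prop1}, giving $\sigma(\K_\Lambda;\Lambda)\leq\kappa_\sigma(\K_\Lambda)$), and then invokes Conjecture~\ref{perturbed:growth:abundance:conj} to collapse $\kappa_\sigma$ to $\kappa$. The only cosmetic imprecision is that $\QQ$-Gorenstein guarantees only that some multiple $\ell_0 n\K_\Lambda$ is Cartier, not $n\K_\Lambda$ itself, which is fixed exactly as in the paper's Proposition~\ref{can:ring:prop:2}.
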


At this point we still need birational invariance to define the Kodaira dimension $\kappa(\Sigma)$.  Note that the ramification divisor of $\Sigma$ is defined for all proper normal models of $\KK$ and so $\Sigma$ determines a b-divisor on $\KK$.  This naturally leads us to the notion of b-singularities from~\cite{Chan:plus:10} and the existence of b-terminal resolutions~\cite[Theorem 2.30]{Chan:plus:10}.  We obtain the following theorem which is an alternative formulation of  Theorem~\ref{b-divisor-theorem}.

\begin{theorem}\label{brauer:iitakacanonical:singulatiries:main:result}
Let $\KK$ be a field finitely generated over $\kk$.  Let $\DD$ be a b-divisor on $\KK$ with coefficients in $[0,1)\cap\QQ$.  Let $X$ and $Y$ be proper normal models of $\KK$ such that the b-log pairs
$(X,\DD)$ and $(Y,\DD)$ have b-canonical singularities.
If $\ell(\K_X+\DD_X)$ and $\ell(\K_Y+\DD_Y)$ are Cartier, for some positive integer $\ell$, then
$$ R(X,\K_X+\DD_X)^{(\ell)} = R(Y,\K_Y+\DD_Y)^{(\ell)}$$
and
$$\GKdim R(X,\K_X+\DD_X) = \GKdim R(Y,\K_Y+\DD_Y).$$
\end{theorem}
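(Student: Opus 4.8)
The plan is to compare both $X$ and $Y$ against a single model that dominates them, and to show that along a proper birational morphism the b-canonical hypothesis forces the relevant spaces of global sections to be preserved. Concretely, let $\Gamma$ be the normalization of the closure, inside $X\times_\kk Y$, of the graph of the birational map $X\dashrightarrow Y$; then $\Gamma$ is a proper normal model of $\KK$ carrying proper birational morphisms $p\colon\Gamma\to X$ and $q\colon\Gamma\to Y$. I will reduce the theorem to the assertion that, for every $m\ge 0$,
$$\H^0(X,\Osh_X(\lfloor m(\K_X+\bfD_X)\rfloor))=\H^0(\Gamma,\Osh_\Gamma(\lfloor m(\K_\Gamma+\bfD_\Gamma)\rfloor))=\H^0(Y,\Osh_Y(\lfloor m(\K_Y+\bfD_Y)\rfloor))$$
as subspaces of $\KK$, where the two equalities come from $p$ and $q$ respectively.

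The heart of the argument is the following \emph{key lemma}: for a proper birational morphism $\pi\colon W\to Z$ of proper normal models of $\KK$ with $(Z,\bfD)$ b-canonical, and every $m\ge 0$, the section spaces $\H^0(W,\Osh_W(\lfloor m(\K_W+\bfD_W)\rfloor))$ and $\H^0(Z,\Osh_Z(\lfloor m(\K_Z+\bfD_Z)\rfloor))$ coincide as subspaces of $\KK$. To prove it I would fix the rational differential defining the canonical b-divisor, so that $\K_Z$ and $\K_W$ are compatible, and use that $\bfD$ is a b-divisor to get $\pi_*\bfD_W=\bfD_Z$. Setting $F:=(\K_W+\bfD_W)-\pi^*(\K_Z+\bfD_Z)$, the divisor $F$ is $\pi$-exceptional since $\pi_*F=0$, and the inequality $F\ge 0$ is exactly the b-canonical hypothesis for $(Z,\bfD)$ evaluated on the model $W$ (see \cite{Chan:plus:10}). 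Now $f\in\KK$ lies in the $Z$-space iff $\mathrm{div}_Z(f)+m(\K_Z+\bfD_Z)\ge 0$ and in the $W$-space iff $\mathrm{div}_W(f)+m(\K_W+\bfD_W)\ge 0$; since $\mathrm{div}_W(f)=\pi^*\mathrm{div}_Z(f)$, these two $\QQ$-divisors differ by $\pi^*(\,\cdot\,)+mF$. The inclusion $\subseteq$ follows from $mF\ge 0$ together with the fact that $\pi^*$ preserves effectivity, and the reverse inclusion follows by applying $\pi_*$, which annihilates $mF$ and carries effective divisors to effective divisors.

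Applying the key lemma to $p$ (using that $(X,\bfD)$ is b-canonical) and to $q$ (using that $(Y,\bfD)$ is b-canonical) yields the displayed chain of equalities for every $m$, so the graded pieces $\H^0(X,\Osh_X(\lfloor m(\K_X+\bfD_X)\rfloor))$ and $\H^0(Y,\Osh_Y(\lfloor m(\K_Y+\bfD_Y)\rfloor))$ agree as subspaces of $\KK$ in each degree. Because multiplication in either log canonical ring is induced by multiplication of rational functions in $\KK$, and superadditivity of the floor guarantees that the product of a degree-$a$ section and a degree-$b$ section is a degree-$(a+b)$ section, this identification respects the grading and the products; in particular the two Hilbert functions coincide, whence $\GKdim R(X,\K_X+\bfD_X)=\GKdim R(Y,\K_Y+\bfD_Y)$ (this part in fact uses only the b-canonical hypotheses). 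For the Veronese statement I restrict to degrees divisible by $\ell$: the hypothesis that $\ell(\K_X+\bfD_X)$ and $\ell(\K_Y+\bfD_Y)$ are Cartier removes the floors, so that $R(X,\K_X+\bfD_X)^{(\ell)}_j=\H^0(X,\Osh_X(\ell j(\K_X+\bfD_X)))$ matches the corresponding $Y$-term, giving $R(X,\K_X+\bfD_X)^{(\ell)}=R(Y,\K_Y+\bfD_Y)^{(\ell)}$.

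I expect the only real obstacle to be the key lemma, and within it the step of correctly extracting $F\ge 0$ from the definition of b-canonical singularities: one must know this effectivity on \emph{every} higher model $W$ over $Z$, not merely on a single fixed resolution, and must combine it with the b-divisor compatibility $\pi_*\bfD_W=\bfD_Z$ and with the equality $\mathrm{div}_W(f)=\pi^*\mathrm{div}_Z(f)$ of principal divisors under the birational morphism. Once these inputs are secured, the remaining floor-and-pushforward bookkeeping, the reduction to the dominating model $\Gamma$, and the passage from equality of graded pieces to equality of (Veronese) rings and of GK dimensions are all routine.
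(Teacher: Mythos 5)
Your proposal is correct and follows essentially the same route as the paper: the paper also passes to a common model dominating $X$ and $Y$ and reduces to a lemma (Proposition~\ref{b-divisor-section-ring-proposition}) stating that a proper birational morphism with effective exceptional difference $\K_W+\bfD_W-\pi^*(\K_Z+\bfD_Z)\geq 0$ induces equality of section rings, the effectivity being exactly the b-canonical hypothesis; the paper proves that lemma via $\pi_*\Osh_W(\pi^*L+E)=L$ for $E$ effective and exceptional, while you carry out the equivalent divisor-level bookkeeping directly. The only point worth making explicit is that $\Gamma$ should be chosen so that the comparison makes sense (e.g.\ a smooth common resolution, so that $\K_\Gamma+\bfD_\Gamma$ is $\QQ$-Cartier), which is how the paper phrases its ``common resolution'' as well.
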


Theorem \ref{brauer:iitakacanonical:singulatiries:main:result}, combined with the existence of b-terminal resolutions for b-log pairs~\cite[Theorem 2.31]{Chan:plus:10}, allows us to define the Kodaira dimension of a b-divisor $\DD$ as $\kappa(\DD):=\kappa(X,\K_X+\DD_X)$ where $X$ is any normal proper model of $\KK$ with $(X,\DD)$ having b-canonical singularities.  This also allows us to formulate one of our main results, which includes birational and Morita invariance of Kodaira dimension for maximal orders.

\begin{theorem}\label{kodOfcsa}
Let $\KK$ be a field finitely generated over $\kk$.  Let $\alpha$ be a Brauer class in $\Br(\KK)$ with ramification b-divisor $\DD(\alpha)$.  Suppose that the numerical abundance conjecture holds for all $(X,\DD(\alpha))$ such that $X$ is a normal projective model of $\KK$ and $(X,\DD(\alpha))$ has b-canonical singularities.  Then if we choose  a central simple algebra $\Sigma$ with Brauer class $\alpha$, a maximal $\Osh_X$-order $\Lambda$ in $\Sigma$ with the b-log pair $(X,\DD(\alpha))$ having b-canonical singularities then $\kappa(\Lambda)$
depends only on $\alpha$.
\end{theorem}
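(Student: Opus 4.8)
The plan is to pass from the noncommutative canonical ring of $\Lambda$ to the commutative log canonical ring of its centre, and then to invoke the birational invariance already available for b-log pairs. First I would apply Theorem~\ref{log:abundance:growth} at the chosen model $X$: because $(X,\DD(\alpha))$ has b-canonical singularities, its trace boundary $\DD(\alpha)_X = \Delta_\Lambda$ is effective with coefficients of the form $1-1/e < 1$ and the pair has nonnegative discrepancies, hence is klt; together with the assumed numerical abundance this yields
\[
\GKdim R(\Lambda,\omega_\Lambda) = \GKdim R(X,\K_\Lambda), \qquad \K_\Lambda = \K_X+\Delta_\Lambda,
\]
so that $\kappa(\Lambda) = \kappa(X,\K_X+\Delta_\Lambda)$.

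Next I would remove the dependence on $\Sigma$ and on $\Lambda$. By Artin's description of the ramification divisor, recalled around \eqref{Adjunction:intro:defn1} and Corollary~\ref{canonical:ring:order:cor4}, the $\QQ$-divisor $\Delta_\Lambda$ is computed from the ramification of the Brauer class along the codimension-one points of $X$ and is therefore independent of the particular central simple algebra $\Sigma$ representing $\alpha$ and of the maximal order $\Lambda$ inside it. Hence $\Delta_\Lambda = \DD(\alpha)_X$ for every such choice, giving Morita invariance and reducing the claim to showing that $\kappa(X,\K_X+\DD(\alpha)_X)$ is independent of the model $X$.

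This last independence is exactly Theorem~\ref{brauer:iitakacanonical:singulatiries:main:result} applied to the b-divisor $\DD(\alpha)$: for any two normal proper models $X,Y$ with $(X,\DD(\alpha))$ and $(Y,\DD(\alpha))$ having b-canonical singularities,
\[
\GKdim R(X,\K_X+\DD(\alpha)_X) = \GKdim R(Y,\K_Y+\DD(\alpha)_Y),
\]
and the existence of b-terminal resolutions guarantees that such models exist, so the common value is the well-defined invariant $\kappa(\DD(\alpha))$. Chaining the three identifications gives $\kappa(\Lambda) = \kappa(\DD(\alpha))$, which depends only on $\alpha$.

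The step requiring the most care is the first, where two hypotheses must be made to line up: one must confirm that b-canonical singularities of the b-log pair force the trace $(X,\DD(\alpha)_X)$ to be klt, so that Conjecture~\ref{perturbed:growth:abundance:conj} is the correct abundance input, and that the abundance assumption of the present theorem, posited for all b-canonical models simultaneously, specializes correctly to the single-model hypothesis of Theorem~\ref{log:abundance:growth}. The remaining two steps are then purely formal: the model-independence of the ramification divisor is Artin's, and the birational invariance is a direct citation.
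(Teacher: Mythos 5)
Your proposal is correct and follows essentially the same route as the paper: the paper's proof likewise notes that b-canonical implies the trace $(X,\DD(\alpha)_X)$ is klt (citing a corollary of the b-singularities paper for the implication you flag as the delicate point) and then combines Theorem~\ref{log:abundance:growth} with Theorem~\ref{brauer:iitakacanonical:singulatiries:main:result}. Your explicit remark that $\Delta_\Lambda$ depends only on $\alpha$ is left implicit in the paper (it is Proposition~\ref{Brauer:ram:prop}), but this is a presentational difference, not a different argument.
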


\begin{proof}[Proof of Theorem~\ref{kodOfcsa}] If $(X,\DD(\alpha))$ is b-canonical then it is b-lt and so $(X,\DD(\alpha)_X)$ is klt by~\cite[Corollary 2.23]{Chan:plus:10}.  So we can 
combine Theorem~\ref{log:abundance:growth} and Theorem~\ref{brauer:iitakacanonical:singulatiries:main:result} to obtain the result.
\end{proof}

In what follows, in the setting of Theorem~\ref{kodOfcsa} and under the hypothesis that $(X,\DD(\alpha))$ has b-canonical singularities, we set: $ \kappa(\alpha):=\kappa(\Sigma) :=\kappa(\Lambda)$ and we then have birational invariance of Kodaira dimension for maximal orders.   

There are easy examples (compare with Example~\ref{notPrimeExample}) where $R(\Lambda,\omega_\Lambda)$ is not prime, so we can not form a degree zero ring of fractions in general.  However, when $\Sigma$ is a division ring, we can form the degree zero graded fractions of $R(\Lambda,\omega_\Lambda)$ and we then give an alternative description for the growth of $R(\Lambda,\omega_\Lambda)$.  In addition we can relate the Kodaira dimension of $\Lambda$ to the transcendence degree of this division algebra without using the abundance conjecture.  This is the content of Theorem~\ref{Ld:maximal:order:section:ring:thm:intro} which is a special case of Theorem~\ref{Ld:maximal:order:section:ring:thm}; we prove these theorems in \S \ref{division:ring:function:field:centre}.  

\begin{theorem}\label{Ld:maximal:order:section:ring:thm:intro} 
Let $\KK$ be a field finitely generated over $\kk$.  Fix a $\KK$-central division algebra $\mathcal{D}$, with finite dimension over $\KK$, with Brauer class $\alpha \in \Br (\KK)$ and ramification b-divisor $\DD(\alpha)$.  Let $X$ be a proper normal model of $\KK$,  and write $\K_\alpha:=\K_X+\DD(\alpha)_X$.  Choose a maximal $\Osh_X$-order $\Lambda$ in $\mathcal{D}$ and suppose that $(X,\DD(\alpha)_X)$ is $\QQ$-Gorenstein.  Let $\kk(\Lambda, \K_\alpha)$ be the degree zero division ring of fractions of $R(\Lambda,\omega_\Lambda)$ and let $\kk(X,\K_\alpha)$ be the degree zero field of fractions of the log canonical ring $R(X,\K_\alpha)$.  Then 
$$\GKdim R(\Lambda,\omega_\Lambda) -1 =
\trdeg \kk(\Lambda,\K_\alpha)
= 
\trdeg \kk(X,\K_\alpha) = \GKdim R(X,\K_\alpha)-1.$$
If, in addition, $(X,\DD(\alpha))$ has b-canonical singularities, then these numbers depend only on $\alpha$. 
 \end{theorem}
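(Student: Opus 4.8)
The plan is to prove the three equalities separately, using the central graded subring $R(X,\K_\alpha)^{(n)} \subseteq R(\Lambda,\omega_\Lambda)^{(n)}$ recorded just above (here $n^2=[\mathcal{D}:\KK]$ and $\K_\Lambda=\K_\alpha$) as the bridge between the noncommutative quantities on the left and the commutative quantities on the right. Both outer equalities are instances of a single principle: for a $\ZZ_{\geq 0}$-graded $\kk$-domain whose nonzero homogeneous elements form an Ore set, the GK dimension is one more than the transcendence degree of the centre of its degree-zero division ring of fractions. The middle equality is then just a comparison of those two degree-zero division rings. Throughout I assume positive-degree sections exist; the degenerate case where they do not makes all four quantities equal to $-1$ by the sign convention fixed in the introduction.

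First I would dispatch the commutative equality $\trdeg \kk(X,\K_\alpha) = \GKdim R(X,\K_\alpha) - 1$, which is the statement that the D-dimension of the $\QQ$-Gorenstein log pair $(X,\K_\alpha)$ equals the growth rate of its log canonical ring minus one. Fixing $\ell_0>0$ with $\ell_0\K_\alpha$ Cartier, the degree-zero homogeneous fraction field of $R(X,\K_\alpha)^{(\ell_0)}$ is precisely $\kk(X,\K_\alpha)$, and its transcendence degree is the dimension of the image of the Iitaka fibration, namely $\kappa(X,\K_\alpha)$. Since $\dim_\kk R(X,\K_\alpha)_{\leq m}=\sum_{\ell\leq m} h^0(X,\lfloor \ell\K_\alpha\rfloor)$ and the log-plurigenera grow of order $\ell^{\kappa(X,\K_\alpha)}$ along multiples of $\ell_0$, the ring $R(X,\K_\alpha)$ has growth of order $m^{\kappa(X,\K_\alpha)+1}$. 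The point requiring care is that $R(X,\K_\alpha)$ need not be finitely generated, so one cannot simply quote the formula valid for finitely generated graded domains; instead one argues directly from growth, as developed in the earlier sections on D-dimension, and I would cite that material.

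Next come the two noncommutative reductions. By \eqref{Adjunction:intro:defn1} we have $R(\Lambda,\omega_\Lambda)^{(n)}_\ell = \H^0(X,\Lambda(\ell n\K_\alpha))$, so $R(\Lambda,\omega_\Lambda)^{(n)}$ is a graded module over the central subring $R(X,\K_\alpha)^{(n)}$ that is torsion-free of generic rank $n^2$, $\Lambda$ being a reflexive coherent $\Osh_X$-module of rank $n^2$. Inverting the nonzero homogeneous elements realises $\kk(\Lambda,\K_\alpha)$ as a division ring containing the central subfield $\kk(X,\K_\alpha)$, with $[\kk(\Lambda,\K_\alpha):\kk(X,\K_\alpha)]\leq n^2$; being finite-dimensional over its centre, its transcendence degree equals that of its centre, which is a finite extension of $\kk(X,\K_\alpha)$. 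Hence $\trdeg \kk(\Lambda,\K_\alpha)=\trdeg \kk(X,\K_\alpha)$, the middle equality. Applying the graded GK-versus-fraction-ring principle to the domain $R(\Lambda,\omega_\Lambda)$ then gives $\GKdim R(\Lambda,\omega_\Lambda)-1 = \trdeg \kk(\Lambda,\K_\alpha)$, the left equality; combined with the previous two this also yields $\GKdim R(\Lambda,\omega_\Lambda)=\GKdim R(X,\K_\alpha)$, exactly as one expects from a finite central extension. Finally, when $(X,\DD(\alpha))$ has b-canonical singularities, Theorem~\ref{brauer:iitakacanonical:singulatiries:main:result} makes $\GKdim R(X,\K_\alpha)$ independent of the chosen model, so all four equal quantities depend only on the b-divisor $\DD(\alpha)$, hence only on $\alpha$.

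I expect the principal obstacle to be the non-finitely-generated setting, on both sides. On the commutative side one must control the growth of $R(X,\K_\alpha)$ without finite generation, since we are deliberately avoiding abundance here. On the noncommutative side one must first know that the nonzero homogeneous elements of $R(\Lambda,\omega_\Lambda)$ satisfy the Ore condition, so that $\kk(\Lambda,\K_\alpha)$ exists, and then establish the growth principle $\GKdim R(\Lambda,\omega_\Lambda)=\trdeg \kk(\Lambda,\K_\alpha)+1$ directly from the filtration of $R(\Lambda,\omega_\Lambda)$ by its graded pieces. This is where the hypotheses that $\mathcal{D}$ is a division algebra, making $R(\Lambda,\omega_\Lambda)$ a domain, and that $R(\Lambda,\omega_\Lambda)^{(n)}$ is generically a rank-$n^2$ module over its central subring, do the essential work.
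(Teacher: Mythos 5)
Your overall architecture (a central graded subring as the bridge, Ore localization, and PI theory to convert GK dimension into the transcendence degree of the centre of a quotient division ring) is the same as the paper's, which runs these steps through Zhang's lower transcendence degree. But there is a genuine gap at the step you call the middle equality, and it is the heart of the theorem. You justify $\trdeg \kk(\Lambda,\K_\alpha)=\trdeg\kk(X,\K_\alpha)$ by saying that $R(\Lambda,\omega_\Lambda)^{(n)}$ is torsion-free of generic rank $n^2$ over $R(X,\K_\alpha)^{(n)}$, whence $[\kk(\Lambda,\K_\alpha):\kk(X,\K_\alpha)]\leq n^2$. The rank $n^2$ is the rank of the sheaf $\Lambda(\ell n\K_\alpha)$ over $\Osh_X(\ell n\K_\alpha)$, i.e.\ a rank over $\KK$ at the generic point of $X$; it does not bound the rank of $R(\Lambda,\omega_\Lambda)$ over the section ring $R(X,\K_\alpha)$, whose degree-zero fraction field has transcendence degree only $\kappa(X,\K_\alpha)$, in general much smaller than $\trdeg\KK=\dim X$. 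A degree-zero fraction $t^{-1}s$ with $s\in\H^0(X,\Lambda(\ell\K_\alpha))$ and $t\in\H^0(X,\Osh_X(\ell\K_\alpha))$ is just some element of $\mathcal{D}$, and there is no formal reason it should even be \emph{algebraic} over $\kk(X,\K_\alpha)$: when $\kappa(X,\K_\alpha)=0$ your claim amounts to saying every such ratio lies in $\kk$, a genuinely geometric statement. (Nor can one squeeze a degree bound out of $[\mathcal{D}:\KK]=n^2$ by intersecting with $\KK$, since $[L:L\cap\KK]$ can exceed $[L\KK:\KK]$ for subfields $L$.)

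The paper fills exactly this gap with Proposition~\ref{alg:extension:prop}: for $t$ a central section, $t^{-1}s$ generates a finite field extension $\FF/\KK$ inside $\mathcal{D}$; one passes to the normalization $p:Y\to X$ of $X$ in $\FF$, checks on a trivializing cover that $s\in\H^0(Y,\Osh_Y(p^*\ell\K_\alpha))$, and invokes Mori's integrality of $R(Y,p^*\K_\alpha)$ over $R(X,\K_\alpha)$ to conclude that every element of $\kk(\Lambda,\K_\alpha)$ is algebraic over $\kk(X,\K_\alpha)$. Upgrading ``algebraic'' to ``finite'' for the centre then requires the ascending-chain result Proposition~\ref{division:alg:finite:generation}, which rests on Lemma~\ref{csa:fg}. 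Your proposal needs this input (or an equivalent); the rank count does not supply it. The remaining pieces --- the Ore condition via the embedding of the section ring into $\mathcal{D}[t]$ and the p.i.\ property, the commutative growth computation, and the identification of $\GKdim$ with $1+\trdeg$ of the centre of the degree-zero quotient division ring --- are obstacles you correctly flag, and they are handled in the paper by Propositions~\ref{can:ring:prop:5} and~\ref{graded:ore:pi:prop} and by Zhang's results on lower transcendence degree.
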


Note that Theorem~\ref{Ld:maximal:order:section:ring:thm:intro} provides a proof of an unsubstantiated assertion of the second author in~\cite{BIR}, before~\cite[Question 1, p.~427]{BIR}. At the same time, it partially answers~\cite[Question 1, p.~427]{BIR}.  
  
Finally, we turn to the main result of this paper (Theorem \ref{Division:Alg:Galois:Embedding:Thm}). As a special case, see  Theorem~\ref{Division:Alg:Galois:Embedding:Thm:Intro} below.  To place this result into perspective, we first mention that its statement relies on the birationally invariant concept of Kodaira dimension for Brauer classes which we obtain here.  It also requires some more subtle conditions on the ramification.  We make these conditions precise in Definitions \ref{effective:embedding:intro}  and \ref{weak:ramification:intro}.

 \begin{defn}\label{effective:embedding:intro} Let $\Sigma_1 \subseteq \Sigma_2$ be central simple algebras with centres $\KK$ and Brauer classes $\alpha_1,\alpha_2 \in \Br(\KK).$   Let $\mathbb{D}(\alpha_1)$ and $\mathbb{D}(\alpha_2)$ be their respective birational ramification divisors. Let $X$ be a normal model of $\KK$.  We write $\Sigma_1 \leq_X \Sigma_2$  and say that the given embedding is   
   {\it $X$-effective} if $\mathbb{D}(\alpha_2)_X - \mathbb{D}(\alpha_1)_X \geq 0$.
   If we have the stronger condition that the ramification index of $\alpha_1$ divides that of $\alpha_2$ at every divisorial place of $X$, we write
   $\Sigma_1 \mid_X \Sigma_2$.  If these conditions hold  for all normal models $Y$ of $\KK$, or equivalently for all divisorial places of $\KK$, we write
   $\Sigma_1 \leq_b \Sigma_2$  (respectively $\Sigma_1 \mid_b \Sigma_2$).  In case that $\Sigma_1 |_X \Sigma_2$, respectively $\Sigma_1 |_b \Sigma_2$, we refer to such phenomenon as the property that the given embedding has $X$-divisible ramification, respectively $b$-divisible ramification.    
 \end{defn}

   While the concept of effective embedding is not a partial order on central simple algebras, there are the following two evident implications:
   \begin{enumerate}
   \item{  $\Sigma_1 \mid_X \Sigma_2 \implies \Sigma_1 \leq_X \Sigma_2$;}
   \item{ $\Sigma_1 \leq_X \Sigma_2$ and $\Sigma_2 \leq_X \Sigma_1 \implies
     \mathbb{D}(\alpha_1)_X = \mathbb{D}(\alpha_2)_X$\text{.}
     }
   \end{enumerate}
Similar conclusions hold true for the relations $\leq_b$ and  $\mid_b$.

If $\Sigma_1 \leq_b \Sigma_2$ is a b-effective embedding, then the difference of their ramification b-divisors is effective.
In order to formulate Theorem \ref{Division:Alg:Galois:Embedding:Thm:Intro} below, we need one other concept, see Definition \ref{weak:ramification:intro}, namely that of weak ramification with respect to a Galois extension together with its birational counter part.  
 
This condition of $b$-effective embedding (Definition \ref{effective:embedding:intro}) is ensured when   $\operatorname{period}(\Sigma_2) = \operatorname{index}(\Sigma_2)$, so, in particular, it holds 
if $\Sigma_2$ is a cyclic division algebra \cite[p.~37, before Theorem 2.5.7]{Gille:Szamuley:2006}, or when the transcendence degree of $\KK$ is two over an algebraically closed base field~\cite[p.~72]{deJong:2004}.
Proposition \ref{perinduni} below is a global version of  Proposition~\ref{period:index}.
  
\begin{proposition}\label{perinduni}  Let $\Sigma_2$ be a central simple algebra with centre $\KK$.  If $\operatorname{period}(\Sigma_2) = \operatorname{index}(\Sigma_2)$, then every central simple  subalgebra $\Sigma_1$ with centre $\KK$ is $b$-effectively embedded in $\Sigma_2$.
 \end{proposition}

Our main results in the direction of Kodaira dimensions, for embeddings of central simple algebras, are the subject of Theorems \ref{Division:Alg:Galois:Embedding:Thm} and \ref{Division:Alg:Galois:Embedding:Thm:Intro}.
    
\begin{theorem}[See also Theorem \ref{Division:Alg:Galois:Embedding:Thm}]\label{Division:Alg:Galois:Embedding:Thm:Intro}
Suppose that $\kk$ is an algebraically closed field of characteristic zero.  Let  $\Sigma_1 \subseteq \Sigma_2$ be central simple algebras, finite dimensional over their centres $\KK, \FF$ which are finitely generated over $\kk$.  Suppose further that:
\begin{enumerate}
\item{ $\KK \subseteq \FF$;}
\item{ the extension $\FF / \KK$ is Galois with degree relatively prime to the index of $\Sigma_1$; and 
}
\item{ $\FF \otimes_{\KK} \Sigma_1 \leq_b \Sigma_2$.
}
\end{enumerate}
Then
$$ \kappa(\Sigma_1) \leq \kappa(\Sigma_2).$$
\end{theorem}

Together, 
Proposition \ref{perinduni} and Theorem \ref{Division:Alg:Galois:Embedding:Thm:Intro} have the following consequence.

\begin{corollary}[See also Corollary \ref{rel-prime-period-index}]\label{rel-prime-period-index-intro}
Suppose that $\kk$ is an algebraically closed field of characteristic zero.  Let  $\Sigma_1 \subseteq \Sigma_2$ be central simple algebras, finite dimensional over their centres $\KK, \FF$ which are finitely generated over $\kk$.  Suppose further that:
\begin{enumerate}
\item{ $\KK \subseteq \FF$;}
\item{ the extension $\FF / \KK$ is Galois with degree relatively prime to the index of  $\Sigma_1$; and}
\item{ $\operatorname{period}(\Sigma_2) = \operatorname{index}(\Sigma_2)$.
}
\end{enumerate}
Then
$$ \kappa(\Sigma_1) \leq \kappa(\Sigma_2).$$
\end{corollary}

Although Theorem~\ref{Division:Alg:Galois:Embedding:Thm:Intro} provides a partial answer to~\cite[Question 2, p.~427]{BIR}, the following questions remain:
\begin{question}
Does the conclusion of Theorem~\ref{Division:Alg:Galois:Embedding:Thm:Intro} hold  without the hypothesis that the extension $\FF / \KK$ is Galois?
  \end{question}
\begin{question}
Does the conclusion of Theorem~\ref{Division:Alg:Galois:Embedding:Thm:Intro} hold  without the hypothesis that the embedding is effective?
  \end{question}
We now discuss some aspects to the proof of Theorem~\ref{Division:Alg:Galois:Embedding:Thm:Intro}.  In doing so, we  describe some additional results of this paper in some detail.  Precisely, to prove Theorem~\ref{Division:Alg:Galois:Embedding:Thm:Intro} we break the problem into two pieces.  First, we study the nature of ramification under embeddings of  central simple algebras over a fixed centre.  Second, we extend this result and compare the ramification of  a given central simple algebra $\Sigma$ to that of a  central simple algebra which is a  tensor product $\FF \otimes_{Z(\mathcal{D})} \Sigma$, for $\FF$ a Galois extension of the centre of  $\Sigma$.
In both cases we obtain the desired bound on Kodaira dimension as a consequence of a formula akin to the Riemann-Hurwitz formula.   For completeness we state these Riemann-Hurwitz type theorems as { Proposition}~\ref{embedding:main:result1} and Theorem~\ref{R-H-orders:Kod:cor1:main:result} below. 

\begin{proposition}[See also Theorem \ref{embedding:iitaka:theorem}]\label{embedding:main:result1}  
Let $\KK$ be a finitely generated field over an algebraically closed  characteristic zero field $\kk$.
Fix two finite dimensional $\KK$-central simple algebras  $\Sigma_1$ and $\Sigma_2$, with an  
$X$-effective embedding  $\Sigma_1 \hookrightarrow \Sigma_2$ of $\KK$-algebras, where $X$ is a normal proper model of $\KK$.  Let $\alpha_1$ and $\alpha_2$ denote the respective Brauer classes of $\Sigma_1$ and $\Sigma_2$.   Let $\Delta_{\alpha_1}$ and $\Delta_{\alpha_2}$ denote the Weil divisors on $X$ determined by the ramification of $\alpha_1$ and $\alpha_2$.   Finally, fix a canonical divisor $\K_X$ on $X$.
Then,
in the above setting, we have the following inequality of Weil divisors:
$$\Delta_{\alpha_1} \leq \Delta_{\alpha_2}. $$
In addition, let  
$$\K_{X,\alpha_i} = \K_{\alpha_i} := \K_X + \Delta_{\alpha_i},$$ 
for $i=1,2$, denote the log canonical divisors determined by the classes $\alpha_i \in \Br(\KK)$, let $e_{\mathfrak{p}}(\alpha_1)$ denote the ramification of $\alpha_1$ at $\mathfrak{p}$ and let $e_{\alpha_2/\alpha_1}(\mathfrak{p})$ denote the ramification of the embedding $\Sigma_1 \hookrightarrow \Sigma_2$ at $\mathfrak{p}$.
Then, with these notations, if $\Sigma_1 \mid_X \Sigma_2$ then it holds true that:
$$
\K_{\alpha_2} = \K_{\alpha_1} + \sum\limits_{\mathfrak{p}}\frac{1}{e_{\mathfrak{p}}(\alpha_1)}\left(1 - \frac{1}{e_{\alpha_2/\alpha_1}(\mathfrak{p})} \right)\mathfrak{p}.
$$

If the log canonical divisors $\K_{\alpha_i}$, for $i = 1,2$, are $\QQ$-Cartier,
and $\Sigma_1 \leq_X \Sigma_2$
then the inequality 
$$\kappa(X,\alpha_1) \leq \kappa(X,\alpha_2) $$
holds true. 

\end{proposition}

Theorem~\ref{R-H-orders:Kod:cor1:main:result} below  establishes a Riemann-Hurwitz type theorem for a given $\QQ$-Gorenstein pair $(X,\Delta_\alpha)$, with $\alpha \in \Br(\KK)$.  In order to state its hypothesis, we introduce the following concept   which implicitly arose  in the statement of Theorem \ref{Division:Alg:Galois:Embedding:Thm:Intro}.

\begin{defn}[See also Definitions \ref{weak:ramification} and \ref{G:weak:ramification:2}]\label{weak:ramification:intro}
Let $X$ be a normal projective variety with function field $\KK = \kk(X)$.  Let  $\FF / \KK$ be a finite dimensional extension field and let $f \colon X' \rightarrow X$ be the normalization of $X$ in $\FF$.  We say that $\alpha \in \Br(\KK)$ is \emph{$f$-weakly ramified} at all primes $\mathfrak{p} \in \operatorname{Supp}(\Delta_{\alpha})$ if for all primes $\mathfrak{p}' \in X'$ with $\mathfrak{p}' | \mathfrak{p}$
either 
$e_{\mathfrak{p}' / \mathfrak{p}}(\FF / \KK) >  1$ 
or
$e_{\mathfrak{p}'}(\alpha') = e_{\mathfrak{p}}(\alpha) \text{.}$  
Here, $e_{\mathfrak{p}'/\mathfrak{p}}(\FF / \KK)$ is the ramification index of $\FF / \KK$ at $\mathfrak{p}' | \mathfrak{p}$ whereas $e_{\mathfrak{p}'}(\alpha')$ denotes the ramification of $\alpha' := \FF \otimes_{\KK} \alpha$ at $\mathfrak{p}'$.  
 \end{defn}

In particular, keeping with the context of Definition \ref{weak:ramification:intro}, when $\FF / \KK$ is assumed to be a Galois extension, then this concept of weak ramification indeed holds true provided that the degree of $\FF / \KK$ is relatively prime to the index of the given Brauer class $\alpha \in \Br(\KK)$.  We refer to Proposition \ref{weak:ramification:prop} for more details in that regard.

\begin{theorem}[See also Corollary~\ref{R-H-orders:Kod:cor1} and Proposition \ref{brauer-r-h}]\label{R-H-orders:Kod:cor1:main:result}  
Let $X$ be a normal projective variety, over an algebraically closed characteristic zero field $\kk$, with function field $\KK = \kk(X)$.  Let $\FF / \KK$ be a finite field extension and let $f : X' \rightarrow X$ be the normalization of $X$ in $\FF$.  Finally, fix a Brauer class $\alpha \in \Br(\KK)$
and put $\alpha' = f^*\alpha \in \Br(\FF)$.  
Then, in this situation, the divisor $\K_{f^* \alpha} - f^* \K_\alpha$ is effective
if and only if $\alpha$ is $f$-weakly ramified, in the sense of Definition \ref{weak:ramification}, at all primes $\mathfrak{p} \in \operatorname{Supp}(\Delta_{\alpha})$.
 Assume
further that the classes $\alpha$ and $\alpha'$ determine $\QQ$-Gorenstein pairs $(X,\Delta_{\alpha})$ and $(X',\Delta_{\alpha'})$.  Then, in this situation, it holds true that:
$$ \kappa(X',\K_{f^*\alpha}) \geq \kappa(X,\K_{\alpha}).$$
\end{theorem}

An important aspect in proving Theorem~\ref{R-H-orders:Kod:cor1:main:result} is to study the condition that the difference of the log-canonical divisor on $X'$ determined by the class $\alpha' \in \Br(\FF)$ and the pull-back of the log canonical divisor on $X$ determined by the class $\alpha \in \Br(\KK)$ is effective.  This is the content of Proposition \ref{brauer-r-h}.  A key point is the concept of weak ramification (see Definition \ref{weak:ramification}).

In \S \ref{ramification:embeddings}, we establish  auxiliary results which are related to   Proposition  \ref{embedding:main:result1}.  They pertain to the behaviour of ramification under embeddings of central simple algebras.     The aim is to address the question of divisibility of ramification indices.  For example, Proposition \ref{ramification:division} characterizes this property in terms of complete local numerical invariants of central simple algebras and maximal orders.  We believe that this result is of an independent interest.  We also show, by example, that this property does not hold in general.  Finally, we prove that it does hold true when the target central simplealgebra has the property that its period equals its index.  We are indebted to an anonymous referee for helpful comments on this topic.  Finally, we give a cohomological interpretation of Proposition \ref{ramification:division} in \S \ref{Galois:divisible:ramifcation}. 

Recall that we use the term \emph{Kodaira dimension} to emphasize birational invariance and we use the term \emph{Iitaka dimension} to emphasize dependence on a particular choice of model.

Note that many of our results on canonical rings and log canonical rings hold in situations where we do not have birational invariance and so apply to the Iitaka dimension.
We note that the growth of canonical and log canonical rings are important invariants of canonical divisors and log-pairs on projective varieties.  We also recall that, in the literature, these invariants are often referred to as Iitaka, Kodaira, or $\mathrm{D}$-dimensions, see~\cite{Iitaka:1971},~\cite{Laz}, and~\cite{Mori:1985} for instance.
To place our results obtained here into their proper context, we mention that one source of motivation for what we do here is the joint work~\cite{Chan:Ingalls:2005} of the second author and Daniel Chan.  

In that work, the authors consider Brauer classes arising from function fields of algebraic surfaces.  Using the Artin-Mumford sequence,~\cite{Artin:Mumford:1971}, the authors define ramification data and log pairs determined by such Brauer classes.  Further, they establish a minimal model program for such pairs in dimension two.  This has been extended to higher dimensions in~\cite{Chan:plus:10}, where b-divisors are applied to the birational geometry of orders.  

We should also mention that the idea of using log pairs, arising from ramification data, to study classification questions for maximal orders is not new (\cite{Chan:Kulkarni2003}, \cite{Chan:Kulkarni:2005}, \cite{Chan:Ingalls:2005}). 

Finally we mention that, by \cite[Corollary 1.1.2, p.~408]{Birkar:Cascini:Hacon:McKernan}, if $(X,\Delta)$ is a projective klt $\QQ$-Gorenstein pair $(X,\Delta)$, as in Conjecture \ref{perturbed:growth:abundance:conj}, then the log canonical ring $R(X,\K_X + \Delta)$ is a finitely generated $\kk$-algebra. So we ask the following question.

\begin{question}
Let $X$ be a normal projective variety and let $\Lambda$ be a maximal $\Osh_X$-order in a central simple $\kk(X)$-algebra with ramification b-divisor $\DD$.  Are there natural conditions on the pair $(X,\DD)$ so that $R(\Lambda,\omega_\Lambda)$ is a finitely generated $\kk$-algebra?
\end{question}

\subsection{Acknowledgements} This work began while the first author was supported by an AARMS postdoctoral fellowship at the University of New Brunswick, NB.   Portions of this work completed while he was a postdoctoral fellow at Michigan State University. Both authors hold NSERC grants (RGPIN-2017-04623, DGECR-2021-00218 and RGPIN-2021- 03821).  They thank the Banff International Research Station for its hospitality during November 2019. The second author would like to thank Daniel Chan for useful conversations.    Both authors thank Rajesh Kulkarni for providing comments related to an early version of this work.  They also thank Adam Logan for help with Lemma~\ref{p:e1:not:divide:e3}.   We also thank colleagues for their interest and comments.  
Finally, we are indebted to an anonymous referee for his or her 
careful reading, for catching a gap in an earlier draft of this paper, and for suggestions that led to Propositions~\ref{perinduni} and \ref{weak:ramification:prop}. 

\subsection{Notations and conventions}
Unless otherwise stated, all varieties are assumed to be integral, normal, proper and defined over an algebraically closed characteristic zero field $\kk$.  Further, all division algebras and central simple algebras are assumed to be finitely generated over some given base field, finite dimensional over their centres and have degree prime to the characteristic of their given base field should this characteristic be positive.

\section{Discrete valuation rings, maximal orders and ramifcation}

We summarize, following \cite{Reiner:2003}, \cite{Serre:local:fields}, \cite{Artin:Chan:deJong:Lieblich} and \cite{Chan:Orders},   
 the theory of maximal orders over discrete valuation rings.  
  
\subsection{Discrete  valuation rings}  Let $R = (R,\mathfrak{m},t)$ be a discrete valuation ring.  Here, $\mathfrak{m} = \mathfrak{m}_R$ is the maximal ideal and $t = t_R \in \mathfrak{m}$ a uniformizer.  Write $\KK = \KK(v)$ for the function field, $\kappa = \kappa(v)$ for the residue field, $v \colon \KK^\times \rightarrow \RR_{>0}$ for the discrete valuation and $\Gamma_v = \Gamma_{\mathfrak{m}}$ for the valuation group.  If $R$ is not complete, then $\hat{R}$ and $\hat{\KK}$ denote the respective $v$-adic completions of $R$ and $\KK$.

\subsection{Maximal orders in division algebras over complete discrete valuation rings}\label{maximal:orders:division:algebras:complete:dvrs}  Let $R$ be a complete discrete valuation ring and $\mathfrak{D}$ a degree $m$ $\KK$-central division algebra.  Then:
$$
m^2 = [\mathfrak{D} : \KK] = \dim_{\KK} \mathfrak{D}.
$$
Let
$
\N_{\mathfrak{D} / \KK}(\cdot) \colon \mathfrak{D} \rightarrow \KK
$
be the norm from $\mathfrak{D}$ to $\KK$ and put:
$$
w_{\mathfrak{D}}(\cdot) = \frac{1}{m^2} v(\N_{\mathfrak{D} / \KK}(\cdot)) \text{.}
$$
Recall, that the integral closure of $\mathfrak{D}$ in $\KK$ is the unique maximal order
$
\mathfrak{R} := \{ a \in \mathfrak{D} : \N_{\mathfrak{D} / \KK}(a) \in R \} 
$
and that the \emph{ramification index}
$
e_{\mathfrak{D}} := e(\mathfrak{D}/\KK) = e
$
has the property that
$
e^{-1} := \min \{ w_{\mathfrak{D}}(a) : a\in \mathfrak{D} \text{ and } w_{\mathfrak{D}}(a) > 0 \} \text{.}
$
Let 
$J := \operatorname{rad} \mathfrak{R}$ 
be the \emph{Jacobson radical} of $\mathfrak{R}$ and fix a \emph{prime element} $\pi_{\mathfrak{D}} \in \mathfrak{R}$.  Then, 
$e \cdot w_{\mathfrak{D}}(\pi_{\mathfrak{D}}) = 1$ and  
the \emph{powers} of $J$ are the two sided ideals
$
J^k := \pi_{\mathfrak{D}}^k \cdot \mathfrak{R} = \mathfrak{R} \cdot \pi_{\mathfrak{D}}^k \text{.}
$
In particular 
$
J^e = \pi^e_{\mathfrak{D}} \cdot \mathfrak{R} = t \cdot \mathfrak{R} \text{.}
$
Recall, that the \emph{residue class ring} 
$
\overline{\mathfrak{R}} := \mathfrak{R} / \operatorname{rad}{\mathfrak{R}} 
$
is a division ring over the residue field $\kappa$.  It has centre $\kappa_{\mathfrak{R}}$, which is a finite dimensional extension field of $\kappa$.

\subsection{Normal orders in central simple algebras over complete discrete valuation rings}  Let $R$ be a complete discrete valuation ring and $\Sigma \simeq \mathfrak{D}^{n \times n}$ a $\KK$-central simple algebra.  Here, 
$ \mathfrak{D}^{n \times n} \text{,}
$ 
denotes an $n \times n$ matrix algebra with entries in a $\KK$-central division algebra $\mathfrak{D}$.  Let $\Lambda$ be an $R$-order in $\Sigma$.  Recall, that $\Lambda$ is  
 \emph{hereditary} if all of its left ideals are projective.  It is \emph{normal} if its \emph{Jacobson radical} has the form:
$$ J := \operatorname{rad} \Lambda = \Lambda \pi = \pi \Lambda,$$ 
for some $\pi \in \Lambda$.  The Jacobson radical of a normal order is projective and hence normal orders are hereditary \cite[Theorem 39.1]{Reiner:2003}.  While the structure of hereditary orders is well understood,~\cite[Theorem 39.14]{Reiner:2003}, it is possible to give a more refined description of the structure of normal orders (see Theorem \ref{normal:complete:dvr:theorem2}).

Let  $\mathfrak{R}$ be the unique maximal $R$-order in $\mathfrak{D}$ with residue ring $\overline{\mathfrak{R}}$.  The structure of hereditary $R$-orders in $\Sigma$ is summarized in {\cite[Theorem 39.14]{Reiner:2003}}.  In particular, if $\Lambda$ is a hereditary order in: 
$$\Sigma \simeq \mathfrak{D}^{n \times n},$$ then there exists positive integers $\{n_1,\dots,n_r \}$, with $n = \sum_{i=1}^r n_i$, so that:
$$\Lambda / \operatorname{rad} \Lambda \simeq \prod_{i=1}^r \overline{\mathfrak{R}}^{n_i \times n_i}. $$
In this setting, $r$ is the \emph{type} of $\Lambda$ and $\{n_1,\dots, n_r \}$ its \emph{invariants}.

The following well known theorem distinguishes normal $R$-orders amongst hereditary orders.   It follows from the above discussion together with \cite[Corollary 39.18, p.~ 360]{Reiner:2003}.

\begin{theorem}[{\cite[\S 39]{Reiner:2003}}, {\cite[1.1 (6)]{Hijikata:Nishida:1998}}]
\label{normal:complete:dvr:theorem2}
Let $\Lambda$ be a normal $R$-order in $\Sigma \simeq \mathfrak{D}^{n\times n}$.  If $r$ denotes its type and $\{n_1,\dots, n_r\}$ its invariants, then $n_i = n/r$, for all $i$.
\end{theorem}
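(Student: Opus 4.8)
The plan is to exploit the defining property of a normal order---that its radical is two-sided principal---to produce an automorphism of $\Lambda$ which cyclically permutes the $r$ diagonal blocks, and hence forces them to have equal size. Write $\operatorname{rad}\Lambda = \pi\Lambda = \Lambda\pi$. First I would check that $\pi$ is a unit in $\Sigma$: since $\operatorname{rad}\Lambda$ contains $t\Lambda$ and $t$ is invertible in $\KK$, we have $\operatorname{rad}\Lambda \otimes_R \KK = \Sigma$, so $\pi\Sigma = (\pi\Lambda)\otimes_R\KK = \Sigma$, and a right-invertible element of the simple Artinian ring $\Sigma$ is invertible. Consequently conjugation $\sigma(x) := \pi x \pi^{-1}$ makes sense in $\Sigma$, and $\pi\Lambda\pi^{-1} = (\Lambda\pi)\pi^{-1} = \Lambda$ shows that $\sigma$ is a ring automorphism of $\Lambda$. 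As $\operatorname{rad}\Lambda$ is characteristic, $\sigma$ descends to an automorphism $\bar\sigma$ of $\Lambda/\operatorname{rad}\Lambda \simeq \prod_{i=1}^r \overline{\mathfrak{R}}^{n_i\times n_i}$. A ring automorphism permutes the central primitive idempotents $\bar e_1,\dots,\bar e_r$ (the block identities) by some $\rho\in S_r$ and restricts to isomorphisms $\overline{\mathfrak{R}}^{n_i\times n_i}\xrightarrow{\ \sim\ }\overline{\mathfrak{R}}^{n_{\rho(i)}\times n_{\rho(i)}}$; comparing $\kappa$-dimensions gives $n_i = n_{\rho(i)}$. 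Thus it suffices to prove that $\rho$ is a single $r$-cycle.

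To control $\rho$ I would examine the bimodule $M := \operatorname{rad}\Lambda/\operatorname{rad}^2\Lambda$. From $\operatorname{rad}\Lambda = \pi\Lambda = \Lambda\pi$ one obtains $\operatorname{rad}^2\Lambda = \pi^2\Lambda = \pi\operatorname{rad}\Lambda$, so left multiplication by $\pi$ induces a right $\bar\Lambda$-module isomorphism $\phi\colon \bar\Lambda \xrightarrow{\ \sim\ } M$, $\bar\lambda\mapsto \overline{\pi\lambda}$ (injectivity uses that $\pi$ is a non-zero-divisor). Transporting the left action through $\phi$ twists it by $\bar\sigma^{-1}$, i.e. $M$ is the regular bimodule $\bar\Lambda$ with its left action precomposed with $\bar\sigma^{-1}$. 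Therefore $\bar e_i\, M\, \bar e_j \simeq \bar e_{\rho^{-1}(i)}\,\bar\Lambda\,\bar e_j$, which is nonzero if and only if $i = \rho(j)$.

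Finally I would compute $M$ directly from the explicit staircase forms of $\Lambda$ and $\operatorname{rad}\Lambda$ recorded above. A block-by-block evaluation of $\operatorname{rad}^2\Lambda = (\operatorname{rad}\Lambda)^2$ shows that the $(i,j)$ block of $M$ is nonzero precisely when $i \equiv j+1 \pmod r$: for $i>j+1$ and for $i\le j$ with $(i,j)\ne(1,r)$ the square already fills the block, whereas the subdiagonal blocks and the corner block $(1,r)$ survive as a copy of $\overline{\mathfrak{R}}$. This is the cyclic $\widetilde A_{r-1}$ quiver. Matching with the previous paragraph yields $\rho(j)\equiv j+1\pmod r$, so $\rho$ is the full cycle $(1\,2\,\cdots\,r)$, which is transitive; hence all $n_i$ are equal and $n_i = n/r$. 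The main obstacle is exactly this transitivity step: showing that the radical-square quotient is a single cycle rather than a union of several is what rules out block sizes repeating only within smaller orbits, and it is the point where the normality hypothesis (principality of $\operatorname{rad}\Lambda$) is converted, via the twist by $\bar\sigma$, into the required cyclic symmetry.
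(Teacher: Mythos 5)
Your proof is correct, and its skeleton coincides with the paper's: both arguments observe that the generator $\pi$ of $\operatorname{rad}\Lambda$ induces (via conjugation, equivalently via the functor $\Lambda\pi\otimes_\Lambda-$) a permutation of the Wedderburn factors of $\bar\Lambda\simeq\prod_i\overline{\mathfrak{R}}^{\,n_i\times n_i}$, and that transitivity of this permutation forces all $n_i$ to be equal. Where you genuinely diverge is in how transitivity is established. The paper gets it for free from Reiner's Corollary 39.18, which exhibits the full set of non-isomorphic simple left $\Lambda$-modules as $S_{i-1}=\pi^{i-1}M/\pi^{i}M$ for $M=\mathfrak{R}^{n\times 1}$, so that $\Lambda\pi\otimes_\Lambda-$ visibly shifts them cyclically. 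You instead compute the bimodule $\operatorname{rad}\Lambda/\operatorname{rad}^2\Lambda$ block-by-block from the hereditary staircase presentation (the block $(i,j)$ survives exactly when $i\equiv j+1\pmod r$, giving the cyclic $\widetilde{A}_{r-1}$ quiver), and match this against the twisted-regular-bimodule description ${}_{\bar\sigma^{-1}}\bar\Lambda$ to pin down the permutation as the full $r$-cycle. Your route costs a longer computation but is more self-contained, replacing the citation of Reiner's classification of the simples by an explicit determination of the quiver of $\Lambda$; the paper's route is shorter but leans on that external result and leaves the ``we check that the functor permutes the simples transitively'' step terse. Both are valid; your block computation (the $(1,r)$ corner contributing $\operatorname{rad}\mathfrak{R}/(\operatorname{rad}\mathfrak{R})^2$ and the subdiagonal blocks contributing $\mathfrak{R}/\operatorname{rad}\mathfrak{R}$) checks out, as does the identification $\bar e_iM\bar e_j\simeq\bar e_{\rho^{-1}(i)}\bar\Lambda\bar e_j$.
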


Now let $\mathfrak{A}_{(1,r)}(\mathfrak{D})$ denote the normal order of type $r$ with invariants 
$$\{\underbrace{1,\dots,1}_{\text{$r$-copies}} \}$$ 
in the matrix algebra $\mathfrak{D}^{r \times r}$.  
Fix a prime element $\pi_{\mathfrak{D}} \in \mathfrak{R}${\text .} 
Combining we obtain that
\begin{equation}\label{standard:generator:eqn} \operatorname{rad} \mathfrak{A}_{(1,r)}(\mathfrak{D}) = p \cdot \mathfrak{A}_{(1,r)}(\mathfrak{D}).
\end{equation}
Further
\begin{equation}\label{standard:gen:ramification:calc} 
p^r = \operatorname{diag}(\pi_{\mathfrak{D}},\dots, \pi_{\mathfrak{D}});
\end{equation}
we say that $p$ is the \emph{standard generator} of $\operatorname{rad} \mathfrak{A}_{(1,r)}(\mathfrak{D})$.

 Next we discuss type $r$ normal orders with invariants 
$$\{\underbrace{s,\dots,s}_{\text{$r$-copies}} \}\text{.}$$ 
Set $n = rs$ and observe that
\begin{equation}\label{standard:type:s:r:described}
\mathfrak{A}_{(s,r)}(\mathfrak{D}) \simeq \mathfrak{A}_{(1,r)}(\mathfrak{D}) \otimes_R R^{s \times s} \subseteq \mathfrak{D}^{n \times n}. 
\end{equation}
Moreover
\begin{equation}\label{standard:type:s:r:rad} \operatorname{rad} \mathfrak{A}_{(s,r)}(\mathfrak{D}) = \operatorname{rad} \mathfrak{A}_{(1,r)}(\mathfrak{D}) \otimes_R R^{s \times s}.
\end{equation}
Finally, if $p$ is the standard generator for $\operatorname{rad} \mathfrak{A}_{(1,r)}(\mathfrak{D})$,  then using \eqref{standard:type:s:r:described} and \eqref{standard:type:s:r:rad}, it is deduced that $p\otimes 1$ generates $\operatorname{rad} \mathfrak{A}_{(s,r)}(\mathfrak{D})$.  Because of this, we also refer to $p\otimes 1$ as the \emph{standard generator} for $\operatorname{rad} \mathfrak{A}_{(s,r)}(\mathfrak{D})$.

Theorem~\ref{normal:complete:dvr:theorem2} has the following consequence which will be of use when we prove Theorem~\ref{ramification:data:theorem1}.

\begin{corollary}\label{normal:complete:dvr:corollary3}
If $\Lambda$ is a normal order in $\KK^{n\times n}$ of type $r$, then 
$$ \Lambda \simeq \mathfrak{A}_{(s,r)}(R),$$
for $s = n/r$.
\end{corollary}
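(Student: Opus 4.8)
The plan is to specialize the general structure theory of hereditary orders to the split case and then read off the invariants from Theorem~\ref{normal:complete:dvr:theorem2}. First I would pin down the underlying division algebra. Since $\Lambda$ is an order in the central simple $\KK$-algebra $\Sigma = \KK^{n\times n}$, and $\Sigma \simeq \mathfrak{D}^{m\times m}$ for a unique (up to isomorphism) division algebra $\mathfrak{D}$ by Artin--Wedderburn, we must have $\mathfrak{D} = \KK$ and $m = n$. Consequently the unique maximal $R$-order $\mathfrak{R}$ in $\mathfrak{D}$ is the integral closure of $R$ in $\KK$, namely $R$ itself, and $\operatorname{rad}\mathfrak{R} = \mathfrak{m}$. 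This identifies the ambient data in the preceding structure theory with $R$ and $\mathfrak{m}$, and in particular it is what makes the notation $\mathfrak{A}_{(s,r)}(R)$ (i.e.\ $\mathfrak{A}_{(s,r)}(\mathfrak{D})$ with $\mathfrak{D} = \KK$) meaningful.

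Next I would recall, as noted in the text, that a normal order is hereditary. The structure theorem for hereditary orders, \cite[Theorem 39.14]{Reiner:2003} as recalled above, then supplies an inner automorphism of $\Sigma$ carrying $\Lambda$ onto a standard hereditary order built from $\mathfrak{R} = R$ and $\operatorname{rad}\mathfrak{R} = \mathfrak{m}$, of some type with invariants $\{n_1,\dots,n_r\}$. Since conjugation is a ring isomorphism preserving both the type and the invariants, this standard order has the same type $r$ as $\Lambda$.

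I would then invoke Theorem~\ref{normal:complete:dvr:theorem2}: because $\Lambda$ is normal of type $r$, all of its invariants coincide, so $n_i = n/r = s$ for every $i$. Hence the standard hereditary order to which $\Lambda$ is conjugate has invariants $\{\underbrace{s,\dots,s}_{r}\}$, and by the explicit descriptions recorded in \eqref{standard:type:1:r:described}, \eqref{standard:type:s:r:described} and \eqref{standard:type:s:r:rad} this is precisely $\mathfrak{A}_{(s,r)}(R)$. Composing with the conjugation isomorphism yields $\Lambda \simeq \mathfrak{A}_{(s,r)}(R)$, as desired.

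The only genuine content beyond bookkeeping is the identification $\mathfrak{D} = \KK$ (hence $\mathfrak{R} = R$), which reduces everything to the split matrix-algebra situation, together with the compatibility between the invariants of $\Lambda$ supplied by Theorem~\ref{normal:complete:dvr:theorem2} and the invariants of the standard model $\mathfrak{A}_{(s,r)}(R)$. Once these are matched the isomorphism is immediate; no estimate or delicate construction is required, so I expect this to be the main (and essentially the only) point to verify carefully, the remainder being a direct application of the structure theorem in the case $\mathfrak{D} = \KK$.
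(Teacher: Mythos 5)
Your proof is correct and follows essentially the same route as the paper: the paper simply observes that the corollary is the special case $\mathfrak{D}=\KK$ (so $\mathfrak{R}=R$) of Theorem~\ref{normal:complete:dvr:theorem2}, and your argument just makes explicit the appeal to the hereditary-order structure theorem and the equality of invariants that the paper leaves implicit. Nothing is missing.
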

\begin{proof}
Corollary~\ref{normal:complete:dvr:corollary3}  is a special case of Theorem ~\ref{normal:complete:dvr:theorem2} with $\mathfrak{D} = \KK$.  Indeed,  in this case, we have $\mathfrak{R} = R$.
\end{proof}

\subsection{Ramification of maximal orders over complete discrete valuation rings}\label{ramification:data}  We continue to denote by $R$ a complete discrete valuation ring and we discuss ramification of $\Lambda$, a maximal $R$-order in a $\KK$-central simple algebra $\Sigma$.
Let  
$\kappa_\Lambda$ be
the centre of the residue ring  
$\bar{\Lambda} := \Lambda / \operatorname{rad} \Lambda$.  
The numerical invariants which are of interest are positive integers, $e,e',f$ and $n$, which are defined by the conditions that
\begin{equation}\label{ram:numbers:defined}
\left(\operatorname{rad} \Lambda \right)^e = \Lambda \mathfrak{m}, 
\ 
e' = [ \kappa_{\Lambda} : \kappa], 
\ 
f^2 = [\bar{\Lambda} : \kappa_{\Lambda}], 
\text{ and }
 n^2 = [\Sigma:\KK];
\end{equation}
they have the properties that
\begin{equation}\label{ramification:data:eqn2}
ee'f^2=n^2.
\end{equation}
Indeed, since $\Lambda$ is normal we can choose $\pi \in \Lambda$ which generates its radical $J$.  Next, consider the $J$-adic filtration on $\Lambda$.  It follows that 
$$n^2 = \dim_{\kappa} \Lambda / \mathfrak{m} \Lambda = e \dim_{\kappa} \bar{\Lambda} = e e' f^2$$
\cite[Theorem 13.3]{Reiner:2003}, \cite[Lemma 1.5.9 (i)]{Artin:Chan:deJong:Lieblich}.

\begin{defn}
We say that a maximal $R$-order $\Lambda$, as above, is \emph{ramified} if $ee' > 1$; the number $e$ is the \emph{ramification index} of $\Lambda$.
\end{defn}

Consider now a finite \'etale extension 
$ R \rightarrow S$ of complete discrete valuation rings. We employ standard base change notation.  For example
${}_S\Lambda := S \otimes_R \Lambda$ whereas 
 $\kappa({}_S\Lambda) = \kappa_{ {}_S\Lambda}$ denotes the centre of the residue ring  $_S\bar{\Lambda} :={} _S\Lambda / \operatorname{rad}{}_S\Lambda$.  Let $\FF$ be the field of fractions of $S$ and put
$${}_S\Sigma := \FF \otimes_\KK  \Sigma \text{.}$$  

\begin{proposition}[Compare with {\cite[Proposition 1.5.8]{Artin:Chan:deJong:Lieblich}}]\label{ramification:data:proposition:1'}
With the notations and assumptions as above,
the
following assertions hold true:
\begin{enumerate}
\item{$\operatorname{rad}({}_S\Lambda) = S \otimes_R  \operatorname{rad}(\Lambda) $;}
\item{the $S$-order ${}_S\Lambda$ in ${}_S\Sigma$ is normal; and}
\item{the numbers $e,e',f$ and $n$, as defined above, satisfy the conditions that
$$\left(\operatorname{rad} {}_S\Lambda \right)^e = \mathfrak{m} {}_S\Lambda,  \ 
e' = [ \kappa({}_S\Lambda) : \kappa(S)], 
\  
f^2 = [{}_S\bar{\Lambda} : \kappa({}_S\Lambda)], 
 \text{ and }
n^2 = [{}_S\Sigma:\FF].$$
}
\end{enumerate}
\end{proposition}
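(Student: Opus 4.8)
The plan is to derive all three parts from a single observation about what ``finite étale'' buys us. Since $R \to S$ is finite étale between complete discrete valuation rings, it is unramified, so $\mathfrak{m}S = \mathfrak{m}_S$ and the residue extension $\kappa(S)/\kappa$ is finite separable; moreover $S$ is flat over $R$, so $S\otimes_R(-)$ is exact and carries ideals of $\Lambda$ to ideals of ${}_S\Lambda$. I would first record the routine facts that $S\otimes_R\KK = \FF$, hence ${}_S\Sigma = \FF\otimes_\KK\Sigma$ with $[{}_S\Sigma:\FF] = [\Sigma:\KK] = n^2$, and that ${}_S\Lambda = S\otimes_R\Lambda$ is a module-finite $S$-order in ${}_S\Sigma$. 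The dimension count already settles the last equation of (c).

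For (a), since ${}_S\Lambda$ is module-finite over the complete local ring $S$, its Jacobson radical is the preimage, under ${}_S\Lambda \to {}_S\Lambda/\mathfrak{m}_S\,{}_S\Lambda$, of the radical of the fibre (standard structure theory over complete local rings, as in \cite{Reiner:2003}). Using $\mathfrak{m}_S = \mathfrak{m}S$ I compute this fibre to be ${}_S\Lambda/\mathfrak{m}_S\,{}_S\Lambda = \kappa(S)\otimes_\kappa(\Lambda/\mathfrak{m}\Lambda)$. The crux is then the well-known fact that the Jacobson radical commutes with a separable extension of the ground field: as $\kappa(S)/\kappa$ is separable, $\operatorname{rad}(\kappa(S)\otimes_\kappa(\Lambda/\mathfrak{m}\Lambda)) = \kappa(S)\otimes_\kappa\operatorname{rad}(\Lambda/\mathfrak{m}\Lambda) = \kappa(S)\otimes_\kappa(\operatorname{rad}\Lambda/\mathfrak{m}\Lambda)$, where the last equality uses $\mathfrak{m}\Lambda\subseteq\operatorname{rad}\Lambda$. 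Pulling back along the flat base change identifies $\operatorname{rad}({}_S\Lambda) = S\otimes_R\operatorname{rad}\Lambda$. I expect this separable base-change step to be the main obstacle, and the one genuinely nontrivial input: it is precisely where étaleness cannot be dropped, since over an inseparable residue extension the radical can strictly grow.

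Part (b) is then immediate. Because the maximal order $\Lambda$ is normal, there is $\pi\in\Lambda$ with $\operatorname{rad}\Lambda = \Lambda\pi = \pi\Lambda$; applying (a) and flatness gives $\operatorname{rad}({}_S\Lambda) = S\otimes_R\Lambda\pi = {}_S\Lambda\,(1\otimes\pi)$ and symmetrically $= (1\otimes\pi)\,{}_S\Lambda$, so $1\otimes\pi$ is a normalizing generator and ${}_S\Lambda$ is normal. For (c), raising (a) to the $e$-th power and using flatness gives $(\operatorname{rad}{}_S\Lambda)^e = S\otimes_R(\operatorname{rad}\Lambda)^e = S\otimes_R\Lambda\mathfrak{m} = \mathfrak{m}\,{}_S\Lambda$ (this also equals $\mathfrak{m}_S\,{}_S\Lambda$ since $\mathfrak{m}S=\mathfrak{m}_S$), the first equation.

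For the two residual equations I would use that (a) yields ${}_S\bar\Lambda = \kappa(S)\otimes_\kappa\bar\Lambda$, together with the fact that the residue ring of the maximal order is simple, i.e. $\bar\Lambda$ is a central simple $\kappa_\Lambda$-algebra. Setting $C := \kappa(S)\otimes_\kappa\kappa_\Lambda$ and viewing ${}_S\bar\Lambda = C\otimes_{\kappa_\Lambda}\bar\Lambda$, the Azumaya property of $\bar\Lambda$ over $\kappa_\Lambda$ forces $Z({}_S\bar\Lambda) = C$, so $\kappa({}_S\Lambda) = \kappa(S)\otimes_\kappa\kappa_\Lambda$. Taking $\kappa(S)$-dimensions gives $[\kappa({}_S\Lambda):\kappa(S)] = \dim_\kappa\kappa_\Lambda = e'$, and comparing with the total dimension $\dim_{\kappa(S)}{}_S\bar\Lambda = \dim_\kappa\bar\Lambda = e'f^2$ gives $[{}_S\bar\Lambda:\kappa({}_S\Lambda)] = f^2$. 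Here I read these degrees as ranks over the possibly non-field center $C$; $C$ need not be a field, but the numerical identities hold regardless, which is all the statement requires. This establishes that the same quadruple $e,e',f,n$ serves as the ramification data of ${}_S\Lambda$ and completes (c).
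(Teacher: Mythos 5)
Your proof is correct and follows essentially the same route as the paper's (much terser) argument: the crux in both is that \'etaleness makes $\kappa(S)/\kappa$ separable, so the Jacobson radical commutes with this base change (the paper cites semisimplicity of $\kappa(S)\otimes_\kappa\bar\Lambda$ from Lang, which is the same fact), giving (a); then (b) follows from normality of $\Lambda$ and (c) from flatness. The only cosmetic difference is that you compute $Z({}_S\bar\Lambda)=\kappa(S)\otimes_\kappa\kappa_\Lambda$ via the Azumaya property of $\bar\Lambda$ over $\kappa_\Lambda$, where the paper simply invokes compatibility of centres with flat base change.
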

\begin{proof}
The ring $\kappa(S) \otimes_\kappa \bar{\Lambda}$ is a semisimple algebra, by~\cite[Chapter XVII, Theorem 6.2]{Lang:Algebra}, and it follows that 
$$\operatorname{rad} {}_S \Lambda = S \otimes_R  \operatorname{rad}(\Lambda)  $$
which establishes (a) and (b) since $\Lambda$ is normal.  
For (c), we first note that, in light of (a) and (b), the equation
$$(\operatorname{rad} {}_S\Lambda)^e = \ {}_S\Lambda  \ \mathfrak{m}{}_S $$
holds true.  Finally, since centres are compatible with flat base change, the 
equations given by (c) 
are valid.
\end{proof}

We now discuss ramification within the context of splitting fields.

\begin{defn}
We say that a $\KK$-central simple algebra $\Sigma$ has an \emph{unramified splitting field} if there exists an \'etale extension $R\rightarrow S$ of complete discrete valuation rings so that if $\FF$ denotes the function field of $S$, then the $\FF$-central simple algebra $\FF \otimes_\KK  \Sigma $ splits.
\end{defn}

The $\KK$-central simple algebra $\Sigma$ is Morita equivalent to some $\KK$-central division ring $\mathfrak{D}$ and the \emph{index} of $\Sigma$ is defined to be 
the positive integer with the property that its square equals $[\mathfrak{D} : \KK]$.  We now discuss existence of unramified splitting fields.

\begin{proposition}
[Compare with {\cite[Proposition 1.5.11]{Artin:Chan:deJong:Lieblich}} or {\cite[Chapter XII, \S 1]{Serre:local:fields}}]
\label{ramification:data:prop2}  Let $R$ be a complete discrete valuation ring and $\Sigma$ a $\KK$-central simple algebra.  Assume either that $\kappa$ is perfect or that the index of $\Sigma$ is prime to $\operatorname{char}(\kappa)$.  Then $\Sigma$ admits an unramified splitting field.
\end{proposition}
\begin{proof}  
The argument in \cite[Chapter XII, \S 2]{Serre:local:fields} treats the case that $\kappa$ is perfect.  It can be adapted to treat the case that the index of $\Sigma$ is prime to $\operatorname{char}(\kappa)$.
\end{proof}

Theorem~\ref{ramification:data:theorem1} below gives a sufficient condition for equality of $e$ and $e'$.

\begin{theorem}
[Compare with {\cite[Proposition 1.7.14]{Artin:Chan:deJong:Lieblich}}]
\label{ramification:data:theorem1}  Let $R$ be a complete discrete valuation ring and $\Lambda$ a maximal order in a $\KK$-central simple algebra $\Sigma$.
Assume either that $\kappa$ is perfect or that the index of $\Sigma$ is prime to the characteristic of $ \kappa$.  Then the ramification invariants of $\Lambda$ have the properties that
$$e = e' \text{ and } ef=n.$$
\end{theorem}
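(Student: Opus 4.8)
The plan is to reduce to the split case by an unramified base change, under which all four invariants $e,e',f,n$ are simultaneously preserved, and then to read them off from the explicit normal form of a normal order in a split matrix algebra. Since $e e' f^2 = n^2$ by \eqref{ramification:data:eqn2}, once I establish $e=e'$ the relation $ef=n$ follows by taking positive square roots; so the real content is the identity $e=e'$. The conceptual point to keep in mind throughout is that maximality is \emph{not} preserved under this base change: the ramification of $\Lambda$ will reappear as the \emph{type} of a genuinely non-maximal normal order over $S$.

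First I would invoke Proposition~\ref{ramification:data:prop2}, whose hypotheses are exactly those of the present theorem, to produce an \'etale extension $R \to S$ of complete discrete valuation rings, with fraction field $\FF = \operatorname{Frac}(S)$ and residue field $\kappa(S)$, such that ${}_S\Sigma = \FF \otimes_\KK \Sigma$ is split, i.e. ${}_S\Sigma \simeq \FF^{n \times n}$. Since $\Lambda$ is maximal it is normal, so Proposition~\ref{ramification:data:proposition:1'} applies: ${}_S\Lambda = S \otimes_R \Lambda$ is again a normal $S$-order, $\operatorname{rad}({}_S\Lambda) = S \otimes_R \operatorname{rad}(\Lambda)$, and the four integers $e,e',f,n$ attached to $\Lambda$ over $R$ are simultaneously the corresponding invariants of ${}_S\Lambda$ over $S$ (here $\mathfrak{m}\,{}_S\Lambda = \mathfrak{m}_S\,{}_S\Lambda$ because the extension is unramified). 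It therefore suffices to compute $e,e',f$ for ${}_S\Lambda$.

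Now ${}_S\Lambda$ is a normal order in the split algebra $\FF^{n\times n}$, so Corollary~\ref{normal:complete:dvr:corollary3} (applied with $R,\KK$ replaced by $S,\FF$, where $\mathfrak{R}=S$) identifies it up to isomorphism with the standard normal order $\mathfrak{A}_{(s,r)}(S)$, for $r$ its type and $s = n/r$ the common value of its invariants by Theorem~\ref{normal:complete:dvr:theorem2}. I would then read the three invariants off this model. For $e$: in the split case $\pi_\mathfrak{D}$ is a uniformizer $t$ of $S$, so the standard generator $p\otimes 1$ of $\operatorname{rad}\mathfrak{A}_{(s,r)}(S)$ satisfies $(p\otimes 1)^r = t\cdot I$ by \eqref{standard:gen:ramification:calc}, whence $(\operatorname{rad}{}_S\Lambda)^r = \mathfrak{m}\,{}_S\Lambda$ and $e = r$. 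For $e'$ and $f$: the residue ring is ${}_S\bar\Lambda \simeq \prod_{i=1}^r \kappa(S)^{s\times s}$, so its centre $\kappa({}_S\Lambda)=\prod_{i=1}^r \kappa(S)$ has dimension $r$ over $\kappa(S)$, giving $e' = r$, while each matrix factor has dimension $s^2$ over its centre $\kappa(S)$, giving $f^2 = s^2$, i.e. $f = s$.

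Combining these computations yields $e = e' = r$ and $ef = rs = n$, which are the two asserted identities; this is consistent with \eqref{ramification:data:eqn2} since $ee'f^2 = r^2 s^2 = n^2$. The main obstacle is conceptual rather than computational: one must resist expecting ${}_S\Lambda$ to be maximal (it fails to be, precisely when $e>1$) and instead recognize that the invariance of Proposition~\ref{ramification:data:proposition:1'} lets the ramification of $\Lambda$ be detected as the type $r$ of ${}_S\Lambda$ in the split algebra. With that realization in place, the remaining work is the routine reading of $e,e',f$ off the normal form $\mathfrak{A}_{(s,r)}(S)$.
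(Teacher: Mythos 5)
Your proposal is correct and follows essentially the same route as the paper: reduce to the split case via the unramified splitting field of Proposition~\ref{ramification:data:prop2}, transfer the invariants using Proposition~\ref{ramification:data:proposition:1'}, identify ${}_S\Lambda$ with $\mathfrak{A}_{(s,r)}(S)$ by Corollary~\ref{normal:complete:dvr:corollary3}, and read off $e=r$, $f=s$, $e'=r$ from the standard generator and the residue ring. The only cosmetic difference is that you compute $e'$ directly from the centre $\prod_{i=1}^r\kappa(S)$ rather than deducing it at the end from $ee'f^2=n^2$, which is if anything slightly cleaner.
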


\begin{proof}  
Since, by Equation \eqref{ramification:data:eqn2}, $ee'f^2=n^2$, it suffices to show that $ef = n$.  To this end, Proposition~\ref{ramification:data:prop2} implies that $\Sigma$ has an unramified splitting field.  Let $R \rightarrow S$ be a finite \'etale extension, of complete discrete valuation rings, which splits $\Sigma$ and write
$${}_S\Sigma := \FF \otimes_\KK \Sigma  \simeq \FF^{n \times n}, $$
for $\FF$ the fraction field of $S$.

By Proposition~\ref{ramification:data:proposition:1'}, we have that ${}_S \Lambda$ is normal in ${}_S\Sigma$  and also that 
$$(\operatorname{rad} {}_S\Lambda)^e = {}_S\Lambda (\mathfrak{m}{}_S) {}_S\Lambda, \ 
e' = [\kappa({}_S\Lambda) : \kappa(S)], \
f^2 = [\bar{\Lambda}{}_S : \kappa({}_S\Lambda)] \
\text{ and }
n^2 = [ {}_S\Sigma : \FF]. $$
On the other hand, by Corollary~\ref{normal:complete:dvr:corollary3}, 
$${}_S\Lambda \simeq \mathfrak{A}_{(s,r)}(S) $$
for some $(s,r)$ with $sr=n$; by considering the standard generator for $\operatorname{rad} \mathfrak{A}_{(s,r)}(S)$, it is clear that $e = r$.  Further,
$$\mathfrak{A}_{(s,r)}(S) / \operatorname{rad} \mathfrak{A}_{(s,r)}(S) \simeq \prod_{i=1}^r \kappa(S)^{s \times s} $$
and so 
$$ \kappa({}_S\Lambda) \simeq \kappa(S)^{\oplus r}$$
whence $f = s$.  Then 
$ e' = r = e $
since $n = rs = e f$ and  
$e e' f^2 = n^2$.
\end{proof}
Theorem~\ref{ramification:data:theorem1} has the following consequence.
\begin{corollary}\label{ramification:data:corollary1'}
In the setting of Theorem~\ref{ramification:data:theorem1}, the ramification invariants of the maximal order $\Lambda$ are equal to those of the unique maximal order $\mathfrak{R}$ in $\mathfrak{D}$ the unique division algebra which is Morita equivalent to $\Sigma$.
\end{corollary}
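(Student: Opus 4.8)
The plan is to reduce the statement to an explicit computation inside a matrix order, after first observing that the invariants $e$ and $e'$ depend only on the isomorphism class of a maximal order and hence are the same for every maximal order in $\Sigma$. Since $\Sigma$ is Morita equivalent to $\mathfrak{D}$, the structure theory of central simple algebras gives an isomorphism $\Sigma \simeq \mathfrak{D}^{k\times k}$ with $k = n/m$, where $m$ is the degree of $\mathfrak{D}$. Over the complete discrete valuation ring $R$ all maximal orders in $\Sigma$ are conjugate, and $\mathfrak{R}^{k\times k}$ is one such maximal order; so I would replace $\Lambda$ by the standard matrix order $\mathfrak{R}^{k\times k}$, which changes neither $e$ nor $e'$.

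First I would record that $\operatorname{rad}(\mathfrak{R}^{k\times k}) = (\operatorname{rad}\mathfrak{R})^{k\times k}$, the matrices with entries in $\operatorname{rad}\mathfrak{R}$, and that for any two-sided ideal $I \trianglelefteq \mathfrak{R}$ one has $\bigl((I)^{k\times k}\bigr)^{j} = (I^{j})^{k\times k}$ for all $j\geq 1$. Consequently $(\operatorname{rad}\Lambda)^{e} = \bigl((\operatorname{rad}\mathfrak{R})^{e}\bigr)^{k\times k}$ coincides with $\mathfrak{m}\Lambda = (\mathfrak{m}\mathfrak{R})^{k\times k}$ exactly when $(\operatorname{rad}\mathfrak{R})^{e} = \mathfrak{m}\mathfrak{R}$, so the ramification index $e$ of $\Lambda$ equals that of $\mathfrak{R}$. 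Next I would identify the residue ring $\overline{\Lambda} \simeq \overline{\mathfrak{R}}^{\,k\times k}$, whose centre is $Z(\overline{\mathfrak{R}}^{\,k\times k}) = Z(\overline{\mathfrak{R}}) = \kappa_{\mathfrak{R}}$; hence $\kappa_{\Lambda} = \kappa_{\mathfrak{R}}$ and $e' = [\kappa_{\Lambda}:\kappa] = [\kappa_{\mathfrak{R}}:\kappa]$ agrees as well. (The remaining quantities scale with the matrix size, $f_{\Lambda} = k\,f_{\mathfrak{R}}$ and $n = k\,m$, reflecting that $f$ and $n$ measure dimension rather than ramification; it is precisely the genuine ramification invariants $e$ and $e'$ that are preserved, and by Theorem~\ref{ramification:data:theorem1} these two already coincide on each side.)

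The only nontrivial input, and therefore the main obstacle, is the independence of $e$ and $e'$ from the choice of maximal order, i.e.\ the classical fact that maximal orders over a complete discrete valuation ring are all conjugate; this is what lets me pass from an arbitrary $\Lambda$ to $\mathfrak{R}^{k\times k}$. An alternative that stays closer to the proof of Theorem~\ref{ramification:data:theorem1} is to base change along the same \'etale splitting field: the field $\FF$ splitting $\Sigma$ also splits $\mathfrak{D}$, and Proposition~\ref{ramification:data:proposition:1'} shows that $e$ and $e'$ are unchanged under $R\to S$, after which both ${}_S\Lambda$ and ${}_S\mathfrak{R}$ are standard normal orders $\mathfrak{A}_{(s,r)}(S)$ whose type $r$ equals $e$; one then compares types, again using the Morita relation $\mathfrak{A}_{(s,r)}(S)\sim\mathfrak{A}_{(1,r)}(S)$ to see the type is preserved. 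In either route the computational heart is the pair of identities $\bigl((I)^{k\times k}\bigr)^{j} = (I^{j})^{k\times k}$ and $Z(\overline{\mathfrak{R}}^{\,k\times k}) = Z(\overline{\mathfrak{R}})$, so I expect the bookkeeping to be short once conjugacy is invoked.
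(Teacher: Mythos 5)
Your proposal is correct and follows essentially the same route as the paper: both reduce to the standard maximal order $\mathfrak{R}^{k\times k}$ (the paper invokes $\Lambda\simeq\mathfrak{R}^{N\times N}$ implicitly via conjugacy of maximal orders over a complete discrete valuation ring) and then identify $Z(\bar\Lambda)$ with $Z(\bar{\mathfrak{R}})$ to get $e'$. The only cosmetic difference is that you also verify $e$ directly from $\bigl((\operatorname{rad}\mathfrak{R})^{k\times k}\bigr)^{j}=\bigl((\operatorname{rad}\mathfrak{R})^{j}\bigr)^{k\times k}$, whereas the paper deduces $e=e'$ from Theorem~\ref{ramification:data:theorem1}; both are fine.
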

\begin{proof}  Since 
$\Sigma \simeq \mathfrak{D}^{k \times k}$, 
for 
$k [\mathfrak{D}:\KK] = n^2$, 
the maximal order $\Lambda$ in 
$\Sigma$ will be of the form 
$\Lambda \simeq \mathfrak{R}^{k \times k}$.  It follows that the centres of the algebras $\overline{\Lambda}$, $(\mathfrak{R} / \operatorname{rad}(\mathfrak{R}))^{k \times k}$ and  $\overline{\mathfrak{R}}$ coincide.  
In particular, the $e'$ of $\Lambda$, as defined in \eqref{ram:numbers:defined}, equals the $e_{\mathfrak{D}}'$ of $\mathfrak{D}$, also as defined in \eqref{ram:numbers:defined}.   The other relations follow from Theorem~\ref{ramification:data:theorem1}.
\end{proof}

\subsection{Maximal orders in central simple algebras over discrete valuation rings}  Now let $R$ be a discrete valuation ring, not necessarily complete.  Let $\Lambda$ be a maximal $R$-order, in a central simple $\KK$-algebra $\Sigma$. Put
$\hat{\Lambda} := \hat{R}\otimes_R \Lambda $
and
$\hat{\Sigma} := \hat{\KK}\otimes_\KK \Sigma. $
By~\cite[Proposition 2.5]{Auslander:Goldman:1960}, $\hat{\Lambda}$ is a maximal $\hat{R}$-order in $\hat{\Sigma}$ and its radical is described as
$$\operatorname{rad} \Lambda =\Lambda \cap \operatorname{rad}\hat{\Lambda}.$$
By \cite[Theorems 18.3, 19.3]{Reiner:2003},
the powers of $\operatorname{rad} \Lambda$ are expressed as
$$(\operatorname{rad} \Lambda)^k := \Lambda \cap (\operatorname{rad} \hat{\Lambda})^k = \Lambda \cap \hat{\Lambda} \pi_{\mathfrak{D}}^k \hat{\Lambda},  $$
for $k \in \ZZ_{\geq 0}$.
In this notation
$$\Lambda t = \Lambda \cap (\operatorname{rad} \hat{\Lambda})^e = (\operatorname{rad} \Lambda)^e. $$

We now assume that the index of $\Sigma$ is prime to $\cchar(\kappa)$ or that $\kappa$ is perfect.  Proposition~\ref{dual:trace:rad:lemma} below shows how the  $\Lambda$-bimodule 
$$\Lambda^\vee := \Hom_R(\Lambda,R)$$
relates to $\operatorname{rad}(\Lambda)$ and the ramification invariant $e$.  It can be used to establish Proposition \ref{azumaya:discrim:locus:prop}.

\begin{proposition}\label{dual:trace:rad:lemma}  In the above setting, the reduced trace pairing
\begin{equation}\label{reduced:trace:pairing}
\Sigma \times \Sigma \rightarrow \KK,\end{equation}
which is defined by 
$$(x,y) \mapsto \Trd_{\Sigma/\KK}(xy),$$ 
induces an isomorphism
$$\Lambda^\vee \simeq (\operatorname{rad} \Lambda)^{1-e} $$
of $\Lambda$-bimodules. 
\end{proposition}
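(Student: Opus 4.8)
The plan is to identify $\Lambda^\vee$ with a trace-dual lattice inside $\Sigma$, reduce to a split model by faithfully flat base change, and then compute both sides explicitly for the standard orders $\mathfrak{A}_{(s,r)}$.

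First I would translate the statement into a comparison of two full $R$-lattices in $\Sigma$. Since $\Sigma$ is central simple over $\KK$, the reduced trace form $(x,y)\mapsto \Trd_{\Sigma/\KK}(xy)$ is nondegenerate, so $z\mapsto \Trd_{\Sigma/\KK}(z\,\cdot\,)$ is a $\KK$-isomorphism $\Sigma \xrightarrow{\sim} \Hom_\KK(\Sigma,\KK)$. Because $\Lambda$ is a full $R$-lattice with $\Lambda\otimes_R\KK=\Sigma$, every $R$-linear functional on $\Lambda$ extends uniquely to a $\KK$-linear functional on $\Sigma$, and I would deduce that this map restricts to an isomorphism
$$\Lambda^* := \{z\in\Sigma : \Trd_{\Sigma/\KK}(z\Lambda)\subseteq R\} \xrightarrow{\ \sim\ } \Lambda^\vee=\Hom_R(\Lambda,R).$$
The cyclicity $\Trd_{\Sigma/\KK}(uv)=\Trd_{\Sigma/\KK}(vu)$ shows that $\Lambda^*$ is a sub-$\Lambda$-bimodule of $\Sigma$ and that this identification intertwines the two bimodule structures (with $(a\phi b)(x)=\phi(bxa)$ on $\Lambda^\vee$). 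Thus the proposition reduces to the equality of fractional ideals $\Lambda^*=(\operatorname{rad}\Lambda)^{1-e}$ inside $\Sigma$.

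Next I would reduce to the complete, split case, since equality of two full $R$-lattices in $\Sigma$ may be checked after any faithfully flat base change. I would pass first to the completion $\hat R$ and then, invoking Proposition~\ref{ramification:data:prop2}, to a finite \'etale (hence finite free, hence faithfully flat) extension $\hat R\to S$ splitting $\Sigma$. Here three compatibilities are used: $\Hom_R(\Lambda,R)$ commutes with flat base change because $\Lambda$ is $R$-free; the reduced trace is compatible with scalar extension, so $\Lambda^*$ base-changes to $({}_S\Lambda)^*$; and by Proposition~\ref{ramification:data:proposition:1'} the radical and its integral and fractional powers commute with the base change while the invariant $e$ is unchanged, so $(\operatorname{rad}\Lambda)^{1-e}$ base-changes to $(\operatorname{rad}{}_S\Lambda)^{1-e}$. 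By Corollary~\ref{normal:complete:dvr:corollary3} we then have ${}_S\Lambda\simeq\mathfrak{A}_{(s,r)}(S)$ with $e=r$, and by \eqref{standard:type:s:r:described}--\eqref{standard:type:s:r:rad} together with the fact that a full matrix algebra $S^{s\times s}$ is self-dual for its trace form, I may further reduce to the type $(1,r)$ order $\mathfrak{A}_{(1,r)}(S)$ inside $\FF^{r\times r}$.

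It then remains to carry out the explicit computation, which I regard as the main point. Using $\Trd(x\,e_{ij})=x_{ji}$ for matrix units and the description \eqref{standard:type:1:r:described} of $\mathfrak{A}_{(1,r)}(S)$ (entries in $S$ on and below the diagonal, in $\mathfrak{m}_S$ above), I would compute the trace dual entrywise and obtain the lattice with $S$ on and above the diagonal and $\mathfrak{m}_S^{-1}$ strictly below it. On the other side, the standard generator $p$ satisfies $p^{r}=\operatorname{diag}(\pi_{\mathfrak{D}},\dots,\pi_{\mathfrak{D}})=t\cdot\mathrm{id}$ by \eqref{standard:gen:ramification:calc}, so $(\operatorname{rad}\mathfrak{A}_{(1,r)}(S))^{r}=t\,\mathfrak{A}_{(1,r)}(S)$ and hence $(\operatorname{rad}\mathfrak{A}_{(1,r)}(S))^{1-r}=t^{-1}\operatorname{rad}\mathfrak{A}_{(1,r)}(S)$; reading off $\operatorname{rad}\mathfrak{A}_{(1,r)}(S)$ (entries $\mathfrak{m}_S$ on and above the diagonal, $S$ strictly below) and multiplying by $t^{-1}$ gives exactly the same lattice. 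This establishes $\Lambda^*=(\operatorname{rad}\Lambda)^{1-e}$ after base change, and faithful flatness propagates the equality back to $\Lambda$. The step I expect to require the most care is the bookkeeping in this matrix computation, in particular keeping the transpose coming from $\Trd(x\,e_{ij})=x_{ji}$ consistent so that the trace dual and the shifted radical land on the same side of the diagonal.
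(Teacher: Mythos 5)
Your proposal is correct, and it shares its first two steps with the paper's proof --- identifying $\Lambda^\vee$ with the trace-dual lattice (the inverse different) $\widetilde{\Lambda}=\{x\in\Sigma:\Trd_{\Sigma/\KK}(x\Lambda)\subseteq R\}$ via nondegeneracy of the reduced trace pairing, and reducing to the complete case --- but then diverges in how the actual computation is done. The paper reduces in the opposite direction: it uses that the inverse different commutes with completion and with passing to matrix algebras to reduce to the case $\Sigma=\mathfrak{D}$ a division algebra with $\Lambda=\mathfrak{R}$ its unique maximal order, and then cites the classical computation of the different of $\mathfrak{R}$ (Schilling; cf.\ also~\cite[Proposition 1.8.2]{Artin:Chan:deJong:Lieblich}). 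You instead split $\Sigma$ by a finite \'etale base change $\hat R\to S$ (Proposition~\ref{ramification:data:prop2}), use faithfully flat descent together with Proposition~\ref{ramification:data:proposition:1'} to transport both lattices, invoke Corollary~\ref{normal:complete:dvr:corollary3} to land in $\mathfrak{A}_{(s,r)}(S)$, and verify the equality by the explicit entrywise matrix computation ($S$ on and above the diagonal, $\mathfrak{m}_S^{-1}$ below, on both sides). Your route replaces the external citation with a self-contained calculation, at the cost of the descent bookkeeping; note that both routes consume the standing hypothesis that $\kappa$ is perfect or the index is prime to $\cchar\kappa$ --- the paper for the validity of Schilling's formula, you for the existence of the unramified splitting field. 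Your computation checks out (including the transpose in $\Trd(xe_{ij})=x_{ji}$ and the identity $(\operatorname{rad}\mathfrak{A}_{(1,r)}(S))^{1-r}=t^{-1}\operatorname{rad}\mathfrak{A}_{(1,r)}(S)$ coming from \eqref{standard:gen:ramification:calc}), so the argument is sound.
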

\begin{proof}
Via the reduced trace pairing \eqref{reduced:trace:pairing}, the module $\Lambda^\vee$ is identified with the inverse different
$$ \widetilde{\Lambda} = \{x \in \Sigma : \Trd_{\Sigma / \KK}(x\Lambda) \subseteq R \},$$
\cite[p.~184]{Reiner:2003}.  Further, as explained in~\cite[\S 20]{Reiner:2003}, the inverse different commutes with taking completion, and with taking matrix algebras.  Thus, without loss of generality, we may assume that $(R,\mathfrak{m})$ is complete, that $\Sigma = \mathfrak{D}$, for $\mathfrak{D}$ a $\KK$-central division ring, and $\Lambda =\mathfrak{R}$ the unique maximal $R$-order in $\mathfrak{D}$.  In this setting it is known, see for instance~\cite[p.~148 and p.~149]{Schilling:1950}, compare also with~\cite[Proposition 1.8.2]{Artin:Chan:deJong:Lieblich},~\cite[Proposition 10.12]{Chan:Orders} and~\cite[Discussion on p.~ 152]{Chan:Kulkarni2003}, that when $\kappa$ is perfect or the degree of $\mathfrak{D}$ is prime to $\cchar(\kappa)$, that
$$ \widetilde{\Lambda} \simeq (\operatorname{rad} \mathfrak{R})^{-e+1}$$
as desired.
\end{proof}

\subsection{Cohomological interpretation of ramification for maximal orders}\label{cohom:max:orders:ramification}
Finally, we discuss ramification of maximal orders from the point of view of Galois cohomology.  
Our approach is explicit, requires existence of primitive roots of unity and handles the case of characteristic zero discrete valuation rings.  Alternative treatments of ramification maps can be found in \cite{Saltman:1999}, \cite{Artin:Chan:deJong:Lieblich}, \cite{Chan:Orders} or \cite[Chapter XII, \S 3]{Serre:local:fields}.  Our conventions about Galois cohomology are  based on those of \cite{Gille:Szamuley:2006}.   

Let $R$ be a characteristic zero discrete valuation ring, with residue field $\kappa$, and assume that $R$ contains a primitive $m$th root of unity $\zeta_m \in R$.  Assume that $(m,\operatorname{char}(\kappa)) = 1$ if $\operatorname{char}(\kappa) > 0$.
The fixed choice of $\zeta_m$ induces a group isomorphism
$
\mu_m \simeq \ZZ / m \ZZ \text{.}
$
By Kummer theory, together with the Merkurjev-Suslin theorem \cite[Theorem 4.6.6]{Gille:Szamuley:2006}, there exists a commutative diagram
\begin{equation}\label{ramification:commutative:diagram}
\begin{tikzcd}
\K_2^M(\KK) / m \arrow[r, "\sim"] 
\arrow[d] & \H^2(\KK,\mu_m^{\otimes 2}) 
\arrow[d] & \arrow[d] \arrow[l, swap, "\sim"] \Br(\KK)[m] \\
\K_2^M(\hat{\KK}) / m \arrow[r, "\sim"] \arrow[d] & \H^2(\hat{\KK},\mu_m^{\otimes 2}) \arrow[d] & \arrow[l, swap, "\sim"] \Br(\hat{\KK})[m] \\
\K_1^M(\kappa) / m \arrow[r, "\sim"] & \H^1(\kappa,\mu_m) 
\end{tikzcd}
\end{equation}
which induces the \emph{ramification group homomorphism}
\begin{equation}\label{ramification:group:homomorphism:defn'}
a \colon \Br(\KK)[m] \rightarrow \H^2(\hat{\KK},\mu_m^{\otimes 2}) \rightarrow \H^1(\kappa,\mu_m) \text{.}
\end{equation}
Using this homomorphism \eqref{ramification:group:homomorphism:defn'}, we are able to give a direct explicit proof of Theorem \ref{explicit:ramification:map:theorem} below.  Our viewpoint here complements other accounts which are currently present in the literature. 

\begin{theorem}\label{explicit:ramification:map:theorem}  Let $R$ be a characteristic zero discrete valuation ring, with residue field $\kappa$, and assume that $R$ contains a primitive $m$th root of unity $\zeta_m \in R$.  Assume that $(m, \operatorname{char}(\kappa)) = 1$ if $\operatorname{char}(\kappa) > 0$.
Let $\Lambda$ be a maximal order in a $\KK$-central simple algebra $\Sigma$.  Assume that $\Sigma$ has period $m$.  With these assumptions, the ramification index of $\Lambda$ equals the degree of the cyclic field extension that corresponds to the image of the class $\alpha$ of $\Sigma$ under the ramification map $a$.  This number equals the order of $a(\alpha) \in \H^1(\kappa,\mu_m)$.
\end{theorem}

The proof of Theorem \ref{explicit:ramification:map:theorem} is a consequence of a collection of auxiliary results.
Moreover, applying the theorem of Merkurjev and Suslin, \cite[Theorem 2.5.7]{Gille:Szamuley:2006}, a primitive generator for this extension can be described explicitly under the image of the tame symbol map.  First, we record the following proposition which collects a handful of facts which were observed in the course of proving Theorem \ref{normal:complete:dvr:theorem2} and Corollary \ref{ramification:data:corollary1'}.

\begin{proposition}\label{maximal:order:prop}
Let $R$ be a characteristic zero discrete valuation ring, with residue field $\kappa$, and assume that $R$ contains a primitive $m$th root of unity $\zeta_m \in R$.  Assume that $(m, \operatorname{char}(\kappa)) = 1$ if $\operatorname{char}(\kappa) > 0$.  Let $\Lambda$ be a maximal $R$-order in a $\KK$-central simple algebra $\Sigma$.  Let $\kappa_{\Lambda}$ be the centre of  $\Lambda / \operatorname{rad} \Lambda$.  Write $\operatorname{rad} \Lambda = \pi \Lambda$ for some prime element  $\pi \in \Lambda$.  The following assertions hold true.
\begin{enumerate}
\item{
The field $\kappa_{\Lambda}$ is a cyclic field extension of $\kappa$.  It is independent of the choice of maximal order $\Lambda$.  More precisely, it depends only on the class of $\hat{\Sigma}$ in the Brauer group of $\hat{\KK}$.
}
\item{
Conjugation by $\pi$ induces a generator $\sigma$ for the Galois group $\Gal(\kappa_{\Lambda} / \kappa)$.  This automorphism of $\kappa_{\Lambda}$ is well-defined.  It depends only on the class of $\hat{\Sigma}$ in the Brauer group of $\hat{\KK}$.  This is the distinguished generator which maps to $1/[\kappa_{\Lambda} : \kappa]$ in $\QQ/\ZZ$ under our chosen isomorphism 
$$
\H^1(\kappa,\mu_m) \simeq \H^1(\kappa, \ZZ/m\ZZ) \simeq \Hom_{\mathrm{cont}}(\Gal(\overline{\kappa}/\kappa), \ZZ/m\ZZ).
$$}
\end{enumerate}
\end{proposition}
\begin{proof}
Write $\operatorname{rad} \Lambda = \pi \Lambda$.  In the course of proving Theorem \ref{normal:complete:dvr:theorem2}, we noted that conjugation by $\pi$ induces an automorphism of $\kappa_{\Lambda}$.  This automorphism has order equal to $e = [\kappa_{\Lambda}:\kappa]$.  Both assertions (a) and (b) are then easily checked.  
\end{proof}

We now prove Theorem \ref{explicit:ramification:map:theorem}.

\begin{proof}[Proof of Theorem \ref{explicit:ramification:map:theorem}]
By the theorem of Merkurjev-Suslin, \cite[Theorem 2.5.7]{Gille:Szamuley:2006}, we may write the class of $\Sigma$ in the form
$$
\Sigma \sim (a_1,b_1)_{\zeta_m} \otimes \dots \otimes (a_r,b_r)_{\zeta_m}\text{,}
$$
for some positive integer $r$ and for certain cyclic algebras $(a_i,b_i)_{\zeta_{m}}$, $a_i,b_i \in \KK^\times$, for $i = 1,\dots, r$.  This allows for the calculation of the image of the class of $\Sigma$ under the ramification map $a$ to be carried out at the level of symbols.  As noted in Proposition \ref{maximal:order:prop}, the  radical of $\Lambda$ determines a generator for the Galois group.  More precisely, this generator is determined by $\pi$ a generator for $\operatorname{rad} \Lambda$.  By Corollary \ref{ramification:data:corollary1'}, the ramification index of $\Lambda$ depends only on the Morita equivalence class of $\Lambda$.  For each $i = 1,\dots,r$, let
$$
\alpha_i := (-1)^{v(a_i) v(b_i) } \overline{a_i^{v(b_i)}b_i^{-v(a_i)}}  \in \kappa(v) = \kappa
$$
and put
$$
\alpha := \prod_{i=1}^r \alpha_i \text{.}
$$
Then 
using 
Proposition \ref{maximal:order:prop} together with the above commutative diagram \eqref{ramification:commutative:diagram}, we deduce   that
$$\kappa_{\Lambda} = \kappa(\alpha^{1/m}) \text{.}$$
The desired result then follows since $ e = [\kappa_{\Lambda}:\kappa]$ by Theorem \ref{ramification:data:theorem1}.
\end{proof}

\section{Iitaka-Kodaira-D-dimensions, b-divisors and ramification of Brauer classes}  Let $X$ be a normal proper variety, defined over an algebraically closed field $\kk$, with function field $\KK = \kk(X)$.  In this section, we define and study concepts related to log pairs $(X,\Delta_\alpha)$ determined by Brauer classes $\alpha \in \Br(\KK)$ which have degree prime to $\cchar(\kk)$.  Initially, we allow $\kk$ to have positive characteristic, but in order to consider Kodaira dimensions of Brauer classes, we later insist that $\kk$ have characteristic zero.

\subsection{Divisors, section rings and Iitaka-dimensions}  If $\mathfrak{p} \in X$ is a codimension $1$ point, then it is identified with the irreducible codimension $1$ subvariety of $X$ that it determines.  Let $(\Osh_{X,\mathfrak{p}},\mathfrak{m}_{\mathfrak{p}})$ be its local ring, $\kappa(\mathfrak{p}) := \Osh_{X,\mathfrak{p}}/\mathfrak{m}_{\mathfrak{p}}
$
the residue field and $\hat{\Osh}_{X,\mathfrak{p}}$ and $\KK_{\mathfrak{p}} = \hat{\KK}$, respectively, the $\mathfrak{m}_{\mathfrak{p}}$-adic completions of  $\Osh_{X,\mathfrak{p}}$ and $\KK$.
Let $\WDiv(X)$ be the group of Weil divisors, set $\WDiv_\QQ(X) := \QQ \otimes_\ZZ \WDiv(X)$ and $\lfloor D \rfloor := \sum \lfloor d_i \rfloor \mathfrak{p}_i$.  Here $\lfloor d \rfloor$, for $d \in \QQ$, is the largest integer which is at most $d$.   Let $\Div(X) \subseteq \WDiv(X)$, be the subgroup of Cartier divisors and set $\Div_\QQ(X) := \QQ \otimes_\ZZ \Div(X)$. A divisor $D \in \WDiv_\QQ(X)$ is called $\QQ$-Cartier if some positive integral multiple of $D$ is an element of $\Div(X)$. 
Recall that the Weil divisorial sheaf of a divisor $D \in \WDiv(X)$ is the rank $1$ reflexive sheaf on $X$ defined by the condition that:
$$
\Osh_X(D)(U) := \{s \in \KK : \operatorname{div}(s)|_U + D  |_U \geq 0 \}
$$
for Zariski open subsets $U \subseteq X$.  Here $\operatorname{div}(s)$ denotes the divisor determined by $s \in \KK$.

If $R = \bigoplus_{\ell \geq 0} R_\ell$ is a graded commutative $\kk$-domain with $R_0 = \kk$, then $\kk(R)$ denotes the field obtained by taking the degree zero piece of $R$ localized at the set of nonzero homogeneous elements.
Further, $\N(R)$ is the semigroup of $R$.  Explicitly,
$
\N(R) := \{\ell \geq 0 : R_\ell \not = 0 \}.
$

Given a nonnegative integer $d$, let $R^{(d)} := \bigoplus_{\ell \geq 0} R_{\ell d}$ be the $d$th Veronese subalgebra of $R$. Recall that:
\begin{equation}\label{prelim:normal:var:eqn3}
\kk(R^{(d)}) = \kk(R).
\end{equation}
For our purposes, we are interested in 
$$
R(X,D) := \bigoplus_{\ell \geq 0} \H^0(X,\Osh_X(\lfloor \ell D \rfloor ))\text{,}
$$
the section ring of a given Weil divisor $D \in \WDiv_\QQ(X)$.  To indicate dependence on $D$, $\N(X,D)$ and $\kk(X,D)$ denote, respectively, its semigroup and degree zero fraction field.  
Our conventions for the \emph{Iitaka dimension} of $D \in \WDiv_\QQ(X)$ are such that:
\begin{equation}\label{prelim:normal:var:eqn5}
\kappa(X,D) := \begin{cases}
-1 & \text{ if $\N(X,D) = (0)$} \\
\trdeg \kk(X,D) & \text{ if $\N(X,D) \not = (0)$.}
\end{cases}
\end{equation}
Recall, that one consequence of \eqref{prelim:normal:var:eqn3} and \eqref{prelim:normal:var:eqn5} is that
\begin{equation}\label{prelim:normal:var:eqn6}
\kappa(X,D) = \kappa(X,\ell D)
\end{equation}
for all positive integers $\ell$, compare with~\cite[p.~274]{Mori:1985}.

Recall, that for a $\QQ$-Cartier divisor $D$, the Iitaka dimension $\kappa(X,D)$ can be described in terms of growth.  Precisely, fix a positive integer $m$ so that $m D \in \Div(X)$.  Then, there exists constants $\alpha,\beta >0$ so that if $\kappa := \kappa(X,D)$, then
\begin{equation}\label{prelim:normal:var:eqn7}
\alpha \cdot \ell^{\kappa} \leq h^0(X,\Osh_X(\ell m D)) \leq \beta \cdot \ell^{\kappa}
\end{equation}
for all sufficiently large $\ell \in \N(X, mD)$ \cite[Theorem 1]{Iitaka:1971}. 
(When $\N(X,mD) = (0)$, $\kappa(X,D) = -1$.)  

\subsection{Log pairs}  
Let $\K_X$ be a canonical divisor on $X$. 
A \emph{log pair} $(X,\Delta)$ is obtained by fixing an effective divisor $\Delta \in \WDiv_\QQ(X)$.  We say that $(X,\Delta)$ is \emph{$\QQ$-Gorenstein} if $\K_X + \Delta$ is $\QQ$-Cartier.  The \emph{log canonical ring} of a $\QQ$-Gorenstein pair $(X,\Delta)$ is defined to be the section ring of the $\QQ$-Cartier divisor $\K_X + \Delta$:
$$
R(X,\K_X + \Delta) := \bigoplus_{\ell \geq 0} \H^0(X,\Osh_X(\lfloor \ell (\K_X + \Delta) \rfloor )).
$$

\begin{defn}\label{Iitaka-Q-Gorenstein-pair}
The \emph{Iitaka dimension} of a $\QQ$-Gorenstein pair $(X,\Delta)$ is defined to be $\kappa(X,\K_X + \Delta)$.  This is the Iitaka dimension of the $\QQ$-Cartier divisor $\K_X + \Delta$.
\end{defn}

\begin{remark}
Let $(X,\Delta)$ be a $\QQ$-Gorenstein pair and $\ell$ a positive integer so that $\ell(\K_X + \Delta) \in \Div(X)$.  Then:
$$
\kappa(X,\K_X + \Delta) = \kappa(X,\ell(\K_X + \Delta))
$$
as is implied by \eqref{prelim:normal:var:eqn6}.   
\end{remark}

\subsection{Brauer classes, maximal orders and ramification divisors}\label{Brauer:ramification}  
Let $\Sigma$ be a central simple $\KK$-algebra with degree prime to $\cchar(\kk)$.  By an \emph{$\mathcal{O}_X$-order} in $\Sigma$, we mean a sheaf of algebras $\Lambda$ on $X$, which has the three properties that:
\begin{enumerate}
\item[(a)]{
$\Lambda$ is a coherent $\Osh_X$-module on $X$;
}
\item[(b)]{
$\Lambda$ is torsion free; 
and
}
\item[(c)]{$\KK \otimes_{\Osh_X} \Lambda  \simeq \Sigma$,} by a fixed isomorphism.
\end{enumerate}

Such an $\Osh_X$-order $\Lambda$ is \emph{maximal} if it is maximal with respect to inclusion amongst orders in $\Sigma$.  
If $\Lambda$ is a maximal $\Osh_X$-order in $\Sigma$ and if $\mathfrak{p}$ is a codimension $1$ point of $X$, then 
$\Lambda_{\mathfrak{p}} :=  \Osh_{X,\mathfrak{p}} \otimes\Lambda $ 
is a maximal $\Osh_{X,\mathfrak{p}}$-order in $\Sigma$ and  
$\hat{\Lambda}_{\mathfrak{p}} :=  \hat{\mathcal{O}}_{X,\mathfrak{p}} \otimes \Lambda_{\mathfrak{p}} $ 
is a maximal $\hat{\Osh}_{X,\mathfrak{p}}$-order in the $\KK_{\mathfrak{p}}$-central simple algebra 
$\Sigma_{\mathfrak{p}} := \KK_{\mathfrak{p}} \otimes_{\KK} \Sigma$.
 Also,  by~\cite[Theorem 1.5, p.~ 3]{Auslander:Goldman:1960}, an $\Osh_X$-order $\Lambda$ in $\Sigma$ is maximal if and only if it satisfies the two conditions that:
\begin{itemize}
\item[(a)]{
it is reflexive as an $\Osh_X$-module; and
 }
 \item[(b)]{
$\Lambda_{\mathfrak{p}}$ is a maximal $\Osh_{X,\mathfrak{p}}$-order for each codimension $1$ point $\mathfrak{p}$ of $X$.
 }
\end{itemize}
Let $\Lambda$ be a maximal $\Osh_X$-order in $\Sigma$, fix $\mathfrak{p}$ a codimension $1$ point of $X$, denote by 
$
\kappa_{\Lambda}(\mathfrak{p})
$
the centre of the ring 
$\hat{\Lambda}_{\mathfrak{p}} / \operatorname{rad} \hat{\Lambda}_{\mathfrak{p}}$ 
and put: 
$$
e_{\Lambda}(\mathfrak{p}) := [\kappa_{\Lambda}(\mathfrak{p}):\kappa (\mathfrak{p})]. 
$$
Then since $\hat{\Lambda}_\mathfrak{p}$ is maximal it follows, by Theorem~\ref{ramification:data:theorem1}, 
that:
$$
\hat{\Lambda}_{\mathfrak{p}}\hat{\mathfrak{m}}_{\mathfrak{p}}  =  \hat{\mathfrak{m}}_{\mathfrak{p}} \hat{\Lambda}_{\mathfrak{p}} = (\operatorname{rad} \hat{\Lambda}_{\mathfrak{p}})^{e_{\Lambda}(\mathfrak{p})}.
$$

\begin{defn}  We say that $e_{\Lambda}(\mathfrak{p})$ is the \emph{ramification index} of $\Lambda$ at $\mathfrak{p}$.  If $e_{\Lambda}(\mathfrak{p}) > 1$, then $\Lambda$ is \emph{ramified} at $\mathfrak{p}$.
 \end{defn}

     Before stating  Proposition~\ref{azumaya:discrim:locus:prop} below, we recall the concept of discriminant divisor of a maximal order $\Lambda$ inside of $\Sigma$ (compare with \cite[\S 25]{Reiner:2003}).  Let $U_{\mathrm{loc.free}} \subseteq X$ be the Zariski open subset over which $\Lambda$ is locally free as an $\Osh_X$-module and let $U = \spec R \subseteq U_{\mathrm{loc. free}}$ be an affine open subset over which $\Lambda$ is free.
     
     Fix a basis $e_1,\dots,e_{n^2}$, $n^2 = [\Sigma:\KK]$, for $\Lambda(U) \subseteq \Sigma$ over $\Osh_X(U) = R \subseteq \KK$.  The \emph{discriminant} of $\Lambda$ over $U$, with respect to the basis $e_1,\dots, e_{n^2}$ and the reduced trace pairing induced by $\Trd_{\Sigma/\KK}$, is given by:
     \begin{equation}\label{discrim:eqn}
     \mathrm{Disc}_{\Lambda / R}(e_1,\dots,e_{n^2}) := \det(\Trd_{\Sigma / \KK}(e_i e_j)) \in R.
     \end{equation}
     The principal ideal $\mathfrak{d}_{\Trd_{\Sigma / K} , R}$ generated by \eqref{discrim:eqn} does not depend on the choice of basis and such principal ideals $\mathfrak{d}_{\Trd_{\Sigma / K}, R}$, defined affine locally, patch together to give a principal ideal sheaf $\mathfrak{d}_{\Trd_{\Sigma / K}, \Osh_X}$ which determines an effective Cartier divisor on $U_{\mathrm{loc. free}}$.  Denote its closure in $X$ by $\operatorname{Disc}_X(\Lambda)$.  This is a Weil divisor in $X$.

  Proposition \ref{azumaya:discrim:locus:prop} below is well-known and its proof is omitted.  On the other hand, one approach to its proof is based on the concept  of \emph{discriminant} for maximal orders,  as described above,  together with Proposition \ref{dual:trace:rad:lemma}.
  
  \begin{proposition}\label{azumaya:discrim:locus:prop}
    If $\Lambda$ is a maximal $\Osh_X$-order in a central simple $\KK$-algebra $\Sigma$ then there exists a Weyl divisor $\operatorname{Disc}_X(\Lambda)$ in $X$, the {\it discriminant,}
    such that
    for all codimension one points $\mathfrak{p} \in X$, it holds true that $e_{\Lambda}(\mathfrak{p}) > 1$
    if and only if $\mathfrak{p} \in \operatorname{Disc}_X(\Lambda)$. 
    Furthermore, $\Disc_X(\Lambda)$ does not depend on the choice of maximal order $\Lambda$ in $\Sigma.$
    \end{proposition}

Now, let $\alpha \in \Br (\KK)$ be the class of our fixed central simple $\KK$-algebra $\Sigma$ and fix a codimension $1$ prime $\mathfrak{p}$ of $X$. Assume that the degree of $\alpha$ is prime to $\cchar(\kk)$.  Let $\hat{\alpha}_{\mathfrak{p}} \in \Br(\KK_{\mathfrak{p}})$ denote the image of $\alpha$ under the natural map 
$$\Br(\KK) \rightarrow \Br(\KK_{\mathfrak{p}}). $$
Then $\hat{\alpha}_{\mathfrak{p}}$ is the class of the central simple $\KK_{\mathfrak{p}}$-algebra $\Sigma_{\mathfrak{p}}$ and we can write
$$\Sigma_{\mathfrak{p}} \sim_{\mathrm{Morita}} \mathfrak{D}_{\mathfrak{p}} $$
for some unique, up to isomorphism, $\KK_{\mathfrak{p}}$-central division ring $\mathfrak{D}_{\mathfrak{p}}$. Put
$$ e_{\mathfrak{p}}(\alpha) := e_{\mathfrak{D}_{\mathfrak{p}}} = e(\mathfrak{D}_{\mathfrak{p}} / \KK_{\mathfrak{p}}),$$
where the number $e(\mathfrak{D}_{\mathfrak{p}}/\KK_{\mathfrak{p}})$ is defined as in \S \ref{maximal:orders:division:algebras:complete:dvrs}.

\begin{defn} In the above setting, 
we say that $e_{\mathfrak{p}}(\alpha)$ is the \emph{ramification index} of the Brauer class $\alpha$ at $\mathfrak{p}$.
\end{defn}

We observe that the numbers $e_\Lambda(\mathfrak{p})$ and $e_{\mathfrak{p}}(\alpha)$ coincide.

\begin{proposition}\label{Brauer:ram:prop}  Fix a codimension $1$ prime $\mathfrak{p}$ of $X$, let $\Lambda$ be a maximal $\Osh_X$-order in $\Sigma$ and  let $\alpha \in \Br(\KK)$ denote the Brauer class of $\Sigma$. In this setting, we then have that
$$ e_{\mathfrak{p}}(\alpha) = e_{\Lambda}(\mathfrak{p}).$$
\end{proposition}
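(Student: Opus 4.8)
The plan is to reduce everything to the completed local situation, where both invariants have already been computed, and then to invoke the structure theorems for maximal orders over complete discrete valuation rings established earlier in the paper. By construction, $e_{\mathfrak{p}}(\alpha)$ and $e_{\Lambda}(\mathfrak{p})$ are defined entirely in terms of the completion $\hat{\Osh}_{X,\mathfrak{p}}$, which is a complete discrete valuation ring with fraction field $\KK_{\mathfrak{p}}$ and residue field $\hat{\kappa}(\mathfrak{p})$, the completed central simple algebra $\Sigma_{\mathfrak{p}} := \KK_{\mathfrak{p}} \otimes_{\KK} \Sigma$, and the maximal $\hat{\Osh}_{X,\mathfrak{p}}$-order $\hat{\Lambda}_{\mathfrak{p}}$ in $\Sigma_{\mathfrak{p}}$. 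So the entire statement lives in the local complete setting of \S\ref{ramification:data}, and no global input beyond this localization/completion is required.

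First I would unwind the definition of $e_{\Lambda}(\mathfrak{p})$. By definition $e_{\Lambda}(\mathfrak{p}) = [\kappa_{\Lambda}(\mathfrak{p}) : \hat{\kappa}(\mathfrak{p})]$, where $\kappa_{\Lambda}(\mathfrak{p}) = Z(\hat{\Lambda}_{\mathfrak{p}}/\operatorname{rad}\hat{\Lambda}_{\mathfrak{p}})$; this is precisely the invariant $e'$ of the maximal order $\hat{\Lambda}_{\mathfrak{p}}$ in the sense of \eqref{ram:numbers:defined}, applied with $(R,\mathfrak{m}) = \hat{\Osh}_{X,\mathfrak{p}}$. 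Because the degree, and hence the index, of $\Sigma$ is prime to $\cchar\kk = \cchar\hat{\kappa}(\mathfrak{p})$ by our standing hypotheses, Theorem~\ref{ramification:data:theorem1} applies to $\hat{\Lambda}_{\mathfrak{p}}$ and gives $e' = e$. Thus $e_{\Lambda}(\mathfrak{p})$ coincides with the ramification index $e$ of $\hat{\Lambda}_{\mathfrak{p}}$, the integer determined by $(\operatorname{rad}\hat{\Lambda}_{\mathfrak{p}})^{e} = \hat{\Lambda}_{\mathfrak{p}}\hat{\mathfrak{m}}_{\mathfrak{p}}$.

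Next I would identify the other side. The class $\hat{\alpha}_{\mathfrak{p}}$ is represented by $\Sigma_{\mathfrak{p}} \sim_{\mathrm{Morita}} \mathfrak{D}_{\mathfrak{p}}$ for a unique $\KK_{\mathfrak{p}}$-central division ring $\mathfrak{D}_{\mathfrak{p}}$, and by definition $e_{\mathfrak{p}}(\alpha) = e(\mathfrak{D}_{\mathfrak{p}}/\KK_{\mathfrak{p}})$, the ramification index of the unique maximal order $\mathfrak{R}_{\mathfrak{p}}$ in $\mathfrak{D}_{\mathfrak{p}}$ as defined in \S\ref{max:complete}; note the two conventions for $e$ agree, since in both cases it is characterized by $(\operatorname{rad}\mathfrak{R}_{\mathfrak{p}})^{e} = \mathfrak{R}_{\mathfrak{p}}\hat{\mathfrak{m}}_{\mathfrak{p}}$. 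Now Corollary~\ref{ramification:data:corollary1'} asserts exactly that the ramification invariants of the maximal order $\hat{\Lambda}_{\mathfrak{p}}$ in $\Sigma_{\mathfrak{p}}$ agree with those of the unique maximal order $\mathfrak{R}_{\mathfrak{p}}$ in the Morita-equivalent division algebra $\mathfrak{D}_{\mathfrak{p}}$; in particular their ramification indices $e$ coincide. Chaining the two identifications gives $e_{\Lambda}(\mathfrak{p}) = e(\hat{\Lambda}_{\mathfrak{p}}) = e(\mathfrak{R}_{\mathfrak{p}}) = e_{\mathfrak{p}}(\alpha)$, as claimed.

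There is no substantial obstacle here: the result is essentially definitional once the local theory is in place. The only points requiring care are bookkeeping ones—matching the invariant $e'$ of \eqref{ram:numbers:defined} with the field-degree definition of $e_{\Lambda}(\mathfrak{p})$, checking that the two normalizations of the ramification index $e$ in \S\ref{max:complete} and \S\ref{ramification:data} agree, and verifying that the hypothesis on the index being prime to $\cchar\kk$ (which propagates from $\Sigma$ to $\Sigma_{\mathfrak{p}}$ and to $\mathfrak{D}_{\mathfrak{p}}$, whose index divides that of $\Sigma$) is exactly what licenses the application of Theorem~\ref{ramification:data:theorem1} and Corollary~\ref{ramification:data:corollary1'}.
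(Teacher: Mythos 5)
Your proposal is correct and follows essentially the same route as the paper: the paper's own proof simply cites Corollary~\ref{ramification:data:corollary1'} together with the definitions (noting that the prime-to-characteristic hypothesis licenses its use), and your argument is an expanded version of exactly that chain, passing through Theorem~\ref{ramification:data:theorem1} to identify $e_\Lambda(\mathfrak{p})$ with the invariant $e$ of $\hat{\Lambda}_{\mathfrak{p}}$ and then matching it with $e_{\mathfrak{D}_{\mathfrak{p}}}$. Your extra care in checking that the two normalizations of $e$ in \S\ref{max:complete} and \S\ref{ramification:data} agree (via $J^e = \mathfrak{R}\mathfrak{m}$) is a worthwhile bookkeeping point that the paper leaves implicit.
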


\begin{proof}
This is an immediate consequence of Corollary~\ref{ramification:data:corollary1'} and the definitions.  Note that the hypothesis of Corollary~\ref{ramification:data:corollary1'} are indeed satisfied because the degree of $\alpha$ is assumed to be prime to $\cchar(\kk)$.  
\end{proof}

One consequence of Proposition~\ref{Brauer:ram:prop} and Proposition ~\ref{azumaya:discrim:locus:prop} is that, for a fixed Brauer class $\alpha \in \Br (\KK)$ with degree prime to $\cchar(\kk)$, and by considering the ramification of a maximal order $\Lambda$ in $\Sigma$, a central simple algebra representing $\alpha$, there are at most finitely many prime divisors $\mathfrak{p} \in X$ with  $e_{\mathfrak{p}}(\alpha) > 1$.  In light of this, we associate to every fixed Brauer class $\alpha \in \Br (\KK)$, a 
Weil divisor
$$\Delta_{X,\alpha} = \Delta_\alpha \in \WDiv_\QQ(X)$$ 
which is defined by
\begin{equation}\label{brauer:boundary:defn}
\Delta_\alpha := \sum\limits_{\mathfrak{p} \in X, \, \operatorname{ht}(\mathfrak{p}) = 1} \left(1 - \frac{1}{e_{\mathfrak{p}}(\alpha)} \right)\mathfrak{p}.
\end{equation}

Motivated by the above discussion, we make the following definitions.

\begin{defns}\label{Iitaka:Brauer:defn}
Let $\alpha \in \Br (\KK)$ be a Brauer class, with degree prime to $\cchar(\kk)$. 
We say that the divisor \eqref{brauer:boundary:defn} is its  \emph{ramification divisor}.  The \emph{canonical divisor} of $\alpha$ is the Weil divisor
$
\K_{X,\alpha} = \K_\alpha := \K_X + \Delta_\alpha \in \WDiv_\QQ(X)\text{.}
$
 A \emph{Brauer pair} is a pair $(X,\alpha)$ of a normal variety $X$, with function field $\KK = \kk(X)$, together with a Brauer class $\alpha \in \Br (\KK)$.  A Brauer pair $(X,\alpha)$ is \emph{$\QQ$-Gorenstein} if its canonical divisor $\K_\alpha$ has the property that $\K_\alpha \in \Div_\QQ(X)$.  
When $X$ is additionally assumed to be proper, we define the \emph{Iitaka dimension} of a $\QQ$-Gorenstein Brauer pair $(X,\alpha)$ to be the Iitaka dimension of the $\QQ$-Gorenstein pair $(X,\Delta_\alpha)$ as given by Definition~\ref{Iitaka-Q-Gorenstein-pair}.   We denote the Iitaka dimension of $(X,\alpha)$ by $\kappa(X,\alpha)$.
\end{defns}

\subsection{Models and b-divisors}\label{b-div:models}
We now fix our conventions about \emph{birational divisors}.
Let $\kk$ be an algebraically closed field and let $\KK$ be a finitely generated field over $\kk$.
Let $\mathcal{M}_{/ \KK}$ be the category of normal proper models of $\KK$.  Then objects of $\mathcal{M}_{/ \KK}$ are normal integral proper varieties $Y / \kk$ together with a fixed morphism $\spec \KK \rightarrow Y$ over $\kk$ to the generic point of $Y$, and morphisms between objects of $\mathcal{M}_{/ \KK}$ are given by those birational morphisms $X \to Y$ over $\spec \kk$ such that the diagram
$$
\begin{tikzcd}
\Spec \KK \arrow{r}{} \arrow{dr}[swap]{} & X \arrow{d}{}\\
& Y
\end{tikzcd} 
 $$
commutes.  Every morphism of models induces pushforward morphisms, at the level of Weil divisors,
and these morphisms behave well with respect to composition.  The collection of such morphisms form an inverse system with respect to the directed set of objects of $\mathcal{M}_{/ \KK}$.    We denote the resulting inverse limit in the category of abelian groups as:
$$\DDiv_\QQ(\KK) := \idlim\limits_{Y \in \mathcal{M}_{/ \KK}} \WDiv_\QQ(Y). $$

\begin{defn}
A (fractional) \emph{b-divisor} on $\KK$ is an element $\mathbb{D} \in \DDiv_\QQ(\KK)$.
\end{defn}

We may represent a b-divisor as an infinite collection of divisors
$$
\mathbb{D} = \{D_Y\}_{Y \in \mathcal{M}_{/ \KK}}.
$$
Here the divisors $D_Y \in \WDiv_\QQ(Y)$ are divisors on normal models $Y \in \mathcal{M}_{/ \KK}$  which have the property that whenever we are given a morphism $g : Y' \rightarrow Y$ of normal models, then
$$D_Y = g_*(D_{Y'}).$$
  In this setting, 
we define 
$$\mathbb{D}_Y := D_Y$$ 
and we refer to $\mathbb{D}_Y$ as the \emph{trace of the b-divisor $\mathbb{D}$ on $Y$}.
 
\subsection{b-log $\QQ$-Gorenstein pairs}\label{b-log:div}  We now make several definitions related to the concept of a \emph{b-log $\QQ$-Gorenstein pair}.  These concepts are from~\cite{Chan:plus:10}.

\begin{defn}\label{b-Gorenstein:defn}   Let $X$ be a normal proper variety over an algebraically closed field $\kk$ with function field $\KK = \kk(X)$.  By a \emph{b-log pair}, or simply a \emph{pair}, we mean a pair $(X,\mathbb{D})$ where $\mathbb{D} \in \DDiv_\QQ(\KK)$ is a b-divisor, with the property that $\mathbb{D}_Y$ has coefficients in $[0,1) \cap \QQ$ for each $Y \in \mathcal{M}_{/\KK}$.
By a \emph{b-log $\QQ$-Gorenstein pair}, or simply a \emph{$\QQ$-Gorenstein pair}, we mean a  pair $(X,\mathbb{D})$ where $\mathbb{D} \in \DDiv_\QQ(\KK)$ is a b-divisor on $\KK$ with the property that $\K_X+\mathbb{D}_X \in \Div_\QQ(X)$.
\end{defn} 
 
Fix a $\QQ$-Gorenstein pair $(X,\mathbb{D})$ and let $f : Y \rightarrow X$ be a birational morphism.  We  assume that $Y$ is normal and $\K_{Y} + \mathbb{D}_{Y} \in \Div_\QQ(Y)$.  Let $E_1,\dots,E_k$ be the exceptional prime divisors of $f$; we can  write
\begin{equation}\label{discrep:eqn1}
\K_{Y} + \mathbb{D}_{Y}  \equiv  f^*(\K_X + \mathbb{D}_X) + \sum_i b_i' E_i,
\end{equation}
for rational numbers $b_i' \in \QQ$.  Finally, write the trace of $\mathbb{D}$ on $Y$ in the form:
\begin{equation}\label{discrep:eqn1'}
\mathbb{D}_Y = \sum (1-1/r_D)D
\end{equation}
for rational numbers $r_D$.

\begin{defn}[\cite{Chan:Ingalls:2005}, \cite{Chan:plus:10}]
The rational numbers $b(E_i;X,\mathbb{D}):=b_i'r_{E_i}$, determined by the expressions \eqref{discrep:eqn1} and \eqref{discrep:eqn1'}, are called the \emph{b-discrepancy} of the prime divisor $E_i \in \WDiv(Y)$. 
\end{defn}
Having defined the concept of b-discrepancy for a $\QQ$-Gorenstein pair $(X,\mathbb{D})$ for prime exceptional divisors  $E_i$ in a fixed normal model $f : Y \rightarrow X$, we define the b-discrepancy for a $\QQ$-Gorenstein pair $(X,\mathbb{D})$ in the following manner.

\begin{defn}[\cite{Chan:Ingalls:2005}, \cite{Chan:plus:10}]\label{QQ:Gorenstein:canonical}
We define the \emph{b-discrepancy} of a $\QQ$-Gorenstein pair $(X,\mathbb{D})$ to be:
$$
\operatorname{b-discrep}(X,\mathbb{D}) := \inf  
 \left\{ b(E;X,\mathbb{D}) : \substack{\text{$E$ is a prime  exceptional divisor  } \\ \text{ on any normal model $f:Y\rightarrow X$} \\ \text{with $\K_{Y} + \mathbb{D}_{Y} \in \Div_\QQ(Y)$ } } \right\}. 
 $$
\end{defn}
 
Finally, we introduce the concepts of b-canonical and b-terminal $\QQ$-Gorenstein pairs.

\begin{defn}[\cite{Chan:Ingalls:2005}, \cite{Chan:plus:10}]\label{b-canonial:defn}  Let $(X,\mathbb{D})$ be a $\QQ$-Gorenstein pair.
We say that $(X,\mathbb{D})$ is \emph{b-canonical}, or simply that $(X,\mathbb{D})$ has
\emph{b-canonical singularities}, if: 
$$\operatorname{b-discrep}(X,\mathbb{D}) \geq 0$$ and that $(X,\mathbb{D})$ is \emph{b-terminal} if: 
$$\operatorname{b-discrep}(X,\mathbb{D}) > 0.$$
\end{defn}

\subsection{Iitaka dimensions of b-log-$\QQ$-Gorenstein pairs}\label{b-Kodaria:defn}  We now define a concept of Iitaka dimension for a given b-log $\QQ$-Gorenstein pair.

\begin{defn}\label{b-iitaka-defn}
The \emph{Iitaka dimension} of a b-log $\QQ$-Gorenstein pair $(X,\mathbb{D})$ is denoted by $\kappa(X,\K+\mathbb{D})$.  It is defined to be that of the pair $(X,\mathbb{D}_X)$:
$$ \kappa(X,\K+\mathbb{D}) := \kappa(X, \K_X + \mathbb{D}_X). $$
\end{defn}

Theorem~\ref{b-divisor-theorem} below pertains to birational invariance of Kodaira dimension for $\QQ$-Gorenstein pairs, see Corollary~\ref{b-divisor-theorem-cor} and \S \ref{canonical:kodaira}.  That b-terminal, and hence b-canonical, $\QQ$-Gorenstein resolutions for a given fractional b-log pair exist, is established in~\cite{Chan:plus:10}. Further, in \S \ref{Galois:embeddings:division:algebras}, we formulate and establish an equivariant version of that result, see Theorem~\ref{existence:equivariant:terminal:resolutions}. 

\begin{theorem}\label{b-divisor-theorem}
Let the characteristic of $\kk$ be zero and let $\KK$ be a finitely generated field over $\kk$.  Let $\mathbb{D}$ be a b-divisor on $\KK$, with coefficients in $[0,1)\cap \QQ$, and let $X$ and $Y$ be normal proper models of $\KK$ with the pairs $(X,\DD)$ and $(Y,\DD)$ having b-canonical singularities.  Then, if $\ell(\K_Y + \DD_Y)$ and $\ell(\K_X+\DD_X)$ are Cartier, for some positive integer $\ell$, we have that:
$$R(Y, \ell(\K_Y + \DD_Y)) = R(X,\ell(\K_X + \DD_X))$$
and so 
$$\kappa(Y, \K+\mathbb{D}) = \kappa(X,\K+\mathbb{D}).$$
\end{theorem}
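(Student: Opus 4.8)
The plan is to reduce the comparison of the two models to a single birational morphism by passing to a common dominating model, and then to run the standard argument that global sections of pluri-log-canonical sheaves are preserved under a birational morphism whose exceptional discrepancies are nonnegative. First I would choose a normal model $W$ of $\KK$ admitting birational morphisms $p : W \to X$ and $q : W \to Y$ (for instance the normalization of the closure of the graph of the birational map $X \dashrightarrow Y$, or a common resolution, which exists since $\cchar \kk = 0$). It then suffices to prove, for a birational morphism $f : W \to X$ of normal models with $(X,\DD)$ b-canonical and $\ell(\K_X + \DD_X)$ Cartier, the identity
$$\H^0(W, \Osh_W(\lfloor m\ell(\K_W + \DD_W)\rfloor)) = \H^0(X, \Osh_X(m\ell(\K_X + \DD_X)))$$
for every $m \geq 0$, together with its analogue for $q$ and $(Y,\DD)$. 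Reading both sides inside the function field $\KK$, this would identify $R(X, \ell(\K_X+\DD_X))$ and $R(Y, \ell(\K_Y + \DD_Y))$ with the common graded subring $\bigoplus_m \H^0(W, \Osh_W(\lfloor m\ell(\K_W + \DD_W)\rfloor))$ of $\bigoplus_m \KK$, which is the first conclusion.

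To establish the displayed identity I would fix compatible canonical divisors $\K_W$ and $\K_X$ (agreeing on the open locus where $f$ is an isomorphism), so that the discrepancy relation \eqref{discrep:eqn1} becomes an honest equality of $\QQ$-divisors
$$\ell(\K_W + \DD_W) = f^*(\ell(\K_X + \DD_X)) + E',$$
with $E'$ supported on the exceptional locus of $f$; here $f^*(\ell(\K_X+\DD_X))$ is a genuine integral Cartier divisor precisely because $\ell(\K_X+\DD_X)$ is Cartier. The b-canonical hypothesis says $\operatorname{b-discrep}(X,\DD) \geq 0$, which by the definition $b(E_i;X,\DD) = b_i' r_{E_i}$ with $r_{E_i} > 0$ forces every coefficient $b_i' \geq 0$, hence $E' \geq 0$. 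Taking floors and using integrality of $f^*(m\ell(\K_X+\DD_X))$ gives $\lfloor m\ell(\K_W+\DD_W)\rfloor = f^*(m\ell(\K_X+\DD_X)) + \lfloor mE'\rfloor$ with $\lfloor mE'\rfloor$ effective and $f$-exceptional. The projection formula then yields
$$f_*\Osh_W(\lfloor m\ell(\K_W+\DD_W)\rfloor) = \Osh_X(m\ell(\K_X+\DD_X)) \otimes f_*\Osh_W(\lfloor mE'\rfloor),$$
and since $\lfloor mE'\rfloor$ is effective and exceptional on the normal variety $X$ one has $f_*\Osh_W(\lfloor mE'\rfloor) = \Osh_X$; taking global sections and using $\H^0(W,-) = \H^0(X, f_*-)$ gives the identity. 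Running the same argument for $q$ (using b-canonicity of $(Y,\DD)$) completes the reduction. The equality of section rings then passes to their degree-zero fraction fields, and invoking \eqref{prelim:normal:var:eqn6} to replace $\K+\DD$ by its $\ell$-th multiple gives $\kappa(X,\K_X+\DD_X) = \kappa(Y,\K_Y+\DD_Y)$, which is $\kappa(Y,\K+\DD) = \kappa(X,\K+\DD)$ by Definition~\ref{b-iitaka-defn}.

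The hard part will be the two bookkeeping points hidden in the second paragraph: (i) upgrading the discrepancy relation from the $\equiv$-statement of \eqref{discrep:eqn1} to an honest equality of divisors whose exceptional part $E'$ is genuinely effective, where the key observation is that it is b-canonicity of the \emph{base} $X$ (resp. $Y$), not of $W$, that is needed — this is what lets us take an arbitrary common model $W$ without any hypothesis on $W$ itself; and (ii) the vanishing $f_*\Osh_W(\lfloor mE'\rfloor) = \Osh_X$ for effective exceptional $\lfloor mE'\rfloor$, which rests on normality of $X$ and on the exceptional locus having image of codimension $\geq 2$, so that a section bounded only along exceptional divisors extends across them. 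Everything else — existence of the dominating model in characteristic zero, the projection formula, and invariance of Iitaka dimension under positive integral multiples — is standard.
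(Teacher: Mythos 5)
Your proposal is correct and follows essentially the same route as the paper: the paper also passes to a common resolution $Z$ of $X$ and $Y$ and invokes Proposition~\ref{b-divisor-section-ring-proposition}, whose proof is exactly your projection-formula argument ($M \simeq f^*L \otimes \Osh_Y(E)$ with $E$ effective and exceptional, hence $f_*M^{\otimes m} = L^{\otimes m}$). Your handling of the intermediate model via floors, rather than assuming $\ell(\K_W+\DD_W)$ is itself Cartier, is a small tidying of a point the paper's proof glosses over, but the substance is identical.
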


Theorem~\ref{b-divisor-theorem} is a consequence of Proposition~\ref{b-divisor-section-ring-proposition} and Corollary~\ref{b-divisor-proposition} below.  Before stating these results, recall that a \emph{$\QQ$-Gorenstein pair} $(X,D)$ consists of an effective divisor 
$D \in \WDiv_\QQ(X)$ 
with the property that
$\K_X + D \in \Div_\QQ(X).$  

\begin{proposition}\label{b-divisor-section-ring-proposition}  Let $X$ and $Y$ be normal proper varieties over $\kk$ and suppose given $\QQ$-Gorenstein pairs $(Y,D_Y)$, $(X,D_X)$ together with a proper birational morphism $f : Y \rightarrow X$.  Assume that: 
$$\K_Y + D_Y = f^*(\K_X + D_X) + \sum b_i E_i$$
for $E_i$ $f$-exceptional prime  divisors and non-negative rational numbers $b_i \geq 0$.  
If $\ell(\K_Y + D_Y)$ and $\ell(\K_X+D_X)$ are Cartier, for some positive integer $\ell$, then
we have equality of section rings:
$$R(Y, \ell(\K_Y + D_Y)) = R(X,\ell(\K_X+D_X)).$$
\end{proposition}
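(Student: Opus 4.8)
The plan is to prove the equality of section rings degree by degree, realising both rings as graded subrings of the common function field. Since $\ell(\K_X+D_X)$ and $\ell(\K_Y+D_Y)$ are Cartier, no rounding is needed, and for every $m \geq 0$ the degree-$m$ piece of $R(X,\ell(\K_X+D_X))$ is
$$\{g \in \KK : \operatorname{div}_X(g) + m\ell(\K_X+D_X) \geq 0\} \cup \{0\},$$
viewed as a $\kk$-subspace of $\KK = \kk(X) = \kk(Y)$, and likewise on $Y$. Because multiplication in both section rings is induced by multiplication in $\KK$, I would reduce to showing that these subspaces of $\KK$ agree for each $m$; the graded ring structures then match automatically, yielding the asserted equality of rings.

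Fixing $m$, I would set $L := \Osh_X(m\ell(\K_X+D_X))$, an invertible sheaf since the divisor is Cartier, and rewrite the hypothesised identity (scaled by $m\ell$) as
$$m\ell(\K_Y+D_Y) = f^*(m\ell(\K_X+D_X)) + E_m, \qquad E_m := m\ell \sum_i b_i E_i,$$
where $E_m$ is effective (as $b_i \geq 0$), $f$-exceptional, and integral, the last point because both $m\ell(\K_Y+D_Y)$ and the pullback of the Cartier divisor $m\ell(\K_X+D_X)$ are integral. Consequently $\Osh_Y(m\ell(\K_Y+D_Y)) \simeq \Osh_Y(E_m) \otimes f^*L$, using that tensoring the reflexive sheaf $\Osh_Y(E_m)$ by the line bundle $f^*L$ computes the reflexive sheaf of the sum.

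The key step would be the lemma that $f_*\Osh_Y(E) = \Osh_X$ whenever $f : Y \to X$ is proper birational with $X,Y$ normal and $E \geq 0$ is integral and $f$-exceptional. The inclusion $\Osh_X = f_*\Osh_Y \subseteq f_*\Osh_Y(E)$ is immediate from $E \geq 0$ together with $f_*\Osh_Y = \Osh_X$ (Zariski's main theorem). For the reverse inclusion, a local section of $f_*\Osh_Y(E)$ over an open $U \subseteq X$ is a $g \in \KK$ with $\operatorname{div}_Y(g) + E \geq 0$ over $f^{-1}(U)$; since $f$ is an isomorphism over a codimension-$\geq 2$ complement in $X$, every prime divisor $P'$ of $X$ meeting $U$ has a non-exceptional strict transform $P$ on $Y$ with $v_P = v_{P'}$ and $\operatorname{coeff}_P(E) = 0$, whence $v_{P'}(g) = v_P(g) \geq 0$; normality of $X$ then forces $g \in \Osh_X(U)$. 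Granting the lemma, the projection formula gives $f_*\Osh_Y(m\ell(\K_Y+D_Y)) = f_*\Osh_Y(E_m) \otimes L = L$, and applying $\H^0$ identifies the degree-$m$ pieces. Equivalently, one can argue directly: for one inclusion, pull back the section corresponding to $g$ via $f^*$ (legitimate because $L$ is a line bundle) to obtain $\operatorname{div}_Y(g) + f^*(m\ell(\K_X+D_X)) \geq 0$ and add $E_m \geq 0$; for the other, compare coefficients along strict transforms exactly as above.

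I expect the main obstacle to be this lemma $f_*\Osh_Y(E) = \Osh_X$ — precisely, verifying that effectiveness of $E$ (which is exactly where the hypothesis $b_i \geq 0$, inherited from b-canonicity, is used) together with normality of $X$ prevents the exceptional locus from contributing new sections. The remaining points (integrality of $E_m$, the reflexive-hull identity, and upgrading the degree-wise equalities to a graded ring isomorphism via the common realisation inside $\KK$) are routine.
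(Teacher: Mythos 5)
Your proposal is correct and follows essentially the same route as the paper: write $\Osh_Y(m\ell(\K_Y+D_Y)) \simeq f^*L \otimes \Osh_Y(E_m)$ with $E_m$ effective and $f$-exceptional, deduce $f_*$ of this sheaf equals $L^{\otimes m}$, and take global sections. The only difference is that you prove the key fact $f_*\Osh_Y(E)=\Osh_X$ directly (correctly, via the codimension-two exceptional locus and normality of $X$), whereas the paper cites it from de Fernex--Ein--Musta\c{t}\u{a} and Lazarsfeld.
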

\begin{proof}
Choose $\ell > 0$ so that the divisors
$\ell(\K_Y + D_Y)$,  
$\ell(\K_X + D_X),$ 
are Cartier and so consequently
$E := \sum \ell b_i E_i$  
is also Cartier. Next, put
$L := \Osh_X(\ell(\K_X + D_X)) $
and
$M := \Osh_Y(\ell(\K_Y + D_Y)); $
then 
$M \simeq f^* L \otimes \Osh_Y(E). $
Further, since $X$ and $Y$ are normal and $E$ is exceptional and effective we have natural isomorphisms:
\begin{equation}\label{b-divisor-proposition:eqn4'}
\H^0(X,L^{\otimes m})  = \H^0(Y,M^{\otimes m}),
\end{equation}
for all integers $m>0$, since $f_* M^{\otimes m} = L^{\otimes m}$,~\cite[Lemma B.2.5]{deF:E:M}, alternatively this follows from~\cite[Example 2.1.16, p.~ 126]{Laz}.

Finally, we note that \eqref{b-divisor-proposition:eqn4'} implies equality of section rings 
$$R(Y, \ell(\K_Y + D_Y)) = R(X,\ell(\K_X+D_X))$$ as desired. 
\end{proof}

As an immediate consequence of Proposition~\ref{b-divisor-section-ring-proposition}, we record the following remark for later use:

\begin{corollary}\label{b-divisor-proposition}  With the same hypothesis as Proposition~\ref{b-divisor-section-ring-proposition}, we have equality of Iitaka dimensions:
$$\kappa(Y,\K_Y+D_Y) = \kappa(X,\K_X + D_X). $$
\end{corollary}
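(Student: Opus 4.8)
The plan is to read this off directly from the equality of section rings furnished by Proposition~\ref{b-divisor-section-ring-proposition}, combined with the scale-invariance of the Iitaka dimension recorded in \eqref{prelim:normal:var:eqn6} and its description via the semigroup and the degree zero fraction field in \eqref{prelim:normal:var:eqn5}. The conceptual point is that the Iitaka dimension $\kappa(X,\K_X+D_X)$ is an invariant of the graded $\kk$-algebra $R(X,\K_X+D_X)$, depending only on its semigroup and degree zero fraction field; Proposition~\ref{b-divisor-section-ring-proposition} matches these section rings after passing to a Veronese, so their Iitaka dimensions must agree.

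Concretely, first I would fix a positive integer $\ell$ such that both $\ell(\K_Y+D_Y)$ and $\ell(\K_X+D_X)$ are Cartier, exactly as in the proof of Proposition~\ref{b-divisor-section-ring-proposition}; this is possible since both pairs are $\QQ$-Gorenstein. For such $\ell$ the divisor $\ell(\K_X+D_X)$ is integral, so $R(X,\ell(\K_X+D_X))$ coincides with the $\ell$th Veronese subring of $R(X,\K_X+D_X)$, and similarly on $Y$. Proposition~\ref{b-divisor-section-ring-proposition} then gives the identification $R(Y,\ell(\K_Y+D_Y)) = R(X,\ell(\K_X+D_X))$ of graded algebras. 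In particular the semigroups coincide, $\N(Y,\ell(\K_Y+D_Y)) = \N(X,\ell(\K_X+D_X))$, and so do the degree zero fraction fields, $\kk(Y,\ell(\K_Y+D_Y)) = \kk(X,\ell(\K_X+D_X))$. By the defining formula \eqref{prelim:normal:var:eqn5} this yields $\kappa(Y,\ell(\K_Y+D_Y)) = \kappa(X,\ell(\K_X+D_X))$.

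Finally I would strip off the factor $\ell$ using \eqref{prelim:normal:var:eqn6}, which gives $\kappa(Y,\K_Y+D_Y) = \kappa(Y,\ell(\K_Y+D_Y))$ and $\kappa(X,\K_X+D_X) = \kappa(X,\ell(\K_X+D_X))$; chaining these with the equality of the previous step produces the asserted identity. I do not expect a genuine obstacle here, since the argument only unwinds definitions once the ring equality is in hand. The single point requiring a word of care is the degenerate case $\N = (0)$, but this is handled uniformly: because the section rings agree on the nose, their semigroups are simultaneously zero or nonzero, so both Iitaka dimensions equal $-1$ in the former situation and the common transcendence degree in the latter.
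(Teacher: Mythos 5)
Your proposal is correct and follows the same route as the paper: the paper's proof of this corollary is exactly the one-line combination of Proposition~\ref{b-divisor-section-ring-proposition} with the scale-invariance relation \eqref{prelim:normal:var:eqn6}. You have merely spelled out the intermediate steps (identifying the Veronese subrings, matching semigroups and degree zero fraction fields, and handling the degenerate case $\N = (0)$), all of which are consistent with the paper's intent.
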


\begin{proof} 
Use the relation \eqref{prelim:normal:var:eqn6} combined with Proposition~\ref{b-divisor-section-ring-proposition}.
\end{proof}

We now prove Theorem~\ref{b-divisor-theorem}.

\begin{proof}[Proof of Theorem~\ref{b-divisor-theorem}]
Applying Proposition~\ref{b-divisor-section-ring-proposition} and Corollary 
\ref{b-divisor-proposition} to a common resolution $Z$ of the $\QQ$-Gorenstein pairs $(X, \DD_X)$ and $(Y, \DD_Y)$ it follows that
$$R(Y, \ell(\K_Y + \DD_Y)) = R(Z,\ell(\K_Z + \DD_Z)) = R(X,\ell(\K_X + \DD_X))$$
and so consequently
$$\kappa(X,\K_X+ \DD_X) = \kappa(Y,\K_Y + \DD_Y). $$
In light of this, the conclusion of Theorem~\ref{b-divisor-theorem} is satisfied too because of Definition~\ref{b-iitaka-defn}.
\end{proof}

We also note that Theorem~\ref{brauer:iitakacanonical:singulatiries:main:result} is implied by 
Theorem~\ref{b-divisor-theorem}. 

\begin{proof}[Proof of Theorem~\ref{brauer:iitakacanonical:singulatiries:main:result}]
Immediate consequence of Theorem~\ref{b-divisor-theorem}.
\end{proof}

\subsection{b-divisors and Brauer classes}\label{b-div:brauer}  We continue with the setting of \S \ref{b-Kodaria:defn} and explain how every class $\alpha \in \Br(\KK)$, with degree prime to $\cchar (\kk)$, determines a b-divisor $\mathbb{D}(\alpha) \in \DDiv_\QQ(\KK)$.  To this end, we have, for each $Y \in \mathcal{M}_{/\KK}$, a ramification divisor 
$$\Delta_{Y,\alpha} \in \WDiv_\QQ(Y)$$
determined by a fixed Brauer class $\alpha \in \Br(\KK)$ and the prime divisors on $Y$.  These divisors have the property that:
\begin{equation}\label{brauer:pairs:b:div}
g_* \Delta_{Y,\alpha} = \Delta_{Y',\alpha}
\end{equation}
for each morphism $g \colon Y \rightarrow Y'$ in $\mathcal{M}_{/\KK}$.  The reason that \eqref{brauer:pairs:b:div} holds true follows from the fact that the (divisorial) valuation of $\KK$ induced by the prime divisors over two models $Y$ and $Y'$ are equivalent.

In this setting, Corollary~\ref{b-divisor-proposition} takes the following form: 

\begin{corollary}\label{b-divisor-theorem-cor} Let $X$ be a normal proper variety with function field $\KK = \kk(X)$ and fix a Brauer class $\alpha \in \Br(\KK)$ with degree prime to $\cchar(\kk)$.  Suppose that $(X,\mathbb{D}(\alpha))$ is a $\QQ$-Gorenstein b-canonical pair.  Then for each proper normal model $f \colon Y \rightarrow X$ with 
$$\K_Y + \mathbb{D}(\alpha)_Y \in \Div_\QQ(Y),$$ 
we have that:
$$\kappa(X,\K+\mathbb{D}(\alpha)) = \kappa(Y,\K+\mathbb{D}(\alpha)). $$
\end{corollary}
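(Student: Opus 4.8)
The plan is to deduce the corollary from Theorem~\ref{b-divisor-theorem}, or more efficiently from its two constituent results Proposition~\ref{b-divisor-section-ring-proposition} and Corollary~\ref{b-divisor-proposition}, specialized to the Brauer b-divisor $\mathbb{D}(\alpha)$. The starting point is that a morphism $f : Y \rightarrow X$ in $\mathcal{M}_{/\KK}$ exhibits $Y$ as dominating $X$, so the two hypotheses of Proposition~\ref{b-divisor-section-ring-proposition} are immediately within reach: the pair $(X,\mathbb{D}(\alpha)_X)$ is $\QQ$-Gorenstein because $(X,\mathbb{D}(\alpha))$ is assumed $\QQ$-Gorenstein and b-canonical, while $(Y,\mathbb{D}(\alpha)_Y)$ is $\QQ$-Gorenstein by the standing assumption $\K_Y + \mathbb{D}(\alpha)_Y \in \Div_\QQ(Y)$.

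First I would record the discrepancy expression \eqref{discrep:eqn1} for this $f$, namely
$$\K_Y + \mathbb{D}(\alpha)_Y = f^*(\K_X + \mathbb{D}(\alpha)_X) + \sum_i b_i' E_i,$$
the sum being taken over the $f$-exceptional divisors $E_i$. Here one must check that the difference $\K_Y + \mathbb{D}(\alpha)_Y - f^*(\K_X + \mathbb{D}(\alpha)_X)$ is genuinely $f$-exceptional, so that the displayed identity is an honest equality of $\QQ$-divisors rather than merely a numerical equivalence: canonical divisors satisfy $f_* \K_Y = \K_X$, and the ramification traces satisfy $f_* \mathbb{D}(\alpha)_Y = \mathbb{D}(\alpha)_X$ by \eqref{brauer:pairs:b:div}, so after fixing compatible representatives the pushforward of the difference vanishes.

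The key step is to pin down the sign of the coefficients $b_i'$. Because $(X,\mathbb{D}(\alpha))$ has b-canonical singularities, $\operatorname{b-discrep}(X,\mathbb{D}(\alpha)) \geq 0$, and each $E_i$ is an exceptional divisor on the model $Y$, which satisfies $\K_Y + \mathbb{D}(\alpha)_Y \in \Div_\QQ(Y)$; hence $E_i$ is among the divisors over which the infimum in Definition~\ref{QQ:Gorenstein:canonical} ranges. Therefore $b(E_i;X,\mathbb{D}(\alpha)) = b_i' r_{E_i} \geq 0$ with $r_{E_i} \geq 1 > 0$, forcing $b_i' \geq 0$. This puts us exactly in the situation of Proposition~\ref{b-divisor-section-ring-proposition} with $D_Y = \mathbb{D}(\alpha)_Y$ and $D_X = \mathbb{D}(\alpha)_X$: choosing $\ell > 0$ so that $\ell(\K_X + \mathbb{D}(\alpha)_X)$ and $\ell(\K_Y + \mathbb{D}(\alpha)_Y)$ are Cartier, the proposition gives equality of section rings, and Corollary~\ref{b-divisor-proposition} then yields $\kappa(Y,\K_Y + \mathbb{D}(\alpha)_Y) = \kappa(X,\K_X + \mathbb{D}(\alpha)_X)$. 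Unwinding Definition~\ref{b-iitaka-defn} translates this into the asserted equality $\kappa(X,\K + \mathbb{D}(\alpha)) = \kappa(Y,\K + \mathbb{D}(\alpha))$.

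The main obstacle I anticipate is concentrated in the positivity input of the key step: the whole argument rests on reading off $b_i' \geq 0$ from the b-canonical hypothesis on $(X,\mathbb{D}(\alpha))$, so I would take care that the infimum defining $\operatorname{b-discrep}(X,\mathbb{D}(\alpha))$ genuinely includes the exceptional divisors $E_i$ living on $Y$, which it does precisely because $Y$ meets the $\QQ$-Gorenstein condition. If one instead wished to invoke Theorem~\ref{b-divisor-theorem} verbatim — which demands that both $(X,\mathbb{D}(\alpha))$ and $(Y,\mathbb{D}(\alpha))$ have b-canonical singularities — the obstacle would shift to verifying that b-canonicity of the Brauer b-divisor is independent of the chosen model; the direct route above sidesteps this by using only the one-sided inequality $b_i' \geq 0$ supplied by b-canonicity of $(X,\mathbb{D}(\alpha))$ alone.
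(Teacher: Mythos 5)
Your proposal is correct, and at bottom it runs on the same engine as the paper: Proposition~\ref{b-divisor-section-ring-proposition} applied with non-negative exceptional coefficients extracted from b-canonicity. The difference is in how that engine is invoked. The paper disposes of the corollary in one line as an ``immediate consequence of Theorem~\ref{b-divisor-theorem} and the definitions''; but Theorem~\ref{b-divisor-theorem}, read verbatim, asks that \emph{both} $(X,\DD(\alpha))$ and $(Y,\DD(\alpha))$ have b-canonical singularities, whereas the corollary only hypothesizes this for $(X,\DD(\alpha))$ together with $\K_Y+\DD(\alpha)_Y\in\Div_\QQ(Y)$. You sidestep this by exploiting the one feature of the corollary's setting that the general theorem does not have, namely that $Y$ already dominates $X$: you apply Proposition~\ref{b-divisor-section-ring-proposition} directly to $f:Y\to X$ instead of to a common resolution, so that only the inequality $b_i'\geq 0$ for the $f$-exceptional divisors is needed, and this comes precisely from $\operatorname{b-discrep}(X,\DD(\alpha))\geq 0$ because $Y$ is among the models over which that infimum ranges. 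Your preliminary verifications are also the right ones: the difference $\K_Y+\DD(\alpha)_Y-f^*(\K_X+\DD(\alpha)_X)$ is $f$-exceptional because its pushforward vanishes (using $f_*\DD(\alpha)_Y=\DD(\alpha)_X$ from \eqref{brauer:pairs:b:div} and a compatible choice of canonical divisors), and $r_{E_i}\geq 1$ since the traces of $\DD(\alpha)$ have coefficients in $[0,1)$, so $b(E_i;X,\DD(\alpha))\geq 0$ does force $b_i'\geq 0$. In short, your route proves exactly the stated corollary from exactly its stated hypotheses, and in doing so makes explicit a small amount of bookkeeping that the paper's one-line proof leaves to the reader.
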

\begin{proof}
Immediate consequence of Corollary~\ref{b-divisor-proposition}.
\end{proof}

\subsection{b-canonical resolutions, b-canonical models and Kodaira dimensions for b-log pairs}\label{canonical:kodaira}   
We now assume that $\cchar(\kk) = 0$ and formulate resolution of singularities for a given b-log $\QQ$-Gorenstein pair.  

\begin{defn}\label{b:can:res:defn} 
Let $X$ be a normal proper variety and $(X,\DD)$ a b-log pair.  By a \emph{$\QQ$-Gorenstein resolution} (resp.~\emph{b-canonical resolution}, resp.~\emph{b-terminal resolution})  of the pair $(X,\DD)$, we mean a proper birational morphism $f\colon Y \rightarrow X$ with the property that the pair $(Y,\DD)$ is $\QQ$-Gorenstein (resp.~\emph{b-canonical}, resp.~\emph{b-terminal}).
  Given a fixed b-divisor $\DD$, with coefficients in $[0,1)\cap\QQ$, we say the pair $(Y,\DD)$ is a \emph{$\QQ$-Gorenstein} (resp.~\emph{b-canonical}, resp.~\emph{b-terminal}) \emph{model} if the pair $(Y,\DD)$ is $\QQ$-Gorenstein, (resp.~\emph{b-canonical}, resp.~\emph{b-terminal}).
\end{defn}

Finally, we define concepts which relate to the Kodaira dimension of a b-divisor $\mathbb{D} \in \DDiv_\QQ(\KK)$.  Before doing so, note that Theorem~\ref{b-divisor-theorem} implies that the quantity \eqref{Kodaira:pair:defn}, below, is well defined.  In particular, it is a birational invariant of the pair $(X,\mathbb{D})$.  Further, as is a consequence of results obtained in~\cite{Chan:plus:10}, b-canonical models, in the sense of Definition~\ref{b:can:res:defn} exist.  Here we use that result to study  resolutions and Kodaira dimensions of Brauer classes.  Specifically, given a Brauer class $\alpha \in \Br(\KK)$, we consider the pair $(X,\mathbb{D}(\alpha))$ together with a b-canonical resolution $(Y,\mathbb{D}(\alpha))$.

\begin{defns}\label{b-Kodaira:defn}
Assume that $\mathbb{D} \in \DDiv_\QQ(\KK)$ has coefficients in $[0,1)\cap\QQ$ and define the \emph{Kodaira dimension} of the b-divisor $\mathbb{D}$ to be that of the pair $(Y,\mathbb{D})$:
\begin{equation}\label{Kodaira:pair:defn}
\kappa(\mathbb{D}) := \kappa(Y,\K+\mathbb{D}) = \kappa(Y,\K_Y + \mathbb{D}_Y),
\end{equation}
for  $(Y,\mathbb{D})$ some b-canonical model. 
For a fixed Brauer class, $\alpha \in \Br(\KK)$ let $(Y,\mathbb{D}(\alpha))$ be a b-canonical model and define:
\begin{equation}\label{Kodaira:Brauer:pair:defn} \kappa(\alpha) := \kappa(\mathbb{D}(\alpha)) = \kappa(Y,\K_Y + \mathbb{D}(\alpha)_Y);
\end{equation}
as in \eqref{Kodaira:pair:defn}, the definition given by \eqref{Kodaira:Brauer:pair:defn} is well defined.
We say that $\kappa(\alpha)$ is the \emph{Kodaira dimension} of $\alpha \in \operatorname{Br}(\KK)$. 
Finally, let $\KK$ be a finitely generated field over $\kk$ and let  $\Sigma$ be a  central simple algebra finite dimensional over its centre $\KK$ with Brauer class $\alpha \in \Br(\KK)$.  We define the \emph{Kodaira dimension} of  $\Sigma$, which we denote by  $\kappa(\Sigma)$, to be $\kappa(\Sigma) := \kappa(\alpha)$.  This is the Kodaira dimension of any b-log pair $(Y,\mathbb{D}(\alpha))$ which is $\QQ$-Gorenstein and has b-canonical singularities. 
\end{defns}
 
\section{Embeddings of central simple algebras and ramification formulas}\label{embeddings:div:alg:ramification:formulas}  In this section, we study the question of ramification of maximal orders in a variety of algebraic and geometric situations.

\subsection{Embeddings of central simple algebras and existence of maximal orders}\label{ram:calc}  First, we study the behaviour of  maximal orders under embeddings of central simple algebras.  

\subsubsection{Local embeddings and maximal orders}  Let $R$ be a noetherian integrally closed domain, with quotient field $\KK$, and let $\Sigma$ be a finite dimensional central simple $\KK$-algebra.  A \emph{full $R$-lattice in $\Sigma$} is a finitely generated $R$-submodule $M \subseteq \Sigma$ such that if
$$
\KK\cdot M := \left \{\sum_{\text{finite}} \alpha_i s_i : \alpha_i \in \KK, s_i \in M \right \},
$$
then
$
\KK \cdot M = \Sigma.
$
An \emph{$R$-order} in $\Sigma$, is a subring $\Lambda \subseteq \Sigma$, with $1_{\Lambda} = 1_{\Sigma}$, and such that $\Lambda$ is a full $R$-lattice in $\Sigma$.  This definition is the affine local version of that which was given in \S \ref{Brauer:ramification}.  The \emph{right order} of a full $R$-lattice $M \subseteq \Sigma$ is:
$$
\mathrm{O}_r(M) := \{x \in \Sigma : Mx \subseteq M \}.
$$
It is an $R$-order in $\Sigma$.

We recall the following fact for later use.

\begin{lemma}[{\cite[Exercise 1, p.~112]{Reiner:2003}}]\label{Reiner:Excercise:p.112}
Let $\Gamma$ be a subring of $\Sigma$ which contains $R$ and suppose further that $\Gamma$ is finitely generated as an $R$-module.  Then, for every full $R$-lattice $M$ in $\Sigma$, the set
$$
M \cdot \Gamma := \left  \{ \sum_{\mathrm{finite}} s_i \gamma_i :  s_i \in M, \gamma_i \in \Gamma \right \}
$$
is a full $R$-lattice in $\Sigma$ and $\Gamma$ is contained in the right order $\mathrm{O}_r(M\cdot \Gamma)$. 
\end{lemma}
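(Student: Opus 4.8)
The plan is to verify the two assertions in turn, using only that $M$ and $\Gamma$ are each finitely generated over $R$, that $1 \in \Gamma$, and that $R \subseteq \KK = Z(\Sigma)$ is central in $\Sigma$. First I would check that $M \cdot \Gamma$ is an $R$-submodule of $\Sigma$: for $r \in R$ and $\sum_i s_i \gamma_i \in M \cdot \Gamma$, centrality of $R$ gives $r\sum_i s_i \gamma_i = \sum_i (r s_i)\gamma_i$, which lies in $M \cdot \Gamma$ since each $r s_i \in M$. To see that it is finitely generated, I would fix presentations $M = \sum_{k=1}^p R\, m_k$ and $\Gamma = \sum_{l=1}^q R\, \gamma_l$. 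For a typical generator $s\gamma$ with $s = \sum_k a_k m_k$ and $\gamma = \sum_l b_l \gamma_l$, where $a_k, b_l \in R$, centrality of the coefficients lets me collect them at the front: $s\gamma = \sum_{k,l} a_k b_l\, m_k \gamma_l$. Hence $M \cdot \Gamma = \sum_{k,l} R\, m_k \gamma_l$ is generated over $R$ by the finite set $\{m_k \gamma_l\}$.

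For fullness, since $\Gamma$ is a subring containing $R$ we have $1 \in \Gamma$, so $M = M\cdot 1 \subseteq M \cdot \Gamma$; therefore $\Sigma = \KK \cdot M \subseteq \KK\cdot(M\cdot\Gamma) \subseteq \Sigma$, which forces $\KK\cdot(M\cdot\Gamma) = \Sigma$. Combined with the previous paragraph, this shows that $M \cdot \Gamma$ is a finitely generated $R$-submodule of $\Sigma$ spanning $\Sigma$ over $\KK$, i.e.\ a full $R$-lattice.

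Finally, for the containment $\Gamma \subseteq \mathrm{O}_r(M \cdot \Gamma)$, I would take $\gamma \in \Gamma$ and an arbitrary element $\sum_i s_i \gamma_i \in M \cdot \Gamma$ and compute $\left(\sum_i s_i \gamma_i\right)\gamma = \sum_i s_i(\gamma_i \gamma)$. Since $\Gamma$ is closed under multiplication, each $\gamma_i \gamma \in \Gamma$, so this product again lies in $M \cdot \Gamma$. Thus $(M\cdot\Gamma)\gamma \subseteq M \cdot \Gamma$, and $\gamma \in \mathrm{O}_r(M\cdot\Gamma)$ by the definition of the right order, which gives the desired inclusion.

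No step here is a genuine obstacle; the only point requiring care is the finite-generation computation, where one must invoke the centrality of the $R$-coefficients in $\Sigma$ in order to pull them out of the products $s\gamma$ and exhibit the explicit finite generating set $\{m_k\gamma_l\}$. Everything else is formal manipulation with the definitions of $M\cdot\Gamma$ and $\mathrm{O}_r$, and noetherianity of $R$ is not even needed for this argument since the generators are produced directly.
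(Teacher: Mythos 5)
Your proof is correct and is simply a fully detailed version of the paper's one-line argument, which asserts exactly the same two facts ($\Gamma \subseteq \mathrm{O}_r(M\cdot\Gamma)$ since $\Gamma$ is a ring, and $\KK\cdot(M\cdot\Gamma) = \Sigma$ since $1\in\Gamma$) and leaves the finite generation of $M\cdot\Gamma$ implicit. Your explicit verification that $\{m_k\gamma_l\}$ generates $M\cdot\Gamma$ over $R$, using centrality of $R\subseteq\KK=Z(\Sigma)$, is the right way to fill in that implicit step.
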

\begin{proof}
Clearly $\Gamma$ is contained in $\mathrm{O}_r(M\cdot\Gamma)$ and also $\KK \cdot (M \cdot \Gamma) = \Sigma$.  \end{proof}

Lemma \ref{Reiner:Excercise:p.112} implies that embeddings of central simple algebras induce embeddings of maximal orders.  
\begin{proposition}\label{embedding:prop2} Suppose given two finite dimensional $\KK$-central simple algebras $\Sigma_1$ and $\Sigma_2$ together with a $\KK$-algebra embedding $\Sigma_1 \hookrightarrow \Sigma_2$.  Further, fix an order $\Lambda_1$ in $\Sigma_1$.
Then, under this hypothesis,
 there exists an order $\Lambda_2$ in $\Sigma_2$ together with an $R$-algebra embedding $\Lambda_1 \hookrightarrow \Lambda_2$ which fits into a commutative diagram:
\begin{equation}\label{embedding:diagram}
\begin{tikzcd}
\Lambda_1 \arrow[hook]{r} \arrow[hook]{d}
               & \Lambda_2 \arrow[hook]{d}\\
\Sigma_1 \arrow[hook]{r} 
& \Sigma_2 
\end{tikzcd}
\end{equation}
of $R$-algebras.
\end{proposition}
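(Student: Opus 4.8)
The plan is to realize everything inside $\Sigma = \mathcal{D}_2$ and to produce $\Lambda_2$ as the right order of a suitable full $R$-lattice, letting Lemma~\ref{Reiner:Excercise:p.112} do essentially all of the work. First I would use the embedding $\mathcal{D}_1 \hookrightarrow \mathcal{D}_2$ to identify $\Lambda_1$ with its image, which is a subring of $\mathcal{D}_2$. Since $\Lambda_1$ is an $R$-order in $\mathcal{D}_1$, it is by definition a full $R$-lattice in $\mathcal{D}_1$ and hence a finitely generated $R$-module; moreover it contains $R = R\cdot 1_{\mathcal{D}_2}$ because $1_{\Lambda_1} = 1_{\mathcal{D}_1} = 1_{\mathcal{D}_2}$. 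Thus $\Lambda_1$, viewed inside $\mathcal{D}_2$, qualifies as the ring $\Gamma$ in the hypothesis of Lemma~\ref{Reiner:Excercise:p.112} applied with $\Sigma = \mathcal{D}_2$.

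Next I would fix any full $R$-lattice $M$ in $\mathcal{D}_2$; for instance the free $R$-module on a $\KK$-basis of $\mathcal{D}_2$, which visibly satisfies $\KK \cdot M = \mathcal{D}_2$ and is finitely generated. Applying Lemma~\ref{Reiner:Excercise:p.112} to $\Gamma = \Lambda_1$ and this $M$ then yields that $M \cdot \Lambda_1$ is again a full $R$-lattice in $\mathcal{D}_2$ and that $\Lambda_1 \subseteq \mathrm{O}_r(M \cdot \Lambda_1)$. I would set
$$\Lambda_2 := \mathrm{O}_r(M \cdot \Lambda_1).$$
By the observation recorded immediately after the definition of the right order, $\mathrm{O}_r$ of any full $R$-lattice is an $R$-order in $\mathcal{D}_2$, so $\Lambda_2$ is indeed an $R$-order in $\mathcal{D}_2$ containing the image of $\Lambda_1$.

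Finally, the top arrow $\Lambda_1 \hookrightarrow \Lambda_2$ is then simply the inclusion of subrings of $\mathcal{D}_2$, and the two vertical maps $\Lambda_i \hookrightarrow \mathcal{D}_i$ are the structural inclusions. Commutativity of the diagram \eqref{embedding:diagram} holds because, by construction, both composites $\Lambda_1 \to \mathcal{D}_2$ agree with the restriction to $\Lambda_1$ of the fixed embedding $\mathcal{D}_1 \hookrightarrow \mathcal{D}_2$. I expect no genuine obstacle here: the only point requiring care is verifying that $\Lambda_1$, once transported into $\mathcal{D}_2$, really meets the finiteness and unit-containing hypotheses of Lemma~\ref{Reiner:Excercise:p.112}, after which the lemma together with the remark that right orders are orders delivers both the order $\Lambda_2$ and the inclusion $\Lambda_1 \subseteq \Lambda_2$ simultaneously.
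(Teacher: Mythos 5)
Your proposal is correct and follows essentially the same route as the paper: identify $\Lambda_1$ with its image in $\mathcal{D}_2$, pick any full $R$-lattice $M$ in $\mathcal{D}_2$, apply Lemma~\ref{Reiner:Excercise:p.112} to conclude $\Lambda_1 \subseteq \mathrm{O}_r(M\cdot\Lambda_1)$, and take $\Lambda_2 = \mathrm{O}_r(M\cdot\Lambda_1)$. The only difference is that you spell out the verification that $\Lambda_1$ satisfies the hypotheses of the lemma and that right orders of full lattices are orders, which the paper leaves implicit.
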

\begin{proof} 
We identify $\Lambda_1$ and $\Sigma_1$ with their respective image in $\Sigma_2$.  Then $\Lambda_1$ is a subring of $\Sigma_2$ which contains $R$ and which is a finitely generated $R$-module.  Let $M$ be a full $R$-lattice in $\Sigma_2$.  Then:
$$M \cdot \Lambda_1 = \left  \{ \sum_{\text{finite}} \alpha_i s_i : \alpha_i \in M, s_i \in \Lambda_1\right \} $$
is a full $R$-lattice in $\Sigma_2$ and, by Lemma~\ref{Reiner:Excercise:p.112},  
$\Lambda_1 \subseteq \mathrm{O}_r(M\cdot \Lambda_1).$
Further,  the diagram \eqref{embedding:diagram} applied to the order 
$\Lambda_2 = \mathrm{O}_r(M\cdot\Lambda_1) $
commutes.
\end{proof}

One consequence of Proposition~\ref{embedding:prop2} reads:
\begin{corollary}\label{embedding:cor2} 
With the same assumptions as Proposition~\ref{embedding:prop2}, if $\Lambda_1$ is a maximal order in  $\Sigma_1$, then there exists a maximal order $\Lambda_2$ in $\Sigma_2$ so that the diagram \eqref{embedding:diagram} commutes.
\end{corollary}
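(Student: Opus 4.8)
The plan is to first apply Proposition~\ref{embedding:prop2} to produce \emph{some} compatible order, and then to enlarge it to a maximal one without disturbing the embedding. Concretely, since $\Lambda_1$ is in particular an $R$-order in $\mathcal{D}_1$, Proposition~\ref{embedding:prop2} supplies an order $\Lambda_2^0$ in $\mathcal{D}_2$ together with an $R$-algebra embedding $\Lambda_1 \hookrightarrow \Lambda_2^0$ fitting into a commutative instance of the diagram \eqref{embedding:diagram}. It then suffices to produce a maximal order $\Lambda_2$ of $\mathcal{D}_2$ with $\Lambda_2^0 \subseteq \Lambda_2$: the composite $\Lambda_1 \hookrightarrow \Lambda_2^0 \hookrightarrow \Lambda_2$ is again an $R$-algebra embedding, and since every arrow in sight is simply an inclusion of subsets of $\mathcal{D}_2$, the resulting square \eqref{embedding:diagram} (with $\Lambda_2$ in place of $\Lambda_2^0$) commutes automatically.

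The remaining point is that every $R$-order in the central simple algebra $\mathcal{D}_2$ is contained in a maximal one, which I would establish by the standard reduced-trace argument. Write $\Trd := \Trd_{\mathcal{D}_2/\KK}$ and, for a full $R$-lattice $M$, let $M^{\#} := \{x \in \mathcal{D}_2 : \Trd(xM) \subseteq R\}$ denote its dual with respect to the nondegenerate reduced-trace pairing. Because $R$ is integrally closed and the reduced trace of an element integral over $R$ lies in $R$, every order $\Lambda$ satisfies $\Lambda \subseteq \Lambda^{\#}$; and since passing to the dual reverses inclusions, any order $\Lambda \supseteq \Lambda_2^0$ obeys
$$\Lambda \subseteq \Lambda^{\#} \subseteq (\Lambda_2^0)^{\#}.$$
Thus the collection of orders containing $\Lambda_2^0$ is bounded inside the single full lattice $(\Lambda_2^0)^{\#}$. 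As $R$ is Noetherian, $(\Lambda_2^0)^{\#}$ is a Noetherian $R$-module, so the union of any ascending chain of such orders is again a full $R$-lattice and a ring, hence an order; Zorn's lemma (equivalently the ascending chain condition) then yields an order $\Lambda_2 \supseteq \Lambda_2^0$ that is maximal among orders of $\mathcal{D}_2$. Alternatively, one may simply invoke the existence of maximal orders over Noetherian integrally closed domains from~\cite{Auslander:Goldman:1960} or~\cite{Reiner:2003} and take any maximal order containing $\Lambda_2^0$.

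I do not expect a serious obstacle here, since once Proposition~\ref{embedding:prop2} is available the only genuine content is the enlargement step. The one point that must be checked with care is the boundedness $\Lambda \subseteq (\Lambda_2^0)^{\#}$, which rests squarely on $R$ being integrally closed (so that reduced traces of integral elements genuinely land in $R$) and Noetherian (so that the bounding lattice is finitely generated); these are exactly the standing hypotheses on $R$. After that, maximality is a formal consequence, and the commutativity of \eqref{embedding:diagram} is immediate from the fact that all the structure maps are inclusions.
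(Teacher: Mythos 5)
Your proposal is correct and follows essentially the same route as the paper, which simply combines Proposition~\ref{embedding:prop2} with the standard fact that every order is contained in a maximal order (the commutativity of \eqref{embedding:diagram} being automatic since all maps are inclusions inside $\mathcal{D}_2$). The only difference is that you spell out the reduced-trace bounding argument for that standard fact, which the paper leaves as a citation; this is fine, and the nondegeneracy you need is guaranteed by the paper's standing hypothesis that the degree of $\mathcal{D}_2$ is prime to $\cchar\kk$.
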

\begin{proof}
Combine Proposition~\ref{embedding:prop2} with the fact that every order is contained in a maximal order.
\end{proof}

We make use of the following remark in Example \ref{complete:local:embedding:remark} and Corollary \ref{complete:local:embeddings:division:algebras}.

\begin{proposition}\label{prelims:full:lattice:orders:non-unit}
  Given a maximal order $\Lambda_1$ and a diagram as in \eqref{embedding:diagram}
 where $\Sigma_1=\mathcal{D}_1 \text{ and } \Sigma_2=\mathcal{D}_2$ are division algebras, if $a \in \Lambda_1$ is a non-unit, then its image in $\Lambda_2$ is a non-unit.
\end{proposition}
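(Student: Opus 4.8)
The plan is to prove the contrapositive: if the image of $a$ in $\Lambda_2$ is a unit, then $a$ is already a unit in $\Lambda_1$. The case $a = 0$ is vacuous, since $0$ is a non-unit in every order, so I would assume $a \neq 0$. Because $\mathcal{D}_1$ is a division algebra, $a$ then lies in $\mathcal{D}_1^\times$; let $b \in \mathcal{D}_1$ be its inverse there. Since the embedding $\mathcal{D}_1 \hookrightarrow \mathcal{D}_2$ in \eqref{embedding:diagram} is a unital ring homomorphism, the relations $ab = ba = 1$ continue to hold in $\mathcal{D}_2$, so $b$ is also the two-sided inverse of $a$ in $\mathcal{D}_2$. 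By uniqueness of inverses in $\mathcal{D}_2$, this $b$ coincides with whatever inverse $a$ is assumed to have in $\Lambda_2$; hence $b \in \Lambda_2$, while at the same time $b \in \mathcal{D}_1$.

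The key step is to identify the intersection $\Lambda_2 \cap \mathcal{D}_1$ with $\Lambda_1$. First I would verify that $\Lambda_2 \cap \mathcal{D}_1$ is an $R$-order in $\mathcal{D}_1$: it is a subring of $\mathcal{D}_1$ containing $R$; being an $R$-submodule of the finitely generated module $\Lambda_2$ over the noetherian ring $R$, it is itself finitely generated; and since it contains $\Lambda_1$, which is a full $R$-lattice in $\mathcal{D}_1$, one has $\KK \cdot (\Lambda_2 \cap \mathcal{D}_1) \supseteq \KK \cdot \Lambda_1 = \mathcal{D}_1$, so it is a full lattice, and therefore an order. As $\Lambda_1 \subseteq \Lambda_2 \cap \mathcal{D}_1$ and $\Lambda_1$ is a \emph{maximal} order in $\mathcal{D}_1$, maximality forces the equality $\Lambda_2 \cap \mathcal{D}_1 = \Lambda_1$.

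Combining the two observations gives $b \in \Lambda_2 \cap \mathcal{D}_1 = \Lambda_1$, so $a$ is a unit in $\Lambda_1$, which proves the contrapositive. The only substantive point is the equality $\Lambda_2 \cap \mathcal{D}_1 = \Lambda_1$; the thing to be careful about there is checking that the intersection is a genuine order (finite generation as an $R$-module and the full-lattice property) so that the maximality of $\Lambda_1$ can be invoked, the manipulation of inverses being purely formal.
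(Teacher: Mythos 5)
Your proof is correct and follows essentially the same route as the paper's: both arguments hinge on showing that $\Gamma := \Lambda_2 \cap \mathcal{D}_1$ is an $R$-order in $\mathcal{D}_1$ containing $\Lambda_1$, so that maximality forces $\Gamma = \Lambda_1$ and hence $a^{-1} \in \Lambda_1$. The only cosmetic difference is that you phrase it as a contrapositive while the paper argues by contradiction; your explicit verification of the full-lattice property and the $a=0$ case is a welcome bit of extra care.
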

\begin{proof}
Suppose that the assertion is false.  Then there exists a non-unit $a \in \Lambda_1$ which is a unit in $\Lambda_2$ and such an $a$ must be a unit in $\mathcal{D}_1$ too. Let
$$\Gamma := \Lambda_2 \cap \mathcal{D}_1 \subseteq \Lambda_2.$$
Then 
 $\Gamma$ is a finitely generated $R$-module since $\Lambda_2$ is finitely generated as an $R$-module and because $R$ is Noetherian.  Further,
$\Lambda_1 \subseteq \Gamma$ so $\Gamma$
is a full $R$-lattice in $\mathcal{D}_1$.  So $\Gamma$ is an $R$-order in $\mathcal{D}_1$ which contains $a^{-1}$ and which contains $\Lambda_1$ too.  Since $\Lambda_1$ is a maximal order, we must have $\Lambda_1 = \Gamma$ 
and so $a^{-1} \in \Lambda_1$ which contradicts the fact that $a \in \Lambda_1$ is assumed to be a non-unit.
\end{proof}

\subsection{Behaviour of ramification under embeddings}\label{ramification:embeddings}  For the applications that we have in mind, it is of interest to  determine the nature of ramification under embeddings of  central simple algebras.  This is the content of Proposition \ref{ramification:division}.  In what follows, we fix a discrete valuation ring $(R,\mathfrak{m})$ with fraction field $\KK$, residue field $\kappa$ and uniformizer $t \in R$.  Let $(\hat{R}, \hat{\mathfrak{m}})$ be the completion of $(R,\mathfrak{m})$ with fraction field $\hat{\KK}$.

\subsubsection{Local embeddings of central simple algebras} Here, we explore the concept of local embeddings of central simple algebras together with the question of divisibility of ramification indices.  First, we make precise what we mean by local embeddings of central simple algebras.

\begin{defn}\label{local:embedding:defn}  
By a \emph{local embedding} of $\KK$-central simple algebras $\Sigma_1$ and $\Sigma_2$, we mean an embedding $\Sigma_1 \hookrightarrow \Sigma_2$ which fits into a commutative diagram of $R$-algebras of shape:
$$
\begin{tikzcd}
\operatorname{rad} \Lambda_1 \arrow[hook]{r} \arrow[hook]{d}
               & \operatorname{rad} \Lambda_2 \arrow[hook]{d}\\
\Lambda_1 \arrow[hook]{r} \arrow[hook]{d}
               & \Lambda_2 \arrow[hook]{d}\\
\Sigma_1 \arrow[hook]{r} 
& \Sigma_2 
\end{tikzcd}
$$
for some maximal orders $\Lambda_i$ contained in the central simple algebras $\Sigma_i$, for $i = 1,2$.  
\end{defn}

\begin{example}\label{complete:local:embedding:remark}
When $(R,\mathfrak{m})$ is complete, then every embedding of division algebras is local in this sense as is a consequence of Proposition \ref{prelims:full:lattice:orders:non-unit}.    Indeed, if $\mathfrak{D}_1 \hookrightarrow \mathfrak{D}_2$ is such and embedding, then the image in $\mathfrak{D}_2$ of a prime element $\pi_{\mathfrak{D}_1} \in \mathfrak{D}_1$ cannot be a unit.  This shows that the image of the radical of the unique maximal order $\mathfrak{R}_1 \subseteq \mathfrak{D}_1$ is contained in the radical of the unique maximal order $\mathfrak{R}_2 \subseteq \mathfrak{D}_2$.   Moreover, note that $\mathfrak{R}_2 \cdot \operatorname{rad} \mathfrak{R}_1 = \operatorname{rad} \mathfrak{R}_1 \cdot \mathfrak{R}_2$ since every one-sided ideal of $\mathfrak{R}_2$ is two-sided (\cite[Theorem 13.2]{Reiner:2003}). 
\end{example}

That the concept of local embedding for central simple algebras relates to Proposition \ref{ramification:division} is the content of Theorem \ref{embedding:divide:theorem} below.

\begin{theorem}\label{embedding:divide:theorem}  Let $(R,\mathfrak{m})$ be a discrete valuation ring with fraction field $\KK$.   Let $\Sigma_1 \hookrightarrow \Sigma_2$ be 
an 
embedding 
of central simple algebras 
with respect to maximal orders $\Lambda_i \subseteq \Sigma_i$, for $i = 1,2$.  The following assertions are true.
\begin{enumerate}
\item{ If the embedding $\Sigma_1 \hookrightarrow \Sigma_2$ is a local embedding, then the  ramification index of $\Lambda_2$ is greater to or equal to the ramification index of $\Lambda_1$.
}
\item{ Assume that $\Lambda_2 \cdot \operatorname{rad} \Lambda_1 = \operatorname{rad} \Lambda_1 \cdot \Lambda_2$.  Then  the embedding is local and the ramification index of $\Lambda_2$ is divisible by the ramification index of $\Lambda_1$. 
}
\end{enumerate}
\end{theorem}

\begin{proof}
 By the assumption of local embedding,  the two sided ideal of ${\Lambda}_2$ which is generated by the image of  the ideal $\operatorname{rad} {\Lambda}_1$ is not the unit ideal.  On the other hand, by \cite[Theorem 18.3, p.~176]{Reiner:2003}, we may write this two sided ideal
$
{\Lambda}_2 \cdot  \operatorname{rad} {\Lambda}_1   \cdot {\Lambda}_2
$
as a power of $\operatorname{rad} {\Lambda}_2$:
\begin{equation}\label{rad:theorem:eqn1}{\Lambda}_2 \cdot \operatorname{rad} {\Lambda}_1   \cdot {\Lambda}_2 = (\rad {\Lambda}_2)^k, 
\end{equation}
for some nonnegative integer $k$.  Since  ${\Lambda}_2 \cdot \operatorname{rad} {\Lambda}_1 \cdot {\Lambda}_2$ is not the unit ideal, we must have that $k > 0$.   

Then, by \eqref{rad:theorem:eqn1}, we have that
\begin{equation}\label{rad:theorem:eqn3} {\Lambda}_2 \cdot (\rad \Lambda_1)^{e_1} \cdot {\Lambda}_2 \subseteq (\Lambda_2 \cdot   \operatorname{rad} {\Lambda}_1  \cdot {\Lambda}_2)^{e_1}  = (\rad {\Lambda}_2)^{k e_1}.
\end{equation}
Further
\begin{equation}\label{rad:theorem:eqn2}
(\rad {\Lambda}_i)^{e_i}  = \Lambda_i \cdot  \mathfrak{m}  \text{.}
\end{equation}
Combing these we get
\begin{equation}\label{rad:theorem:eqn5}
  (\rad {\Lambda}_2)^{e_2} = {\Lambda}_2 \cdot  \mathfrak{m}
  =  {\Lambda}_2 \cdot  (\rad \Lambda_1)^{e_1} \cdot {\Lambda}_2 \subseteq (\rad  \Lambda_2)^{k e_1};
 \end{equation}
so we conclude that
$ke_1 \leq e_2$;
whence  $e_1 \leq e_2$ as desired.   This establishes assertion (a).  

The first statement of (b),   is a special case of \cite[Exercise  6, p.~68]{Farb:Dennis:1993}.  Indeed, note that $ \Lambda_2 $ is finitely generated as a left $\Lambda_1$-module.  Further, by assumption, $\Lambda_2 \cdot \operatorname{rad} \Lambda_1 = \operatorname{rad} \Lambda_1 \cdot \Lambda_2$.  We then check, using Nakayama's lemma, that $\operatorname{rad} \Lambda_1$ acts trivially on each simple $\Lambda_2$-module.   Indeed, write 
$$\Lambda_2 = \Lambda_1\cdot x_1 + \dots + \Lambda_1 \cdot x_n\text{,}$$ where $x_i \in \Lambda_2$, for $i = 1,\dots,n$.  Let $M$ be a simple left $\Lambda_2$-module.  Write 
$$M = \Lambda_2 \cdot a\text{,}$$ 
for some $a \in M$.  Then
\begin{align*}
M & = \left( \Lambda_1 \cdot x_1 + \dots + \Lambda_1 \cdot x_n \right) \cdot a \\
& = \Lambda_1 x_1 a + \dots + \Lambda_1 x_n a \text{.}
\end{align*}
Thus, $M$ is finitely generated as a left $\Lambda_1$-module.  Now, consider $\operatorname{rad} \Lambda_1 \cdot M$.  Since $\Lambda_2 \cdot \operatorname{rad} \Lambda_1 = \operatorname{rad} \Lambda_1 \cdot \Lambda_2 $, this is a $\Lambda_2$-submodule of $M$.  Since $M \not = 0$, Nakayama's lemma implies that $\operatorname{rad} \Lambda_1 \cdot M \subsetneq M$.  But, since $M$ is a simple left $\Lambda_2$-module, it follows that $\operatorname{rad} \Lambda_1 \cdot M = 0$.
  Thus $\operatorname{rad} \Lambda_1 \subseteq \operatorname{rad} \Lambda_2$.  Moreover, since $\Lambda_2 \cdot \operatorname{rad} \Lambda_1 = \operatorname{rad} \Lambda_1 \cdot \Lambda_2$, equality holds in \eqref{rad:theorem:eqn3} and \eqref{rad:theorem:eqn5} whence $e_1 \mid e_2$.
\end{proof}

Theorem \ref{embedding:divide:theorem} has the following consequence.  

\begin{corollary}\label{complete:local:embeddings:division:algebras}
Let $(R,\mathfrak{m})$ be a complete discrete valuation ring with fraction field $\KK$ and suppose that $\mathcal{D}_1 \hookrightarrow \mathcal{D}_2$ is an embedding of $\KK$-central division algebras.  Then the ramification index of $\mathcal{D}_2$ is divisible by the ramification index of $\mathcal{D}_1$.
\end{corollary}
\begin{proof}
As noted in Example \ref{complete:local:embedding:remark},  the hypothesis of Theorem \ref{embedding:divide:theorem} (b) is satisfied.  Thus, Theorem \ref{embedding:divide:theorem} (b) implies the conclusion of Corollary \ref{complete:local:embeddings:division:algebras}.
\end{proof}

\subsubsection{Complete local numerical invariants} With a view towards Proposition \ref{ramification:division}, we first fix some notations for complete local invariants of central simple algebras.  More details about these invariants may be found in \cite[\S 18]{Reiner:2003}.
Let $\Sigma_2$ be a $\KK$-central  simple algebra and 
$\Sigma_1 \subseteq \Sigma_2$ 
a subalgebra with centre equal to $\KK$.  Then $\Sigma_1$ is a  central simple algebra and, by the centralizer theorem \cite[Corollary 7.14]{Reiner:2003}, there exists a central simple subalgebra 
 $\Sigma_3 \subseteq \Sigma_2$
which has the property that
$$
\Sigma_2 \simeq \Sigma_1 \otimes \Sigma_3.
$$
Henceforth, we assume that either the residue field $\kappa$ is perfect or that the index of central simple $\Sigma_2$ is prime to the characteristic of $\kappa$.  Let $m_i$ be the degree of $\Sigma_i$.  Then
\begin{equation}\label{eqn3}
m_2 = m_1 m_3.
\end{equation}
Fix maximal orders $\Lambda_i \subseteq  \Sigma_i$, for $i = 1,2,3$, and write  
$
\hat{\Sigma}_i \simeq \mathfrak{D}_i^{s_i \times s_i}
$
for $\hat{\KK}$-central division algebras $\mathfrak{D}_i$ with unique maximal orders $\mathfrak{R}_i$.  Then 
$
\hat{\Lambda}_i \simeq \mathfrak{R}_i^{s_i \times s_i}
\text{.}
$
Denote the respective centres of the division algebras
$
\Delta_i := \mathfrak{R}_i / \operatorname{rad} \mathfrak{R}_i  \text{, }
$
for $i = 1,2,3$, by $\kappa_i$.  Let $d_i$ be the index of $\mathfrak{D}_i$.  Then 
$
d_i^2 = [\mathfrak{D}_i : \hat{\KK}]
$
and
\begin{equation}\label{eqn7}
m_i = s_id_i.
\end{equation}
Recall, that 
the ramification indices $e_i$ have the property that
$
e_i = [\kappa_i : \kappa] \text{, }
$
by Theorem \ref{ramification:data:theorem1}.
Let $g_i$ be the index of $\Delta_i$.  Since
$$
g_i^2 = [\Delta_i : \kappa_i] \text{, }
$$
it follows that
\begin{equation}\label{eqn10}
f_i^2 := \dim_{\kappa_i} \hat{\Lambda}_i / (\operatorname{rad} \hat{\Lambda}_i) = s_i^2 g_i^2.
\end{equation}

\subsubsection{A characterization for divisibility of ramification for embeddings of division algebras}
Here, we establish Proposition \ref{ramification:division} below.  It pertains to the behaviour of ramification under embeddings of central simple algebras.  We are extremely grateful to an anonymous referee for providing comments about completions of division algebras.  
These more subtle points are partly explained in Proposition \ref{ramification:division} below.  

\begin{proposition}\label{ramification:division}  
Let $(R,\mathfrak{m})$ be a discrete valuation ring with fraction field $\KK$.  Let 
$\Sigma_2 \simeq \Sigma_1 \otimes \Sigma_3$ 
be a tensor product of $\KK$-central simple algebras $\Sigma_i$, for $i = 1,2,3$.  Assume that either the residue field $\kappa$ is perfect or that the index of $\Sigma_2$ is prime to the characteristic  of $\kappa$.
Then 
$
e_1 \mid e_2 \text{ if and only if } g_2 s_2 \mid g_1 s_1 d_3 s_3 \text{.}
$
Equivalently, it holds true that
$
e_1 \mid e_2 \text{ if and only if } f_2 \mid f_1 m_3.
$
\end{proposition}

\begin{proof}
By computing $\kappa$-dimensions and combining equations \eqref{eqn7} and \eqref{eqn10}, we obtain that
\begin{equation}\label{eqn11}
m_i = e_i f_i = e_i s_i g_i = s_i d_i.
\end{equation}
This means that we may rewrite equation \eqref{eqn3} in the form
\begin{equation}\label{eqn12}
m_2 = s_2 d_2 = e_2 s_2 g_2 = e_2 f_2 = e_1 f_1 e_3 f_3;
\end{equation}
expanding equation \eqref{eqn12}, we obtain: 
\begin{equation}\label{eqn13}
m_2 = s_2 d_2 = e_2 s_2 g_2 = e_1 s_1 g_1 e_3 s_3 g_2 = s_1 d_1 s_3 d_3.
\end{equation}
One other consequence of \eqref{eqn11} and \eqref{eqn7} is the fact that
$
d_i = e_i g_i.
$

We may thus rewrite equation \eqref{eqn13} in the form
\begin{equation}\label{eqn15}
e_2 s_2 g_2 = d_2 s_2 = s_1 s_3 d_1 d_3 = e_1 e_3 s_1 g_1 s_3 g_3.
\end{equation}
Recall that 
$
\hat{\Sigma}_i = \mathfrak{D}_i^{s_i \times s_i}  
$, 
with each $\mathfrak{D}_i$ a division algebra, 
and
$
\hat{\Sigma}_2 = \hat{\Sigma}_1 \otimes \hat{\Sigma}_3.
$
Thus:
\begin{equation}\label{eqn18}
\mathfrak{D}_2^{s_2 \times s_2} = \mathfrak{D}_1^{s_1 \times s_1} \otimes \mathfrak{D}_3^{s_3 \times s_3} \text{.}
\end{equation}
But also
\begin{equation}\label{eqn19}
\mathfrak{D}_1^{s_1 \times s_1} \otimes \mathfrak{D}_3^{s_3 \times s_3} \simeq (\mathfrak{D}_1 \otimes \mathfrak{D}_3)^{s_1 s_3 \times s_1 s_3}.
\end{equation}
equation \eqref{eqn19} then implies that
$
d_2 s_2 = d_1 s_1 d_3 s_3.
$
In other words,
$$
d_2 = d_1 d_3 \left( \frac{s_1 s_3}{s_2} \right) \text{.}
$$
Finally, solving for $e_2$, yields:
$$
e_2 = e_1 \left( \frac{g_1 d_3 s_1 s_3}{g_2 s_2}  \right)
= 
e_1 \left(\frac{f_1 m_3}{f_2}\right) \text{.}
$$
\end{proof}

In what follows, in the context of Proposition \ref{ramification:division}, when $e_1 \mid e_2$, we put:
\begin{equation}\label{embedding:divide:theorem:notation}
 e_{\Sigma_2 / \Sigma_1} := \frac{e_2}{e_1}.
\end{equation} 

We conclude our discussion of divisibility of ramification indices by mentioning the following result which was suggested to us by an anonymous referee.  

\begin{proposition}\label{period:index}
  Let $(R,\mathfrak{m})$ be a discrete valuation ring with field of fractions $\KK$ and residue field $\kappa$.
Let $\Sigma_1 \hookrightarrow \Sigma_2$ be an embedding of central simple algebras with centres $\KK$.
    Assume that either the residue fields $\kappa_\mathfrak{p}$ are perfect for all divisorial places $\mathfrak{p}$ of $\KK,$ or that the index of $\Sigma_2$ is prime to the characteristic of $\kappa$.  Suppose further  that the period of $\Sigma_2$ is equal to its index
we have that
     $$e_\mathfrak{p}(\Sigma_1) \mid e_\mathfrak{p}(\Sigma_2).$$
\end{proposition}
\begin{proof}[Proof of Propositions \ref{perinduni} and \ref{period:index}]  Proposition \ref{perinduni}, is clearly implied by  Proposition \ref{period:index}, so it suffices to establish this last result. 
To this end, 
the centralizer theorem implies that 
$$\Sigma_2 \simeq \Sigma_1 \otimes_\KK \Sigma_3$$
for a $\KK$-central  central simple algebra  $\Sigma_3$. 
Let $p_i,m_i$ for $i = 1,2,3$, denote the period and index of  $\Sigma_i$.
By assumption, $p_2 = m_2$ and $p_i$ is the order of the class of $\Sigma_i$ in $\Br \left( \KK \right)$.  Moreover, Theorem \ref{explicit:ramification:map:theorem} implies that $e_i$ is the order of the image of this class under the ramification map 
$$
a \colon \operatorname{Br}(\KK) \rightarrow \H^1(\kappa ,\QQ/\ZZ).
$$ 
In particular, for $i=1,2,3$, $e_i \mid p_i$, and~\cite[Proposition~4.5.13]{Gille:Szamuley:2006} implies that $p_i \mid m_i$. 
Thus, the following divisibility relations hold true:
$$
p_2 \mid \lcm(p_1,p_3) \mid p_1 p_3 \mid m_1 m_3 = m_2;
$$
on the other hand, by assumption,  $p_2 = m_2$, whence equality holds everywhere:
$$
p_2 = \lcm(p_1,p_3) = p_1 p_3 = m_1 m_3 = m_2.
$$
Thus:
$$
\gcd(p_1,p_3) = 1
$$  
and:
 $$
 e_i \mid p_i\text{,}
 \gcd(e_1,e_3) = 1\text{ and }  
 e_2 = e_1 e_3 \text{.}
 $$
\end{proof}

\begin{example}\label{embedding counter:example}
Here, we illustrate the fact that not every embedding of division algebras 
of the form considered in Proposition \ref{ramification:division}, has the property that $e_1 \mid  e_2$.
For a field $\mathbf{E}$ and $\alpha, \beta \in \mathbf{E}$, write 
$(\alpha,\beta)$ for the quaternion algebra with basis $1,s,t,st$ over $\mathbf{E}$ and relations
$$
s^2 = \alpha, t^2 = \beta \text{ and } st = - t s \text{.}
$$ 
Now, let $\KK = \kk(z_1,z_2,z_3,z_4)$
be a purely transcendental extension of an algebraically closed base field $\kk$ of characteristic $\neq 2$.  
As in \cite[Example 1.5.7]{Gille:Szamuley:2006}, consider the biquaternion division algebra
$$
\mathcal{D} = (z_1,z_2) \otimes_{\KK} (z_3,z_4) \text{.}
$$
This division algebra has index $4$ and period equal to $2$.  By setting 
$$
\text{
$z_1 = vx$, $z_2 = u$, $z_3 = vy$ and $z_4 = u+v$, 
}
$$
we may identify $\KK \simeq \kk(u,v,x,y)$.
Via this isomorphism, we deduce that
$$
\mathcal{D}_2 := (vx,u) \otimes_\KK (vy, u+v)
$$
is a division algebra with index $4$ and period $2$.
Note now that, when working over the discrete valuation
 ring $R_{\mathfrak{m}(v)}$,
we have the ramification map in equation (\ref{ramification:group:homomorphism:defn}), below,
where $a(vx,u)$ and $a(vy,u+v)$ 
both correspond to the double cover
$\kappa(\sqrt{u})$ of the residue field $\kappa$ at ${\mathfrak{m}(v)}$. 
So $\mathcal{D}_1 = (vx, u)$ 
has ramification index $e_1 = 2$,  but $\mathcal{D}_2$ is not ramified at the place $\mathfrak{m}(v)$ so we
do not have the property that $e_1 \mid e_2$. 
\end{example}

\subsection{Galois theoretic interpretation of divisibility of ramification}\label{Galois:divisible:ramifcation}  We now give 
  an alternative interpretation of
  effective embeddings of division algebras in terms of Galois cohomology. 
Let $(R,\mathfrak{m})$ be a discrete valuation ring
 with fraction field $\KK$ and residue field $\kappa$.
Recall, that there exists the well known \emph{ramification group homomorphism}:
\begin{equation}\label{ramification:group:homomorphism:defn}
a \colon \Br(\KK) \rightarrow  \H^1(\kappa,\QQ/\ZZ)
\end{equation}
as in \cite[Chapter 10]{Saltman:1999}; compare with \eqref{ramification:group:homomorphism:defn'}.   Recall, also the following bijective correspondence:
$$\H^1(\kappa,\QQ/\ZZ) = \Hom_{\mbox{cont}}(\Gal(\overline{\kappa}/\kappa),\QQ/\ZZ) \leftrightarrow \{ (L,\pi) : \Gal(L : \kappa) = \la \pi \ra\}$$
where $\overline{\kappa}$ is the separable closure of $\kappa.$
This correspondence maps 
$\phi \in \H^1(\kappa,\QQ/\ZZ) $ 
to the extension 
$L = \overline{\kappa}^{\ker \phi}$ 
with 
$\phi(\pi) = 1/[L:\kappa] \in \QQ/\ZZ$.

Fix 
$\alpha_1,\alpha_3 \in \Br(\KK)$
and put 
$\alpha_2 := \alpha_1 \otimes \alpha_3\text{.}$  Let $e_i$ be the ramification index of $\alpha_i$.  We want to understand the condition that $e_1 \nmid e_2$.  The following proposition gives an equivalent  formulation of this condition. 
It is expressed using the tools of Galois cohomology.  To this end, we assume that either the residue field $\kappa$ is perfect or that the degree of $\alpha_2$ is prime to the characteristic of $\kappa$.

\begin{proposition}\label{e1:not:divide:e3}
Let $\kappa_i$ be the cyclic extension of $\kappa$ determined by the class $\alpha_i \in \Br(\KK)$.  Let $\sigma_i$ be the generators for the Galois group $\Gal(\kappa_i / \kappa)$ determined by $\alpha_i$, for $i = 1,3$.  Then $e_1 \nmid e_2$ if and only if there exists a prime number $p$ which has the following two properties:
\begin{enumerate}
\item{
  $e_1 = p^{\ell} u$, $e_3 = p^{\ell} v$, for some $\ell > 1$ so that
  $p \nmid uv$
and
}
\item{
the $p$ part of the order of 
$\sigma_1 \cdot \sigma_3 |_{\kappa_1 \bigcap \kappa_3}$ 
in $\Gal(\kappa_1 \bigcap \kappa_3 / \kappa)$ is strictly smaller than $[\kappa_1 \cap \kappa_3 : \kappa]$.
}
\end{enumerate}
\end{proposition}
\begin{proof}
Without loss of generality, we reduce to the case that the cyclic extension $\kappa_i$, for $i = 1,2,3$ are powers of $p$.  Let
$
e_i := p^{\ell_i} := [\kappa_i : \kappa] \text{, }
$
for $i = 1,2,3$. If $e_1 \nmid e_2$ then we are in case (d) of Lemma~\ref{p:e1:not:divide:e3} below.
Using the notation there, we see that the order 
of $\sigma_1 \cdot \sigma_3 |_{\kappa_1 \bigcap \kappa_3}$ 
in $\Gal(\kappa_1 \bigcap \kappa_3 / \kappa)$ is given by $p^{k-a}$. So 
if $e_1 \nmid e_2$, then $a \geq 1$. 
\end{proof}

\begin{lemma} \label{p:e1:not:divide:e3} Let $\kappa$ be a field with absolute Galois group $G$.  Let $\phi_1,\phi_3 \in  \Hom_{\operatorname{cont}}(G,\ZZ[p^{-1}]/\ZZ).$
  Let  $\phi_2 = \phi_1+\phi_3$ and $e_i =[G:\ker \phi_i]$.
\begin{enumerate}
\item{
If $e_1 \not = e_3$, then $e_2 = \lcm(e_1,e_3)$;
}
\item{
If $e_1 = e_3$ and $\kappa_1 \cap \kappa_3 = \kappa$, then $e_2 = e_1 = e_3$;
}
\item{
  If $e_1 = e_3$, $\kappa \neq   \kappa_1 \cap  \kappa_3$ and $\pi_1|_{\kappa_1 \cap \kappa_3} = \pi_3^c|_{\kappa_1 \cap \kappa_3}$, with $c \not \equiv -1\pmod{p},$ then
  $e_2 = e_1 = e_3$; and
}
\item\label{caseiv}{
  If $e_1 = e_3 =p^\ell$, $\kappa  \not = \kappa_1 \cap  \kappa_3, [\kappa_1 \cap  \kappa_3:\kappa] = p^k$ and $c \equiv -1\pmod{p^a},$ with $k \geq a>0$ smallest possible,
  then 
  $e_2 = p^{\ell-a} < e_1 = e_3$.
}
\end{enumerate}
\end{lemma}
\begin{proof}
Let 
$\kappa_i$ be the subextension of $\overline{\kappa}$ corresponding to $\ker \phi_i$.  Write $\kappa_1\kappa_3$ for the composite extension.  Note that $\kappa_1\kappa_3$
corresponds to $\ker \phi_1 \cap \ker \phi_3$, which is contained in
$\ker( \phi_1+\phi_3)$.  So to compute the order of $G/\ker( \phi_1+\phi_3)$ it is sufficient to work in the Galois group
$G' = \Gal(\kappa_1\kappa_3/\kappa) = G/(\ker\phi_1 \cap \ker \phi_3).$  Since we have abelian
extensions, we obtain a diagram of Galois groups
 $$\begin{CD} G' & @>>> & \Gal(\kappa_1) \\
      @VVV & & @V{\psi_1}VV \\
      \Gal(\kappa_3) & @>{\psi_3}>> &  \Gal(\kappa_1 \cap \kappa_3) \end{CD}$$
Since $\la \pi_i \ra = \Gal(\kappa_i)$, we can conclude that
$\la \pi_1,\pi_3 \ra = G'$.  Let $G'' =  \Gal(\kappa_1 \cap \kappa_3)$
which is also cyclic and generated by the image of $\pi_1$ or $\pi_3.$
Let $p^{\ell_i} = |\Gal(\kappa_i)|$, and let $p^{k} = |G''|.$
Let $c \in (\ZZ/p^k)^\times$ be such that $\psi_1(\pi_1) = \psi_3(\pi_3)^c$.
So we see that $G'$ is the pullback
$$G' = \{ \pi_1^i\pi^j_3 : (i,j) \in \ZZ/p^{\ell_1}\oplus\ZZ/p^{\ell_3} \mbox{ and } i \equiv cj \pmod{p^k}\}.$$  We will abbreviate elements $\pi_1^i\pi^j_3$
of $G'$ as $(i,j)$.
Note that
$$\phi_1(i,j) = i/p^{\ell_1} \text{ and } \phi_3(i,j) = j/p^{\ell_3}$$
in $\QQ/\ZZ.$
So $$(\phi_1+\phi_3)(i,j) = i/p^{\ell_1}+j/p^{\ell_3}.$$
First note that if $\ell_1 \neq \ell_3$, say $\ell_3> \ell_1$, then the order of
$(\phi_1+\phi_3)(c,1)$ is $p^{\ell_3}$ which is the largest possible, and so $e_2 = |G'/\ker(\phi_1+\phi_3)| = e_3$.
So in this case $e_2 = \lcm(e_1,e_3)$.

Now suppose that $\ell = \ell_1 = \ell_3.$ In this case
 $$(\phi_1+\phi_2)(i,j) = \frac{i+j}{p^{\ell}}.$$
If $\kappa_1 \cap \kappa_3 = \kappa$ then $k=0$ and so $(0,1) \in G'$ then as above, we have $e_3 = p^\ell = e_2 = e_1$, so we must consider the case where $(0,1) \not\in G'$ and $k >0$.
In this case the elements of  $G'$ are of the form  $(cj,j)\pmod{p}$ so in order to have $\ker(\phi_1 +\phi_3)$ correspond to a degree $p^\ell$ extension,
we need that $cj+j \equiv 0 \pmod{p}$ and so $c\equiv-1 \pmod{p}$.  Lastly, if
$c \equiv -1 \pmod{p^a}$ with $a >0$ minimal, then $c \equiv -1+xp^{a} \pmod{p^{a+1}}$ and so $(c,1) = (-1+xp^{a},1)$ is an element of $G'$ with $(\phi_1+\phi_3)(c,1)$ of order $p^{\ell - a}$. 
\end{proof}

\begin{remark}
As is a consequence of Proposition \ref{e1:not:divide:e3}, one way to  achieve a cancellation of ramification is when
we have the same cyclic extension $L$ of $\kappa$ with
inverse generators.  So $\phi_1,\phi_2$ correspond to
$(L,\sigma)$ and $(L,\sigma^{-1}).$
\end{remark}

 \subsection{Global embeddings and ramification}\label{global:ramification:embedding:formula}    Let $\kk$ be an algebraically closed characteristic zero field and $X$ a normal proper variety with function field $\KK = \kk(X)$.  Fix two $\KK$-central  simple algebras $\Sigma_1$ and $\Sigma_2$ and an $X$-effective embedding $\Sigma_1 \hookrightarrow \Sigma_2$ over $\KK$.  Recall, that the concept of  $X$-effective embedding means that the  ramification of $\Sigma_2$ is greater or equal to the ramification of $\Sigma_1$ at all codimension $1$ points $\mathfrak{p}$ of $X$. 
Further, denote by $\alpha_1$ and $\alpha_2$, respectively, their classes in the Brauer group $\Br(\KK)$.   Also, given a prime divisor $\mathfrak{p}$ of $X$, we denote by
$$
e_{\alpha_2 / \alpha_1}(\mathfrak{p}) := e_{\Sigma_2 / \Sigma_1}(\mathfrak{p})
$$
the number given by \eqref{embedding:divide:theorem:notation} at $\mathfrak{p}$.  Finally, let $\Delta_{\alpha_1}$ and $\Delta_{\alpha_2}$ denote the ramification divisors determined by $\alpha_1$ and $\alpha_2$, respectively, and we also denote by  $\K_{\alpha_1}$ and $\K_{\alpha_2}$  the corresponding log canonical divisors.  Recall that $e_{\mathfrak{p}}(\alpha_i)$ denotes the ramification of $\alpha_i$ at a prime divisor $\mathfrak{p}$.

\begin{theorem}\label{embedding:iitaka:theorem} Let $\KK = \kk(X)$ be the function field of an integral normal proper variety $X$ over an algebraically closed field $\kk$.   Fix two $\KK$-central  simple algebras $\Sigma_1$ and $\Sigma_2$  with respective Brauer classes $\alpha_1, \alpha_2 \in \Br(\KK)$, and an embedding $\Sigma_1 \hookrightarrow \Sigma_2$ over $\KK$.  Assume that the index of $\Sigma_2$ is prime to the characteristic of $\kk$ and 
  $\Sigma_1 \leq_X \Sigma_2$.
 Let $\Delta_{\alpha_i}$ and $\K_{\alpha_i}$, denote, respectively, the ramification and log canonical divisors on $X$ determined  by $\alpha_i$, for $i = 1,2$.
It  then holds true that:
$$ \Delta_{\alpha_1} \leq \Delta_{\alpha_2},$$
$$ \K_{\alpha_1} \leq \K_{\alpha_2}$$
Further, if the ramification divisors $\Delta_{\alpha_i}$, for $i=1,2$, determine $\QQ$-Gorenstein pairs $(X,\Delta_{\alpha_i})$, then the inequality of Iitaka dimensions
$$ \kappa(X,\alpha_1) \leq \kappa(X,\alpha_2)$$
holds true.
If we have $\Sigma_1 \mid_X \Sigma_2$ then
$$
\K_{\alpha_2} = \K_{\alpha_1} + \sum\limits_{\mathfrak{p}}\frac{1}{e_{\mathfrak{p}}(\alpha_1)}\left(1 - \frac{1}{e_{\alpha_2/\alpha_1}(\mathfrak{p})} \right)\mathfrak{p}.
$$
\end{theorem}
We use the following immediate lemma in the proof of Theorem~\ref{embedding:iitaka:theorem}.
\begin{lemma}\label{easyLemma'}  Let $X$ be an integral normal proper variety over $\kk$.  Let $D$ be a $\QQ$-Cartier divisor on $X$ and let $E$ be an effective $\QQ$-Cartier divisor.  Then
$\kappa(X,D) \leq \kappa(X,D+E)$.
\end{lemma}
\begin{proof}
By \eqref{prelim:normal:var:eqn6} we may assume that $D$ and $D+E$ are integral.  Then the assertion follows from the inclusion $\Osh_X(D) \hookrightarrow \Osh_X(D + E)$.
\end{proof}

\begin{proof}[Proof of Theorem~\ref{embedding:iitaka:theorem} and Proposition \ref{embedding:main:result1}]
Note that our hypothesis implies that the 
ramification of  $\Sigma_2$ is divisible by the ramification of $\Sigma_1$ at all prime divisors $\mathfrak{p}$ of $X$.  The fact that $\Delta_{\alpha_1} \leq \Delta_{\alpha_2}$ then follows from the definition given in \eqref{brauer:boundary:defn} since, by  divisibility of ramification indicies, we have that: 
\begin{equation}\label{embedding:iitaka:theorem:key:eqn} 1-\frac{1}{e_{\mathfrak{p}}(\alpha_2)} = 1 - \frac{1}{e_{\alpha_2 / \alpha_1}(\mathfrak{p}) e_{\mathfrak{p}}(\alpha_1) } \geq 1 - \frac{1}{e_{\mathfrak{p}}(\alpha_1)}
\end{equation}
at each prime divisor $\mathfrak{p}$ of $X$.
We note
that, as is a consequence of \eqref{embedding:iitaka:theorem:key:eqn}, the precise relation amongst $\K_{\alpha_1}$ and $\K_{\alpha_2}$ is given by:
$$
\K_{\alpha_2} = \K_{\alpha_1} + \sum\limits_{\mathfrak{p}}\frac{1}{e_{\mathfrak{p}}(\alpha_1)}\left(1 - \frac{1}{e_{\alpha_2/\alpha_1}(\mathfrak{p})} \right)\mathfrak{p}$$
and so $\K_{\alpha_2} - \K_{\alpha_1}$ is effective.

So by Lemma~\ref{easyLemma'} we obtain
$$\kappa(X,\K_{\alpha_1}) \leq \kappa(X,\K_{\alpha_2})$$ 
as desired.
\end{proof}

\subsection{A Riemann-Hurwitz type theorem for ramification divisors determined by Brauer classes}\label{R-H-Brauer}  Here, we study the behaviour of ramification divisors under field extensions.  
 
\subsubsection{Ramification of Brauer classes and extensions of discrete valuation rings}\label{maximal:orders:dvr:extensions}  Fix a primitive $m$th root of unity $\zeta_m$ and let $\mu_m$ be the multiplicative group of $m$th roots of unity.  Let $(R,\mathfrak{m})$ be a characteristic zero discrete valuation ring, with residue field $\kappa = \kappa(\mathfrak{m})$,   and assume that  $(m, \operatorname{char}(\kappa)) = 1$ if $\operatorname{char}(\kappa) > 0$.  Again, this assumption $(m, \operatorname{char}(\kappa)) = 1$ has to do with existence of unramified splitting fields.   Let $\KK$ be the fraction field of $R$, $\hat{R}$ the completion and $\hat{\KK}$ its field of fractions.  Let $\FF / \KK$ be a finite dimensional field extension, $R'$ the integral closure of $R$ in $\FF$ and fix a maximal ideal $\mathfrak{m}'$ of $R'$.   The natural extension of discrete valuation rings
$
(R,\mathfrak{m}) \hookrightarrow (R'_{\mathfrak{m}'},\mathfrak{m}')
$
induces an extension of their completions
$
(\hat{R},\hat{\mathfrak{m}}) \hookrightarrow (\hat{R}'_{\mathfrak{m}'}, \hat{\mathfrak{m}}') \text{.}
$
Let $\hat{\FF}$ be the completion of $\FF$ with respect to $\mathfrak{m}'$ and $\kappa(\mathfrak{m}')$ the residue field.  Similar to \cite[Chapter XII, Exercise 2]{Serre:local:fields}, see also 
\cite[Theorem 10.4]{Saltman:1999},
consider the commutative  diagram
\begin{equation}\label{restriction:ramification:commutative:diagram}
\begin{tikzcd}
\Br(\KK)[m] \arrow[r] \arrow[d, "\operatorname{Res}(\cdot)"] & \H^2(\hat{\KK},\mu_m) \arrow[r] \arrow[d, , "\operatorname{Res}(\cdot)"]  &\H^1(\kappa(\mathfrak{m}),\ZZ/m\ZZ) \arrow[d,"e_{\mathfrak{m}' / \mathfrak{m}} \cdot \operatorname{Res}(\cdot) "] \\
\Br(\FF)[m] \arrow[r] & \H^2(\hat{\FF},\mu_m) \arrow[r] & \H^1(\kappa(\mathfrak{m'}),\ZZ/m\ZZ)
\end{tikzcd}
\end{equation}
Here, the vertical maps are induced by the natural pullback (restriction) maps.  The integer 
$$e_{\mathfrak{m}'/\mathfrak{m}} := [\Gamma_{\mathfrak{m}'} : \Gamma_{\mathfrak{m}}]$$ 
denotes the \emph{ramification index} of the field extension $\FF / \KK$ with respect to $\mathfrak{m}' \mid \mathfrak{m}$.  Recall that, if 
$$f_{\mathfrak{m}'/\mathfrak{m}} := [\kappa(\mathfrak{m}') : \kappa(\mathfrak{m})]$$ 
denotes the \emph{residue degree}, then
$$
e_{\mathfrak{m}'/\mathfrak{m}} \cdot f_{\mathfrak{m}'/\mathfrak{m}} = [\hat{\FF}:\hat{\KK}] \text{.}
$$
In the diagram \eqref{restriction:ramification:commutative:diagram},  composition in each of the horizontal rows yields the respective  ramification group homomorphisms $a_{\mathfrak{m}}(\cdot)$ and $a_{\mathfrak{m}'}(\cdot)$ as in \eqref{ramification:group:homomorphism:defn'} or \eqref{ramification:group:homomorphism:defn}.  Further, fixing an $m$-torsion Brauer class
$
\alpha \in \Br(\KK)[m]
$
and letting 
$$
\alpha' := \operatorname{Res}(\alpha) = \FF \otimes_{\KK} \alpha \in \Br(\FF)[m]
$$
be its pullback, then the above diagram \eqref{restriction:ramification:commutative:diagram} implies that
\begin{equation}\label{restriction:ramfication:relation}
a_{\mathfrak{m}'}(\alpha') = e_{\mathfrak{m}'/\mathfrak{m}} \operatorname{Res}(a_{\mathfrak{m}}(\alpha)) \text{.}
\end{equation}

\subsubsection{Riemann-Hurwitz type theorems}\label{R-H-Brauer-thms}  Let $X$ be a normal projective variety over an algebraically closed characteristic zero field $\kk$, with function field $\KK = \kk(X)$, let $\FF / \KK$ be a finite dimensional  field extension and $f \colon X' \rightarrow X$ the normalization of $X$ in $\FF$.  Recall, that the morphism $f$ is flat in codimension $1$.  
First, we make some comments in regards to \S \ref{maximal:orders:dvr:extensions} within our present global geometric context.  Fix a prime divisor $\mathfrak{p} \in X$ and let $\mathfrak{p}' \in X'$ have the property that $\mathfrak{p}' \mid \mathfrak{p}$.  Then
$$
e_{\mathfrak{p}' / \mathfrak{p}} \cdot f_{\mathfrak{p}' / \mathfrak{p}} = [\FF_{\mathfrak{p}'} : \KK_{\mathfrak{p}}] ;
$$
the extension $\FF / \KK$ is \emph{unramified} at $\mathfrak{p}' \mid \mathfrak{p}$ if 
$e_{\mathfrak{p}' / \mathfrak{p}} = 1$ 
and \emph{totally ramified} if 
$f_{\mathfrak{p}'/\mathfrak{p}} = 1$.  Fix $\alpha \in \operatorname{Br}(\KK)$ and let $\Delta_\alpha$ be the ramification divisor that it determines.  

In our formulation of Proposition \ref{brauer-r-h} below, we make use of the following concept which pertains to the nature of ramification under restriction.

\begin{defn}\label{weak:ramification}
With the notations and hypothesis of this section, we say that $\alpha \in \Br(\KK)$ is \emph{weakly ramified at all primes $\mathfrak{p} \in \operatorname{Supp}(\Delta_{\alpha})$ with respect to $f \colon X' \rightarrow X$}, the normalization of $X$ in $\FF$, if for all primes 
$\mathfrak{p} \in \operatorname{Supp}(\Delta_{\alpha})$ and all primes $\mathfrak{p}' \in X'$, with $\mathfrak{p}' \mid \mathfrak{p}$
either 
$e_{\mathfrak{p}' / \mathfrak{p}}(\FF / \KK) > 1$ 
or
$e_{\mathfrak{p}'}(\alpha') = e_{\mathfrak{p}}(\alpha)$.  When these conditions are satisfied, we also say that the Brauer class $\alpha$ has $f$  {\emph weak
  ramification} or, equivalently, that $f $ is \emph{weakly $\alpha$-ramified}.
\end{defn}

The following result shows that the conditions which are required by Definition \ref{weak:ramification} are satisfied in a variety of situations.

\begin{proposition}\label{weak:ramification:prop}
Let $X$ be a normal projective variety, over an algebraically closed characteristic zero field $\kk$, with function field $\KK = \kk(X)$, let $\FF / \KK$ be a finite dimensional Galois extension and let $f \colon X' \rightarrow X$ be the normalization of $X$ in $\FF$.  Fix $\alpha \in \Br(\KK)$ and let $\alpha' \in \Br(\FF)$ be its pullback.  Consider the following three conditions:
\begin{enumerate}
\item{the degree of $\FF / \KK$
and the index of $\alpha$ are relatively prime;
}
\item{
for all primes $\mathfrak{p}' \in X'$, if $\mathfrak{p} = f(\mathfrak{p}') \in \operatorname{Supp}(\Delta_{\alpha})$, then either 
$e_{\mathfrak{p}'/\mathfrak{p}} > 1$ or 
$(f_{\mathfrak{p}'/\mathfrak{p}}, e_{\mathfrak{p}}(\alpha))=1\text{;}$ 
}
\item{ the morphism
$f \colon X' \rightarrow X$ is weakly $\alpha$-ramified.
}
\end{enumerate}
Then condition (a) implies condition (b) and condition (b) implies condition (c).
\end{proposition}

\begin{proof}
In what follows, fix a prime $\mathfrak{p}' \in X'$ with $\mathfrak{p} = f(\mathfrak{p}') \in \operatorname{Supp}(\Delta_{\alpha})$.  Assume condition (a) and suppose that $e_{\mathfrak{p}'/\mathfrak{p}} = 1$.  
Since $\FF / \KK$ is Galois, 
$$
\#f^{-1}(\mathfrak{p}) \cdot e_{\FF / \KK} \cdot f_{\FF / \KK}  = [\FF : \KK ] = \deg f \text{,}
$$
for $e_{\FF / \KK} = e_{\mathfrak{p}'/\mathfrak{p}}$ and $f_{\FF / \KK} = f_{\mathfrak{p}'/\mathfrak{p}}$ (independent of $\mathfrak{p}'$).  
Thus
$f_{\mathfrak{p}'/\mathfrak{p}} \mid \operatorname{deg} f
$.   Now let $\hat{\alpha}$ denote the image of $\alpha$ in $\Br(\KK_{\mathfrak{p}})$.  Then the period of $\hat{\alpha}$ divides the period of $\alpha$ and,
 by assumption, the degree of $f$ is relatively prime to the index of $\alpha$.  Together, these assertions imply that 
$$
1  = (\deg f, \operatorname{index}(\hat{\alpha}) )
 = (f_{\mathfrak{p}'/\mathfrak{p}},\operatorname{index}(\hat{\alpha})) \text{.}
 $$
Finally, since
$
[\FF_{\mathfrak{p}'} : \KK_{\mathfrak{p}}] = e_{\mathfrak{p}'/\mathfrak{p}} \cdot f_{\mathfrak{p}'/\mathfrak{p}} 
$
and $e_{\mathfrak{p}'/\mathfrak{p}} = 1$, the above discussion implies that
\begin{equation}\label{weak:ram:eqn5}
(\operatorname{period} (\hat{\alpha}), [ \FF_{\mathfrak{p}'}:\KK_{\mathfrak{p}}]) = 1
\end{equation}
and
\begin{equation}\label{weak:ram:eqn6}
(e_{\mathfrak{p}}(\alpha), f_{\mathfrak{p}'/\mathfrak{p}}) = 1 \text{.}
\end{equation}
This last relation \eqref{weak:ram:eqn6} is a consequence of \eqref{weak:ram:eqn5} since the  ramification index
$
e_{\mathfrak{p}}(\alpha) 
$
divides the index of $\hat{\alpha}$ 
and because the index and period of $\hat{\alpha}$ have the same prime factors.  Thus (a) implies (b) as desired.

Now assume condition (b) and suppose that $e_{\mathfrak{p}' / \mathfrak{p}}=1$.  Let $m$ be the period of $\alpha$.  To establish (c), recall that the order of $e_{\mathfrak{p}'/\mathfrak{p}} \cdot \operatorname{Res}(a_{\mathfrak{p}}(\alpha))$ in $\H^1(\kappa(\mathfrak{p}'),\mu_m)$ equals $e_{\mathfrak{p}'}(\alpha')$.  But it also equals $e_{\mathfrak{p}}(\alpha)$, the order of $a_{\mathfrak{p}}(\alpha)$ in $\H^1(\kappa(\mathfrak{p}),\mu_m)$, since $e_{\mathfrak{p}'/\mathfrak{p}} = 1$ and since the period of $\hat{\alpha}$ is relatively prime to $ [\FF_{\mathfrak{p}'}:\KK_{\mathfrak{p}}]$.  That (b) implies (c) is then evident in light of Definition \ref{weak:ramification}.
\end{proof}

Using Definition \ref{weak:ramification}, we state Proposition \ref{brauer-r-h}  in the following way.   

\begin{proposition}\label{brauer-r-h} Let $X$ be a normal projective variety, over an algebraically closed characteristic zero field $\kk$, with function field $\KK = \kk(X)$.  Let $\FF / \KK$ be a finite 
field extension
and let $f \colon X' \rightarrow X$ be the normalization of $X$ in $\FF$.  Finally, fix a Brauer class $\alpha \in \Br(\KK)$
and put $\alpha' = f^*\alpha \in \Br(\FF)$.  
Then, in this situation, the divisor $\K_{f^* \alpha} - f^* \K_\alpha$ is effective
if and only if $\alpha$ is weakly ramified, in the sense of Definition \ref{weak:ramification}, at all primes $\mathfrak{p} \in \operatorname{Supp}(\Delta_{\alpha})$.
\end{proposition}
 
\begin{proof}
We want to study the divisor
$$
\K_{f^*\alpha} - f^* \K_\alpha = \Delta_{f^* \alpha} - f^* \Delta_{\alpha} + \operatorname{Ram}(f) \text{.}
$$
Here, the ramification divisor $\operatorname{Ram}(f)$ has the shape
$$
\operatorname{Ram}(f) = \sum_{\mathfrak{p}'} (e_{\mathfrak{p}'/\mathfrak{p}} - 1) \mathfrak{p}' \text{.}
$$
The sum is taken over all prime divisors $\mathfrak{p}'$ of $X'$.  In what follows, the prime $\mathfrak{p}$ is the prime divisor of $X$ that lies below the prime $\mathfrak{p}' \in X'$.  

Consider now the divisor $\Delta_{\alpha}$ which we may write in the form
$$
\Delta_{\alpha} = \sum_{\mathfrak{p}} \left( 1 - \frac{1}{e_{\mathfrak{p}}(\alpha)} \right) \mathfrak{p} \text{.}
$$
Then
$$
f^* \Delta_{\alpha} = \sum_{\mathfrak{p}} \left( 1 - \frac{1}{e_{\mathfrak{p}}(\alpha)} \right) f^*\mathfrak{p} \text{.}
$$
Furthermore, for each fixed $\mathfrak{p} \in \operatorname{Supp}(\Delta_{\alpha})$, by flat pullback of cycles, it holds true that
$$
f^* \mathfrak{p} = \sum_{\mathfrak{p}' | \mathfrak{p}} \mathfrak{p}' \text{.}
$$
Combining, it follows that
$$
f^*\Delta_{\alpha} = \sum_{\mathfrak{p} \in \operatorname{Supp}(\Delta_{\alpha})} \sum_{\mathfrak{p}' | \mathfrak{p}} \left(1 - \frac{1}{e_{\mathfrak{p}}(\alpha)}  \right) \mathfrak{p}' \text{.}
$$

On the other hand
$$
\Delta_{f^* \alpha} = \sum_{\text{primes } \mathfrak{p}' \in X'} \left( 1 - \frac{1}{e_{\mathfrak{p}'}(\alpha')} \right) \mathfrak{p}' \text{.}
$$
Thus, using the above discussion, we obtain the relation 
\begin{align*}
\K_{f^* \alpha} - f^* \K_{\alpha} & = \Delta_{f^* \alpha} - f^* \Delta_{\alpha} + \operatorname{Ram}(f)  \\
& = \sum_{\text{primes } \mathfrak{p}' \in X'} \left( 1 - \frac{1}{e_{\mathfrak{p}'}(\alpha')} \right) \mathfrak{p}' \\
& - \left( \sum_{\mathfrak{p} \in \operatorname{Supp}(\Delta_{\alpha})} \sum_{ 
\substack{
\text{ primes } \mathfrak{p}' \in X' \\
\text{ with } \mathfrak{p}' | \mathfrak{p} } } \left( 1 - \frac{1}{e_{\mathfrak{p}}(\alpha) }\right) \mathfrak{p}' \right) \\
& + \sum_{\text{ primes } \mathfrak{p}' \in X'} \left( e_{\mathfrak{p}' / \mathfrak{p} }- 1 \right) \mathfrak{p}' 
\end{align*}
which we may rewrite in the form
\begin{align*}
\K_{f^* \alpha} - f^* \K_{\alpha} & = \sum_{ \substack{ \text{prime divisors} \\ \mathfrak{p}' \in X'} } \left( e_{\mathfrak{p}' / \mathfrak{p}} - \frac{1}{e_{\mathfrak{p}'}(\alpha')} \right) \mathfrak{p}'  \\
& - \left(
\sum_{ \mathfrak{p} \in \operatorname{Supp}(\Delta_{\alpha})} 
\sum_{
\substack{\text{prime divisors} \\
\mathfrak{p}' \in X' \\
\text{with $\mathfrak{p}' | \mathfrak{p}$}
}
}
\left( 1 - \frac{1}{e_{\mathfrak{p}}(\alpha)} \right)
 \mathfrak{p}'
\right) \text{.}
\end{align*}

We want to study the negative contribution in the right hand side above.  Fix a prime divisor $\mathfrak{p}' \in X'$ which has the property that $\mathfrak{p}' | \mathfrak{p}$ for some prime $\mathfrak{p} \in \operatorname{Supp}(\Delta_\alpha)$.  Then the coefficient of $\mathfrak{p}'$ in $\K_{f^* \alpha} - f^* \K_{\alpha}$  is given by
$$
e_{\mathfrak{p}' / \mathfrak{p}} - \frac{1}{e_{\mathfrak{p}'}(\alpha')} + \frac{1}{e_{\mathfrak{p}}(\alpha)} - 1 \text{.}
$$
Thus
$$
\K_{f^* \alpha} - f^* \K_{\alpha} \geq 0
$$
if and only if
\begin{equation}\label{effective:characterization}
e_{\mathfrak{p}' / \mathfrak{p}} - \frac{1}{e_{\mathfrak{p}'}(\alpha')} + \frac{1}{e_{\mathfrak{p}}(\alpha)} \geq 1
\end{equation}
for all prime divisors $\mathfrak{p}' \in X'$ which lie over some prime $\mathfrak{p} \in \operatorname{Supp}(\Delta_{\alpha})$.
\end{proof}

\begin{example}
Let us now consider the above quantity \eqref{effective:characterization} in some further details.  Let $\mathfrak{p} \in \operatorname{Supp}(\Delta_{\alpha})$ and consider the case that 
$
e_{\mathfrak{p}'/\mathfrak{p}} =1 \text{ and } e_{\mathfrak{p}'}(\alpha') = 1 \text{.}
$
In particular, the field extension $\FF / \KK$ is not ramified at $\mathfrak{p}' | \mathfrak{p}$ nor is $\alpha'$, the pullback of $\alpha \in \Br(\KK)$, ramified at $\mathfrak{p}'$.  In this case, 
$\frac{1}{e_{\mathfrak{p}}(\alpha)} < 1$, since $\mathfrak{p} \in \operatorname{Supp}(\Delta_{\alpha})$,
and so the divisor
$\K_{f^* \alpha} - f^* \K_{\alpha}$
cannot be effective because of Proposition \ref{brauer-r-h}.  We thank an anonymous referee for stimulating examples of this flavour. 
\end{example}

Proposition \ref{brauer-r-h} has the following consequence.

\begin{corollary}\label{R-H-orders:Kod:cor1}
In the setting of Proposition \ref{brauer-r-h}, especially if $\alpha \in \Br(\KK)$ is weakly ramified with respect to $f \colon X' \rightarrow X$, the normalization of $X$ in $\FF$, assume
further that the classes $\alpha$ and $\alpha'$ determine $\QQ$-Gorenstein pairs.  Then, in this situation, it holds true that:
$$ \kappa(X',\K_{f^*\alpha}) \geq \kappa(X,\K_{\alpha}).$$
\end{corollary} 

\begin{proof}[Proof of Corollary~\ref{R-H-orders:Kod:cor1}]
To begin with, the hypothesis of Corollary~\ref{R-H-orders:Kod:cor1} implies that the difference
$$\K_{f^*\alpha} - f^* \K_{\alpha}$$
is an effective $\QQ$-Cartier divisor.  Thus, by Lemma~\ref{easyLemma'}, we have that
\begin{equation}\label{R-H-orders:Kod:cor1:eqn}
\kappa(X',\K_{f^* \alpha})  \geq \kappa(X',f^* \K_{\alpha})
\end{equation}
holds true.
Using \eqref{R-H-orders:Kod:cor1:eqn}, it then follows, from~\cite[Proposition 1.5 (ii)]{Mori:1985} for instance, that
$$\kappa(X,\K_{\alpha}) = \kappa(X',f^* \K_{\alpha}) \leq  \kappa(X',\K_{f^* \alpha})$$
as desired.
\end{proof}

We are now interested in two  central simple algebras $\Sigma_1$ and $\Sigma_2$ with centres $\KK$ and $\FF$ respectively.
We assume we have embeddings $\Sigma_1 \hookrightarrow \Sigma_2$ such that there is an induced map of centres $\KK \subseteq \FF$.
We summarize our setting below in the following diagram of extensions of  central simple algebras with extensions of centres: 
\begin{equation}\label{divisionAlgDiagram}
\begin{tikzcd}
\Sigma_1 \arrow[hook]{r}  & \FF \otimes_{\KK} \Sigma_1 \arrow[hook]{r}  & \Sigma_2  \\
\KK \arrow[hook]{r} \arrow[hook]{u} & \FF  \arrow[hook]{ur} 
\end{tikzcd}
\end{equation}
Let $\alpha_1 \in \Br (\KK)$ denote the class of $\Sigma_1$, $f^* \alpha_1 \in \Br(\FF)$ the class of $\FF \otimes_\KK  \Sigma_1$, and { $\alpha_2 \in \Br(\FF)$  the class of $\Sigma_2$.  

\begin{corollary}\label{R-H-embedding:cor} Assume, in addition to the assumptions of Corollary~\ref{R-H-orders:Kod:cor1}, that 
the classes $\alpha_1 \in \Br(\KK)$, $f^*\alpha_1 \in \Br(\FF)$ and $\alpha_2 \in \Br(\FF)$ determine $\QQ$-Gorenstein pairs and that the ramification of $\alpha_2$ is divisible by the ramification of $f^*\alpha_1$ at all prime divisors $\mathfrak{p}'$ of $X'$. Then, under these assumptions, it holds true that:
$$ \kappa(X,\K_{\alpha_1}) \leq \kappa(X',\K_{\alpha_2}).$$
\end{corollary}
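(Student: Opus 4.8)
The plan is to factor the comparison through the intermediate Brauer class $f^* \alpha_1 \in \Br(\FF)$, which interpolates between the source class $\alpha_1$ on $X$ and the target class $\alpha_2$ on $X'$. The point is that the embedding $\mathcal{D}_1 \hookrightarrow \mathcal{D}_2$ of division algebras with distinct centres factors, by Remark~\ref{division:alg:embedding:remark}, into two steps of the two kinds we have already analyzed: first the pullback $\mathcal{D}_1 \rightsquigarrow \FF \otimes_\KK \mathcal{D}_1$ effected by the Galois-free field extension $\FF/\KK$, and second the embedding $\FF \otimes_\KK \mathcal{D}_1 \hookrightarrow \mathcal{D}_2$ of $\FF$-central division algebras over the \emph{fixed} centre $\FF$. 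Each step has its own Riemann--Hurwitz-type inequality, and chaining them yields the result. So I would aim to establish
$$\kappa(X,\K_{\alpha_1}) \leq \kappa(X',\K_{f^* \alpha_1}) \leq \kappa(X',\K_{\alpha_2}).$$

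For the first inequality, I would simply invoke Corollary~\ref{R-H-orders:Kod:cor1} (equivalently Theorem~\ref{brauer-r-h}) applied to the class $\alpha = \alpha_1 \in \Br(\KK)$ and its pullback $\alpha' = f^* \alpha_1 \in \Br(\FF)$ along the normalization $f : X' \rightarrow X$ of $X$ in $\FF$. The hypothesis of Corollary~\ref{R-H-embedding:cor} provides exactly what is needed here, namely that both $(X,\alpha_1)$ and $(X',f^* \alpha_1)$ are $\QQ$-Gorenstein, so that $\K_{f^* \alpha_1} - f^* \K_{\alpha_1}$ is effective and $\QQ$-Cartier; the conclusion $\kappa(X,\K_{\alpha_1}) \leq \kappa(X',\K_{f^* \alpha_1})$ is then immediate.

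For the second inequality, I would apply Theorem~\ref{embedding:iitaka:theorem} on the model $X'$, whose function field is $\FF$. Here the two $\FF$-central division algebras are $\FF \otimes_\KK \mathcal{D}_1$ and $\mathcal{D}_2$, with the embedding over $\FF$ furnished by Remark~\ref{division:alg:embedding:remark}; their Brauer classes are $f^* \alpha_1$ and $\alpha_2$. Since the hypothesis of Corollary~\ref{R-H-embedding:cor} assumes that both $(X',f^* \alpha_1)$ and $(X',\alpha_2)$ are $\QQ$-Gorenstein, Theorem~\ref{embedding:iitaka:theorem} gives $\kappa(X',\K_{f^* \alpha_1}) \leq \kappa(X',\K_{\alpha_2})$, and composing the two inequalities completes the argument.

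Because both constituent theorems are already proved, this corollary is essentially bookkeeping, and I do not expect a genuine obstacle. The only point requiring care is verifying that the $\QQ$-Gorenstein hypotheses are stated for exactly the three pairs $(X,\alpha_1)$, $(X',f^* \alpha_1)$, $(X',\alpha_2)$ that the two cited results consume—the middle class $f^* \alpha_1$ appearing in both steps—and that the embedding $\FF \otimes_\KK \mathcal{D}_1 \hookrightarrow \mathcal{D}_2$ is genuinely over the fixed centre $\FF$ so that Theorem~\ref{embedding:iitaka:theorem} (rather than the mixed-centre Riemann--Hurwitz formula) is the applicable tool.
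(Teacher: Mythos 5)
Your proposal is correct and follows exactly the paper's own argument: the paper likewise factors the comparison through $f^*\alpha_1$, applying Corollary~\ref{R-H-orders:Kod:cor1} for the pullback step and Theorem~\ref{embedding:iitaka:theorem} (via Remark~\ref{division:alg:embedding:remark}) for the fixed-centre embedding $\FF\otimes_\KK\mathcal{D}_1\hookrightarrow\mathcal{D}_2$. Your added care about which three pairs must be $\QQ$-Gorenstein is a faithful, slightly more explicit rendering of the same bookkeeping.
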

\begin{proof}
We are considering extensions of central simple algebras which fits into an extension diagram as given in \eqref{divisionAlgDiagram} above.
Thus, the conclusion follows by 
combining Corollary~\ref{R-H-orders:Kod:cor1} and Theorem~\ref{embedding:iitaka:theorem}.
\end{proof}

 Note that the divisibility hypothesis
of Corollary~\ref{R-H-embedding:cor} is satisfied in the context of Proposition \ref{period:index} whereas the condition about weak ramification is implied under the assumptions given by Proposition \ref{weak:ramification:prop}. 
Also, observe that Corollary~\ref{R-H-orders:Kod:cor1} and Corollary~\ref{R-H-embedding:cor} pertain to Iitaka dimensions and not Kodaira dimensions.  In other words, these numbers may depend on a given choice of model of $\KK$.  Removing this birational dependence is one of the aims of \S \ref{Galois:embeddings:division:algebras}.

\subsubsection{Galois embeddings, central simple algebras and Kodaira dimensions}\label{Galois:embeddings:division:algebras}  We continue to assume that $\cchar (\kk) = 0$ and our goal is to consider extensions of central simple algebras over $\kk$ which induce a Galois extension of their centres.   Let $\KK = \kk(X)$ be the function field of a normal projective variety $X$ and let $\FF / \KK$ be a finite Galois extension with Galois group $\G = \operatorname{Gal}(\FF/\KK)$.  Next, we let $\Sigma$ be a $\KK$-central simple algebra with Brauer class $\alpha \in \Br(\KK)$ and we suppose that the pair $(X,\mathbb{D}(\alpha))$ has b-canonical singularities.  Let $f \colon X' \rightarrow X$ be the normalization of $X$ in $\FF$.  Recall that $\G$ acts on $X'$ and that $f \colon X' \rightarrow X$ is the quotient map (this is a consequence of ~\cite[Theorem 9.3]{Mat}).  In what follows, we study a concept of $\G$-equivariant resolution for the pair $(X', \mathbb{D}(f^* \alpha))$.  We make this precise as follows.

\begin{defn}\label{G:b:log:pair}
Let $\G$ be a finite group.  A $\G$-b-log pair is a b-log pair $(Y,\mathbb{E})$ consisting of an integral $\G$-variety $Y$ over $\kk$ and a b-divisor $\mathbb{E} \in \DDiv_\QQ(\kk(Y))$ with the property that if $Z \in \mathcal{M}_{/\kk(Y)}$ is a $\G$-variety compatible with the action of $\G$ on $\kk(Y)$, then $\mathbb{E}_Z$, the trace of $\mathbb{E}$ on $Z$, is $\G$-stable. 
\end{defn}
 
Having defined the concept of a $\G$-b-log pair, we now make precise what we mean by $\G$-equivariant, b-canonical and b-terminal resolutions of such a pair.

\begin{defn}\label{b-equiv-b-terminal}
Let $\G$ be a finite group and $(Y,\mathbb{E})$ a $\G$-b-log pair.  By a \emph{$\G$-equivariant b-canonical resolution} of $(Y,\mathbb{E})$, we mean a $\G$-b-log pair $(\widetilde{Y},\mathbb{E})$ having b-canonical singularities together with a proper $\G$-equivariant birational morphism $p \colon \widetilde{Y} \rightarrow Y$.  We define a \emph{$\G$-equivariant b-terminal resolution} similarly.
\end{defn}

We establish existence of a stronger version of such a resolution.  We first make the following definition.

\begin{defn}
  Let $\KK$ be a finitely generated field over $\kk$ and let $\FF$ be a Galois extension of $\KK$ with Galois group $\G$.  We define the \emph{branch b-divisor} $\mathbb{B}$, of the Galois extension $\FF/\KK$, in the following way.  Let $X \in \mathcal{M}_{/\KK}$ have normalization $\pi_X \colon X' \to X$ in $\FF$.  Then $\mathbb{B} \in \DDiv_{\QQ}(\KK)$ is defined by the condition that its trace $\mathbb{B}_X$ is defined in terms of the equation:
  \begin{equation}\label{G-b-log-eqn5}
\K_{X'} = \pi^*_X(\K_X + \mathbb{B}_X).
  \end{equation}
  Such a divisor $\mathbb{B}_X$ exists  by the Riemann-Hurwitz formula since $\FF$ is Galois over $\KK.$ This b-divisor is also described in~\cite[Example 2.10]{Chan:plus:10}. 
  \end{defn}

The manner in which branch divisors arise within the context of a given $\G$-b-log pair $(Y,\mathbb{E})$, where the $b$-divisor $\mathbb{E}$ has coefficients in $[0,1)\cap \QQ$, is made precise in the following way.

\begin{proposition}\label{fractional:branch:prop}
  Let $\G$ be a finite group and $(Y,\mathbb{E})$ a $\G$-b-log pair with the property that the coefficients of $\mathbb{E}_{Z'}$, for each $Z' \in \mathcal{M}_{/\kk(Y)}$, lie in $[0,1) \cap \QQ.$ Let $\mathbb{B}$ be the branch b-divisor of the extension $\kk(Y)/\kk(Y)^{\G}$.  Let $\pi_{Z} \colon Z' \rightarrow Z = Z'/{\G}$ be the quotient map. Then the $b$-divisor
\begin{equation}    \label{eqn:B:plus:E}
  \mathbb{B}_Z+\frac{1}{|\G|} \pi_{Z*} \mathbb{E}_{Z'}
  \end{equation}
    has coefficients in  $[0,1) \cap \QQ.$
\end{proposition}
\begin{proof}   Let $\FF = \kk(Y)$, $\KK = \kk(Y)^{\G}$.
Fix  
$Z \in \mathcal{M}_{/\KK}$ and fix a prime divisor $\mathfrak{p}$ on $Z$.  Let $\pi_Z \colon Z' \rightarrow Z$ be the normalization of $Z$ in $\FF$.  Put $\mathbb{F}_Z = \mathbb{B}_Z+\frac{1}{|\G|} \pi_{Z*} \mathbb{E}_{Z'}$. Then the coefficient of $\mathfrak{p}$ in $\mathbb{B}_Z$
 is $1-1/e$ where $e$ is the ramification index of $\FF$ over $\KK$ at $\mathfrak{p}$.

Let $d \in [0,1) \cap \QQ$ be the coefficient of $\mathbb{E}_{Z'}$ at $\mathfrak{p}'$.  Then, since the coefficient of $\pi_{Z*} \mathbb{E}_{Z'}$ at $\mathfrak{p}$ is $d|\G|/e$, it follows that the coefficient of $\mathbb{F}_Z$ at $\mathfrak{p}$ is:
$$\left(1 - \frac{1}{e} \right) + \frac{d}{e} \in [0,1) \cap \QQ .$$
\end{proof}

Fix a finite Galois extension $\FF/\KK$, with Galois group $\G$, and consider the canonical map 
$\pi \colon \Spec \FF \to \Spec \KK$.  In  what follows, we write the b-divisor which is defined by equation~\eqref{eqn:B:plus:E} as
\begin{equation}\label{log:branch:b-divisor}
\mathbb{B}+\frac{1}{|\G|} \pi_{*} \mathbb{E} \text{.}
\end{equation}

Having fixed some preliminaries, our refined form of Definition \ref{b-equiv-b-terminal} is expressed as follows.

  \begin{defn}
    Let $\G$ be a finite group and $(Y,\mathbb{E})$ a $\G$-b-log pair.
    Let $\mathbb{B}$ be the branch b-divisor of the extension $\kk(Y)/\kk(Y)^{\G}$.
    By a \emph{strong $\G$-equivariant b-canonical resolution} of $(Y,\mathbb{E})$, we mean a $\G$-b-log pair $(\widetilde{Y},\mathbb{E})$ having b-canonical singularities together with a proper $\G$-equivariant birational morphism $p \colon \widetilde{Y} \rightarrow Y$ such that the b-log pair 
$$\left(\widetilde{Y}/\G,\mathbb{B}+ \frac{1}{|\G|} \pi_*\mathbb{E} \right)$$ 
is b-canonical.  We define a \emph{strong $\G$-equivariant b-terminal resolution} similarly.
\end{defn}

The following result uses existence of b-terminal resolutions of b-log pairs \cite[Theorem 2.30]{Chan:plus:10}.  It establishes existence of strong $\G$-equivariant b-terminal resolutions.  For later use, we state explicitly the manner in which such resolutions are obtained.

\begin{theorem}\label{existence:equivariant:terminal:resolutions} 
  Let $\G$ be a finite group and $(Y,\mathbb{E})$ a $\G$-b-log pair with the property that the coefficients of $\mathbb{E}_Z$, for each $Z \in \mathcal{M}_{/\kk(Y)}$, lie in $[0,1) \cap \QQ$.  Then the pair $(Y,\mathbb{E})$ admits a strong $\G$-equivariant b-terminal resolution.  In more explicit terms, let $X = Y / \G$, $\FF = \kk(Y)$, $\KK = \kk(X)$ and denote by
 $$\mathbb{F} = \mathbb{B} + \frac{1}{|\G|} \pi_* \mathbb{E}$$
 the b-divisor defined by equation \eqref{log:branch:b-divisor}.  
 Fix a b-terminal resolution $(\widetilde{X},\mathbb{F})$ of $(X,\mathbb{F})$ and let $\tilde{\pi} \colon \widetilde{Y} \rightarrow \widetilde{X}$ be the normalization of $\widetilde{X}$ in $\FF$.  Then $(\widetilde{Y},\mathbb{E})$ is a strong $\G$-equivariant b-terminal resolution of $(Y,\mathbb{E})$.
\end{theorem}
\begin{proof} 
 By~\cite[Theorem 2.30]{Chan:plus:10}, which applies since $\mathbb{F}_W \in [0,1) \cap \QQ$ for each $W \in \mathcal{M}_{/\KK}$, as is a consequence Proposition \ref{fractional:branch:prop}, there exists a b-terminal  resolution $(\widetilde{X},\mathbb{F})$ of the b-log pair $(X,\mathbb{F})$ and we let $\tilde{\pi} \colon \widetilde{Y} \rightarrow \widetilde{X}$ be the normalization of $\widetilde{X}$ in $\FF$.   Then $\G$ acts on $\widetilde{Y}$. Further, since $Y$ is the normalization of $X$ in $\FF$, it then follows, by the universal property of normalizations, that $\widetilde{Y}$ admits a natural birational $\G$-equivariant map $p\colon \widetilde{Y} \rightarrow Y$. The pair $(\widetilde{Y},\mathbb{E})$ is $\QQ$-Gorenstein since $(\widetilde{X},\mathbb{F})$ is $\QQ$-Gorenstein and since  
 $$\K_Y+\mathbb{E}_Y = \widetilde{\pi}^*(\K_X+\mathbb{F}_X)\text{.}$$ 
Our goal now is to show that the pair $(\widetilde{Y},\mathbb{E})$ is a strong $\G$-equivariant b-terminal  resolution of $(Y,\mathbb{E})$.  Since $(\widetilde{X}, \mathbb{F})$ is b-terminal and since the morphism $p \colon \widetilde{Y} \rightarrow Y$ is $\G$-equivariant, it remains only to prove that $(\widetilde{Y},\mathbb{E})$ is $b$-terminal.

Let $D$ be an irreducible exceptional divisor over $\widetilde{Y}$;  
we want to show that $D$ has positive discrepancy with respect to $\mathbb{E}$.
Without loss of generality we can choose $f \colon \widetilde{Y}' \rightarrow \widetilde{Y}$ to be a $\G$-equivariant proper birational morphism which has $D$ as an irreducible exceptional divisor.   Indeed, we can
take the Galois orbit of $D$ interpreted as a collection of  divisorial valuations on
$\FF$.  Then we can perform successive equivariant blow-ups of the centres of these valuations to obtain the desired model.

Let $\widetilde{X}' = \widetilde{Y}'/\G$ with canonical morphism
$\tilde{\pi}'\colon  \widetilde{Y}' \to \widetilde{X}'$.  Note that
$E=\tilde{\pi}'_*(D)$ is an irreducible exceptional divisor over $\widetilde{X}$.  We then have a commutative diagram:
$$
\begin{tikzcd}
  \widetilde{Y}' \arrow{r}{f} \arrow{d}[swap]{\tilde{\pi}'} & \widetilde{Y} \arrow{d}{\tilde{\pi}} \\
  \widetilde{X}' \arrow{r}{g} & \widetilde{X}
\end{tikzcd} 
$$  
of $\G$-varieties which is induced by the universal property of quotients.  

We next show that $D$ has positive discrepancy.    To that end, consider now
the divisor
$$ B = \K_{\widetilde{X}'} + \mathbb{F}_{\widetilde{X}'} - g^*(\K_{\widetilde{X}} + \mathbb{F}_{\widetilde{X}})$$
on $\widetilde{X}'$.

Since $(\widetilde{X},\mathbb{F})$ is b-terminal, all exceptional divisors have  positive discrepancy over $\widetilde{X}$. 
Pulling back $B$ via $\tilde{\pi}'$, we then obtain:
\begin{equation}\label{G-b-log-eqn5'} 
\tilde{\pi}^{' *} \left( \K_{\widetilde{X}'} + \mathbb{F}_{\widetilde{X}'} \right) = \tilde{\pi}^{' *} g^* \left(\K_{\widetilde{X}} + \mathbb{F}_{\widetilde{X}} \right) + \tilde{\pi}^{' *} B.
\end{equation}
On the other hand, using \eqref{G-b-log-eqn5} and the definition of $\mathbb{F}$, we have:
\begin{equation}\label{G-b-log-eqn5''}
\tilde{\pi}'^*(\K_{\widetilde{X}'} + \mathbb{F}_{\widetilde{X}'} ) = \K_{\widetilde{Y}'} + \mathbb{E}_{\widetilde{Y}'}
\end{equation}
and so, combining \eqref{G-b-log-eqn5'} and \eqref{G-b-log-eqn5''}, it follows that:
$$
\K_{\widetilde{Y}'} + \mathbb{E}_{\widetilde{Y}'} = f^* \tilde{\pi}^{*} \left( \K_{\widetilde{X}} + \mathbb{F}_{\widetilde{X}} \right) + \tilde{\pi}^{' *} B
$$
which we can rewrite
as:
\begin{equation}\label{G-b-log-eqn9}
\K_{\widetilde{Y}'} + \mathbb{E}_{\widetilde{Y}'} = f^* \left( \K_{\widetilde{Y}} + \mathbb{E}_{\widetilde{Y}} \right) + \tilde{\pi}^{' *} B.
\end{equation}
Finally, since $B$ is a positive discrepancy divisor over $\widetilde{X}$, it follows from \eqref{G-b-log-eqn9} that $\tilde{\pi}^{' *} B$ is a positive discrepancy divisor over $\widetilde{Y}$ too.  
In particular, the coefficient of $D$ in $\tilde{\pi}^{' *} B$ is positive.  We have shown that all exceptional divisors over $\widetilde{Y}$ have positive discrepancy with respect to $\mathbb{E}$ and thus $(\widetilde{Y},\mathbb{E})$ is b-terminal.  
\end{proof}      
  
Having shown the existence of strong $\G$-equivariant b-terminal resolutions, we establish Proposition~\ref{Galois:embedding:theorem} below which is a  birationally invariant form of Proposition~\ref{brauer-r-h} and Corollary~\ref{R-H-orders:Kod:cor1}.  To do so, we first formulate a variant of Definition \ref{weak:ramification}.  The idea is to control the ramification for a Galois extension with respect to an equivariant resolution of singularities.  We make this precise 
by way of the following concept:

\begin{defn}\label{G:weak:ramification:2}
  Let $\KK$ be a finitely generated field over $\kk$.  Let $\FF/\KK$ be a finite Galois extension with Galois group $\G = \operatorname{Gal}(\FF / \KK)$.
 Let $\Sigma$ be a $\KK$-central simple algebra with Brauer class $\alpha \in \Br(\KK)$ and let $\alpha' \in \Br(\FF)$ be the pullback of $\alpha$.  We say that $\alpha$ is \emph{weakly $\FF/\KK$ ramified}  if there exists a projective model $Y$ of $\FF$ such that $(Y,\DD(\alpha'))$ is $\G$-equivariant, $b$-canonical and the map $Y \to Y/\G$ is weakly $\alpha$-ramified. 
\end{defn}

The following Lemma is a consequence of Proposition \ref{weak:ramification:prop} combined with Theorem \ref{existence:equivariant:terminal:resolutions}. It gets used in the proof of Theorem \ref{Division:Alg:Galois:Embedding:Thm} and Corollary \ref{rel-prime-period-index}.

\begin{lemma}\label{prime:to:index}
Let $\KK$ be a finitely generated field over $\kk$ and $\FF / \KK$ a finite Galois extension with Galois group $\G = \Gal(\FF/\KK)$.  Let $\alpha \in \Br(\KK)$ with pullback $\alpha' \in \Br(\FF)$.  If the index of $\alpha$ is relatively prime to the degree of $\FF$ over $\KK$, then $\alpha$ is weakly $\FF / \KK$ ramified.
\end{lemma}
\begin{proof}
Let $X$ be a normal projective model of $\KK$ and let $Y$ be the normalization of $X$ in $\FF$.   Then $(Y,\mathbb{D}(\alpha'))$ is a $\G$-b-log pair.  Let 
$$\mathbb{F} = \mathbb{B} + \frac{1}{|\G|} \pi_* \mathbb{D}(\alpha')$$ 
and let $(\widetilde{X},\mathbb{F})$ be a b-terminal resolution of $(X,\mathbb{F})$ as in Theorem \ref{existence:equivariant:terminal:resolutions}.  Then, as noted in Theorem \ref{existence:equivariant:terminal:resolutions}, $(\widetilde{Y},\mathbb{D}(\alpha'))$, the strong $\G$-equivariant b-terminal resolution of $(Y,\mathbb{D}(\alpha'))$, has the property that $\tilde{\pi} \colon \widetilde{Y} \rightarrow \widetilde{X}$ is the normalization of $\widetilde{X}$ in $\FF$.  On the other hand, by assumption, $[\FF : \KK]$ is relatively prime to the index of $\alpha$.  Then Proposition \ref{weak:ramification:prop} implies that $\tilde{\pi}$ is weakly $\alpha$-ramified.  In light of Definition \ref{G:weak:ramification:2}, this means that $\alpha$ is weakly $\FF / \KK$ ramified.
\end{proof}
  
With this concept of $\G$-weak ramification, we establish Proposition \ref{Galois:embedding:theorem} below which is a key technical point for establishing Theorem \ref{Division:Alg:Galois:Embedding:Thm}.

\begin{proposition}\label{Galois:embedding:theorem}  
Let $X$ be a normal projective variety over $\kk$, with function field $\KK = \kk(X)$ and let $\FF / \KK$ be a finite Galois extension with Galois group $\G = \operatorname{Gal}(\FF / \KK)$.  Let  $\Sigma$ be a $\KK$-central simple algebra with Brauer class $\alpha \in \Br(\KK)$ and let $\alpha' \in \Br(\FF)$ be the pullback of $\alpha$.  Then, in this setting, if the pair $(X,\mathbb{D}(\alpha))$ has b-canonical singularities and if $\alpha$ is weakly $\FF / \KK$ ramified in the sense Definition \ref{G:weak:ramification:2}, then
$$ \kappa(\alpha) \leq \kappa(\alpha').$$
\end{proposition}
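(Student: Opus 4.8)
The plan is to upgrade the \emph{Iitaka}-level inequality of Corollary~\ref{R-H-orders:Kod:cor1} to the \emph{birational} invariants $\kappa(\alpha),\kappa(\alpha')$ by producing, via the equivariant resolution of Theorem~\ref{existence:equivariant:terminal:resolutions}, a single model on which both Kodaira dimensions can be read off.

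First I would set up the geometry. Let $f:X'\to X$ be the normalization of $X$ in $\FF$; since $\FF/\KK$ is Galois, $\G=\operatorname{Gal}(\FF/\KK)$ acts on $X'$ with quotient $X$. Because $\alpha'=f^*\alpha$ is pulled back from $\KK$, its ramification b-divisor $\mathbb{D}(\alpha')$ is $\G$-invariant and has coefficients in $[0,1)\cap\QQ$, so $(X',\mathbb{D}(\alpha'))$ is a $\G$-b-log pair and Theorem~\ref{existence:equivariant:terminal:resolutions} applies. This yields a $\G$-equivariant b-terminal resolution $p:\widetilde{X}'\to X'$; set $\widetilde{X}:=\widetilde{X}'/\G$ and let $\widetilde{f}:\widetilde{X}'\to\widetilde{X}$ be the quotient, which is the normalization of $\widetilde{X}$ in $\FF$. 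From the construction in the proof of Theorem~\ref{existence:equivariant:terminal:resolutions}, $\widetilde{X}$ carries the induced b-divisor $\mathbb{D}$ with $(\widetilde{X},\mathbb{D})$ b-terminal and $\QQ$-Gorenstein, and with $\K_{\widetilde{X}'}+\mathbb{D}(\alpha')_{\widetilde{X}'}=\widetilde{f}^*(\K_{\widetilde{X}}+\mathbb{D}_{\widetilde{X}})$.

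Next I would compute $\kappa(\alpha')$ on $\widetilde{X}$. As $(\widetilde{X}',\mathbb{D}(\alpha'))$ is b-terminal, it is b-canonical, so $\kappa(\alpha')=\kappa(\widetilde{X}',\K_{\widetilde{X}'}+\mathbb{D}(\alpha')_{\widetilde{X}'})$. Using the displayed relation, the $\QQ$-Cartierness of $\K_{\widetilde{X}}+\mathbb{D}_{\widetilde{X}}$, and invariance of the Iitaka dimension under pullback along the finite map $\widetilde{f}$ (exactly as in the proof of Corollary~\ref{R-H-orders:Kod:cor1}), I obtain $\kappa(\alpha')=\kappa(\widetilde{X},\K_{\widetilde{X}}+\mathbb{D}_{\widetilde{X}})$. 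It then remains to compare the b-divisors $\mathbb{D}$ and $\mathbb{D}(\alpha)$ on $\widetilde{X}$. A coefficient computation from the formula defining $\mathbb{D}$, together with the identity $e_{\mathfrak{p}'/\mathfrak{p}}\,e_{\mathfrak{p}'}(\alpha')=e_{\mathfrak{p}}(\alpha)\,e_{\mathfrak{p}'/\mathfrak{p}}(\alpha)$ coming from \eqref{extension:ramification:number}, shows that the coefficient of $\mathbb{D}_{\widetilde{X}}$ at a prime $\mathfrak{p}$ equals $1-1/(e_{\mathfrak{p}}(\alpha)\,e_{\mathfrak{p}'/\mathfrak{p}}(\alpha))$, which is $\geq 1-1/e_{\mathfrak{p}}(\alpha)$ since $e_{\mathfrak{p}'/\mathfrak{p}}(\alpha)\geq 1$ by Proposition~\ref{dvr:R-H:ram}. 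Hence $\mathbb{D}_{\widetilde{X}}\geq\mathbb{D}(\alpha)_{\widetilde{X}}$.

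Finally I would conclude. Since $(X,\mathbb{D}(\alpha))$ is b-canonical it is $\QQ$-Gorenstein, so with $g:\widetilde{X}\to X$ the divisor $\K_{\widetilde{X}}+\mathbb{D}(\alpha)_{\widetilde{X}}-g^*(\K_X+\mathbb{D}(\alpha)_X)$ is well defined, supported on $g$-exceptional divisors, and its coefficients are the valuation-theoretic (hence model-independent) discrepancies of $(X,\mathbb{D}(\alpha))$, all $\geq 0$ by b-canonicity. Adding this to $\mathbb{D}_{\widetilde{X}}-\mathbb{D}(\alpha)_{\widetilde{X}}\geq 0$ gives that
\[
\K_{\widetilde{X}}+\mathbb{D}_{\widetilde{X}}-g^*(\K_X+\mathbb{D}(\alpha)_X)
\]
is effective, and it is $\QQ$-Cartier as a difference of $\QQ$-Cartier divisors. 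By birational pullback invariance, $\kappa(\widetilde{X},g^*(\K_X+\mathbb{D}(\alpha)_X))=\kappa(X,\K_X+\mathbb{D}(\alpha)_X)=\kappa(\alpha)$, so Lemma~\ref{easyLemma} yields $\kappa(\alpha)\leq\kappa(\widetilde{X},\K_{\widetilde{X}}+\mathbb{D}_{\widetilde{X}})=\kappa(\alpha')$. The main obstacle is the bookkeeping of the previous paragraph: correctly identifying the induced quotient b-divisor $\mathbb{D}$ and proving $\mathbb{D}\geq\mathbb{D}(\alpha)$ through the Riemann--Hurwitz positivity $e_{\mathfrak{p}'/\mathfrak{p}}(\alpha)\geq 1$, and establishing effectivity \emph{without} assuming that $(\widetilde{X},\mathbb{D}(\alpha))$ is itself $\QQ$-Gorenstein, which I circumvent by measuring b-discrepancies on a higher $\QQ$-Gorenstein model and invoking model-independence of ordinary discrepancies.
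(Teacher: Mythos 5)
Your proposal is correct, and its overall architecture coincides with the paper's: pass to the normalization $f\colon X'\to X$, take the $\G$-equivariant b-terminal resolution of $(X',\DD(\alpha'))$ furnished by Theorem~\ref{existence:equivariant:terminal:resolutions}, read $\kappa(\alpha')$ off that resolution, and anchor $\kappa(\alpha)$ on $X$ via b-canonicity. Where you genuinely diverge is in how the two sides are compared. The paper forms the quotient $Y=\widetilde{Y}/\G$ (your $\widetilde{X}$), checks that $(Y,\DD(\alpha))$ is $\QQ$-Gorenstein via \cite[Proposition 5.20 (1)]{Kollar:Mori:1998}, identifies $\kappa(Y,\alpha)=\kappa(\alpha)$ by Corollary~\ref{b-divisor-theorem-cor}, and then applies the upstairs Riemann--Hurwitz comparison of Corollary~\ref{R-H-orders:Kod:cor1} to $\widetilde{Y}\to Y$. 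You instead stay on the quotient: you transport $\kappa(\alpha')$ down through the identity $\K_{\widetilde{X}'}+\DD(\alpha')_{\widetilde{X}'}=\widetilde{f}^*(\K_{\widetilde{X}}+\DD_{\widetilde{X}})$ built into the proof of Theorem~\ref{existence:equivariant:terminal:resolutions}, prove $\DD_{\widetilde{X}}\geq\DD(\alpha)_{\widetilde{X}}$ by the coefficient computation $1-1/\bigl(e_{\mathfrak{p}}(\alpha)e_{\mathfrak{p}'/\mathfrak{p}}(\alpha)\bigr)\geq 1-1/e_{\mathfrak{p}}(\alpha)$ (which is precisely Theorem~\ref{brauer-r-h} pushed down by $\widetilde{f}$, resting on Proposition~\ref{dvr:R-H:ram}), and then fold the birational comparison into the single effectivity statement $\K_{\widetilde{X}}+\DD_{\widetilde{X}}\geq g^*(\K_X+\DD(\alpha)_X)$ using non-negativity of b-discrepancies, finishing with Lemma~\ref{easyLemma}. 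What your route buys is that you never need $(\widetilde{X},\DD(\alpha))$ itself to be $\QQ$-Gorenstein, nor the appeal to Corollary~\ref{b-divisor-theorem-cor}: everything reduces to the manifestly $\QQ$-Cartier divisors $g^*(\K_X+\DD(\alpha)_X)$ and $\K_{\widetilde{X}}+\DD_{\widetilde{X}}$. The small price is the point you flag yourself: you must justify that the coefficients of $\K_{\widetilde{X}}+\DD(\alpha)_{\widetilde{X}}-g^*(\K_X+\DD(\alpha)_X)$ are b-discrepancies actually constrained by Definition~\ref{QQ:Gorenstein:canonical}, i.e.\ that each $g$-exceptional divisor appears on some $\QQ$-Gorenstein model of $(X,\DD(\alpha))$; this follows from the existence of b-terminal resolutions cited from~\cite{Chan:plus:10}, but should be said explicitly. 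The paper's route is shorter only because Corollary~\ref{R-H-orders:Kod:cor1} is already packaged for direct use.
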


\begin{proof}
Let $f \colon X' \rightarrow X$ be the normalization of $X$ in $\FF$ and let $(\widetilde{Y},\mathbb{D}(\alpha'))$ be a strong $\G$-equivariant b-terminal resolution of the b-log pair $(X',\mathbb{D}(\alpha'))$.  That such a resolution exists is assured by 
Theorem~\ref{existence:equivariant:terminal:resolutions}.  Recall, that since $\kk(\widetilde{Y}) = \FF$ and $\FF^{\G}=\KK$ that $Y = \widetilde{Y} / \G$ is indeed normal and $\KK = \kk(Y)$.  Further, since $\widetilde{Y}$ is normal, the canonical finite map $g \colon \widetilde{Y} \rightarrow Y$ is the normalization of $Y$ in $\FF$.   Finally, we note that, as is a consequence of~\cite[Proposition 5.20 (1)]{Kollar:Mori:1998} for instance, the pair $(Y, \mathbb{D}(\alpha))$ is $\QQ$-Gorenstein.  By assumption, $\alpha$ has $\FF / \KK$ weak ramification.  
Thus, we may assume that $\alpha$ has weak ramification, in the sense of Definition \ref{weak:ramification}, with respect to the map $\widetilde{Y} \rightarrow Y$.  

We now make the following deductions.  To begin with, since the pair $(X,\mathbb{D}(\alpha))$ has b-canonical singularities, it follows that:
\begin{equation}\label{Galois:embedding:theorem:eqn1}
\kappa(Y,\alpha) = \kappa(\alpha).
\end{equation}
On the other hand, by Corollary~\ref{R-H-orders:Kod:cor1} applied to $g \colon \widetilde{Y} \rightarrow Y$, we obtain:
\begin{equation}\label{Galois:embedding:theorem:eqn2}
\kappa(\widetilde{Y},\alpha') \geq \kappa(Y,\alpha).
\end{equation}
Finally, since $(\widetilde{Y},\alpha')$ is b-terminal, and hence b-canonical, we have:
\begin{equation}\label{Galois:embedding:theorem:eqn3}
\kappa(\alpha') = \kappa(\widetilde{Y},\alpha');
\end{equation}
combining \eqref{Galois:embedding:theorem:eqn1}, \eqref{Galois:embedding:theorem:eqn2}, and \eqref{Galois:embedding:theorem:eqn3}, it then follows that
$$ \kappa(\alpha') \geq \kappa(\alpha)$$
as desired.
\end{proof}

Together, the results which  we have obtained thus far allow for establishing our main result, Theorem \ref{Division:Alg:Galois:Embedding:Thm}, of which Theorem \ref{Division:Alg:Galois:Embedding:Thm:Intro} is a special case.

\begin{theorem}\label{Division:Alg:Galois:Embedding:Thm}
Suppose that $\kk$ is an algebraically closed field of characteristic zero, that  $\Sigma_1 \subseteq \Sigma_2$ are central simple algebras, finitely generated over $\kk$ and finite dimensional over their centres $\KK, \FF$ and having the properties that:
\begin{enumerate}[label=(\alph*), ref=(\alph*)]
\item{ $\KK \subseteq \FF$; }
\item{ the extension $\FF / \KK$ is Galois 
 and the Brauer class of $\Sigma_1$ is weakly ramified for the extension $\FF / \KK$; and 
}
\item\label{embd:eff}{  
the pullback central simple algebra
$\FF \otimes_{\KK} \Sigma_1$ is $X'$-effectively embedded in $\Sigma_2$, for some normal proper model $X'$ of $\FF$ such that $(X',\DD([\Sigma_2]))$ has b-canonical singularities and dominates a  normal proper model $X$ of $\FF$ where
$(X,\DD([\FF \otimes_{\KK} \Sigma_1]))$ 
has b-canonical singularities. 
}
\end{enumerate}
Then, with these assumptions, it holds true that: 
$$ \kappa(\Sigma_1) \leq \kappa(\Sigma_2).$$
\end{theorem}

\begin{proof}[Proof of Theorem~\ref{Division:Alg:Galois:Embedding:Thm} and Theorem~\ref{Division:Alg:Galois:Embedding:Thm:Intro}.]  For the case of Theorem \ref{Division:Alg:Galois:Embedding:Thm:Intro}, that condition (b) holds in Theorem \ref{Division:Alg:Galois:Embedding:Thm} follows because of Lemma \ref{prime:to:index}.  Thus, it remains only to establish Theorem \ref{Division:Alg:Galois:Embedding:Thm}.
Let $\alpha_1$ and $\alpha_2$ denote, respectively, the Brauer classes of $\Sigma_1$ and $\Sigma_2$.  Fix a normal proper model $X$ of $\KK$ such that $(X,\mathbb{D}(\alpha_1))$ is b-canonical.  Let $f \colon X' \rightarrow X$ be the normalization of $X$ in $\FF$ and let $(\widetilde{Y},\mathbb{D}(f^* \alpha_1))$ be a $\G$-equivariant b-canonical resolution of the $\G$-b-log pair $(X',\mathbb{D}(f^* \alpha_1))$.  
Again that such a resolution $(\widetilde{Y}, \mathbb{D}(f^*\alpha_1))$ exists is assured by Theorem~\ref{existence:equivariant:terminal:resolutions}.

We then have that:
\begin{equation}\label{galois:kod:eqn1}
\kappa(f^*\alpha_1) = \kappa(\widetilde{Y},\K+\mathbb{D}(f^*\alpha_1))
\end{equation}
and
\begin{equation}\label{galois:kod:eqn2}
\kappa(\alpha_1) = \kappa(X,\K+\mathbb{D}(\alpha_1)).
\end{equation}
Thus, using \eqref{galois:kod:eqn1} and \eqref{galois:kod:eqn2} combined with Proposition~\ref{Galois:embedding:theorem}, which applies because of our assumption about weak ramification, we obtain that
\begin{equation}\label{galois:kod:eqn3}
\kappa(\alpha_1) \leq \kappa(f^*\alpha_1).
\end{equation} 
Having established \eqref{galois:kod:eqn3}, now let $X$ and $X'$ be the b-canonical models supplied by the assumption \ref{embd:eff}.
So we have
\begin{equation}\label{kod:ineqn}
\kappa(\alpha_2) = \kappa(X',\K+\mathbb{D}(\alpha_2)) \geq \kappa(X',\K+\mathbb{D}(f^*\alpha_1)) = \kappa(X,\K+\mathbb{D}(f^*\alpha_1)) = \kappa(f^* \alpha_1)
\end{equation}
where the first and last equalities follow from the b-canonical assumption,
the second last equality follows from Corollary~\ref{b-divisor-theorem-cor}, and the inequality follows from the fact that $\mathbb{D}(\alpha_2) - \mathbb{D}(f^*\alpha_1)$ is effective on $X'$. (Here is where we have used the concept of $b$-effective embedding.)

Finally, combining \eqref{galois:kod:eqn3} and
\eqref{kod:ineqn} we have that 
$ \kappa(\alpha_2) \geq \kappa(\alpha_1)$
as desired.
\end{proof}

A more conceptual instance of Theorem \ref{Division:Alg:Galois:Embedding:Thm} is achieved via Corollary \ref{rel-prime-period-index} below.

\begin{corollary}\label{rel-prime-period-index}
Suppose that $\kk$ is an algebraically closed field of characteristic zero.  Let  $\Sigma_1 \subseteq \Sigma_2$ be central simple  algebras, finite dimensional over their centres $\KK, \FF$ which are finitely generated over $\kk$.  Suppose further that:
\begin{enumerate}
\item{ $\KK \subseteq \FF$;}
\item{ the extension $\FF / \KK$ is Galois with degree relatively prime to the index of $\Sigma_1$; and}
\item{ the target central simple algebra $\Sigma_2$ has the property that $\operatorname{period}(\Sigma_2) = \operatorname{index}(\Sigma_2)$.
}
\end{enumerate}
Then
$$ \kappa(\Sigma_1) \leq \kappa(\Sigma_2).$$
\end{corollary}

\begin{proof}[Proof of Corollaries \ref{rel-prime-period-index} and \ref{rel-prime-period-index-intro}]  Corollary \ref{rel-prime-period-index} is an instance of Theorem \ref{Division:Alg:Galois:Embedding:Thm}.  Indeed that the condition (b), of weak ramification for $\FF / \KK$ in Theorem \ref{Division:Alg:Galois:Embedding:Thm}, is satisfied because of Lemma \ref{prime:to:index} and the fact that the degree of $\FF / \KK$ is assumed to relatively prime to the index of $\Sigma_1$.    That the condition \ref{embd:eff} of Theorem \ref{Division:Alg:Galois:Embedding:Thm}, which pertains to effectivity for the embedding $\Sigma_1 \otimes_{\KK} \FF \hookrightarrow \Sigma_2$, holds true is a consequence of Proposition \ref{perinduni}.  Indeed, by assumption the period and index of $\Sigma_2$ are assumed to be equal.  In particular, the hypothesis of Theorem \ref{Division:Alg:Galois:Embedding:Thm} are satisfied, whence the conclusion of Corollaries \ref{rel-prime-period-index} and \ref{rel-prime-period-index-intro} as desired.
\end{proof}

The following example shows why it is important to impose the condition for the embedding  $\Sigma_1 \hookrightarrow \Sigma_2$ to induce a map on centres $\KK \subseteq \FF$.  

\begin{example}\label{kod:example}
Choose homogeneous coordinates $u,v,w$ for $\PP^2_\kk$ and let
$$\KK = \kk(\PP^2) = \kk(u/w,v/w). $$
Next, let $\mathcal{D}$ be the quaternion (division) algebra generated by $x,y$ over $\KK$ and satisfying the relations:
$$ x^2 = \frac{u}{w}, y^2 = \frac{v}{w}, yx = - xy.$$

Now fix rational functions $a,b,c \in \KK$ and put:
$$ \gamma = a x + b y + c xy \in \mathcal{D}.$$
Then:
$$ 
\gamma^2 = (ax + by + cxy)^2 = a^2 x^2 + b^2 y^2 - c^2 x^2 y^2 = a^2 \frac{u}{w} + b^2 \frac{v}{w} - c^2 \frac{uv}{w^2} \in \KK.
$$
Let $\sigma = w^2\gamma^2$, so  $\sigma \in \H^0(\PP^2,\Osh_{\PP^2}(2d))$, for some $d > 0$, and denote by $B \subseteq \PP^2$ the divisor determined by $\sigma$:
$$B = \operatorname{div}(\sigma).$$
Then $B$ is a plane curve of degree $2d$ and there exist $a,b,c \in \KK$ so that $B$ is smooth; fix such $a,b,c \in \KK$.

Next, let $f \colon Y \rightarrow \PP^2$ denote the double cover branched on $B$, fix a line $\ell \subseteq \PP^2$ and canonical divisors $\K_Y$ and $\K_{\PP^2}$ for $Y$ and $\PP^2$ respectively.  We then have:
$$ 
\K_Y \sim f^* \left(\K_{\PP^2} + \frac{1}{2} B \right) \sim (d-3) f^* \ell.
$$
Further, since $f^* \ell$ is ample, it follows that $Y$ is of general type when $d > 3$, that $Y$ has Kodaira dimension $0$ when $d = 3$ and that $Y$ has negative Kodaira dimension when $d = 1$ or $d = 2$.

Finally, we note that $\kappa(\mathcal{D}) = \kappa(\PP^2, \Delta) < 0$ since, as can be deduced from~\cite[Proposition 1.4.9]{Artin:Chan:deJong:Lieblich} for example, that $\Delta$, the ramification divisor for $\mathcal{D}$, equals one half of the sum of the coordinate lines of $\PP^2$.

Now put $\mathcal{D}_1 := \kk(Y) = \KK(\gamma) \subseteq \mathcal{D}$ and $\mathcal{D}_2 := \mathcal{D}$.  We then have:
\begin{enumerate}
\item[(a)]{$\mathcal{D}_1 \subseteq \mathcal{D}_2$;}
\item[(b)]{$Z(\mathcal{D}_1) \not\subseteq Z(\mathcal{D}_2)$; and }
\item[(c)]{$\kappa(\mathcal{D}_1) > \kappa(\mathcal{D}_2)$} { .}
\end{enumerate}
\end{example}

In light of Example~\ref{kod:example}, we ask:

\begin{question}  Let $\mathcal{D}$ be the division algebra in the above example.  Let $C$ be a curve such that $\kk(C)\subseteq \mathcal{D}$.  Then is the genus of $C$ necessarily zero?
\end{question}

\section{Canonical rings, maximal orders and function fields of division algebras}  Let $X$ be an integral normal variety, assumed to be proper over an algebraically closed field $\kk$, with function field $\KK = \kk(X)$.

\subsection{The canonical ring of a maximal order}\label{canonical:ring:order}  Our main point here is to establish Corollary~\ref{canonical:ring:order:cor4} which provides some motivation for the study of the ramification divisor defined in \eqref{brauer:boundary:defn}. Furthermore, in Definition~\ref{can:ring:defn:1}, we define the canonical ring of a maximal $\Osh_X$-order $\Lambda$.  Then, motivated by this concept, in Definition~\ref{can:ring:defn:3}, we define the section ring of $\Lambda$ with respect to a Cartier divisor on $X$.  We then show, in Corollary~\ref{can:ring:cor:6}, that this ring is a p.i.~ring.    
Finally, what we discuss here should also help to provide further motivation for the results we establish in \S \ref{division:ring:function:field}.

First, we denote the reflexive hull of a coherent sheaf $F$ on $X$ by
$$
F^{\vee \vee} := \mathcal{H}om_{\Osh_X}(\mathcal{H}om_{\Osh_X}(F,\Osh_X),\Osh_X).
$$
Further, we remark that if $F$ is a $\Lambda$-(left, right, bi)-module, then the same is true for $F^{\vee \vee}$.
Next, consider a Weil divisor
$
D = \sum_{\text{finite}} d_{\mathfrak{p}} \mathfrak{p} \in \WDiv(X) 
$
on $X$.  We then can form, given a maximal $\Osh_X$-order $\Lambda$ on $X$, the reflexive $\Osh_X$-module:
\begin{equation}\label{Lambda:D}
(\Osh_X(D)  \otimes_{\Osh_X} \Lambda)^{\vee \vee}.
\end{equation}
Since the $\Osh_X$-module
$$
\Lambda(D) := \Osh_X(D)  \otimes_{\Osh_X} \Lambda
$$
is also a $\Lambda$-bimodule, the reflexive $\Osh_X$-module \eqref{Lambda:D} is also a $\Lambda$-bimodule too.

\begin{proposition}\label{canonical:ring:order:prop1}
Let $X$ be a normal proper variety over $\kk$, with function field $\KK = \kk(X)$, let $\Lambda$ be a maximal $\Osh_X$-order, in a $\KK$-central simple algebra with degree prime to the characteristic of $\kk$, and let $D = \sum_{\mathrm{finite}} d_{\mathfrak{p}} \mathfrak{p} \in \WDiv(X) 
$ be a Weil divisor on $X$.  Then, in this context, if $\mathfrak{p} \in X$ is a codimension $1$ prime, then the stalk of the sheaf $\Lambda(D)$ at $\mathfrak{p}$ is the $\Lambda_{\mathfrak{p}}$-bimodule:
$$
(\operatorname{rad} \Lambda_{\mathfrak{p}})^{-d_{\mathfrak{p}}  e_{\Lambda}(\mathfrak{p}) }.
$$
\end{proposition}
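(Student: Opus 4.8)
The plan is to localise at the codimension $1$ point $\mathfrak{p}$ and reduce the statement to the structure theory of maximal orders over a discrete valuation ring, exactly as developed in the subsection on maximal orders over (not necessarily complete) discrete valuation rings. Write $R := \Osh_{X,\mathfrak{p}}$, which is a discrete valuation ring since $X$ is normal and $\mathfrak{p}$ has codimension $1$, let $\mathfrak{m}$ be its maximal ideal and $t$ a uniformizer. The first step is to identify the stalk $\Osh_X(D)_{\mathfrak{p}}$: from the defining condition $\Osh_X(D)(U) = \{s \in \KK : \operatorname{div}(s)|_U + D|_U \geq 0\}$ one reads off that $s$ lies in the stalk precisely when $v_{\mathfrak{p}}(s) \geq -d_{\mathfrak{p}}$, so that $\Osh_X(D)_{\mathfrak{p}} = t^{-d_{\mathfrak{p}}} R$ as a fractional ideal of $R$.

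Next, since $\Osh_X(D)_{\mathfrak{p}}$ is free of rank one over $R$, tensoring gives
\[
\Lambda(D)_{\mathfrak{p}} = \left(\Osh_X(D) \otimes_{\Osh_X} \Lambda\right)_{\mathfrak{p}} = \Osh_X(D)_{\mathfrak{p}} \otimes_R \Lambda_{\mathfrak{p}} = t^{-d_{\mathfrak{p}}}\Lambda_{\mathfrak{p}},
\]
an equality of $\Lambda_{\mathfrak{p}}$-bimodules inside $\Sigma$; here I use that $t$ is central in $\Lambda_{\mathfrak{p}}$, being an element of $R$, so that left and right multiplication by $t^{-d_{\mathfrak{p}}}$ agree. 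I would also note that, a discrete valuation ring being regular, the stalk of a reflexive sheaf at $\mathfrak{p}$ is unchanged by forming the reflexive hull, so the same computation simultaneously records the stalk of $(\Lambda(D))^{\vee\vee}$ at $\mathfrak{p}$.

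The final step is to rewrite $t^{-d_{\mathfrak{p}}}\Lambda_{\mathfrak{p}}$ in terms of $\operatorname{rad}\Lambda_{\mathfrak{p}}$. The key input, already recorded for maximal orders over a discrete valuation ring, is the ramification relation $\Lambda_{\mathfrak{p}}\, t = (\operatorname{rad}\Lambda_{\mathfrak{p}})^{e}$ with $e := e_{\Lambda}(\mathfrak{p})$. Since $t$ is central, raising this identity to the $m$-th power in the group of invertible two-sided fractional $\Lambda_{\mathfrak{p}}$-ideals yields $(\operatorname{rad}\Lambda_{\mathfrak{p}})^{em} = \Lambda_{\mathfrak{p}} t^{m}$ for every $m \in \ZZ$, where negative powers of the radical are understood via $(\operatorname{rad}\Lambda_{\mathfrak{p}})^{k} := \Lambda_{\mathfrak{p}} \cap (\operatorname{rad}\hat{\Lambda}_{\mathfrak{p}})^{k}$. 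Taking $m = -d_{\mathfrak{p}}$ gives
\[
\Lambda(D)_{\mathfrak{p}} = t^{-d_{\mathfrak{p}}}\Lambda_{\mathfrak{p}} = (\operatorname{rad}\Lambda_{\mathfrak{p}})^{-d_{\mathfrak{p}} e_{\Lambda}(\mathfrak{p})},
\]
which is the assertion. The one point requiring care — and the main obstacle — is the bookkeeping for negative exponents: one must check that the powers of $\operatorname{rad}\Lambda_{\mathfrak{p}}$ genuinely form a cyclic group of fractional ideals, so that the power law above is legitimate. This is cleanest after passing to the completion $\hat{\Lambda}_{\mathfrak{p}}$, where $\hat\Lambda_\mathfrak{p}$ is a normal order and hence $\operatorname{rad}\hat{\Lambda}_{\mathfrak{p}} = \hat{\Lambda}_{\mathfrak{p}}\pi_{\mathfrak{D}} = \pi_{\mathfrak{D}}\hat{\Lambda}_{\mathfrak{p}}$ is principal with $\pi_{\mathfrak{D}}^{e}$ a unit multiple of $t$; there the identity is transparent, and one then descends along $\Lambda_{\mathfrak{p}} \hookrightarrow \hat{\Lambda}_{\mathfrak{p}}$ using $(\operatorname{rad}\Lambda_{\mathfrak{p}})^{k} = \Lambda_{\mathfrak{p}} \cap (\operatorname{rad}\hat{\Lambda}_{\mathfrak{p}})^{k}$.
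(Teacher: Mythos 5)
Your proof is correct and follows the same route as the paper, whose entire proof consists of noting the ramification relation $(\operatorname{rad}\Lambda_{\mathfrak{p}})^{e_{\Lambda}(\mathfrak{p})} \simeq \Lambda_{\mathfrak{p}}\mathfrak{m}_{\mathfrak{p}}$ and leaving the localization of $\Osh_X(D)$ and the bookkeeping with negative powers implicit. Your write-up simply supplies those routine details (including the passage to the completion to justify the power law), in a manner consistent with how the paper itself defines the powers of $\operatorname{rad}\Lambda_{\mathfrak{p}}$.
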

\begin{proof}
We simply note:
$$
(\operatorname{rad} \Lambda_{\mathfrak{p}})^{e_{\Lambda}(\mathfrak{p})}  \simeq \Lambda_{\mathfrak{p}} \mathfrak{m}_{\mathfrak{p}} .
$$
\end{proof}

Consider now the $\Lambda$-bimodule
$
\Lambda^\vee := \mathcal{H}om_{\Osh_X}(\Lambda,\Osh_X)
$
which has stalk at a height $1$ prime $\mathfrak{p}$ of $X$ the $\Osh_{X,\mathfrak{p}}$-module:
$$
\Lambda^{\vee}_{\mathfrak{p}} := \Osh_{X,\mathfrak{p}} \otimes \mathcal{H}om_{\Osh_X}(\Lambda,\Osh_X) = \mathcal{H}om_{\Osh_{X,\mathfrak{p}}}(\Lambda_{\mathfrak{p}},\Osh_{X,\mathfrak{p}}).
$$
The module $\Lambda^{\vee}_{\mathfrak{p}}$ naturally carries the structure of a $\Lambda_{\mathfrak{p}}$-bimodule.  In particular, as a $\Lambda_{\mathfrak{p}}$-bimodule, we have:
\begin{equation}\label{canonical:ring:order:eqn9}
\Lambda^\vee_{\mathfrak{p}} \simeq (\operatorname{rad} \Lambda_{\mathfrak{p}})^{1-e_{\Lambda}(\mathfrak{p})} ,
\end{equation}
as follows from Proposition~\ref{dual:trace:rad:lemma}.

\begin{defn}\label{defn:cansheafOfOrder}
We now  fix a canonical divisor, $\K_X$, on $X$ and we let $\omega_X := \Osh_X(\K_X)$ denote the canonical sheaf that it determines.  In this setting,  since $\omega_X$ is a reflexive sheaf on $X$, we can consider the $\Lambda$-bimodule $\omega_\Lambda$, the \emph{canonical sheaf} of $\Lambda$, which is the reflexive sheaf defined by the condition that:
$$
\omega_{\Lambda} := \mathcal{H}om_{\Osh_X}(\Lambda,\omega_X).
$$
\end{defn}

The stalk of $\omega_X$ at a height $1$ prime ideal $\mathfrak{p}$ of $X$ is described in:

\begin{proposition}\label{canonical:ring:order:prop3}  Let $X$ be a normal proper variety over $\kk$, with function field $\KK = \kk(X)$, and let $\Lambda$ be a maximal $\Osh_X$-order, in a $\KK$-central simple algebra $\Sigma$ with degree prime to $\cchar \kk$, on $X$. Then, at
 a height $1$ prime $\mathfrak{p}$ of $X$, the stalk of the $\Osh_X$-module $\omega_\Lambda$ is the $\Lambda_{\mathfrak{p}}$-bimodule
\begin{equation}\label{canonical:ring:order:eqn11}
\omega_{\Lambda,\mathfrak{p}} \simeq \omega_{X,\mathfrak{p}} \otimes_{\Osh_{X,\mathfrak{p}}}   \Lambda^\vee_{\mathfrak{p}} \simeq \omega_{X,\mathfrak{p}} \otimes_{\Osh_{X,\mathfrak{p}}}  (\operatorname{rad} \Lambda_{\mathfrak{p}})^{1-e_{\Lambda}(\mathfrak{p})} .
\end{equation}
\end{proposition}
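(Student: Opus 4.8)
The plan is to reduce the statement to a computation over the discrete valuation ring $\Osh_{X,\mathfrak{p}}$ and then to quote the already-established identification of $\Lambda^\vee_{\mathfrak{p}}$ with a power of $\operatorname{rad}\Lambda_{\mathfrak{p}}$. First I would take the stalk of the internal Hom sheaf defining $\omega_\Lambda$. Since $\Lambda$ is a coherent $\Osh_X$-module, the formation of $\cHom_{\Osh_X}(\Lambda,\omega_X)$ commutes with localization at the codimension $1$ point $\mathfrak{p}$, so there is a canonical isomorphism
$$
\omega_{\Lambda,\mathfrak{p}} \simeq \Hom_{\Osh_{X,\mathfrak{p}}}(\Lambda_{\mathfrak{p}},\omega_{X,\mathfrak{p}})
$$
which I would record as an isomorphism of $\Lambda_{\mathfrak{p}}$-bimodules, where the two-sided $\Lambda_{\mathfrak{p}}$-action is induced by left and right multiplication on $\Lambda_{\mathfrak{p}}$.

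Next I would exploit that $\mathfrak{p}$ has height $1$ and $X$ is normal, so that $\Osh_{X,\mathfrak{p}}$ is a discrete valuation ring and $\omega_{X,\mathfrak{p}}$ is a free $\Osh_{X,\mathfrak{p}}$-module of rank $1$, hence invertible. For an invertible $\Osh_{X,\mathfrak{p}}$-module $L$ there is a natural isomorphism $\Hom_{\Osh_{X,\mathfrak{p}}}(\Lambda_{\mathfrak{p}},L)\simeq \Hom_{\Osh_{X,\mathfrak{p}}}(\Lambda_{\mathfrak{p}},\Osh_{X,\mathfrak{p}})\otimes_{\Osh_{X,\mathfrak{p}}}L$, obtained by trivializing $L$ locally. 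Taking $L=\omega_{X,\mathfrak{p}}$ and recalling that $\Lambda^\vee_{\mathfrak{p}}=\Hom_{\Osh_{X,\mathfrak{p}}}(\Lambda_{\mathfrak{p}},\Osh_{X,\mathfrak{p}})$ yields the first asserted isomorphism
$$
\omega_{\Lambda,\mathfrak{p}} \simeq \omega_{X,\mathfrak{p}} \otimes_{\Osh_{X,\mathfrak{p}}} \Lambda^\vee_{\mathfrak{p}}.
$$
The second isomorphism is then immediate: substituting the identification \eqref{canonical:ring:order:eqn9}, namely $\Lambda^\vee_{\mathfrak{p}}\simeq(\operatorname{rad}\Lambda_{\mathfrak{p}})^{1-e_{\Lambda}(\mathfrak{p})}$ supplied by Proposition~\ref{dual:trace:rad:lemma}, into the previous display gives the claimed description of the stalk.

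The only genuinely delicate point, and where I expect the main obstacle to lie, is the bookkeeping with the bimodule structures: one must check that the displayed $\Osh_{X,\mathfrak{p}}$-module isomorphisms are in fact isomorphisms of $\Lambda_{\mathfrak{p}}$-bimodules rather than merely of $\Osh_{X,\mathfrak{p}}$-modules. This follows because $\omega_{X,\mathfrak{p}}$ is an $\Osh_{X,\mathfrak{p}}$-module and $\Osh_{X,\mathfrak{p}}$ lies in the centre of $\Lambda_{\mathfrak{p}}$, so tensoring by the line $\omega_{X,\mathfrak{p}}$ is central and therefore commutes with both the left and the right $\Lambda_{\mathfrak{p}}$-action; the same centrality ensures that the twist introduced in passing from $\Hom(\Lambda_{\mathfrak{p}},\Osh_{X,\mathfrak{p}})$ to $\Hom(\Lambda_{\mathfrak{p}},\omega_{X,\mathfrak{p}})$ respects the bimodule structure carried by $\Lambda^\vee_{\mathfrak{p}}$ as in \eqref{canonical:ring:order:eqn9}.
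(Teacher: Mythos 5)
Your proposal is correct and follows essentially the same route as the paper: localize the defining $\cHom$, use that $\omega_{X,\mathfrak{p}}$ is free of rank one over the discrete valuation ring $\Osh_{X,\mathfrak{p}}$ to pull it out of the Hom, and then substitute \eqref{canonical:ring:order:eqn9}. Your explicit verification that the isomorphisms respect the $\Lambda_{\mathfrak{p}}$-bimodule structures is a detail the paper leaves implicit, but it is the right justification.
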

\begin{proof}
Since, for each height one prime $\mathfrak{p} \subseteq X$, $\omega_{X,\mathfrak{p}}$ is a free $\Osh_{X,\mathfrak{p}}$-module,  we have a $\Lambda_{\mathfrak{p}}$-bimodule isomorphism
\begin{equation}\label{canonical:ring:order:eqn12}
 \omega_{\Lambda, \mathfrak{p}} \simeq \omega_{X,\mathfrak{p}} \otimes_{\Osh_{X, \mathfrak{p}}}  \mathcal{H}om_{\Osh_{X,\mathfrak{p}}}(\Lambda_{\mathfrak{p}},\Osh_{X, \mathfrak{p}}). 
\end{equation}
The final isomorphism follows by combining \eqref{canonical:ring:order:eqn9} and \eqref{canonical:ring:order:eqn12}.
\end{proof}

In what follows, we denote by $\omega_{\Lambda}^{\otimes \ell}$, for a nonnegative integer $\ell$,
the $\ell$-fold tensor product of the $\Lambda$-bimodule $\omega_{\Lambda}$.  Also, we put $n^2 = [\Sigma : \KK]$.

Proposition~\ref{canonical:ring:order:prop3} has the following consequence which relates $\omega_\Lambda$ to the log-canonical sheaf determined by $\Lambda$.  As such, Corollary~\ref{canonical:ring:order:cor4} below can be seen as a sort of adjunction formula for $\omega_\Lambda$.
 This corollary is essentially~\cite[\S 3, Proposition 5]{Chan:Kulkarni2003} and was originally noticed by M.~Artin.

\begin{corollary}\label{canonical:ring:order:cor4}  With the same assumptions as Proposition~\ref{canonical:ring:order:prop3}, if $n$ denotes the degree of $\Sigma$, then
there exists a natural $\Lambda$-bimodule isomorphism:
\begin{equation}\label{canonical:ring:order:eqn14}
(\omega_{\Lambda}^{\otimes \ell n})^{\vee \vee} \simeq \left( \Osh_X(\ell n(\K_X + \Delta_{\Lambda} ))  \otimes_{\Osh_X} \Lambda \right)^{\vee \vee},
\end{equation}
for each integer $\ell \geq 0$.  Further, if $\ell n (\K_X + \Delta_{\Lambda})$ is Cartier or $\Lambda$ is locally free over $X$, then the $\Lambda$-bimodule isomorphism \eqref{canonical:ring:order:eqn14} takes the form:
\begin{equation}\label{canonical:ring:order:eqn14'}
(\omega_{\Lambda}^{\otimes \ell n})^{\vee \vee} \simeq  \Osh_X(\ell n(\K_X + \Delta_{\Lambda} ))  \otimes_{\Osh_X} \Lambda .
\end{equation}
\end{corollary}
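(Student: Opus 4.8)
The plan is to exploit that both sides of \eqref{canonical:ring:order:eqn14} are reflexive $\Lambda$-bimodules, so that the desired isomorphism is determined by its restriction to any open subset $U \subseteq X$ whose complement has codimension at least $2$. First I would fix such a $U$, namely the locus where $X$ is regular and $\Lambda$ is locally free; as in the proof of Proposition~\ref{azumaya:discrim:locus:prop}, $X \setminus U$ has codimension $\geq 2$ and $U$ contains every codimension $1$ point $\mathfrak{p}$. Since a reflexive sheaf $F$ satisfies $F \simeq j_*(F|_U)$ for $j : U \hookrightarrow X$ the inclusion, and since $\cHom$ of reflexive sheaves is reflexive, it suffices to produce a canonical $\Lambda|_U$-bimodule isomorphism over $U$, or equivalently (as both sheaves are reflexive) to match the two sides at the stalk of each height $1$ prime $\mathfrak{p}$ and to check that the identification is induced by the canonical maps, namely the reduced-trace identification of Proposition~\ref{dual:trace:rad:lemma} together with the multiplication maps on powers of $\operatorname{rad}\Lambda_{\mathfrak{p}}$, so that the local isomorphisms glue.

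For the stalk computation I would localize at a height $1$ prime $\mathfrak{p}$, writing $e := e_\Lambda(\mathfrak{p})$. On the left, Proposition~\ref{canonical:ring:order:prop3} gives $\omega_{\Lambda,\mathfrak{p}} \simeq \omega_{X,\mathfrak{p}} \otimes_{\Osh_{X,\mathfrak{p}}} (\operatorname{rad}\Lambda_{\mathfrak{p}})^{1-e}$; since $\operatorname{rad}\Lambda_{\mathfrak{p}}$ is an invertible $\Lambda_{\mathfrak{p}}$-bimodule (it equals $\Lambda_{\mathfrak{p}}\pi = \pi\Lambda_{\mathfrak{p}}$ with $\pi$ a nonzerodivisor), its powers multiply, and because $\omega_{X,\mathfrak{p}}$ is a free central $\Osh_{X,\mathfrak{p}}$-module the tensor product over $\Lambda_{\mathfrak{p}}$ yields
$$
\omega_{\Lambda,\mathfrak{p}}^{\otimes \ell n} \simeq \omega_{X,\mathfrak{p}}^{\otimes \ell n} \otimes_{\Osh_{X,\mathfrak{p}}} (\operatorname{rad}\Lambda_{\mathfrak{p}})^{\ell n(1-e)};
$$
this module is a full $\Osh_{X,\mathfrak{p}}$-lattice, hence reflexive over the discrete valuation ring $\Osh_{X,\mathfrak{p}}$, so applying $(-)^{\vee\vee}$ leaves the stalk unchanged. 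On the right, the crucial input is Theorem~\ref{ramification:data:theorem1}, which gives $ef = n$ and therefore that the coefficient $\ell n\bigl(1 - 1/e\bigr) = \ell n - \ell f$ of $\mathfrak{p}$ in $\ell n \Delta_{\Lambda}$ is an integer; thus $\Osh_X(\ell n(\K_X+\Delta_{\Lambda}))$ is a genuine reflexive rank $1$ sheaf with stalk $\omega_{X,\mathfrak{p}}^{\otimes \ell n}\otimes_{\Osh_{X,\mathfrak{p}}} \mathfrak{m}_{\mathfrak{p}}^{-\ell n(1-1/e)}$. Using $\Lambda_{\mathfrak{p}}\mathfrak{m}_{\mathfrak{p}} = (\operatorname{rad}\Lambda_{\mathfrak{p}})^{e}$ I would rewrite $\mathfrak{m}_{\mathfrak{p}}^{-\ell n(1-1/e)}\Lambda_{\mathfrak{p}} = (\operatorname{rad}\Lambda_{\mathfrak{p}})^{-e\ell n(1-1/e)} = (\operatorname{rad}\Lambda_{\mathfrak{p}})^{\ell n(1-e)}$, so the right-hand stalk is $\omega_{X,\mathfrak{p}}^{\otimes \ell n}\otimes_{\Osh_{X,\mathfrak{p}}}(\operatorname{rad}\Lambda_{\mathfrak{p}})^{\ell n(1-e)}$, matching the left.

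Having matched all codimension $1$ stalks compatibly, extending by $j_*$ gives the $\Lambda$-bimodule isomorphism \eqref{canonical:ring:order:eqn14}. For the final assertion, if $\ell n(\K_X+\Delta_{\Lambda})$ is Cartier then $\Osh_X(\ell n(\K_X+\Delta_{\Lambda}))$ is a line bundle, and the tensor product of the reflexive sheaf $\Lambda$ with a line bundle is again reflexive; similarly, if $\Lambda$ is locally free then $\Osh_X(\ell n(\K_X+\Delta_{\Lambda}))\otimes_{\Osh_X}\Lambda$ is the tensor of a reflexive sheaf with a locally free sheaf, hence reflexive. In either case the reflexive hull on the right is superfluous, giving \eqref{canonical:ring:order:eqn14'}. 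I expect the main obstacle to be not the arithmetic but the bookkeeping needed to see that the pointwise identifications are induced by canonical maps, and therefore glue to a single global bimodule isomorphism rather than merely an abstract isomorphism at each height $1$ point; the invertibility of $\operatorname{rad}\Lambda_{\mathfrak{p}}$ as a bimodule, and the compatibility of $(-)^{\vee\vee}$ with the two-sided $\Lambda$-action, are what make this work.
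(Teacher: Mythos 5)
Your proof is correct and follows essentially the same route as the paper: reduce to codimension $1$ via reflexivity, match stalks at height $1$ primes using Propositions~\ref{canonical:ring:order:prop3} and~\ref{canonical:ring:order:prop1} (your computation via $\Lambda_{\mathfrak{p}}\mathfrak{m}_{\mathfrak{p}} = (\operatorname{rad}\Lambda_{\mathfrak{p}})^{e}$ is exactly the content of the latter), and observe that the reflexive hull is superfluous when the divisor is Cartier or $\Lambda$ is locally free. Your extra care about the integrality of the coefficients $\ell n(1-1/e_{\mathfrak{p}})$ and about the local identifications gluing canonically fills in details the paper leaves implicit, but the argument is the same.
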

\begin{proof}
Since both the left and right hand sides of \eqref{canonical:ring:order:eqn14} are reflexive sheaves, it suffices to check the above isomorphism at codimension $1$ primes.  In light of this reduction, that \eqref{canonical:ring:order:eqn14} holds true follows from Proposition~\ref{canonical:ring:order:prop3} combined with Proposition~\ref{canonical:ring:order:prop1}. Finally, \eqref{canonical:ring:order:eqn14'} follows from \eqref{canonical:ring:order:eqn14} since, when $\ell n (\K_X + \Delta_{\Lambda})$ is Cartier or $\Lambda$ is locally free over $X$, $\Osh_X(\ell n(\K_X + \Delta_{\Lambda} ))  \otimes_{\Osh_X} \Lambda$ is reflexive. 
\end{proof}

Motivated by Corollary~\ref{canonical:ring:order:cor4}, we define the canonical ring of $\Lambda$ in:
\begin{defn}\label{can:ring:defn:1}
Let $X$ be a normal proper variety over $\kk$, with function field $\KK = \kk(X)$, let $\Lambda$ be a maximal $\Osh_X$-order in a central simple algebra $\Sigma$, with degree $n$ prime to $\operatorname{char} \kk$, and let $\Delta_\Lambda$ be the ramification divisor of $\Lambda$.  We define the \emph{canonical ring of $\Lambda$} to be the graded $\kk$-algebra:
\begin{equation}\label{can:ring:eqn:1}
R(\Lambda,\omega_\Lambda) := \bigoplus_{\ell \geq 0} \Hom_{\Lambda}(\Lambda, (\omega_\Lambda^{\otimes \ell })^{\vee \vee}).
\end{equation}
Here, in \eqref{can:ring:eqn:1}, for each $\phi \in \Hom_\Lambda(\Lambda,(\omega_{\Lambda}^{\otimes \ell  })^{\vee \vee})$ and each $\psi \in \Hom_{\Lambda}(\Lambda,(\omega_{\Lambda}^{\otimes k  })^{\vee \vee})$ the multiplication is given by the natural maps:
\begin{equation}\label{can:ring:eqn:2}
(\phi \otimes \psi)^{\vee \vee} : \Lambda = (\Lambda \otimes_{\Lambda} \Lambda)^{\vee \vee} \rightarrow \left(\omega_{\Lambda}^{\otimes(\ell + k)} \right)^{\vee \vee},
\end{equation}

\end{defn}

The following proposition, among other things, gives an alternative description of $R(\Lambda,\omega_\Lambda)$.

\begin{proposition}\label{can:ring:prop:2}
Let $X$ be a normal proper variety over $\kk$, with function field $\KK = \kk(X)$ and let $\Lambda$ be a maximal $\Osh_X$-order in a central simple algebra $\Sigma$ with degree $n$ prime to $\operatorname{char} \kk$.  Suppose that $(X,\Delta_\Lambda)$ is $\QQ$-Gorenstein and fix an integer $\ell_0>0$ so that $\ell_0 n \K_\Lambda \in \Div(X)$.  Then the following assertions hold true.
\begin{enumerate}
\item{There exists a natural $\Osh_X$-module isomorphism
\begin{equation}\label{can:ring:eqn:3}
\Osh_X(\ell \ell_0 n (\K_X + \Delta_{\Lambda})) \otimes_{\Osh_X} \Lambda \xrightarrow{\sim} \mathcal{H}om_{\Lambda}(\Lambda,(\omega_{\Lambda}^{\otimes \ell \ell_0 n})^{\vee \vee}),
\end{equation}
for each integer $\ell \geq 0$.  Furthermore, for such $\ell$, the $\ell \ell_0 n$\textsuperscript{th} graded piece of the canonical ring $R(\Lambda,\omega_\Lambda)$ has the form:
\begin{equation}\label{can:ring:eqn:4}
R(\Lambda,\omega_\Lambda)_{\ell \ell_0 n} = \H^0(X,\Osh_X(\ell \ell_0 n(\K_X + \Delta_{\Lambda}))\otimes\Lambda).
\end{equation}
}
\item{
The $\ell_0 n$-th Veronese subalgebra of $R(\Lambda,\omega_\Lambda)$, 
$$
R(\Lambda,\omega_\Lambda)^{(\ell_0n)} := \bigoplus_{\ell \geq 0} \Hom_\Lambda(\Lambda, (\omega_{\Lambda}^{\otimes \ell \ell_0 n})^{\vee \vee})
$$
is naturally isomorphic to the graded $\kk$-algebra
$$
\H^0(X,\mathbf{\operatorname{Sym}^\bullet}(\Osh_X(\ell_0 n \K_{\Lambda})) \otimes \Lambda) = \bigoplus_{\ell \geq 0} \H^0(X,\Osh_X(\ell \ell_0 n \K_{\Lambda}) \otimes \Lambda).
$$
}
\end{enumerate}
\end{proposition}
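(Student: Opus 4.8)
The plan is to deduce both parts from the adjunction isomorphism of Corollary~\ref{canonical:ring:order:cor4}, using the fact that on the Veronese strand indexed by multiples of $\ell_0 n$ all the sheaves involved are already reflexive, so that the reflexive hulls in the definition of the ring structure become redundant.

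For part (a), I would first observe that since $\ell_0 n(\K_X+\Delta_\Lambda)$ is Cartier by hypothesis, so is $\ell\ell_0 n(\K_X+\Delta_\Lambda)$ for every integer $\ell\geq 0$. I may therefore invoke the Cartier form \eqref{canonical:ring:order:eqn14'} of Corollary~\ref{canonical:ring:order:cor4}, with the index $\ell$ there replaced by $\ell\ell_0$, to obtain a $\Lambda$-bimodule isomorphism $(\omega_\Lambda^{\otimes\ell\ell_0 n})^{\vee\vee}\simeq\Osh_X(\ell\ell_0 n(\K_X+\Delta_\Lambda))\otimes_{\Osh_X}\Lambda$. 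Composing the inverse of this with the canonical ``evaluation at $1$'' isomorphism $\mathcal{H}om_\Lambda(\Lambda,N)\xrightarrow{\sim}N$, valid for any left $\Lambda$-module $N$, yields the sheaf isomorphism \eqref{can:ring:eqn:3}. Taking global sections over $X$, and recalling that $\Hom_\Lambda(\Lambda,-)=\H^0(X,\mathcal{H}om_\Lambda(\Lambda,-))$, gives the description \eqref{can:ring:eqn:4} of the graded piece $R(\Lambda,\omega_\Lambda)_{\ell\ell_0 n}$.

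For part (b), summing \eqref{can:ring:eqn:4} over $\ell\geq 0$ identifies the underlying graded $\kk$-module of $R(\Lambda,\omega_\Lambda)^{(\ell_0 n)}$ with $\bigoplus_{\ell\geq 0}\H^0(X,\Osh_X(\ell\ell_0 n\K_\Lambda)\otimes\Lambda)$. Writing $L:=\Osh_X(\ell_0 n\K_\Lambda)$, which is invertible precisely because $\ell_0 n\K_\Lambda$ is Cartier, I have $\Osh_X(\ell\ell_0 n\K_\Lambda)\simeq L^{\otimes\ell}$, so this direct sum is exactly $\H^0\bigl(X,\bigoplus_{\ell\geq 0}L^{\otimes\ell}\otimes_{\Osh_X}\Lambda\bigr)=\H^0(X,\mathbf{\operatorname{Sym}^\bullet}(L)\otimes\Lambda)$, which matches the stated group. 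It then remains to verify that this identification is one of graded $\kk$-algebras.

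The main obstacle is this last compatibility of multiplications: the product on $R(\Lambda,\omega_\Lambda)$ is defined in \eqref{can:ring:eqn:2} via reflexive hulls of tensor products over $\Lambda$, whereas the product on $\H^0(X,\mathbf{\operatorname{Sym}^\bullet}(L)\otimes\Lambda)$ uses the sheaf multiplication of the $\Osh_X$-algebra $\mathbf{\operatorname{Sym}^\bullet}(L)\otimes\Lambda$. To reconcile them I would argue sheaf-theoretically: because $L^{\otimes\ell}$ and $L^{\otimes k}$ are invertible $\Osh_X$-modules pulled from the centre, they pass freely across $\otimes_\Lambda$, giving $(L^{\otimes\ell}\otimes_{\Osh_X}\Lambda)\otimes_\Lambda(L^{\otimes k}\otimes_{\Osh_X}\Lambda)\simeq L^{\otimes(\ell+k)}\otimes_{\Osh_X}(\Lambda\otimes_\Lambda\Lambda)\simeq L^{\otimes(\ell+k)}\otimes_{\Osh_X}\Lambda$, where the final step uses the canonical isomorphism $\Lambda\otimes_\Lambda\Lambda\simeq\Lambda$, which is literally the multiplication of $\Lambda$. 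Since the right-hand side is already reflexive, the reflexive hull $(-)^{\vee\vee}$ in \eqref{can:ring:eqn:2} is redundant on this strand, and the map of \eqref{can:ring:eqn:2} becomes precisely the multiplication of $\mathbf{\operatorname{Sym}^\bullet}(L)\otimes\Lambda$; this completes part (b). I expect the only genuinely delicate point to be the bookkeeping of the left/right $\Lambda$-module structures when moving the central line bundles past $\otimes_\Lambda$ and checking naturality of \eqref{can:ring:eqn:3}, everything else reducing to a direct application of Corollary~\ref{canonical:ring:order:cor4}.
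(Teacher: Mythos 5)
Your proposal is correct and follows the same route as the paper, whose entire proof is the single sentence that the proposition is ``essentially just a restatement of Corollary~\ref{canonical:ring:order:cor4}''; you have simply made explicit the steps the authors leave implicit (the evaluation-at-$1$ isomorphism $\mathcal{H}om_\Lambda(\Lambda,N)\simeq N$, passage to global sections, and the redundancy of the reflexive hull in the multiplication on the $\ell_0 n$\textsuperscript{th} Veronese strand because the central invertible sheaves pass across $\otimes_\Lambda$). No gaps.
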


\begin{proof}
Proposition~\ref{can:ring:prop:2} is essentially just a restatement of Corollary~\ref{canonical:ring:order:cor4}.  
\end{proof}

Next, motivated by Proposition~\ref{can:ring:prop:2}, we define the section ring of $\Lambda$ with respect to a Cartier divisor $D \in \Div(X)$.

\begin{defn}\label{can:ring:defn:3}
Let $X$ be a normal proper variety over $\kk$, with function field $\KK = \kk(X)$ and let $\Lambda$ be a maximal $\Osh_X$-order in a central simple algebra $\Sigma$ with degree prime to $\cchar \kk$.  Fix a Cartier divisor $D \in \Div(X)$.  We define the \emph{section ring} of $\Lambda$ with respect to $D$ to be the graded $\kk$-algebra:
\begin{equation}\label{can:ring:eqn10}
R(\Lambda,D) := \H^0(X,\mathbf{\operatorname{Sym}^\bullet}(\Osh_X(D)) \otimes \Lambda).
\end{equation}
\end{defn}

We make a few remarks concerning Definition~\ref{can:ring:defn:3}.

\begin{remark}\label{can:ring:remark:4}  

\begin{itemize}
\item{
The graded pieces of the algebra \eqref{can:ring:eqn10} have the form:
$$
R(\Lambda,D)_\ell = \H^0(X,\Osh_X(\ell D) \otimes \Lambda) = \Hom_{\Osh_X}(\Osh_X,\Osh_X(\ell D) \otimes \Lambda),
$$
for each integer $\ell \geq 0$.
}
\item{
The algebra structure of $R(\Lambda,D)$ is induced by that of the sheaf of (in general noncommutative) algebras $\mathbf{\operatorname{Sym}^\bullet}(\Osh_X(D)) \otimes_{\Osh_X} \Lambda$ on $X$.
}
\item{There is a natural $\kk$-algebra morphism
\begin{equation}\label{can:ring:eqn12}
R(D) \hookrightarrow R(\Lambda,D)
\end{equation}
which identifies the section ring of $D$ as a subalgebra of the centre of $R(\Lambda,D)$.  This morphism \eqref{can:ring:eqn12} is induced by the natural $\Osh_X$-algebra morphism
$$
\mathbf{\operatorname{Sym}^\bullet}(\Osh_X(D)) \hookrightarrow \mathbf{\operatorname{Sym}^\bullet}(\Osh_X(D)) \otimes_{\Osh_X} \Lambda.
$$
}
\end{itemize}
\end{remark}

We next show that the section ring $R(\Lambda,D)$, for $D \in \Div(X)$ is a subalgebra of $\Sigma[t]$.

\begin{proposition}\label{can:ring:prop:5}
Let $X$ be a normal proper variety over $\kk$, with function field $\KK = \kk(X)$ and let $\Lambda$ be a maximal $\Osh_X$-order in a $\KK$-central simple algebra $\Sigma$ with degree prime to $\cchar \kk$.  If $D \in \Div(X)$ is a Cartier divisor on $X$, then the section ring $R(\Lambda,D)$ is a subalgebra of $\Sigma[t]$.
\end{proposition}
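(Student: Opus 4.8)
The plan is to exhibit the claimed inclusion concretely, by identifying $R(\Lambda,D)$ with a graded subalgebra of the generic stalk of the sheaf of graded (noncommutative) algebras $\mathcal{S} := \mathbf{\operatorname{Sym}^\bullet}(\Osh_X(D)) \otimes_{\Osh_X} \Lambda$. First I would compute this generic stalk. Since $D \in \Div(X)$ is Cartier, $\Osh_X(D)$ is invertible and, after choosing a generator $t$ of its one-dimensional generic stalk $\Osh_X(D) \otimes_{\Osh_X} \KK$, we get $\mathbf{\operatorname{Sym}^\bullet}(\Osh_X(D)) \otimes_{\Osh_X} \KK \simeq \KK[t]$ as graded $\KK$-algebras, with degree $\ell$ piece $\KK t^{\ell}$. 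Tensoring with the generic stalk $\KK \otimes_{\Osh_X} \Lambda \simeq \Sigma$ of $\Lambda$, which is legitimate because $\Osh_X(D)$ is locally free, then identifies the generic stalk of $\mathcal{S}$ with the polynomial ring $\Sigma[t] = \Sigma \otimes_\KK \KK[t]$, graded so that its degree $\ell$ piece is $\Sigma t^{\ell}$.

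Next I would set up the inclusion on graded pieces. For each $\ell \geq 0$ the graded piece $\mathcal{S}_\ell = \Osh_X(\ell D) \otimes_{\Osh_X} \Lambda$ is, locally on $X$, isomorphic to $\Lambda$ (because $\Osh_X(\ell D)$ is locally trivial), hence torsion-free as an $\Osh_X$-module since $\Lambda$ is torsion-free by definition of an order. Consequently the restriction map to the generic point $\eta$, namely $\H^0(X, \mathcal{S}_\ell) \to (\mathcal{S}_\ell)_\eta = \Sigma t^{\ell}$, is injective. Taking the direct sum over $\ell$ produces an injective $\kk$-linear map $R(\Lambda, D) = \bigoplus_{\ell \geq 0} \H^0(X, \Osh_X(\ell D) \otimes \Lambda) \hookrightarrow \bigoplus_{\ell \geq 0} \Sigma t^{\ell} = \Sigma[t]$.

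It then remains to check multiplicativity, which is the point that requires the most care. The product on $R(\Lambda, D)$ is, by the discussion following Definition~\ref{can:ring:defn:3}, induced by the multiplication of the sheaf of graded algebras $\mathcal{S}$, that is, by the structure maps $\mathcal{S}_\ell \otimes_{\Osh_X} \mathcal{S}_k \to \mathcal{S}_{\ell + k}$. Restricting these maps to the generic point recovers exactly the multiplication $\Sigma t^{\ell} \times \Sigma t^{k} \to \Sigma t^{\ell + k}$ of $\Sigma[t]$; since the restriction maps $\H^0(X, \mathcal{S}_\bullet) \to \mathcal{S}_\eta$ are compatible with both products, the inclusion above is a homomorphism of graded $\kk$-algebras. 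Therefore $R(\Lambda, D)$ is isomorphic, as a graded $\kk$-algebra, to its image, which is a subalgebra of $\Sigma[t]$, as asserted.

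The main obstacle I anticipate is bookkeeping in the multiplicativity step rather than any deep difficulty: one must ensure the chosen trivialization $t$ is used consistently across all graded pieces so that the products genuinely match, and that the degree-$0$ inclusion $\H^0(X,\Lambda) \hookrightarrow \Sigma$ is the identity component of the ring map. I would emphasize at the outset exactly where the hypothesis that $D$ is Cartier (not merely Weil) enters, since both the generic-stalk computation and the local triviality that guarantees torsion-freeness of each $\mathcal{S}_\ell$ rely on $\Osh_X(D)$ being invertible.
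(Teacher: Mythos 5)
Your proposal is correct and follows essentially the same route as the paper: the paper packages the sheaf of graded algebras $\mathbf{\operatorname{Sym}^\bullet}(\Osh_X(D)) \otimes \Lambda$ as $\pi^*\Lambda$ on the relative spectrum $Y = \mathbf{\operatorname{Spec}}_X \mathbf{\operatorname{Sym}^\bullet}(\Osh_X(D))$ and base-changes to $\operatorname{Spec}(\KK)$ to get $\Sigma[t]$, which is precisely your passage to the generic stalk. Your write-up is if anything slightly more explicit about the two points the paper leaves implicit, namely that torsion-freeness of each graded piece gives injectivity of restriction to the generic point and that this restriction is multiplicative.
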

\begin{proof}
Put $L = \Osh_X(D)$, let
$$
\mathcal{A} := \mathbf{\operatorname{Sym}^\bullet}(L) = \bigoplus_{\ell \geq 0} L^{\otimes \ell}
$$
and let $\pi \colon Y = \mathbf{\operatorname{Spec}}_X \mathcal{A} \rightarrow X$ be the natural affine morphism.  In particular,
$$
\H^0(Y,\Osh_Y) = \H^0(X,\mathcal{A}) = \bigoplus_{\ell \geq 0} \H^0(X,L^{\otimes \ell}),
$$
as in~\cite[Ex.~II.~5.17]{Hart} for instance.  Further, by pulling back $\Lambda$ to $Y$ and then considering the base change to $\operatorname{Spec}(\KK)$, we have:
$$
\KK \otimes \pi^* \Lambda = \KK \otimes \mathbf{\operatorname{Sym}^\bullet}(L) \otimes \Lambda = \KK[t] \otimes_\KK \Lambda  \simeq \Sigma[t]
$$
and it then follows, since $\pi$ is affine, that
\begin{equation}\label{can:ring:eqn17}
\H^0(Y,\pi^* \Lambda) = \H^0(X,\pi_* \pi^* \Lambda) = \bigoplus_{\ell \geq 0} \H^0(X,L^{\otimes \ell} \otimes \Lambda) \subseteq \Sigma[t].
\end{equation}
Finally, since
\begin{equation}\label{can:ring:eqn18}
R(\Lambda,D) = \bigoplus_{\ell \geq 0} \H^0(X,L^{\otimes \ell }\otimes \Lambda),
\end{equation}
it follows from \eqref{can:ring:eqn17} and \eqref{can:ring:eqn18} that
$$
R(\Lambda,D) \subseteq \Sigma[t]
$$
as desired.
\end{proof}

In general, the canonical ring of a maximal order need not be  prime.

\begin{example}\label{notPrimeExample}
Let $x,y$ be homogeneous coordinates on $\PP^1$, 
let $E := \Osh_{\PP^1}(-1) \oplus \Osh_{\PP^1}$ and let 
$$\Lambda := \mathcal{E}nd_{\PP^1}(E) \subseteq \kk(x/y)^{2 \times 2}.
$$ 
Then
$$ 
\Lambda(-2\ell) = \left(
\begin{matrix}
\Osh_{\PP^1}(-2\ell) & \Osh_{\PP^1}(-2\ell + 1) \\
\Osh_{\PP^1}(-2\ell -1) & \Osh_{\PP^1}(-2\ell) 
\end{matrix} \right)
$$
and
$$ R(\Lambda,\omega_\Lambda) = \bigoplus_{\ell \geq 0} \H^0(\PP^1,\Lambda(-2\ell)) =  \left(
\begin{matrix}
\kk & \kk x + \kk y \\
0 & \kk
\end{matrix} \right) \subseteq \kk(x,y)^{2 \times 2}$$
which is not a prime ring.
\end{example}

Before stating one consequence of Proposition~\ref{can:ring:prop:5}, we recall that by a \emph{polynomial identity ring}, or simply a \emph{p.i.~ring}, we mean a ring which satisfies some nonzero universal polynomial relation in the free algebra $\kk \langle \mathbf{x} \rangle = \kk \langle x_1,\dots,x_N \rangle$, for some integer $N \gg 0$.  Having recalled this concept, we can now state:
\begin{corollary}\label{can:ring:cor:6}
With the same assumptions and notations of Proposition~\ref{can:ring:prop:5}, the section ring $R(\Lambda,D)$ is a p.i.~ring.
\end{corollary}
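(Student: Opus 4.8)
The plan is to deduce the conclusion directly from the containment $R(\Lambda,D) \subseteq \Sigma[t]$ established in Proposition~\ref{can:ring:prop:5}, using the elementary fact that any subring of a p.i.~ring is again a p.i.~ring. Thus it suffices to produce a single nonzero polynomial identity satisfied by the ambient ring $\Sigma[t]$, and then invoke the embedding to transfer it to $R(\Lambda,D)$.

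To exhibit such an identity I would pass to a splitting field. Since $\Sigma$ is a central simple $\KK$-algebra of degree $n$, there is a finite field extension $\FF / \KK$ with $\FF \otimes_\KK \Sigma \simeq M_n(\FF)$. As $\Sigma$ is free, hence flat, over its centre $\KK$, tensoring the injection $\KK[t] \hookrightarrow \FF[t]$ by $\Sigma$ yields an embedding of $\kk$-algebras
$$\Sigma[t] = \Sigma \otimes_\KK \KK[t] \hookrightarrow \Sigma \otimes_\KK \FF[t] \simeq (\Sigma \otimes_\KK \FF) \otimes_\FF \FF[t] \simeq M_n(\FF[t]) \subseteq M_n(\FF(t)).$$
By the Amitsur--Levitzki theorem, the ring $M_n(C)$ of $n \times n$ matrices over any commutative ring $C$ satisfies the standard identity $s_{2n}$, which is a nonzero element of the free algebra $\kk\langle \x \rangle$ with coefficients $\pm 1$, hence defined over $\kk$. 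Applying this with $C = \FF(t)$, we conclude that $M_n(\FF(t))$, and therefore its subring $\Sigma[t]$, satisfies $s_{2n}$.

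Combining the two steps, $R(\Lambda,D)$ is a subring of a ring satisfying $s_{2n}$, and so $R(\Lambda,D)$ satisfies $s_{2n}$ as well; in particular it is a p.i.~ring. No step here presents a genuine difficulty; the only mildly delicate point is the injectivity of the scalar-extension map $\Sigma[t] \hookrightarrow M_n(\FF(t))$, which follows from flatness of $\Sigma$ over $\KK$ together with injectivity of $\KK[t] \hookrightarrow \FF[t]$. Once the embedding into a matrix algebra over a commutative ring is in hand, the existence of the identity is automatic.
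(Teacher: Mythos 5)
Your proof is correct and follows essentially the same route as the paper: the paper's own argument is the one-line observation that $R(\Lambda,D)$ is a subring of the p.i.~ring $\Sigma[t]$ by Proposition~\ref{can:ring:prop:5}. You have simply supplied the standard justification (splitting field, Amitsur--Levitzki) for why $\Sigma[t]$ is p.i., which the paper takes for granted.
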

\begin{proof}
By Proposition~\ref{can:ring:prop:5}, the section ring $R(\Lambda,D)$ is a subring of the p.i.~ring $\Sigma[t]$. 
\end{proof}

\subsection{The function field of a maximal order with respect to a divisor}\label{division:ring:function:field}  Here we consider maximal orders in division rings, with degree prime to $\cchar \kk$, and our main goal is to show how, given such a maximal $\Osh_X$-order $\Lambda$, together with a Cartier divisor $D$ on $X$, we can define a division algebra $\kk(\Lambda,D)$, which we can think of as the function field of $\Lambda$ with respect to $D$.

\subsubsection{Preliminaries about Graded Rings of Fractions}  To begin with, we recall, following ~\cite{Graded:Ring:Theory}, that if $A$ is a $\ZZ$-graded ring and $S$ a multiplicatively closed subset consisting of nonzero homogeneous elements,  then $A$ satisfies the \emph{graded left Ore conditions} with respect to $S$ if the two conditions hold:
\begin{itemize}
\item{if $r \in A$ is a homogeneous element, $s \in S$ and $rs = 0$, then there exists an $s' \in S$ with the property that $s'r=0$;}
\item{for each homogeneous $r \in A$ and each $s \in S$, there exists homogeneous elements $r' \in A$ and $s' \in S$ with the property that $s' r = r's$.}
\end{itemize}
If $A$ satisfies the graded left Ore conditions with respect to $S$, then the left ring of fractions $S^{-1} A$ can be described as:
\begin{equation}\label{left:ring:fractions:defn}
S^{-1} A = \left\{ s^{-1} a : \text{ $a \in A$ and $s \in S$}    \right\}.
\end{equation}
Here the ring operations are given as:
$$ s^{-1}x + t^{-1}y = u^{-1}(ax + by),$$
for $a,b \in A$ such that $u = as = bt \in S$, and 
$$ s^{-1}x \cdot t^{-1}y = (t_1 \cdot y)^{-1} (x_1 \cdot y),$$
for $t_1 \in S$, $x_1 \in A$ with the property that 
$ t_1 x = x_1 t.$
Furthermore, the ring $S^{-1} A$ is a $\ZZ$-graded ring with gradation given by:
$$
(S^{-1} A)_\ell = \left\{ s^{-1} a : \text{ $s \in S$, $a \in A$ such that $\ell = \operatorname{deg}(a) - \operatorname{deg}(s)$} \right\},
$$
for $\ell \in \ZZ$.  

For later use, we note:

\begin{proposition}\label{graded:ore:pi:prop}
Let $X$ be a normal proper variety over $\kk$ with function field $\KK = \kk(X)$ and let $\Lambda$ be a maximal $\Osh_X$-order in a $\KK$-central division algebra $\mathcal{D}$ with degree prime to $\cchar \kk$.  Let $D \in \Div(X)$ be a Cartier divisor on $X$ and $A$ a graded subring of the section ring $R(\Lambda,D)$.  Then $A$ satisfies the graded left Ore conditions with respect to the multiplicative set of nonzero homogeneous elements of $A$.
\end{proposition}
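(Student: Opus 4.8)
The plan is to exhibit $A$ as a graded domain which is also a polynomial identity ring, and then to verify the two graded left Ore conditions separately: the first will follow from the domain property, while the second will follow by establishing the ordinary (ungraded) left Ore condition and then separating homogeneous components.

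First I would observe that $A$ is a domain. Indeed, by Proposition~\ref{can:ring:prop:5} there are inclusions $A \subseteq R(\Lambda,D) \subseteq \mathcal{D}[t]$, and since $\mathcal{D}$ is a division ring the polynomial ring $\mathcal{D}[t]$ has no zero divisors: for nonzero $f, g \in \mathcal{D}[t]$ the leading coefficient of $fg$ is the product of the leading coefficients of $f$ and $g$, which is nonzero because $\mathcal{D}$ is a division ring and $t$ is central. Hence $A$ is a domain, and the first graded left Ore condition is immediate: if $r$ is homogeneous, $s \in S$ and $rs = 0$, then $s \neq 0$ forces $r = 0$, so $s' r = 0$ for any $s' \in S$.

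For the second condition I would invoke the polynomial identity structure. By Corollary~\ref{can:ring:cor:6} the section ring $R(\Lambda,D)$ is a p.i.~ring, and hence so is its subring $A$. Thus $A$ is a p.i.~domain, and therefore a left (and right) Ore domain: a domain is a prime ring, so by the structure theory of prime p.i.~rings (Posner's theorem) it is Goldie, and a Goldie domain satisfies the ordinary left Ore condition. I expect this appeal to the structure theory of prime p.i.~rings to be the one genuinely nontrivial input; the remaining steps are bookkeeping with the grading.

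Finally I would upgrade the ordinary left Ore condition to the graded one by separating homogeneous components. Fix a homogeneous $r$ of degree $m$ and an $s \in S$ of degree $n$; discarding the trivial case $r = 0$, the ordinary left Ore condition furnishes $u \in A$ and $v \in A$ with $v \neq 0$ and $vr = us$. Writing $v = \sum_i v_i$ and $u = \sum_j u_j$ as sums of homogeneous components and comparing the degree-$d$ parts of the equality $vr = us$ gives $v_{d-m}\,r = u_{d-n}\,s$ for every $d$, since $v_i r$ is homogeneous of degree $i + m$ and $u_j s$ is homogeneous of degree $j + n$. Choosing an index $i_0$ with $v_{i_0} \neq 0$ and taking $d = i_0 + m$, I would set $s' := v_{i_0} \in S$ and $r' := u_{i_0 + m - n}$; then $s' r = r' s$ with $s'$ a nonzero homogeneous element and $r'$ homogeneous (and automatically nonzero of nonnegative degree, since $A$ is a domain and $r \neq 0$). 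This is exactly the second graded left Ore condition, completing the argument.
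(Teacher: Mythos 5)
Your proof is correct, and it reaches the second Ore condition by a slightly different route than the paper. Both arguments rest on the same two inputs: $A \subseteq R(\Lambda,D) \subseteq \mathcal{D}[t]$ is a domain (Proposition~\ref{can:ring:prop:5}), so the first condition is trivial, and $A$ is a p.i.~ring (Corollary~\ref{can:ring:cor:6}). Where you diverge is in how the p.i.~property is converted into the \emph{graded} Ore condition. The paper adapts the ungraded argument of Lam directly to the graded setting: assuming the graded condition fails, it produces two homogeneous elements that are left linearly independent over $A$, shows they generate a free algebra over the graded centre, and contradicts the polynomial identity. You instead invoke the ungraded statement as a black box (p.i.~domain $\Rightarrow$ prime p.i.~$\Rightarrow$ Goldie by Posner $\Rightarrow$ Ore) and then homogenize: comparing degree-$d$ components of $vr = us$ gives $v_{d-m}r = u_{d-n}s$, and picking any nonzero component $v_{i_0}$ yields homogeneous $s' = v_{i_0}$ and $r' = u_{i_0+m-n}$, the latter forced to be nonzero and of nonnegative degree because $A$ is a domain. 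Your degree bookkeeping is right, and the homogenization step is a clean alternative to re-running the free-algebra contradiction inside the graded category; the trade-off is that you lean on Posner's theorem where the paper only needs the more elementary fact that a free algebra on two generators is not p.i.
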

\begin{proof}
We argue as in~\cite[Corollary 10.26]{Lam:modules:rings} which considers the case of non-graded p.i.~rings.  To begin with, since $R(\Lambda,D) \subseteq \mathcal{D}[t]$, by Proposition~\ref{can:ring:prop:5}, the section ring $R(\Lambda,D)$ is a domain and hence also $A$ is a domain and so the first condition clearly holds.

Next, suppose that the second condition is false.  Then, in this case, there exists homogeneous elements $a,b \in A$ which are left linearly independent over $A$.  Then, as in~\cite[Proposition 10.25 and Lemma 9.2]{Lam:modules:rings}, if $C$ denotes the graded centre of $A$, then $A$ contains the (free) algebra over $C$ generated by $a,b$.  This algebra is not a polynomial identity ring and so we have obtained a contradiction to Corollary~\ref{can:ring:cor:6}, since that corollary implies that $A$ is a p.i.~ring.
\end{proof}

\subsubsection{Division rings determined by Cartier divisors}   We now fix a $\KK$-central division algebra $\mathcal{D}$, with degree prime to $\kk$, we let $\Lambda$ be a maximal $\Osh_X$-order in $\mathcal{D}$ and we fix a Cartier divisor $D$ on $X$.  As one consequence to what we describe here, is an alternative point of view for the Iitaka dimension of the Brauer pair $(X,\alpha)$, for $\alpha \in \Br(\KK)$, the Brauer class of $\mathcal{D}$.  The main idea is that, given our fixed Cartier divisor $D$, we can associate to $\Lambda$ a division ring whose centre sees the \emph{growth} of $\Lambda$ with respect to $D$.  To relate the discussion that follows back to the Iitaka dimension of the pair $(X,\alpha)$, the idea is to replace the Cartier divisor $D$ with an integral multiple of $\K_\alpha$, the canonical divisor of $\alpha$; here we assume that $(X,\alpha)$ is $\QQ$-Gorenstein so that some integral multiple of $\K_\alpha$ is indeed Cartier.  

Let 
$ \N(\Lambda,D) := \{ \ell \geq 0 : \H^0(X,\Lambda(\ell D)) \not = 0\}$ denote the \emph{semigroup} of $R(\Lambda,D)$, let $\kk(\Lambda,D)$ be the degree zero division ring of fractions of the graded quotient ring of $R(\Lambda,D)$.  In what follows, we assume that $\N(\Lambda,D) \not = (0)$.  Then, for each $\ell \in \N(\Lambda,D)$, let $\Lambda^{\langle \ell \rangle} \subseteq R(\Lambda,D)$ denote the graded $\kk$-subalgebra generated by the degree $\ell$ part of $R(\Lambda,D)$:
$$ R(\Lambda,D)^{\langle \ell \rangle} = \kk \langle \H^0(X,\Lambda(\ell D) \rangle \subseteq R(\Lambda,D).$$ 

By Proposition~\ref{graded:ore:pi:prop}, the set of nonzero homogeneous elements of $\Lambda^{\langle \ell \rangle}$ satisfy the left Ore conditions
and we let 
$\kk(\Lambda,D)^{\langle \ell \rangle}$ denote the division ring 
obtained by localizing $R(\Lambda,D)^{\langle \ell \rangle}$ at the multiplicative Ore set of nonzero homogeneous elements and then taking degree zero elements.  In particular:
\begin{equation}\label{Division:ell:defn}
 \kk(\Lambda,D)^{\langle \ell \rangle} := \left\{  s^{-1}a : s,a \in R(\Lambda,D)^{\langle \ell \rangle} \text{ are homogeneous, $s \not = 0$, and $\operatorname{deg}(a) = \operatorname{deg}(s)$}\right\}.
\end{equation}
Furthermore, set:
\begin{equation}\label{infinity:div:alg} \kk(\Lambda,D)^{\langle \infty \rangle} := \bigcup_{\ell \in \N(\Lambda,D)} \kk(\Lambda,D)^{\langle \ell \rangle}. \end{equation}

Our goal next is to study the division algebra $\kk(\Lambda,D)$. To that end, we say that a central simple algebra over a field $k$ (and not necessarily with centre $k$)  is \emph{finitely generated} if it is the ring of quotients of a finitely generated $k$-algebra, which will necessarily be p.i.~ and prime.  We use the following lemma.

\begin{lemma}\label{csa:fg}  Let $\Sigma$ be a central simple algebra over a base field $k,$ whose characteristic is prime to the degree of $\Sigma$.  Then $\Sigma$ is finitely generated over $k$ if and only if its centre $Z(\Sigma)$ is a finitely generated field extension of $k$.
\end{lemma}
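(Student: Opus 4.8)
The plan is to establish the two implications separately, the key external inputs being Posner's theorem and the affineness of the centre of a prime affine PI algebra. Throughout write $F := Z(\Sigma)$, so that $\Sigma$ is a central simple algebra over the field $F \supseteq k$ of some finite dimension $d = n^2$, and recall that a ``finitely generated'' central simple algebra is by definition the classical ring of quotients of an affine (necessarily prime and PI) $k$-algebra.

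For the implication that a finitely generated field extension $F/k$ forces $\Sigma$ to be finitely generated, I would build an affine order by clearing denominators of structure constants. First choose generators $t_1,\dots,t_r$ of $F$ over $k$ and set $B_0 := k[t_1,\dots,t_r] \subseteq F$, an affine domain with $\operatorname{Frac}(B_0) = F$. Fixing an $F$-basis $e_1,\dots,e_d$ of $\Sigma$ and writing $e_i e_j = \sum_\ell a_{ij}^\ell e_\ell$ and $1 = \sum_\ell \lambda_\ell e_\ell$ with all $a_{ij}^\ell, \lambda_\ell \in F$, I adjoin these finitely many elements to $B_0$ to obtain an affine domain $B \subseteq F$ with $\operatorname{Frac}(B) = F$. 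Then $A := \sum_{i} B e_i$ is a unital subring of $\Sigma$, finitely generated as a $k$-algebra, and it is a full $B$-lattice spanning $\Sigma$ over $F$. Localizing at the central Ore set $B \setminus \{0\}$ gives $(B\setminus\{0\})^{-1}A = \sum_i F e_i = \Sigma$, so $\Sigma$ is the ring of quotients of $A$; since $A$ sits inside the finite-dimensional $F$-algebra $\Sigma$ it is PI, and since $\Sigma$ is simple (hence prime) and $A$ is an order in it, $A$ is prime. Thus $\Sigma$ is finitely generated over $k$.

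For the converse, suppose $\Sigma$ is the ring of quotients of an affine prime PI $k$-algebra $A$. By Posner's theorem the classical ring of quotients of $A$ is obtained by inverting the central multiplicative set $Z(A)\setminus\{0\}$, is central simple over $\operatorname{Frac}(Z(A))$, and satisfies $Z(\Sigma) = \operatorname{Frac}(Z(A))$. It therefore suffices to show that $Z(A)$ is an affine $k$-algebra, for then $F = \operatorname{Frac}(Z(A))$ is a finitely generated field extension of $k$. This is where the main work lies: I would invoke the theorem (a consequence of the existence of central polynomials, via the Artin--Procesi and Artin--Tate machinery) that the centre of a prime affine PI algebra over a Noetherian base is again affine. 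Concretely, one produces a central affine subalgebra $C \subseteq Z(A)$ over which $A$ is a finite module; since $C$ is Noetherian and $Z(A)$ is a $C$-submodule of the finite $C$-module $A$, it follows that $Z(A)$ is a finite $C$-module and hence affine over $k$.

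The main obstacle is precisely this affineness of the centre in the ($\Rightarrow$) direction; the lattice and denominator-clearing argument in the ($\Leftarrow$) direction is routine. I expect the characteristic hypothesis (degree of $\Sigma$ prime to $\cchar k$) to play no essential role here beyond the paper's standing conventions, since Posner's theorem and the affineness of the centre hold in arbitrary characteristic and it is enough that $\Sigma$ be genuinely PI, which is automatic for a finite-dimensional algebra over a field. I would double-check only that the cited form of the affine-centre theorem applies with $k$ a field and does not secretly require hypotheses beyond primeness and affineness of $A$.
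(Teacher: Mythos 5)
Your forward direction is fine and is essentially the paper's argument (the paper simply takes $S = k\langle x_1,\ldots,x_n,e_1,\ldots,e_{m^2}\rangle$ for field generators $x_i$ and an $F$-basis $e_j$ and observes that the quotient ring of $S$ contains $\sum Z(\Sigma)e_j = \Sigma$; your clearing of structure constants is harmless but not needed). The converse, however, rests on a statement that is not a theorem: the centre of a prime affine p.i.\ $k$-algebra need \emph{not} be affine, and such an algebra need not be a finite module over any central affine subalgebra. For a standard counterexample, let $C = k + t\,k[s,t] \subseteq k[s,t]$ (not finitely generated as a $k$-algebra) and
\[
A \;=\; \begin{pmatrix} k[s,t] & k[s,t] \\ t\,k[s,t] & C \end{pmatrix} \;\subseteq\; k[s,t]^{2\times 2}.
\]
One checks that $A$ is a ring, that it is affine (generated by $e_{11}$, $se_{11}$, $te_{11}$, $e_{12}$, $te_{21}$ and $1$), that it is prime with ring of quotients the central simple algebra $k(s,t)^{2\times 2}$, and that $Z(A) = \{\operatorname{diag}(a,a) : a \in C\} \simeq C$. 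If $A$ were finite over a central affine subalgebra $C_0$, then $Z(A)$, being a $C_0$-submodule of a Noetherian $C_0$-module, would be affine by Artin--Tate; it is not. So both your invoked ``affine-centre theorem'' and the concrete mechanism you propose for it fail, and the step ``$Z(A)$ is affine, hence $\operatorname{Frac}(Z(A))$ is finitely generated'' collapses.

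The conclusion is nonetheless true (in the example, $\operatorname{Frac}(C) = k(s,t)$ is still finitely generated), and the correct intermediary is the \emph{trace ring}, which is what the paper uses: one adjoins the reduced traces of elements of $S$ to form $TS$, applies \cite[Proposition 13.9.11(ii)]{McConnell-Robson} to conclude that $TS$ is a finite module over its \emph{affine} centre $Z(TS)$, and then identifies $Z(\Sigma) = Z(Q(TS)) = Q(Z(TS))$ via \cite[Corollary 13.9.7(i)]{McConnell-Robson}, so that $Z(\Sigma)$ is the fraction field of an affine domain and hence finitely generated over $k$. Note that this is also exactly where the hypothesis that $\cchar k$ is prime to $\deg \Sigma$ enters --- the trace-ring results are stated in characteristic zero and are adapted to positive characteristic using that hypothesis --- so your remark that the characteristic assumption plays no essential role is an artifact of the flawed route. (An alternative repair avoiding trace rings: express the generators of $A$ and the products of a fixed $F$-basis of $\Sigma$ drawn from $A$ in terms of that basis, let $F_0$ be the subfield generated by the finitely many coefficients, and observe that $\sum F_0 e_j$ is a finite-dimensional prime, hence simple, $F_0$-algebra containing $Z(A)$ in its centre, which is a finite extension of $F_0$.)
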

\begin{proof}  Suppose first that $Z(\Sigma) = k(x_1,\ldots,x_n)$ and let $e_1,\ldots,e_{m^2}$ be a basis of $\Sigma$ over $Z(\Sigma)$.  Let $R = k[x_1,\ldots,x_n]$ and let $S = k \langle x_1,\ldots,x_n,e_1,\ldots,e_{m^2}\rangle.$  Now the fraction field of $R$ is $Z(\Sigma)$ and so the quotient ring of $S$ contains $\sum Z(\Sigma)e_i  =\Sigma$.  So $S$ is a finitely generated $k$-algebra with ring of quotients $\Sigma$.  Now suppose conversely that $\Sigma$ is the ring of quotients of $S$ which is finitely generated over $k$.  Let $T\subseteq Z(\Sigma)$ be the ring of traces of elements of $S$, and we adjoin traces to $S$ to obtain the trace ring $TS$.  As in~\cite[Proposition 13.9.11(ii)]{McConnell-Robson}, which assumes $\cchar k = 0$
but which can also be adapted to our present situation, we see that $Z(TS)$ is a finitely generated $k$-algebra.  Furthermore, if we let $Q(TS)$ and $Q(Z(TS))$ denote, respectively, the ring of quotients of the ring $TS$ and its centre $Z(TS)$, then 
$$Q(Z(TS)) = Z(Q(TS)) = Z(\Sigma)$$ 
by~\cite[Corollary 13.9.7 (i)]{McConnell-Robson} combined with the fact that $TS$ is a finitely generated $T$-module~\cite[Proposition 13.9.11 (ii)]{McConnell-Robson}. 
\end{proof}

We use Lemma \ref{csa:fg} to make the following remark:

\begin{proposition}\label{division:alg:finite:generation}
Let $k$ be a field and let $\Sigma$ be a central simple algebra that is finitely generated over $k$.  Assume that the characteristic of $k$ is prime to the degree of $\Sigma$. 
Then the collection of simple $k$-subalgebras of $\Sigma$ satisfy the ascending chain condition.  In particular, every simple subalgebra of $\Sigma$ is a finitely generated central simple algebra over $k$.
\end{proposition}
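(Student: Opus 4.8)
The plan is to prove the ascending chain condition (ACC) directly and then read off the ``in particular'' clause. Write $F := Z(\Sigma)$, which by Lemma~\ref{csa:fg} is a finitely generated field extension of $k$ (as $\Sigma$ is finitely generated over $k$), so $\trdeg_k F =: d < \infty$; and $\dim_F \Sigma = n^2$ with $n = \deg \Sigma$. Since $\Sigma$ is finite-dimensional over $F$ it satisfies the standard identity $s_{2n}$, so every subalgebra is PI; by Kaplansky's theorem every \emph{simple} $k$-subalgebra $T \subseteq \Sigma$ is a central simple algebra, finite-dimensional over its centre $Z(T)$ with $\dim_{Z(T)} T \leq n^2$.

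I would first record two facts about subfields of $\Sigma$. \textbf{(A)} Every subfield $L \subseteq \Sigma$ is finitely generated over $k$: since $F$ is central, $F \cdot L$ is a commutative, hence Artinian, subalgebra of $\Sigma$ that is finite-dimensional over $F$; projecting $L$ onto a residue field of $F \cdot L$ realizes $L$ as a subfield of a finite extension of $F$, and subfields of finitely generated field extensions are themselves finitely generated over $k$. \textbf{(B)} Any ascending chain of subfields of $\Sigma$ stabilizes: the union is again a subfield, hence finitely generated over $k$ by (A), after which the transcendence degrees (bounded by $d$ and nondecreasing) become constant, the subsequent extensions are finite, and the bounded nondecreasing degrees stabilize.

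Now, given an ascending chain $T_1 \subseteq T_2 \subseteq \cdots$ of simple $k$-subalgebras, I would stabilize in three stages. (i) As $F$ is central, $F \cdot T_i$ is just the $F$-linear span of $T_i$, an $F$-subalgebra of the $n^2$-dimensional $\Sigma$; these are nondecreasing and bounded, so $F \cdot T_i = B$ is constant for $i \geq N_1$. (ii) For $i \geq N_1$ the centres increase: if $z \in Z(T_i)$ then $z$ commutes with $T_i$ and with $F$, hence with $B = F \cdot T_i$, so $z \in Z(B)$; since also $z \in T_i \subseteq T_{i+1} \subseteq B$, it commutes with $T_{i+1}$ and lies in it, whence $z \in Z(T_{i+1})$. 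Thus $\{Z(T_i)\}_{i \geq N_1}$ is an ascending chain of subfields, which by (B) stabilizes to some $Z$ for $i \geq N_2$. (iii) For $i \geq N_2$ the $T_i$ are central simple $Z$-algebras sharing the centre $Z$, so $T_i \subseteq T_{i+1}$ are inclusions of $Z$-algebras of $Z$-dimension $\leq n^2$; the dimensions stabilize, and equality of finite $Z$-dimension forces $T_i = T_{i+1}$. This gives ACC. The ``in particular'' clause then follows from (A): for any simple subalgebra $T$ the centre $Z(T)$ is a subfield of $\Sigma$, hence finitely generated over $k$, so Lemma~\ref{csa:fg} shows $T$ is a finitely generated central simple algebra over $k$ (this also follows formally from ACC, an infinitely generated centre producing a nonstabilizing chain of subfields).

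I expect the crux to be step (ii): centres of larger simple subalgebras can a priori \emph{shrink}, so monotonicity of $Z(T_i)$ is not automatic. The point that rescues it is the prior stabilization in (i), which confines all the relevant centres to the single ring $Z(B)$ and forces the inclusions. The supporting technical input is the finite generation of subfields of finitely generated field extensions, which drives both (A) and (B).
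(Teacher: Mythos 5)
Your proof is correct, but it takes a genuinely different route from the paper's. The paper argues by constructing a nested chain of \emph{maximal subfields} $\FF_i$ of the $\Sigma_i$ (each chosen to contain the previous one), stabilizing that chain via the finite generation of subfields of finitely generated field extensions, and then viewing the tail of the chain of algebras as bounded finite-dimensional vector spaces over the stabilized subfield. You instead stabilize the central spans $F\cdot T_i$ first, use that to prove the key monotonicity $Z(T_i)\subseteq Z(T_{i+1})$ for large $i$, stabilize the centres by the same subfield--finite-generation fact, and finish by comparing $Z$-dimensions (bounded by $n^2$ via Kaplansky). Your step (ii) is indeed the crux and is fully justified as written; it replaces the paper's step of choosing $\FF_{i+1}$ to be a maximal subfield of $\Sigma_{i+1}$ containing $\FF_i$, together with the assertion that such an $\FF_{i+1}$ ``must also contain the centre of $\Sigma_{i+1}$.'' That assertion is delicate: a subfield of a central simple algebra that is maximal with respect to inclusion need not contain the centre (e.g.\ $\QQ(j)\subset \QQ(i)^{2\times 2}$, with $j$ a non-scalar matrix satisfying $j^2=-1$, is inclusion-maximal yet misses the central copy of $\QQ(i)$, since its centralizer is $\QQ(i)\times\QQ(i)$ and subfields of that product inject into a factor), and the compositum of $\FF_i$ with $Z(\Sigma_{i+1})$ need not be a field; so your argument arguably closes a gap rather than merely rephrasing the proof. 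Two small remarks: in the ``in particular'' step you apply Lemma~\ref{csa:fg} to $T$, whose degree is not obviously prime to $\cchar k$; this is harmless because only the direction ``centre finitely generated $\Rightarrow$ algebra finitely generated'' is needed and its proof is characteristic-free, but it is worth saying so explicitly (the paper's own deduction has the same feature). Also, in (B) the stabilization is immediate once the union is known to be finitely generated, since all generators already lie in some $L_i$; the detour through transcendence degrees and field degrees can be omitted.
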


\begin{proof}
Let 
\begin{equation}\label{division:alg:finite:generation:eqn1}
\Sigma_1 \subseteq \Sigma_2 \subseteq \hdots \subseteq \Sigma
\end{equation}
be an ascending chain of simple subalgebras of $\Sigma$.  Then, since $\Sigma$ is p.i., each of the $\Sigma_i$ are p.i.~and simple and so central simple algebras by Kaplansky's Theorem.  Inductively, for each $i$, let $\FF_{i+1}$ be a maximal subfield of $\Sigma_{i+1}$ containing $\FF_i$.   In this way, the chain of central simple algebras \eqref{division:alg:finite:generation:eqn1} induces a chain of maximal subfields:
\begin{equation}\label{division:alg:finite:generation:eqn2}
\FF_1 \subseteq \FF_2 \subseteq \hdots \subseteq \FF \subseteq \Sigma,
\end{equation}
for $\FF$ a maximal subfield of $\Sigma$.  By Lemma~\ref{csa:fg} we see that $\FF$ is a finitely generated field extension of $k$. This chain of subfields \eqref{division:alg:finite:generation:eqn2} stabilizes since $\FF$ is finitely generated~\cite[Theorem 24.9]{Isaacs:Algebra}.  Thus the chain \eqref{division:alg:finite:generation:eqn1} reduces to a chain of finite dimensional vector spaces over this maximal subfield and hence stabilizes as well.
\end{proof}

Motivated by Proposition~\ref{division:alg:finite:generation}, we ask:
\begin{question}
Can Proposition~\ref{division:alg:finite:generation} be generalized in some way, for instance, to remove the p.i.~hypothesis?
\end{question}

We now fix $\ell \in \N(\Lambda,D)$,  we note that, for each $k > 0$, we have:
$$
\kk(\Lambda, D)^{\langle \ell \rangle} \subseteq \kk(\Lambda, D)^{\langle k \ell \rangle}
$$
and we consider the behaviour of the ascending chain of division algebras:
$$
\kk(\Lambda,D)^{\langle \ell \rangle} \subseteq \kk(\Lambda,D)^{\langle k_1 \ell \rangle} \subseteq \hdots \subseteq \kk(\Lambda,D)^{\langle k_1 \dots k_p \ell \rangle} \subseteq \kk(\Lambda, D)^{\langle k_1 \hdots k_{p+1} \ell \rangle} \subseteq \hdots \subseteq \kk(\Lambda, D)
$$
for a given collection of positive integers $\{k_i \}$.

\begin{proposition}\label{D:infty:finiteness}  Let $X$ be a normal proper variety over $\kk$, with function field $\KK = \kk(X)$, let $\Lambda$ be a maximal $\Osh_X$-order, in a $\KK$-central division algebra $\mathcal{D}$ having degree prime to $\cchar \kk$, and let $D$ be a Cartier divisor on $X$. 
There exists $0 < \ell_0 \in \N(\Lambda,D)$ so that 
$$\kk(\Lambda, D)^{\langle \infty \rangle} = \kk(\Lambda, D)^{\langle \ell \ell_0 \rangle} = \kk(\Lambda, D)^{\langle \ell_0 \rangle} $$
for all $\ell \geq 1$.
\end{proposition}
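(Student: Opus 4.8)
The plan is to reduce everything to the ascending chain condition of Proposition~\ref{division:alg:finite:generation}, applied not to $\kk(\Lambda,D)$ but to the ambient division algebra $\mathcal{D}$ itself. First I would record that $\mathcal{D}$ is a finitely generated central simple algebra over $\kk$: its centre is $\KK = \kk(X)$, which is a finitely generated field extension of $\kk$ since $X$ is a variety over $\kk$, and its degree is prime to $\cchar \kk$ by hypothesis, so Lemma~\ref{csa:fg} applies. Consequently the collection of simple $\kk$-subalgebras of $\mathcal{D}$ satisfies the ascending chain condition.

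Next I would observe that each $\kk(\Lambda,D)^{\langle\ell\rangle}$, being the degree-zero part of the graded division ring of fractions of the domain $R(\Lambda,D)^{\langle\ell\rangle} \subseteq R(\Lambda,D) \subseteq \mathcal{D}[t]$ (Proposition~\ref{can:ring:prop:5} and Proposition~\ref{graded:ore:pi:prop}), is itself a division ring, and that since $t$ is central the localization identifies it with a sub-division-algebra of $\mathcal{D}$. In particular each $\kk(\Lambda,D)^{\langle\ell\rangle}$ is a simple $\kk$-subalgebra of $\mathcal{D}$, and, as a directed union of such, so is $\kk(\Lambda,D)^{\langle\infty\rangle}$. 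The family $\{\kk(\Lambda,D)^{\langle\ell\rangle}\}_{\ell\in\N(\Lambda,D)}$ is directed under divisibility: the containment $\kk(\Lambda,D)^{\langle\ell\rangle}\subseteq\kk(\Lambda,D)^{\langle k\ell\rangle}$ noted in the text shows that $\ell\mid\ell'$ forces $\kk(\Lambda,D)^{\langle\ell\rangle}\subseteq\kk(\Lambda,D)^{\langle\ell'\rangle}$.

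The one wrinkle is that this is a directed family rather than a single chain, so to invoke the ACC I would extract a cofinal chain. Fixing any $\ell_1>0$ in $\N(\Lambda,D)$ (nonempty by assumption) and noting that $\N(\Lambda,D)$ is a subsemigroup of $(\ZZ_{\geq 0},+)$ since $R(\Lambda,D)$ is a domain, all positive multiples $m!\,\ell_1$ lie in $\N(\Lambda,D)$. Setting $C_m := \kk(\Lambda,D)^{\langle m!\,\ell_1\rangle}$, the divisibility $m!\,\ell_1 \mid (m+1)!\,\ell_1$ makes $C_1\subseteq C_2\subseteq\cdots$ an ascending chain of simple subalgebras of $\mathcal{D}$, so it stabilizes at some $m_0$. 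This chain is cofinal: for any $\ell>0$ in $\N(\Lambda,D)$ and any $m\geq\max(\ell,m_0)$ one has $\ell\mid m!$, hence $\ell\mid m!\,\ell_1$, whence $\kk(\Lambda,D)^{\langle\ell\rangle}\subseteq C_m = C_{m_0}$; therefore $\kk(\Lambda,D)^{\langle\infty\rangle}=C_{m_0}$. Putting $\ell_0:=m_0!\,\ell_1\in\N(\Lambda,D)$, the containments $\kk(\Lambda,D)^{\langle\ell_0\rangle}\subseteq\kk(\Lambda,D)^{\langle\ell\ell_0\rangle}\subseteq\kk(\Lambda,D)^{\langle\infty\rangle}=\kk(\Lambda,D)^{\langle\ell_0\rangle}$ force equality throughout for every $\ell\geq 1$, which is the assertion.

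The main obstacle, or rather the one genuine idea, is the realization that finite generation should be imposed on $\mathcal{D}$ (through finite generation of the field $\KK$), rather than trying to control the centre of the a priori mysterious division algebra $\kk(\Lambda,D)$; once the ACC is available for simple subalgebras of $\mathcal{D}$, the remainder is the routine bookkeeping of passing from a directed family to a cofinal chain.
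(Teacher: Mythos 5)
Your proposal is correct and follows essentially the same route as the paper: both arguments reduce to the ascending chain condition of Proposition~\ref{division:alg:finite:generation} (available because $\mathcal{D}$, respectively $\mathcal{D}(t)$, is a finitely generated central simple algebra over $\kk$) applied to the divisibility-directed family $\{\kk(\Lambda,D)^{\langle \ell \rangle}\}$. Your explicit extraction of a cofinal chain via factorials merely spells out what the paper compresses into the phrase ``for some sufficiently divisible $\ell$.''
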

\begin{proof}
First note that, by Corollary~\ref{can:ring:cor:6}, $\kk(\Lambda,D)^{\langle  \infty  \rangle}$ is a subdivision algebra of the p.i.~algebra
$\mathcal{D}[t] \subseteq \mathcal{D}(t)$ 
and hence finite dimensional over its centre and finitely generated over $\kk$ 
by Proposition~\ref{division:alg:finite:generation}.  
Next, since  
$$\kk(\Lambda,D)^{\langle \infty \rangle} = \bigcup_{ \ell \in \N(\Lambda,D)} \kk(\Lambda,D)^{\langle \ell \rangle}$$ 
and 
$\kk(\Lambda,D)^{\langle \ell' \rangle} \subseteq \kk(\Lambda,D)^{\langle \ell \rangle}$ 
when $\ell'$ divides $\ell$, it follows 
from Proposition~\ref{division:alg:finite:generation} 
that 
$$\kk(\Lambda, D)^{\langle \infty \rangle} = \kk(\Lambda, D)^{\langle \ell \rangle}$$
for some sufficiently divisible $\ell \in \N(\Lambda,D)$.
\end{proof}

Proposition~\ref{D:infty:finiteness} implies:

\begin{corollary}\label{D:infty:finiteness:cor}
In the setting of Proposition~\ref{D:infty:finiteness}, the degree zero graded fractions of $R(\Lambda,D)$ is
$$\kk(\Lambda,D) = \kk(\Lambda,D)^{\langle \infty \rangle}.$$ 
\end{corollary}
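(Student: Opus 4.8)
The plan is to prove the two inclusions between $\kk(\Lambda,D)$ and $\kk(\Lambda,D)^{\langle \infty \rangle}$ directly from the definitions, invoking Proposition~\ref{D:infty:finiteness} only to identify the a~priori increasing union $\kk(\Lambda,D)^{\langle \infty \rangle}$ with a single finitely generated division ring $\kk(\Lambda,D)^{\langle \ell_0 \rangle}$. Throughout I would freely use that $R(\Lambda,D)$, and each of its graded subalgebras $R(\Lambda,D)^{\langle \ell \rangle}$, is a graded domain satisfying the graded left Ore conditions, by Proposition~\ref{can:ring:prop:5} and Proposition~\ref{graded:ore:pi:prop}, so that every degree-zero graded fraction ring in sight is a genuine division ring and the description \eqref{left:ring:fractions:defn} of its elements as $s^{-1}a$ applies.

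First I would record the easy inclusion $\kk(\Lambda,D)^{\langle \infty \rangle} \subseteq \kk(\Lambda,D)$. For each $\ell \in \N(\Lambda,D)$ the containment $R(\Lambda,D)^{\langle \ell \rangle} \subseteq R(\Lambda,D)$ shows that any degree-zero fraction $s^{-1}a$, with $s,a \in R(\Lambda,D)^{\langle \ell \rangle}$ homogeneous of equal degree and $s \neq 0$, is already a degree-zero element of the graded quotient ring of $R(\Lambda,D)$; taking the union over $\ell \in \N(\Lambda,D)$ gives the inclusion.

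The substantive direction is $\kk(\Lambda,D) \subseteq \kk(\Lambda,D)^{\langle \infty \rangle}$, and here the key observation is that a single graded component already lives inside the subalgebra it generates. Given $q \in \kk(\Lambda,D)$, I would write $q = s^{-1}a$ with $a,s \in R(\Lambda,D)$ homogeneous of a common degree $d$ and $s \neq 0$; such a representation exists because a degree-zero element of the graded Ore localization has $\deg a - \deg s = 0$. Since $0 \neq s \in R(\Lambda,D)_d = \H^0(X,\Lambda(dD))$ we get $d \in \N(\Lambda,D)$, and both $a$ and $s$ lie in $R(\Lambda,D)_d$, which is precisely the degree-$d$ generating piece $\H^0(X,\Lambda(dD))$ of $R(\Lambda,D)^{\langle d \rangle}$. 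Hence $a$ and $s$ are homogeneous elements of $R(\Lambda,D)^{\langle d \rangle}$ of equal degree, so $q = s^{-1}a \in \kk(\Lambda,D)^{\langle d \rangle} \subseteq \kk(\Lambda,D)^{\langle \infty \rangle}$.

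Combining the two inclusions yields the equality, after which Proposition~\ref{D:infty:finiteness} lets me replace $\kk(\Lambda,D)^{\langle \infty \rangle}$ by $\kk(\Lambda,D)^{\langle \ell_0 \rangle}$. I do not anticipate a serious obstacle: the only point requiring care is the grading bookkeeping, namely checking that the numerator and denominator of an arbitrary degree-zero fraction can always be chosen in one and the same graded component $R(\Lambda,D)_d$, which is exactly the generating component of $R(\Lambda,D)^{\langle d \rangle}$. The genuinely nontrivial work — the Ore conditions, the p.i.~structure, and the finiteness statement — has already been carried out in Corollary~\ref{can:ring:cor:6}, Proposition~\ref{graded:ore:pi:prop} and Proposition~\ref{D:infty:finiteness}, so this corollary amounts to an unwinding of the definitions.
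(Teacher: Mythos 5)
Your proof is correct and follows essentially the same route as the paper's (one-line) argument: the paper likewise observes that every degree-zero graded fraction $s^{-1}a$ has numerator and denominator homogeneous of a common degree $d$, hence lying in the generating piece $\H^0(X,\Lambda(dD))$ of $R(\Lambda,D)^{\langle d\rangle}$, so that the fraction already belongs to $\kk(\Lambda,D)^{\langle d\rangle}\subseteq\kk(\Lambda,D)^{\langle\infty\rangle}$, the reverse inclusion being immediate. Your remark that Proposition~\ref{D:infty:finiteness} is only needed to collapse the union to a single $\kk(\Lambda,D)^{\langle\ell_0\rangle}$, rather than for the equality itself, is an accurate reading of where the content actually sits.
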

\begin{proof}
Since
$$ \kk(\Lambda,D) = \left\{ s^{-1} a : s,a \in R(\Lambda,D)  \text{ are homogeneous, $s \not = 0$, and $\operatorname{deg}(a) = \operatorname{deg}(s)$}\right\},$$
the equality $\kk(\Lambda,D) = \kk(\Lambda,D)^{\langle \infty \rangle}$ follows from Proposition~\ref{D:infty:finiteness}.
\end{proof}

\subsection{The division ring determined by divisors}\label{division:ring:function:field:centre}  In this section, we fix a maximal $\Osh_X$-order $\Lambda$ in a $\KK$-central division algebra $\mathcal{D}$ having degree prime to $\cchar \kk$.

We now want to compare the division ring 
$\kk(\Lambda,D)$ with the degree zero field of fractions  $\kk(X,D)$ of the
section ring $R(X,D)$ of the Cartier divisor $D$.

\begin{lemma}\label{section:ring:centre:lemma} With the same assumptions as Proposition~\ref{D:infty:finiteness}, the field $\kk(X,D)$ is a central subfield of $\kk(\Lambda,D)$.
\end{lemma}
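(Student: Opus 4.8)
The plan is to leverage the central embedding $R(D) \hookrightarrow R(\Lambda,D)$ recorded in Remark~\ref{can:ring:remark:4} (here $R(D) = R(X,D)$, since $D$ is Cartier so $\lfloor \ell D\rfloor = \ell D$), pass to degree-zero graded fractions, and then check that the localization preserves centrality. First I would recall from Proposition~\ref{can:ring:prop:5} that $R(\Lambda,D) \subseteq \mathcal{D}[t]$ is a domain, and from Proposition~\ref{graded:ore:pi:prop} that its set $S$ of nonzero homogeneous elements satisfies the graded left Ore conditions; thus $\kk(\Lambda,D)$ is the degree-zero part of the graded division ring $S^{-1}R(\Lambda,D)$. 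By Remark~\ref{can:ring:remark:4} the section ring $R(D)$ sits inside the centre of $R(\Lambda,D)$ as a graded subalgebra, so every nonzero homogeneous $s \in R(D)$ is a nonzero central homogeneous element of the domain $R(\Lambda,D)$, and therefore lies in $S$.

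Next I would invoke the universal property of the left Ore localization. Writing $S_0 \subseteq R(D)$ for the multiplicative set of nonzero homogeneous elements of $R(D)$, we have $S_0 \subseteq S$, and every element of $S_0$ becomes invertible in $S^{-1}R(\Lambda,D)$. Hence the inclusion $R(D) \hookrightarrow R(\Lambda,D)$ induces a homomorphism of graded rings $S_0^{-1}R(D) \to S^{-1}R(\Lambda,D)$, and restricting to degree-zero parts yields a homomorphism $\kk(X,D) \to \kk(\Lambda,D)$. Since $\kk(X,D)$ is a field and this map carries $1$ to $1$, it is injective, exhibiting $\kk(X,D)$ as a subfield of $\kk(\Lambda,D)$.

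It remains to show the image is central, and this is the only step requiring a genuine argument rather than bookkeeping. A typical element of $\kk(X,D)$ has the form $s^{-1}a$ with $s \in S_0$ and $a \in R(D)$ homogeneous of the same degree. Both $a$ and $s$ are central in $R(\Lambda,D)$; the key point is that $s^{-1}$ is then central in $S^{-1}R(\Lambda,D)$. Indeed, for any $x \in R(\Lambda,D)$ one computes $s^{-1}x = s^{-1}x(ss^{-1}) = s^{-1}(xs)s^{-1} = s^{-1}(sx)s^{-1} = xs^{-1}$, using $xs = sx$, and centrality against the whole localization follows formally from this. Consequently $s^{-1}a$ is a product of two central elements, hence central in $\kk(\Lambda,D)$, which completes the proof. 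I expect the main (though mild) obstacle to be precisely this verification that centrality passes through the one-sided Ore localization; once that is in hand the rest is a routine application of the universal property.
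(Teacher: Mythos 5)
Your proof is correct and follows essentially the same route as the paper: both rest on the central embedding $R(D)\hookrightarrow R(\Lambda,D)$ from Remark~\ref{can:ring:remark:4} and then pass to degree-zero graded fractions. The paper phrases this as an ``immediate consequence'' (routing through Corollary~\ref{D:infty:finiteness:cor}), whereas you explicitly verify that centrality survives the one-sided Ore localization; that verification is sound and simply fills in the detail the paper leaves implicit.
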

\begin{proof}  
By Corollary~\ref{D:infty:finiteness:cor}, we have
$$\kk(\Lambda,D)^{\langle \infty \rangle} = \kk(\Lambda,D)$$
and so Lemma~\ref{section:ring:centre:lemma} is an immediate consequence of Remark~\ref{can:ring:remark:4}.
\end{proof}

We now study the nature of the extension $\kk(\Lambda,D) / \kk(X,D)$.

\begin{proposition}\label{alg:extension:prop} Let $X$ be a normal proper variety over $\kk$, with function field $\KK = \kk(X)$, let $\Lambda$ be a maximal $\Osh_X$-order, in a $\KK$-central division ring $\mathcal{D}$ having degree prime to $\cchar \kk$, and let $D$ be a $\QQ$-Cartier divisor on $X$ with $\N(X,D) \not = (0)$.   Then the division algebra $\kk(\Lambda,D)$ is finite dimensional over its centre, which is a finite extension of $\kk(X,D)$.
\end{proposition}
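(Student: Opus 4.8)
The plan is to reduce the whole statement to a single transcendence–degree comparison. Write $Z := Z(\kk(\Lambda,D))$ for the centre and $F := \kk(X,D)$. Combining Proposition~\ref{D:infty:finiteness} and Corollary~\ref{D:infty:finiteness:cor} (which feed through Proposition~\ref{division:alg:finite:generation} and Lemma~\ref{csa:fg}), the algebra $\kk(\Lambda,D)$ is a division ring, finite dimensional over $Z$, and $Z$ is a finitely generated field extension of $\kk$. By Lemma~\ref{section:ring:centre:lemma} we have $F\subseteq Z$ with $F$ central. Since $\kk(\Lambda,D)$ is already finite over $Z$, it suffices to prove $[Z:F]<\infty$; and because $Z$ is finitely generated over $\kk$, hence over $F$, this is equivalent to showing $Z/F$ is algebraic, i.e. that $\trdeg_\kk Z = \trdeg_\kk F = \kappa(X,D)$. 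Finally, replacing $D$ by a Cartier multiple changes neither $\kk(\Lambda,D)$ (by Proposition~\ref{D:infty:finiteness}) nor $\kappa(X,D)$ (by~\eqref{prelim:normal:var:eqn6}), so I may and do assume $D\in\Div(X)$ is Cartier; then $R(\Lambda,D)\subseteq\mathcal{D}[t]$ is a graded domain by Proposition~\ref{can:ring:prop:5}, with graded pieces $R(\Lambda,D)_\ell=\H^0(X,\Osh_X(\ell D)\otimes\Lambda)$.

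The inequality $\trdeg_\kk Z\geq\kappa(X,D)$ is immediate from $F\subseteq Z$, so the work is the reverse inequality, which I would obtain by squeezing the growth of the graded pieces of $R(\Lambda,D)$. For the upper bound I would choose a generic $\Osh_X$-trivialisation of $\Lambda$ and clear denominators to produce, for a sufficiently positive divisor $A$, an injection $\Lambda\hookrightarrow\Osh_X(A)^{\oplus n^2}$ of $\Osh_X$-modules; twisting by $\Osh_X(\ell D)$ and taking cohomology gives
\[
\dim_\kk R(\Lambda,D)_\ell = h^0(X,\Osh_X(\ell D)\otimes\Lambda) \leq n^2\, h^0(X,\Osh_X(A+\ell D)).
\]
Since twisting a fixed divisor class by the multiples of $D$ does not raise the Iitaka growth order, the right-hand side is $O(\ell^{\kappa(X,D)})$; cf.~\eqref{prelim:normal:var:eqn7} and~\cite[Corollary 2.1.38]{Laz}. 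Thus there is a constant $C>0$ with $\dim_\kk R(\Lambda,D)_\ell\leq C\,\ell^{\kappa(X,D)}$ for all large $\ell$.

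For the matching lower bound I would read off growth from the centre itself. Put $s:=\trdeg_\kk Z$ and pick $z_1,\dots,z_s\in Z$ algebraically independent over $\kk$. As $\kk(\Lambda,D)$ is the degree zero part of the graded Ore localisation of $R(\Lambda,D)$ (Proposition~\ref{graded:ore:pi:prop}), the $z_i$ have a common left denominator: there are homogeneous $b,c_1,\dots,c_s\in R(\Lambda,D)$ of one common degree $N$ with $z_i=b^{-1}c_i$. Since each $z_i$ is central it commutes with $b$, so $c_i=z_i b=b z_i$, and for every multi-index $\mu$ with $|\mu|\leq m$,
\[
z^{\mu}b^{m} = c_1^{\mu_1}\cdots c_s^{\mu_s}\,b^{\,m-|\mu|}\in R(\Lambda,D)_{mN},
\]
a product of $m$ homogeneous factors of degree $N$. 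The family $\{z^{\mu}b^{m}:|\mu|\leq m\}$ is linearly independent over $\kk$, for a relation $\big(\sum_\mu\lambda_\mu z^{\mu}\big)b^{m}=0$ forces $\sum_\mu\lambda_\mu z^{\mu}=0$ in the domain $\kk(\Lambda,D)$, whence all $\lambda_\mu=0$. Counting monomials yields $\dim_\kk R(\Lambda,D)_{mN}\geq\binom{m+s}{s}\sim m^{s}/s!$, and comparing with the upper bound $\dim_\kk R(\Lambda,D)_{mN}\leq C(mN)^{\kappa(X,D)}$ forces $s\leq\kappa(X,D)$. Hence $\trdeg_\kk Z=\kappa(X,D)=\trdeg_\kk F$, so $Z/F$ is finite.

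The main obstacle is the growth estimate in the middle step: one must know that the sections of the noncommutative order $\Lambda(\ell D)$ grow no faster than $\ell^{\kappa(X,D)}$, i.e. that tensoring with the fixed coherent sheaf $\Lambda$ cannot raise the Iitaka order of $D$. The injection into a sum of line bundles reduces this to the classical fact that $h^0(X,\Osh_X(A+\ell D))$ and $h^0(X,\Osh_X(\ell D))$ share the same growth order for fixed $A$, and some care is needed with the $\QQ$-Cartier bookkeeping, which is why I would first pass to a Cartier multiple of $D$. Once that bound is secured, the transcendence-degree computation is elementary, the only delicate point being the common-denominator and centrality manipulation that realises $\sim m^{\trdeg_\kk Z}$ independent elements inside a single graded piece of $R(\Lambda,D)$.
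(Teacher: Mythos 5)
There is a genuine gap, and it sits exactly where you flag the ``main obstacle'': the upper bound $h^0(X,\Osh_X(A+\ell D))=O(\ell^{\kappa(X,D)})$ for a fixed auxiliary divisor $A$ is false in general. What \eqref{prelim:normal:var:eqn7} and~\cite[Corollary 2.1.38]{Laz} control is $h^0(X,\Osh_X(\ell D))$ itself; once you twist by a fixed (sufficiently positive) $A$, the growth order of $h^0(X,\Osh_X(A+\ell D))$ is the \emph{numerical} dimension $\kappa_\sigma(D)$, not the Iitaka dimension $\kappa(X,D)$, and these can differ (e.g.\ $X$ a surface and $D$ nef with $D^2=0$, $D\not\equiv 0$, $\kappa(X,D)=0$: Riemann--Roch makes $h^0(X,\Osh_X(A+\ell D))$ grow linearly in $\ell$). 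This is precisely the discrepancy that Proposition~\ref{perturbed:growth:prop1} quantifies and that Conjecture~\ref{perturbed:growth:abundance:conj} is invoked to close in Theorem~\ref{log:abundance:growth}; your injection $\Lambda\hookrightarrow\Osh_X(A)^{\oplus n^2}$ works for an arbitrary coherent sheaf and therefore cannot see anything finer than $\kappa_\sigma(D)$. So your squeeze only yields $\trdeg_\kk Z\leq\kappa_\sigma(D)$, which does not give algebraicity of $Z$ over $\kk(X,D)$ without assuming abundance --- and the whole point of this proposition in the paper is that it holds unconditionally. (Your lower bound via a common Ore denominator and the monomials $z^{\mu}b^{m}$ is fine, as is the final bookkeeping; only the upper bound fails.)

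The paper's proof avoids growth estimates altogether. For each homogeneous section $s\in\H^0(X,\Lambda(\ell D))$ it forms the commutative subfield $\FF=\KK(t^{-1}s)\subseteq\mathcal{D}$ (finite over $\KK$ because $\mathcal{D}$ is a finite-dimensional division algebra --- this is where the division-ring hypothesis enters, and it is absent from your upper-bound step), passes to the normalization $p:Y\to X$ of $X$ in $\FF$, checks on a trivializing cover that $t^{-1}s$ is integral over $\Osh_X$ and hence that $s\in\H^0(Y,\Osh_Y(p^*(\ell D)))$, and then quotes the proof of~\cite[Proposition 1.5 (iii)]{Mori:1985} to conclude that $s$ is integral over $R(X,D)$. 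This gives algebraicity of $\kk(\Lambda,D)$ over $\kk(X,D)$ element by element, with no claim about $\dim_\kk R(\Lambda,D)_\ell$; finiteness of $Z$ over $\kk(X,D)$ then follows as in your reduction from Proposition~\ref{division:alg:finite:generation}. If you want to keep a growth-theoretic flavour, you would have to first establish integrality by some such argument anyway, at which point the dimension count becomes redundant.
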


\begin{proof}
By linear equivalence we may assume that $D$ is effective.  Choose $\ell > 0$ so that $\ell D$ is an integral Cartier divisor and let $t \in \H^0(X,\Osh_X(\ell D))$ be a section corresponding to $\ell D$.   Now let $s \in \H^0(X,\Lambda(\ell D))$ and note that $t^{-1}s \in \kk(\Lambda,D)$.  Consider the field $\FF = \KK (t^{-1}s)\subseteq \mathcal{D}$ obtained by adjoining $t^{-1}s$ to $\KK$.  Since $\mathcal{D}$ is finite dimensional over $\KK$, $\FF$ is finite dimensional over $\KK$ too.  Let $Y$ be the normalization of $X$ in $\FF$ with canonical map $p\colon Y \to X$.
Fix an affine open cover $\{U_i\}$ of $X$ which trivializes $\Osh_X(\ell D)$.  Then
  $$t_{|_{U_i}}\Osh_X(U_i) = \Osh_X(\ell D)(U_i) = \{ f \in \KK : \mathrm{div}(f)|_{U_i} +\ell D|_{U_i} \geq 0\}.$$
  We also have that
  $$s_{|_{U_i}} \in \Lambda(\ell D)(U_i) = \Lambda(U_i) t_{|_{U_i}}.$$
Note that $$(t^{-1}s)_{|_{U_i}} = t_{|_{U_i}}^{-1} s_{|_{U_i}}  \in \Lambda(U_i) \cap \FF.$$ So
  $(t^{-1}s)_{|_{U_i}}$ is an element of $\FF$ that is integral over $\Osh_X(U_i)$ so
  $(t^{-1}s)_{|_{U_i}} \in \Osh_Y(U_i)$.  Now we get
  $$s_{|_{U_i}} \in   t_{|_{U_i}} \Osh_Y(U_i) =\Osh_Y(\ell D)(U_i).$$
In particular, 
$$s \in \H^0(X,p_*\Osh_Y\otimes\Osh_X(\ell D)))= \H^0(X,p_*(\Osh_Y(p^*(\ell D)))= \H^0(Y,\Osh_Y(p^*(\ell D))).$$  
The proof of~\cite[Proposition 1.5 (iii)]{Mori:1985} now shows that $R(Y,p^*D)$ is integral over $R(X,D)$.  Since $s$ was arbitrary every element of $R(\Lambda,D)$ is integral over $R(X,D)$.    Now the argument of~\cite[Lemma 1.2 (iii)]{Mori:1985} shows that every element of $\kk(\Lambda,D)$ is algebraic over $\kk(X,D)$.

It remains to show that the extension $\kk(\Lambda,D) / \kk(X,D)$ is finite.  We know that $\kk(\Lambda,D)$ is a subdivision ring  of $\mathcal{D}$.  So by Proposition~\ref{division:alg:finite:generation}, we have that $\kk(\Lambda,D)$ is finite dimensional over its centre $Z(\kk(\Lambda,D))$ which is finitely generated over $\kk$. So $Z(\kk(\Lambda,D))$ is  finitely generated over $\kk(X,D)$.  Finally, we note that $Z(\kk(\Lambda,D))$ is algebraic over $\kk(X,D)$ and finitely generated over $\kk(X,D)$.  Thus $\kk(\Lambda,D)$ is finite over $\kk(X,D)$.
\end{proof}

Note that if a division algebra $\mathcal{D}$ is finite dimensional over its centre $Z(\mathcal{D})$, then we define 
$$\trdeg \mathcal{D} := \trdeg Z(\mathcal{D}).$$
Proposition~\ref{alg:extension:prop} immediately implies:

\begin{corollary}\label{transcendence:degree:equality} With the same assumptions as Proposition~\ref{alg:extension:prop}, the division algebras $\kk(\Lambda,D)$ and $\kk(X,D)$ have the same transcendence degree.
\end{corollary}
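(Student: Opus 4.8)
The plan is to read the result off directly from Proposition~\ref{alg:extension:prop} together with the stated convention that $\trdeg \kk(\Lambda,D) = \trdeg Z(\kk(\Lambda,D))$ for a division algebra finite dimensional over its centre. First I would recall that, by Proposition~\ref{alg:extension:prop}, the division algebra $\kk(\Lambda,D)$ is indeed finite dimensional over its centre $Z(\kk(\Lambda,D))$, so the convention applies and $\trdeg \kk(\Lambda,D)$ is by definition $\trdeg Z(\kk(\Lambda,D))$.

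Next I would invoke the second conclusion of Proposition~\ref{alg:extension:prop}, namely that $Z(\kk(\Lambda,D))$ is a finite field extension of $\kk(X,D)$. The key (and only nontrivial) input is then the standard fact from field theory that transcendence degree over $\kk$ is invariant under algebraic extensions: if $L / K$ is an algebraic extension of field extensions of $\kk$, then any transcendence basis of $K / \kk$ remains a transcendence basis of $L / \kk$, whence $\trdeg_\kk L = \trdeg_\kk K$. Applying this to the finite, hence algebraic, extension $Z(\kk(\Lambda,D)) / \kk(X,D)$ gives $\trdeg Z(\kk(\Lambda,D)) = \trdeg \kk(X,D)$.

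Chaining the two equalities yields $\trdeg \kk(\Lambda,D) = \trdeg \kk(X,D)$, as desired. There is essentially no obstacle at this stage: the substantive work has all been absorbed into Proposition~\ref{alg:extension:prop}, and the corollary is a one-line consequence once one pairs the finiteness of $Z(\kk(\Lambda,D))$ over $\kk(X,D)$ with the invariance of transcendence degree under algebraic extensions. The only point meriting a word of care is simply to confirm that the convention $\trdeg \mathcal{D} = \trdeg Z(\mathcal{D})$ is legitimately in force here, which is exactly what the finite-dimensionality clause of Proposition~\ref{alg:extension:prop} guarantees.
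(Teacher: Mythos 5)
Your proof is correct and is exactly the argument the paper intends: the paper states the corollary as an immediate consequence of Proposition~\ref{alg:extension:prop}, relying on the convention $\trdeg \mathcal{D} = \trdeg Z(\mathcal{D})$ and on the invariance of transcendence degree under the finite (hence algebraic) extension $Z(\kk(\Lambda,D)) / \kk(X,D)$, just as you spell out.
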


We now consider the case that $D = \K_\alpha = \K_X + \Delta_\alpha$, for $\Delta_\alpha$ the ramification divisor of a maximal order $\Lambda \subseteq \mathcal{D}$.
Here we assume that $\K_\alpha \in \Div_\QQ(X)$; equivalently we assume that the pair $(X,\Delta_\alpha)$ is $\QQ$-Gorenstein.
We then have, as a consequence of Corollary~\ref{transcendence:degree:equality}, the following result.

\begin{theorem}\label{div:alg:transcendence:thm}  Let $X$ be a normal proper variety over $\kk$, with function field $\KK = \kk(X)$, and fix a Brauer class $\alpha \in \operatorname{Br}(\KK)$ having degree prime to $\cchar \kk$.  Let $\Lambda$ be a maximal order in a $\KK$-central division algebra $\mathcal{D}$ with Brauer class equal to $\alpha$.  If the pair $(X,\alpha)$ is $\QQ$-Gorenstein and if $\kappa(X,\K_\alpha) \geq 0$, then the transcendence degree of $\kk(\Lambda,\K_\alpha)$ over $\kk$ equals the Iitaka dimension of the pair $(X,\alpha)$.  Explicitly:
$$ \trdeg \kk(\Lambda,\K_\alpha) = \kappa(X,\alpha) = \kappa(X, \K_\alpha).$$
\end{theorem}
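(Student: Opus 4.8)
The plan is to obtain the stated equalities by chaining together a definitional identity with the transcendence-degree description of the Iitaka dimension and with Corollary~\ref{transcendence:degree:equality}; there is essentially no new content, only an assembly of earlier results. First I would record that $\kappa(X,\alpha) = \kappa(X,\K_\alpha)$ holds by definition: by Definition~\ref{Iitaka:Brauer:defn} the Iitaka dimension $\kappa(X,\alpha)$ is that of the $\QQ$-Gorenstein pair $(X,\Delta_\alpha)$, and by Definition~\ref{Iitaka-Q-Gorenstein-pair} the latter equals $\kappa(X,\K_X + \Delta_\alpha) = \kappa(X,\K_\alpha)$. This disposes of the rightmost equality with no work.

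For the remaining equality $\trdeg \kk(\Lambda,\K_\alpha) = \kappa(X,\K_\alpha)$, I would set $D := \K_\alpha$ and verify that the hypotheses of Proposition~\ref{alg:extension:prop}, and hence of Corollary~\ref{transcendence:degree:equality}, are met. Since $(X,\alpha)$ is assumed $\QQ$-Gorenstein, we have $\K_\alpha \in \Div_\QQ(X)$, so $D$ is $\QQ$-Cartier. Moreover the hypothesis $\kappa(X,\K_\alpha) \geq 0$ forces $\N(X,\K_\alpha) \neq (0)$, because by \eqref{prelim:normal:var:eqn5} an empty semigroup would give $\kappa = -1$. With these two facts in hand, Corollary~\ref{transcendence:degree:equality} applies to $D = \K_\alpha$ and yields $\trdeg \kk(\Lambda,\K_\alpha) = \trdeg \kk(X,\K_\alpha)$.

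Finally, since $\N(X,\K_\alpha) \neq (0)$, the defining formula \eqref{prelim:normal:var:eqn5} gives $\kappa(X,\K_\alpha) = \trdeg \kk(X,\K_\alpha)$, and combining the three identities produces $\trdeg \kk(\Lambda,\K_\alpha) = \trdeg \kk(X,\K_\alpha) = \kappa(X,\K_\alpha) = \kappa(X,\alpha)$, as desired. I do not expect a substantive obstacle: the whole difficulty has already been absorbed into Proposition~\ref{alg:extension:prop} and Corollary~\ref{transcendence:degree:equality}, and the present statement merely records that the two running hypotheses, namely $\QQ$-Gorenstein-ness and $\kappa \geq 0$, are exactly what those results require. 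The only point meriting any care is that Corollary~\ref{transcendence:degree:equality} is invoked at the $\QQ$-Cartier divisor $\K_\alpha$ rather than at an integral one; this is already accommodated in the proof of Proposition~\ref{alg:extension:prop}, which passes to a suitable integral multiple $\ell \K_\alpha$, so no additional argument is needed here.
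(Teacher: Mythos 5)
Your proposal is correct and follows essentially the same route as the paper: both reduce the statement to Corollary~\ref{transcendence:degree:equality} applied to $D = \K_\alpha$, combined with the identification $\kappa(X,\K_\alpha) = \trdeg \kk(X,\K_\alpha)$ (which the paper attributes to Lazarsfeld but which is also immediate from the definition \eqref{prelim:normal:var:eqn5}, as you note). Your explicit verification that the hypotheses $\QQ$-Gorenstein and $\kappa \geq 0$ supply exactly the $\QQ$-Cartier and $\N(X,D) \neq (0)$ conditions needed by Corollary~\ref{transcendence:degree:equality} is a slightly more careful bookkeeping than the paper's, but not a different argument.
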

\begin{proof}
It is well known that $\trdeg \kk(X,\K_\alpha) = \kappa(X,\K_\alpha),$
see for example~\cite[Corollary 2.1.37]{Laz} and~\cite[Definition 2.1.3]{Laz}.
So Theorem \ref{div:alg:transcendence:thm} follows from Corollary~\ref{transcendence:degree:equality} because, in the notation of that result, we have: 
$$\trdeg \kk(X,\K_\alpha) = \kappa(X,\K_\alpha) = \kappa(X,\alpha).$$
\end{proof}

As an additional remark, fix a $\KK$-central division algebra $\mathcal{D}$, with finite dimension over $\KK$ and having degree prime to $\cchar \kk$, fix a maximal order $\Lambda$ in $\mathcal{D}$ and fix a Cartier divisor $D$ on $X$.  We then note that we can use Corollary~\ref{transcendence:degree:equality}, in conjunction with results from~\cite{Zhang:1998}, 
to describe the behaviour of the lower transcendence degree of the algebra $\kk(\Lambda,D)$ compared to that of $R(X,D)$ the section ring of $D$.  This is the content of Theorem~\ref{Ld:maximal:order:section:ring:thm}.  Before stating this result, if $A$ is an algebra over $\kk$, then we denote by $\GKdim A$ its GK dimension and by $\operatorname{Ld} A$ its lower transcendence degree.  For completeness, we briefly recall, following~\cite{Zhang:1998}, that the GK dimension of $A$ is defined to be:
$$
\GKdim A := \sup_V \limsup_{n\to\infty} \log_n \dim V^n,
$$ 
where $V$ ranges over all finite dimensional subspaces of $A$.  Also, if $A$ is assumed to be a prime p.i.~algebra, then the lower transcendence degree of $A$ can be described as:
$$
\operatorname{Ld} A := \sup_V \inf_b \limsup_{n\to \infty} \log_n \dim(\kk + b V)^n
$$
where $V$ ranges over the finite dimensional subspaces of $A$ and where $b$ ranges over the regular elements of $A$.  Here, in the above definitions, $V^n$ and $(\kk + b V)^n$ denote the $\kk$-subspaces of $A$ generated by their respective $n$-fold products.

\begin{lemma}\label{easyLemma}
Let $M_{A}$ be a right $A$ module such that $M_{A} \supseteq A_{A}$.  Then $\GKdim M_{A} = \GKdim A$.
\end{lemma}
\begin{proof}
Proposition 5.1 (d) of \cite{Krause:Lenagan} states that  
$$\GKdim M_{A} \leq \GKdim A.$$  
Since $M_{A} \supseteq A_{A}$ we also have 
$$\GKdim M_{A} \geq \GKdim A_{A} = \GKdim A$$ 
by \cite[Proposition 5.1 (b)]{Krause:Lenagan}.
\end{proof}

Using Lemma \ref{easyLemma}, we also observe:

\begin{proposition}\label{gk:veronese:prop}
Let $A$ be an $\mathbb{N}$-graded $k$-algebra such that $A_0=k$.  Then $\GKdim A = \GKdim A^{(n)}$.
\end{proposition}
\begin{proof}
Let $V\subset A$ be a finite dimensional graded vector space in $A$ such that $V \cap A_0 = 0.$  Let $k\langle V \rangle \subseteq A$ be the subalgebra of $A$ generated by $V$.   Write $L(V) = k\langle V \rangle A^{(n)}$ for the right $A^{(n)}$
submodule of $A_{A^{(n)}}$ generated by $k\langle V \rangle$.  We note that $L(V)$ is a $k\langle V \rangle - A^{(n)}$ bimodule.  Since $1 \in k\langle V\rangle \cap A^{(n)}$, we see that: 
$$A^{(n)} \subseteq L(V)$$ 
and 
$$k\langle V \rangle \subseteq L(V).$$  
Furthermore, $L(V)$ is a finitely generated right $A^{(n)}$-module, since all products of elements of $V$ of length $n$ are in $A^{(n)}$.
So now we have that:
$$ \GKdim k\langle V \rangle  =   \GKdim \  _{k\langle V \rangle}L(V) \leq  \GKdim L(V)_{A^{(n)}}  =  \GKdim A^{(n)}.
$$
Note that the equalities above follow from Lemma~\ref{easyLemma} and the inequality above is \cite[Lemma 5.3 (b)]{Krause:Lenagan}.
So now we have that $\GKdim k\langle V \rangle \leq \GKdim A^{(n)}$ for all $V$ and so we get that $\GKdim A = \GKdim A^{(n)}$.
\end{proof}

 We now establish the following result.

\begin{theorem}\label{Ld:maximal:order:section:ring:thm} Let $\KK$ be a finitely generated field over $\kk$.  Let $\mathcal{D}$ be a $\KK$-central division algebra  with finite dimension over $\KK$ and having degree prime to $\cchar \kk$, choose a maximal $\Osh_X$-order $\Lambda$ in $\mathcal{D}$ over a proper normal 
 variety $X$, and fix a Cartier divisor $D$ on $X$ with $\N(X,D) \not = (0)$.  Let $\kk(\Lambda,D)$ be the division ring of degree zero fractions of $R(\Lambda,D)$ and let $\kk(X,D)$ be the degree zero field of fractions of the section ring $R(X,D)$.
Then, with these notations and hypothesis, it holds true that:
$$ \GKdim R(D) - 1 = \operatorname{Ld} R(D) - 1 = \trdeg \kk(X,D) = \trdeg \kk(\Lambda,D) = \operatorname{Ld} \kk(\Lambda,D). $$
\end{theorem}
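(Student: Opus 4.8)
The plan is to prove the five quantities equal by assembling a chain of equalities whose geometric core is already in hand: Corollary~\ref{transcendence:degree:equality} gives $\trdeg \kk(\Lambda,D) = \trdeg \kk(X,D)$, and the standard identity $\trdeg \kk(X,D) = \kappa(X,D)$ (see~\cite[Corollary 2.1.37]{Laz}) ties this to the Iitaka dimension. The remaining work is to convert transcendence degree into GK dimension on the commutative side and into lower transcendence degree on both sides, which is exactly where the results of~\cite{Zhang:1998} enter. So the two honest inputs are Corollary~\ref{transcendence:degree:equality} and Proposition~\ref{alg:extension:prop}, and everything else is bookkeeping among three notions of dimension.

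First I would treat the commutative section ring $R(D) = R(X,D)$. Since $R(D)$ is a commutative $\kk$-domain, reducing to finitely generated subalgebras shows $\GKdim R(D) = \trdeg_\kk \operatorname{Frac}(R(D))$. Choosing a nonzero homogeneous $t \in R(D)$ of positive degree (possible since $\N(X,D) \neq (0)$), every homogeneous fraction is a degree-zero fraction times a power of $t$, so $\operatorname{Frac}(R(D)) = \kk(X,D)(t)$ with $t$ transcendental over $\kk(X,D)$; hence $\GKdim R(D) = \trdeg \kk(X,D) + 1$. (Alternatively one may read off $\GKdim R(D) = \kappa(X,D)+1$ directly from the growth estimate \eqref{prelim:normal:var:eqn7}.) Zhang's comparison of $\operatorname{Ld}$ and $\GKdim$ for commutative domains then yields $\operatorname{Ld} R(D) = \GKdim R(D)$, giving the first two equalities $\GKdim R(D) - 1 = \operatorname{Ld} R(D) - 1 = \trdeg \kk(X,D)$.

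Next I would treat the division ring $\kk(\Lambda,D)$. By Proposition~\ref{alg:extension:prop} it is finite dimensional over its centre, which is a finite extension of $\kk(X,D)$; in particular it is a prime p.i.\ algebra to which the lower transcendence degree applies, and Zhang's computation of $\operatorname{Ld}$ for a division algebra finite over its centre gives $\operatorname{Ld} \kk(\Lambda,D) = \trdeg Z(\kk(\Lambda,D)) = \trdeg \kk(\Lambda,D)$, using the convention $\trdeg \mathcal{E} := \trdeg Z(\mathcal{E})$ for such $\mathcal{E}$. Combining this with Corollary~\ref{transcendence:degree:equality}, namely $\trdeg \kk(\Lambda,D) = \trdeg \kk(X,D)$, closes the chain and completes the proof.

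The hard part will be invoking~\cite{Zhang:1998} with exactly the right hypotheses: one must check that $\operatorname{Ld}$ is applied only to prime p.i.\ algebras (the commutative domain $R(D)$ and the division algebra $\kk(\Lambda,D)$, both of which qualify), that the equality $\operatorname{Ld} R(D) = \GKdim R(D)$ does not secretly require finite generation of $R(D)$ (it does not, as the argument reduces to the fraction field), and above all that $\kk(\Lambda,D)$ is genuinely finite dimensional over its centre. This last point is precisely the content of Proposition~\ref{alg:extension:prop}, which is why that proposition was established beforehand; a minor auxiliary point is to confirm that the transcendence-degree convention for division algebras is used consistently across every term in the chain.
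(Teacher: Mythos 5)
Your proposal is correct and follows essentially the same route as the paper: the commutative equalities $\GKdim R(D) - 1 = \operatorname{Ld} R(D) - 1 = \trdeg \kk(X,D)$ come from Zhang's results, and the noncommutative side is closed by combining Corollary~\ref{transcendence:degree:equality} with Zhang's computation of $\operatorname{Ld}$ for division algebras finite over their centres (which is where Proposition~\ref{alg:extension:prop} enters, exactly as you say). The paper's proof is just a terser version of yours, citing \cite[Propositions 0.2 and 2.1]{Zhang:1998} for the first chain and \cite[Corollary 3.3, Theorem 0.3(1)]{Zhang:1998} for the second.
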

\begin{proof}[Proof of Theorem~\ref{Ld:maximal:order:section:ring:thm} and Theorem~\ref{Ld:maximal:order:section:ring:thm:intro}]
By~\cite[Proposition 0.2]{Zhang:1998} 
and~\cite[Proposition 2.1]{Zhang:1998}, 
it follows that:
$$\GKdim R(D) - 1 = \operatorname{Ld} R(D) - 1 = \trdeg \kk(X,D). $$
On the other hand, combining Corollary~\ref{transcendence:degree:equality},~\cite[Corollary 3.3]{Zhang:1998} and statement (1) of~\cite[Theorem 0.3]{Zhang:1998}, we obtain:
$$\trdeg \kk(X,D) = \trdeg \kk(\Lambda, D) = \operatorname{Ld} \kk(\Lambda, D)  $$
and so Theorem~\ref{Ld:maximal:order:section:ring:thm}  follows.   Finally, the final assertion about birational invariance in Theorem~\ref{Ld:maximal:order:section:ring:thm:intro} follows from Theorem~\ref{brauer:iitakacanonical:singulatiries:main:result}.
\end{proof}

\subsection{Perturbed growth conditions}\label{perturbed:growth}  In this section, we establish Proposition~\ref{perturbed:growth:prop1} which we then use to prove Theorem~\ref{log:abundance:growth}.  To begin with, if $D$ is a $\QQ$-Cartier divisor on a normal projective variety $X$ over $\kk$, then we let $\kappa_\sigma(D)$ denote its \emph{perturbed growth}, or \emph{numerical dimension}:
\begin{equation}\label{perturbed:growth:eqn:defn}
\kappa_\sigma(D) := \max \{\sigma(D;A):  \text{$A$ is an ample divisor on $X$} \}.
\end{equation}
Here, in \eqref{perturbed:growth:eqn:defn}, if $F$ is a reflexive sheaf on $X$, then, as in~\cite[Definition 2.1]{Fujita81}, we define: 
\begin{equation}\label{perturbed:growth:eqn:defn2}
\sigma(D;F) := \max \left\{ k \in \ZZ_{\geq 0} : \limsup_{m \to \infty} \frac{h^0(X,F \otimes \Osh_X(\lfloor m D \rfloor)) }{m^k} > 0 \right\}
\end{equation}
and we put $\sigma(D;F) = - 1$ in case that $h^0(X,F\otimes\Osh_X(\lfloor m D \rfloor)) = 0$ for all sufficiently divisible $m \gg 0$.

\begin{proposition}\label{perturbed:growth:prop1}
Suppose that $X$ is a normal projective variety over $\kk$.
If $F$ is a reflexive sheaf on $X$ and if $D$ is a $\QQ$-Cartier divisor on $X$, then 
$$ \sigma(D;F) \leq \kappa_\sigma(D).$$
\end{proposition}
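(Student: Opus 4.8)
The plan is to bound the growth of $F$ twisted by $\lfloor mD\rfloor$ by that of a single ample line bundle twisted in the same way, and then to observe that the latter is one of the quantities over which $\kappa_\sigma(D)$ takes a maximum. Concretely, I would first produce an embedding of $F$ into a direct sum of copies of an ample line bundle. Since $X$ is projective, fix a very ample invertible sheaf $\Osh_X(H)$; by Serre's theorem the sheaf $F^\vee \otimes \Osh_X(mH)$ is globally generated for $m \gg 0$, which gives a surjection $\Osh_X(-mH)^{\oplus N} \twoheadrightarrow F^\vee$. Applying the contravariant left-exact functor $\cHom(-,\Osh_X)$ and using that $F$ is reflexive, so $F^{\vee\vee} = F$, yields an injection $F \hookrightarrow \Osh_X(A)^{\oplus N}$, where $A := mH$ is an ample Cartier divisor.

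Next I would pass to the regular locus in order to cope with the fact that $\Osh_X(\lfloor mD\rfloor)$ need not be invertible on the possibly singular $X$. Let $j : U := X_{\mathrm{reg}} \hookrightarrow X$ be the inclusion of the smooth locus; since $X$ is normal, $X \setminus U$ has codimension $\geq 2$, so $\H^0(X,G) = \H^0(U,G|_U)$ for every reflexive sheaf $G$. On $U$ the sheaf $\Osh_U(\lfloor mD\rfloor)$ is an honest line bundle, hence tensoring the restricted injection $F|_U \hookrightarrow \Osh_U(A)^{\oplus N}$ by it stays injective. Taking $\H^0$ and invoking reflexivity over $X$ gives
$$ h^0\!\big(X, F \otimes \Osh_X(\lfloor mD\rfloor)\big) \leq N\, h^0\!\big(X, \Osh_X(A) \otimes \Osh_X(\lfloor mD\rfloor)\big) $$
for all $m$, where I use that $A$ is Cartier so that $\Osh_X(A) \otimes \Osh_X(\lfloor mD\rfloor)$ is reflexive and agrees with the divisorial sheaf of $A + \lfloor mD\rfloor$.

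Finally, dividing by the constant $N$ does not affect whether a $\limsup$ is strictly positive, so the displayed inequality forces $\sigma(D;F) \leq \sigma(D;\Osh_X(A))$ straight from Definition~\eqref{perturbed:growth:eqn:defn2}. Since $A$ is ample, the value $\sigma(D;\Osh_X(A))$ is among those over which the maximum in \eqref{perturbed:growth:eqn:defn} defining $\kappa_\sigma(D)$ is taken, whence $\sigma(D;\Osh_X(A)) \leq \kappa_\sigma(D)$ and the claim follows.

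I expect the main obstacle to be the bookkeeping around reflexivity and the floor operation: one must ensure that tensoring the embedding by the rank-one reflexive sheaf $\Osh_X(\lfloor mD\rfloor)$ introduces no torsion that would inflate $h^0$, and that global sections over $X$ are computed correctly. Restricting to $U = X_{\mathrm{reg}}$, where the twist becomes a line bundle and tensoring is exact, is precisely what keeps this step clean; the remainder is the standard Serre-theoretic embedding of a reflexive sheaf into a sum of ample line bundles.
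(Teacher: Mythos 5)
Your argument is essentially the paper's proof: both embed $F$ into a finite direct sum of copies of an ample line bundle by globally generating $F^\vee$ after an ample twist, dualize using $F = F^{\vee\vee}$, twist by the multiples of $D$, and compare $h^0$ to conclude $\sigma(D;F)\leq\sigma(D;\ell A)\leq\kappa_\sigma(D)$. The only (minor) divergence is that you handle non-Cartier $\lfloor mD\rfloor$ by restricting to the regular locus, whereas the paper simply works with sufficiently divisible $m$ for which $mD$ is Cartier; both devices are adequate here.
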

\begin{proof}
Fix an ample divisor $A$ on $X$ and choose $\ell \gg 0$ so that the coherent sheaf $F^\vee \otimes \Osh_X(\ell A)$ is globally generated.  Then, by taking the dual of the surjective morphism of $\Osh_X$-modules:
\begin{equation}\label{perturbed:growth:eqn1}
\bigoplus_{\mathrm{finite}} \Osh_X(-\ell A) \rightarrow F^\vee \rightarrow 0
\end{equation}
we obtain an injective morphism of $\Osh_X$-modules:
\begin{equation}\label{perturbed:growth:eqn2}
0 \rightarrow F = F^{\vee \vee} \hookrightarrow \bigoplus_{\mathrm{finite}} \Osh_X(\ell A).
\end{equation}
Furthermore, we can twist \eqref{perturbed:growth:eqn1} and \eqref{perturbed:growth:eqn2} by $mD$ for each sufficiently divisible $m \gg 0$.  We then obtain, combining \eqref{perturbed:growth:eqn1} and \eqref{perturbed:growth:eqn2}, 
\begin{equation}\label{perturbed:growth:eqn3}
h^0(X,F\otimes \Osh_X( mD )) \leq h^0(X,F^\vee \otimes \Osh_X(\ell A)) h^0(X,\Osh_X( mD  + \ell A)),
\end{equation}
for each sufficiently divisible $m \gg 0$.   
Clearly \eqref{perturbed:growth:eqn3} implies that 
$\sigma(D;F) \leq \sigma(D;\ell A) \leq \kappa_\sigma(D)$ 
as desired.
\end{proof}

Using Proposition~\ref{perturbed:growth:prop1} together with Conjecture~\ref{perturbed:growth:abundance:conj}, we  now prove Theorem~\ref{log:abundance:growth}.

\begin{proof}[Proof of Theorem~\ref{log:abundance:growth}]
Let $n^2 =\rank \Lambda$.  By Proposition~\ref{gk:veronese:prop} and Corollary~\ref{canonical:ring:order:cor4} we see that
\begin{align}\label{log:abundance:growth:eqn1}
\begin{split}
      \GKdim R(\Lambda,\omega_\Lambda) & 
      = \GKdim R(\Lambda,\omega_\Lambda)^{(n)} \\
       & 
      =   \GKdim R(\Lambda,(\omega_\Lambda^{\otimes n})^{\vee\vee}) 
      \\
& 
= \GKdim \bigoplus_{\ell \geq 0} \H^0(X,\Lambda(\ell n (\K_X+\Delta_\Lambda))) 
\\
& 
= \sigma(\K_\Lambda;\Lambda) - 1. 
\end{split}
\end{align}
On the other hand, since $\Osh_X \hookrightarrow \Lambda$, we deduce:
\begin{equation}\label{log:abundance:growth:eqn1'}
\kappa(X,\K_\Lambda) \leq \sigma(\K_\Lambda;\Lambda) \leq \kappa_\sigma(\K_\Lambda) = \kappa(X,\K_\Lambda). 
\end{equation}
Here the rightmost inequality in \eqref{log:abundance:growth:eqn1'} follows from Proposition~\ref{perturbed:growth:prop1} while the rightmost equality is implied by \eqref{num:abundance:growth:eqn}.  The conclusion desired by Theorem~\ref{log:abundance:growth} then follows by combining \eqref{log:abundance:growth:eqn1} and \eqref{log:abundance:growth:eqn1'}.
\end{proof}

\providecommand{\bysame}{\leavevmode\hbox to3em{\hrulefill}\thinspace}
\providecommand{\MR}{\relax\ifhmode\unskip\space\fi MR }
\providecommand{\MRhref}[2]{%
  \href{http://www.ams.org/mathscinet-getitem?mr=#1}{#2}
}
\providecommand{\href}[2]{#2}


\begin{thebibliography}{10}


\bibitem{Artin:Chan:deJong:Lieblich}
M.~Artin, D.~Chan, A.~J. de~Jong, and M.~Lieblich, \emph{Terminal orders over
  surfaces}, Unpublished.

\bibitem{Artin:Mumford:1971}
M.~Artin and D.~Mumford, \emph{Some elementary examples of unirational
  varieties which are not rational}, Proc. London Math. Soc. (3) \textbf{25}
  (1972), 75--95.

\bibitem{Auslander:Goldman:1960}
M.~Auslander and O.~Goldman, \emph{Maximal orders}, Trans. Amer. Math. Soc.
  \textbf{97} (1960), 1--24.

\bibitem{BIR}
J.~Bell, C.~Ingalls, and R.~Ramkumar, \emph{Embeddings of quotient division
  algebras of rings of differential operators}, Israel J. Math. \textbf{219}
  (2017), 411--430.

\bibitem{Birkar:Cascini:Hacon:McKernan}
C.~Birkar, P.~Cascini, C.~D. Hacon, and J.~McKernan, \emph{Existence of minimal
  models for varieties of log general type}, J. Amer. Math. Soc. \textbf{23}
  (2010), no.~2, 405--468.

\bibitem{Chan:Orders}
D.~Chan, \emph{Lectures on {Orders}}, Unpublished.

\bibitem{Chan:plus:10}
D.~Chan, K.~Chan, L.~De~Thanhoffer de~V\"{o}lcsey, C.~Ingalls, K.~Jabbusch,
  S.~J. Kov\'{a}cs, R.~Kulkarni, B.~Lerner, B.~Nanayakkara, S.~Okawa, and
  M.~Van den Bergh (11~Authors), \emph{The minimal model theory for b-log
  canonical divisors and applications}, Preprint (2017).

\bibitem{Chan:Ingalls:2005}
D.~Chan and C.~Ingalls, \emph{The minimal model program for orders over
  surfaces}, Invent. Math. \textbf{161} (2005), no.~2, 427--452.

\bibitem{Chan:Kulkarni2003}
D.~Chan and R.~S. Kulkarni, \emph{del {P}ezzo orders on projective surfaces},
  Adv. Math. \textbf{173} (2003), no.~1, 144--177.

\bibitem{Chan:Kulkarni:2005}
\bysame, \emph{Numerically {C}alabi-{Y}au orders on surfaces}, J. London Math.
  Soc. (2) \textbf{72} (2005), no.~3, 571--584.

\bibitem{deF:E:M}
T.~de~Fernex, L.~Ein, and M.~Musta{\c{t}}{\v{a}}, \emph{Vanishing theorems and
  singularities in birational geometry}, Unpublished, 2014.

\bibitem{deJong:2004}
A.~J. de~Jong, \emph{{T}he period-index problem for the {B}rauer group of an
  algebraic surface}, Duke Math. J. \textbf{123} (2004), no.~1, 71--94.

\bibitem{Farb:Dennis:1993} B.~Farb, and R.~K.~Dennis, \emph{{N}oncommutative {A}lgebra}, Springer-Verlag, New York, 1993.


\bibitem{Fujita81}
T.~Fujita, \emph{On {L}-dimension of coherent sheaves}, J. Fac. Sci. Univ.
  Tokyo Sect. IA Math. \textbf{28} (1981), no.~2, 215--236.

\bibitem{Gille:Szamuley:2006}
P.~Gille and T.~Szamuely, \emph{Central simple algebras and {G}alois
  cohomology}, Cambridge University Press, Cambridge, 2006.

\bibitem{Gongyo:2013}
Y.~Gongyo, \emph{Abundance theorem for numerically trivial log canonical
  divisors of semi-log canonical pairs}, J. Algebraic Geom. \textbf{22} (2013),
  no.~3, 549--564.

\bibitem{Hart}
R.~Hartshorne, \emph{Algebraic geometry}, Springer-Verlag, New York, 1977.

\bibitem{Hijikata:Nishida:1998}
H.~Hijikata and K.~Nishida, \emph{When is {$\Lambda_1 \otimes \Lambda_2$}
  hereditary}, Osaka J. Math. \textbf{35} (1998), 493--500.

\bibitem{Iitaka:1971}
S.~Iitaka, \emph{On {$D$}-dimensions of algebraic varieties}, J. Math. Soc.
  Japan \textbf{23} (1971), 356--373.

\bibitem{Isaacs:Algebra}
I.~M. Isaacs, \emph{Algebra}, Brooks/Cole Publishing Co., Pacific Grove, CA,
  1994.

\bibitem{Kollar:Mori:1998}
J.~Koll\'{a}r and S.~Mori, \emph{Birational geometry of algebraic varieties},
  Cambridge University Press, Cambridge, 1998.

\bibitem{Krause:Lenagan}
G.~R. Krause and T.~H. Lenagan, \emph{Growth of algebras and
  {G}elfand-{K}irillov dimension}, American Mathematical Society, Providence,
  RI, 2000.

\bibitem{Lam:modules:rings}
T.~Y. Lam, \emph{Lectures on modules and rings}, Springer-Verlag, New York,
  1999.

\bibitem{Lang:Algebra}
S.~Lang, \emph{Algebra}, Springer-Verlag, New York, 2002.

\bibitem{Laz}
R.~Lazarsfeld, \emph{Positivity in algebraic geometry {I}}, Springer-Verlag,
  Berlin, 2004.

\bibitem{Mat}
H.~Matsumura, \emph{Commutative ring theory}, Cambridge University Press,
  Cambridge, 1986.

\bibitem{McConnell-Robson}
J.~C. McConnell and J.~C. Robson, \emph{Noncommutative {N}oetherian rings},
  John Wiley \& Sons, Ltd., Chichester, 1987.

\bibitem{Mori:1985}
S.~Mori, \emph{Classification of higher-dimensional varieties}, Algebraic
  geometry, {B}owdoin, 1985 ({B}runswick, {M}aine, 1985), vol.~46, Amer. Math.
  Soc., Providence, RI, 1987, pp.~269--331.

\bibitem{Graded:Ring:Theory}
C.~N\u{a}st\u{a}sescu and F.~van Oystaeyen, \emph{Graded ring theory},
  North-Holland Publishing Co., Amsterdam-New York, 1982.

\bibitem{Reiner:2003}
I.~Reiner, \emph{Maximal orders}, The Clarendon Press, Oxford University Press,
  Oxford, 2003.

\bibitem{Saltman:1999}
D.~J. Saltman, \emph{Lectures on division algebras}, vol.~94, Published by
  American Mathematical Society, Providence, RI; on behalf of Conference Board
  of the Mathematical Sciences, Washington, DC, 1999.

\bibitem{Schilling:1950}
O.~F.~G. Schilling, \emph{The {T}heory of {V}aluations}, American Mathematical
  Society, New York, N.Y., 1950.

\bibitem{Serre:local:fields}
J.~P. Serre, \emph{Local fields}, Springer-Verlag, 1979.

\bibitem{Zhang:1998}
J.~J. Zhang, \emph{On lower transcendence degree}, Adv. Math. \textbf{139}
  (1998), no.~2, 157--193.

\end{thebibliography}
\end{document}